\definecolor{indigo}{rgb}{0.29, 0.0, 0.51}  % custom colors
\theoremstyle{plain}
\newtheorem{theorem}{Theorem}
\newtheorem{proposition}[theorem]{Proposition}
\newtheorem{lemma}[theorem]{Lemma}
\newtheorem{mainthm}{Theorem}
\theoremstyle{definition}
\newtheorem{definition}[theorem]{Definition}
\theoremstyle{remark}
\newtheorem{remark}[theorem]{Remark}
\newtheorem{observation}[theorem]{Observation}
\numberwithin{theorem}{section}
\newcommand{\R}{\mathbb{R}}           % the real numbers
\newcommand{\Z}{\mathbb{Z}}           % the integers
\newcommand{\C}{\mathbb{C}}           % the complex numbers
\newcommand{\N}{\mathbb{N}}           % the natural numbers
\DeclareMathOperator{\bd}{\partial}   % boundary
\newcommand*\bigcdot{\mathpalette\bigcdot@{0.6}}
\newcommand*\bigcdot@[2]{\mathbin{\vcenter{\hbox{\scalebox{#2}{$\m@th#1\bullet$}}}}}
\DeclareMathOperator{\reg}{\text{reg}}
\DeclareMathOperator{\sing}{\text{sing}}
\DeclareMathOperator{\exot}{\text{exot}}
\DeclareMathOperator{\im}{\text{im}}
\DeclareMathOperator{\Ima}{\text{Im}}
\DeclareMathOperator{\Rea}{\text{Re}}
\newcommand{\wind}{\operatorname{wind}}
\newcommand{\Mod}{\operatorname{Mod}}
\newcommand{\SMod}{\operatorname{SMod}}
\newcommand{\Stein}{\operatorname{Stein}}
\newcommand{\ind}{\operatorname{ind}}
\newcommand{\crit}{\operatorname{crit}}
\newcommand{\bC}{\mathbb{C}}
\newcommand{\bD}{\mathbb{D}}
\newcommand{\bR}{\mathbb{R}}
\newcommand{\cF}{\mathcal{F}}
\newcommand{\cH}{\mathcal{H}}
\newcommand{\cM}{\mathcal{M}}
\newcommand{\cS}{\mathcal{S}}
\newcommand{\cT}{\mathcal{T}}
\begin{document}

% title
\title{Spinal open books and symplectic fillings with exotic fibers} 

\author{Hyunki Min}

\author{Agniva Roy}

\author{Luya Wang}

\address{Department of Mathematics \\ University of California \\ Los Angeles, CA}
\email{hkmin27@math.ucla.edu}

\address{Department of Mathematics \\ Boston College \\ Chestnut Hill, MA}
\email{agniva.roy@bc.edu}

\address{Institute for Advanced Study \\ Princeton, NJ}
\email{luyawang@ias.edu}

%\subjclass[2020]{57R17}

% abstract
\begin{abstract}
Spinal open book decompositions provide a natural generalization of open book decompositions. We show that any minimal symplectic filling of a contact $3$-manifold supported by a planar spinal open book is deformation equivalent to the complement of a positive multisection in a bordered Lefschetz fibration, which generalizes a result of Wendl \cite{Wendl_strongly}. Along the way, we give an explicit local model for a non-compactly supported ``singularity at infinity'' in a generalized version of bordered Lefschetz fibrations, given by pseudoholomorphic foliations associated to the spinal open books. This provides new tools to classify symplectic fillings of a contact $3$-manifold that is not supported by an amenable spinal open book, by studying monodromy factorizations in the newly defined spinal mapping class group. As an application, we complete the classification of strong fillings of all parabolic torus bundles, and make progress towards classifying symplectic fillings of contact $3$-manifolds supported by non-planar open books.
\end{abstract}

\maketitle
\tableofcontents

%%%%%%%%%%%%%%%%%%%%%%%%
\section{Introduction}\label{sec: intro}
%%%%%%%%%%%%%%%%%%%%%%%%

Finite-energy pseudoholomorphic foliations have played a central role in symplectic geometry for the past forty years. Using pseudoholomorphic foliations, Gromov \cite{gromov85} and McDuff \cite{mcduff_rational_ruled} initiated the study of the classification of symplectic $4$-manifolds. In contact geometry, early major applications include Eliashberg's classification of symplectic fillings of the tight $3$-sphere \cite{eli90} and Hofer's resolution of the Weinstein conjecture in several cases \cite{Hofer_Weinstein_conjecture}. These works inspired many developments later on in symplectic field theory (SFT), symplectic dynamics, and contact topology. 

Our paper focuses on the problem of classification of symplectic fillings\footnote{In this paper, we often omit \emph{strong} in front of symplectic filling when it does not cause confusion. A strong symplectic filling $(X, \omega)$ of the contact manifold $(Y,\xi)$ is such that $\partial X = Y$ and $\omega|_{\text{nbd}\partial X} = d\lambda$ for some contact form $\lambda$ of $\xi$.} of contact $3$-manifolds. For the tool of pseudoholomorphic foliations to apply, one has to restrict to the case where the leaves of the foliations are planar, due to index reasons. In the planar case, or more precisely, in the case of contact $3$-manifolds supported by planar open book decompositions, there are many past successes in the classification of their symplectic fillings, which is typically considered a very difficult problem, see \cite{Wendl_strongly, PVHM:lensFillings, Kaloti:fillings, pla_lensspace, wand2012mapping, kaloti-li} for examples. To go beyond the planar open book case, one key recent development is the technology of spinal open books initiated by Lisi, Van-Horn-Morris and Wendl in \cite{spinal_I} and \cite{spinal_II}. We discuss the current status and definition of spinal open books in \S\ref{subsec:past_planar_spinal obd} and \S\ref{subsec: spinalobd}. Our paper completes the topological understanding of a novel element in these studies: the role of ``exotic fibers'' in bordered Lefschetz fibrations that fill planar spinal open books. As a result, we are now able to give a complete description of symplectic fillings incorporating this new phenomenon. Our main theorems are stated in \S\ref{subsec:main_results}. 

\subsection{Scientific context} The main results of \cite{Wendl_strongly}, \cite{spinal_I} and \cite{spinal_II} encapsulate a general theme of classifying minimal symplectic fillings of contact $3$-manifolds. The primary idea is to find planar pages in the contact $3$-manifold, which are genus-zero pages of its open book decomposition and which can be lifted to pseudoholomorphic curves in the symplectization. One then considers the moduli space of these pseudoholomorphic curves which turn out to foliate the symplectic fillings of the contact $3$-manifold, as in the main theorems of \cite{Wendl_strongly, spinal_I, spinal_II}. In \cite{spinal_I} and \cite{spinal_II}, which generalize the results of \cite{Wendl_strongly}, the authors show that in general, the moduli spaces contain two types of degenerations, namely {\bf singular fibers} and {\bf exotic fibers}. The exotic fibers do not appear in the setting of planar open books of \cite{Wendl_strongly}, and the authors of \cite{spinal_I} and \cite{spinal_II} also only classify symplectic fillings in the restricted {\bf Lefschetz-amenable} setting, where exotic fibers do not appear. They show that, in this situation, all minimal strong fillings are deformation equivalent to Stein fillings. Furthermore, the pseudoholomorphic foliation gives a bordered Lefschetz fibration structure on the strong filling, which induce the planar spinal open book on the boundary. These results already give vast generalization of the relative case of a result of McDuff \cite{mcduff_rational_ruled} and unify the results of Gromov \cite{gromov85}, Eliashberg \cite{eli90}, and McDuff \cite{mcduff_contact_type}, which classified the minimal symplectic fillings of the standard $3$-sphere and the standard lens spaces. In addition, the main result of \cite{Wendl_strongly} turns the classification problem for $3$-manifolds supported by planar open books into a monodromy factorization problem of the mapping classes supported in the interior of the page. This reduction to monodromy factorization was used extensively in \cite{PVHM:lensFillings}, \cite{Kaloti:fillings}, \cite{pla_lensspace}, \cite{wand2012mapping}, and \cite{kaloti-li} to classify symplectic fillings of certain families of $3$-manifolds.

In this paper, we extend the results above to turn the problem of classifying symplectic fillings of a general planar spinal open book into a monodromy factorization problem, in a larger mapping class group that we call the {\bf spinal mapping class group} of a surface. This mapping class group in particular contains diffeomorphisms of a surface that are not supported in the interior and will be defined in \S\ref{subsec: framed_mcg}. We show that the presence of exotic fibers in minimal symplectic fillings forces the fillings to be complements of multisections in bordered Lefschetz fibrations. This phenomenon of exotic fiber can be considered as a ``singularity at infinity'' and has been largely unexplored. We expect our local model and description in terms of the spinal mapping class group to be also of interest for studying links of algebraic singularities in the spirit of \cite{Plamenevskaya_Starkston, Plamenevskaya_VHM, kaloti-li} and mirror symmetry.

%-----------------------------------------------------------------
\subsection{Past structural theorem for planar spinal open books}\label{subsec:past_planar_spinal obd}
%-----------------------------------------------------------------
Though the nomenclature and formal definition appeared later via the collaborations \cite{spinal_I,spinal_II}, the notion of spinal open books was first studied by Wendl in \cite{Wendl_strongly}, where he used them to classify the symplectic and Stein fillings of the standard $3$-torus.

Roughly speaking, a \textbf{spinal open book decomposition} of a $3$-manifold is generalization of the usual open book decomposition, where the binding solid tori are replaced by $S^1$-fibrations, called \textbf{spine}, over arbitrary compact oriented surfaces, called \textbf{vertebrae}. The complement of the spine in the $3$-manifold is called \textbf{paper}. A more precise definition of spinal open books can be found in \S\ref{subsec: spinalobd}. A spinal open book is \textbf{partially planar} if its interior contains a page of genus zero. The following is the main structural theorem for partially planar spinal open books proven in \cite{spinal_II} which we will develop further in our paper. See \S\ref{subsec:main_results} for our main results.

\begin{theorem}{\cite[Proposition 1.30]{spinal_II}}\label{thm: lvw_foliation}
  Let $(W, \omega)$ be a strong filling of a contact $3$-manifold $(M,\xi)$ supported by a partially planar spinal open book. Then there is a symplectic completion $\widehat{W}$ of $(W, \omega)$ with a compatible almost complex structure $\widehat{J}$ and smooth surjection
  $$\Pi: \widehat{W} \to \mathcal{M},$$
  where $\mathcal{M}$ is an oriented surface with cylindrical ends bijective to the connected components of the paper and admitting a partition
  $$\cM = \cM_{\reg}(\widehat{J}) \cup \cM_{\sing}(\widehat{J}) \cup \cM_{\exot}(\widehat{J}).$$
  Here,
  \begin{itemize}
    \item $u\in \Pi^{-1}(\cM_{\reg}(\widehat{J}))$ is an embedded $\widehat{J}$-holomorphic curve asymptotic to simply covered Reeb orbits;
    \item $u\in \Pi^{-1}(\cM_{\sing}(\widehat{J}))$ is a nodal $\widehat{J}$-holomorphic curve asymptotic to simply covered Reeb orbits composing of two embedded curves intersecting transversely exactly once;
    \item $u\in \Pi^{-1}(\cM_{\exot}(\widehat{J}))$ is an embedded $\widehat{J}$-holomorphic curve with one asymptotic end doubly covering a Reeb orbit and all other ends simply covering Reeb orbits.
  \end{itemize}
  Furthermore, for each vertebra $\Sigma_i$, there is a properly embedded $J$-holomorphic curve $S_i\subset \widehat{W}$ such that $S_i$ is homemomorphic to $\Sigma_i$ and 
  $$\Pi|_{S_i}: S_i \to \cM$$
  is a proper branched cover with simple branch points.
\end{theorem}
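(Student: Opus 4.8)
We outline the proof of this result of \cite{spinal_II}; it extends Wendl's argument for planar open books \cite{Wendl_strongly} to the spinal setting, the essential new work being the control of degenerations via Siefring's intersection theory for punctured holomorphic curves.

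One first fixes a contact form $\alpha$ on $(M,\xi)$ \emph{adapted} to the partially planar spinal open book: Morse--Bott, with the Reeb vector field tangent to the $S^1$-fibers of the spine and transverse to the interior pages, so that a genus-zero page $F_0$ in a planar component of the paper is a Reeb-invariant surface whose boundary is a union of simply covered Reeb orbits lying over $\partial\Sigma_i$. Completing $(W,\omega)$ by attaching the positive cylindrical end $([0,\infty)\times M,\, d(e^t\alpha))$ yields $\widehat W$, and one picks $\widehat J$ tamed by the completed symplectic form, cylindrical and $\alpha$-compatible on the end, so that $\R\times\partial F_0$ is $\widehat J$-holomorphic; the completion of $F_0$ then becomes an embedded finite-energy $\widehat J$-holomorphic curve $u_0$. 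Next one studies the moduli space of finite-energy $\widehat J$-holomorphic curves in the homology and asymptotic class of $u_0$. Because the pages are planar and every puncture is asymptotic to a binding orbit with the correct winding and Conley--Zehnder data, Wendl's automatic transversality criterion applies: the moduli space is a smooth $2$-manifold away from nodal strata, its members are embedded, and, since the Siefring self-intersection number of $u_0$ vanishes, distinct members are disjoint. Recording which curve passes through each point defines the surjection $\Pi\colon\widehat W\to\cM$, the base $\cM$ being an oriented surface with one cylindrical end for each connected component of the paper.

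The core is the compactness analysis in the SFT compactification. A Gromov--Hofer limit of pages can degenerate in only two ways: a node may form in the interior, producing a two-component nodal curve whose pieces are embedded and meet transversally once --- the curves over $\cM_{\sing}(\widehat J)$ --- or two boundary punctures may collide so that in the limit one asymptotic end doubly covers a binding orbit while the rest stay simply covered --- the curves over $\cM_{\exot}(\widehat J)$. Every other potential building (extra nonconstant levels, higher multiple covers, components of smaller genus) is ruled out by the index, energy, and adjunction bounds supplied by Siefring's intersection theory, so the compactified moduli space is a compact oriented surface partitioned as $\cM=\cM_{\reg}(\widehat J)\cup\cM_{\sing}(\widehat J)\cup\cM_{\exot}(\widehat J)$. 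Finally, for each vertebra $\Sigma_i$ the spinal structure distinguishes a properly embedded $\widehat J$-holomorphic curve $S_i$ homeomorphic to $\Sigma_i$, coming from the Reeb-invariant part of the spine over $\Sigma_i$ and again regular by automatic transversality; positivity of intersections between $S_i$ and the fibers of $\Pi$ forces $\Pi|_{S_i}$ to be a proper branched cover, and a generic count shows the branch points are simple.

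I expect the main obstacle to be the second degeneration. Ruling out further bubbling near an exotic fiber, and identifying the precise local model of an embedded curve with a doubly covered asymptotic end, requires relative intersection and Conley--Zehnder computations in the Morse--Bott regime with no counterpart in \cite{Wendl_strongly}; this is exactly the non-compactly supported singularity for which the present paper later provides an explicit local model.
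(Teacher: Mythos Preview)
This theorem is not proved in the present paper; it is quoted from \cite[Proposition 1.30]{spinal_II} as background, and the paper only reviews the relevant constructions in \S\ref{sec: pseudoholomorphic}. So there is no ``paper's own proof'' to compare against beyond that review. Your sketch is a reasonable high-level outline of the argument in \cite{spinal_II}, but it diverges from the actual construction in several technical respects that the paper's review in \S\ref{subsec:moduli_space_foliation} makes explicit.

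First, the completion used is not the standard cylindrical completion $W\cup([0,\infty)\times M, d(e^t\alpha))$ but the \emph{double completion} $\widehat{E}$ of \cite[\S3]{spinal_II}, which treats the spine and paper directions separately and carries a stable Hamiltonian structure $\cH=(\Omega,\Lambda)$ rather than a genuine contact form. Second, the analysis is not run in a Morse--Bott regime: one passes to a nondegenerate perturbation $(M^+,\cH_+)$ before defining the moduli space $\widehat{\cM}^{\cF}(\widehat{J})$. Your sketch's Morse--Bott contact form and standard completion would make the holomorphic pages visible, but the compactness and transversality arguments in \cite{spinal_II} rely on the stable Hamiltonian setup and the specific $J_+$ built there. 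Third, your description of the exotic degeneration as ``two boundary punctures colliding'' is not how it arises: the domain of a regular fiber does not lose a puncture in the limit. Rather, a sequence of regular curves SFT-converges to a building whose main level has one end asymptotic to a doubly covered orbit; the present paper's Figure~\ref{fig:building} and the reparametrization in Definition~\ref{def:model} show precisely how this happens. Finally, the curves $S_i$ are not produced by automatic transversality applied to ``the Reeb-invariant part of the spine''; they are the trivial cylinders over the vertebrae in the double completion, and the branched-cover statement for $\Pi|_{S_i}$ is an output of the intersection analysis in \cite[Lemma 6.33]{spinal_II}, not a genericity count.

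Your closing remark is well taken: the exotic degeneration is indeed the delicate point, and supplying its explicit local model is exactly what \S\ref{sec:exotic_fibers} of the present paper does.
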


The above moduli spaces are referred to as {\bf the moduli space of regular, singular and exotic fibers}, respectively. The pseudoholomorphic foliations given by Theorem \ref{thm: lvw_foliation} in the symplectic filling naturally give constraint to its boundary spinal open book. This leads to the seemingly technical definition of \textbf{uniformity} of a spinal open book, given in \S\ref{subsec: spinalobd}. As discussed in \cite[Lemma 6.33]{spinal_II}, $\Pi|_{S_i}$ is an honest covering map if and only if $\cM_{\exot} = \varnothing$. This condition on the spinal open book is called {\bf Lefschetz-amenable}. In the Lefschetz-amenable setting, Lisi--Van-Horn-Morris--Wendl obtained the following result:

\begin{theorem}{\cite[Theorem 1.5]{spinal_II}}\label{thm: lef_amenable_LVW}
  Suppose $(M, \xi)$ is a closed contact $3$-manifold that is strongly fillable and contains a compact domain $M_0 \subset M$, possibly with boundary, on which $\xi$ is supported by a partially planar spinal open book $\pi$. Then $M =M_0$ and $\pi$ is uniform. Moreover, if $\pi$ is also Lefschetz-amenable, then any minimal strong filling of $(M, \xi)$ is symplectic deformation equivalent to a Stein filling.
\end{theorem}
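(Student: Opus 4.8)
The plan is to begin with an arbitrary minimal strong filling $(W,\omega)$ of $(M,\xi)$, pass to a symplectic completion $\widehat W$, and invoke Theorem~\ref{thm: lvw_foliation} to obtain the compatible $\widehat J$, the base surface $\cM$, the holomorphic foliation $\Pi\colon\widehat W\to\cM$ with its partition into $\cM_{\reg},\cM_{\sing},\cM_{\exot}$, and the branched multisections $S_i$ homeomorphic to the vertebrae $\Sigma_i$. From this data one first extracts the two structural conclusions. For $M=M_0$: near the cylindrical ends of $\widehat W$ the leaves of the foliation are the symplectization-trivial cylinders over the pages of the spinal open book $\pi$ on $M_0$, and each positive puncture limits onto a simply covered binding Reeb orbit lying in the spine, hence in $M_0$. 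Since $\Pi$ is surjective, these leaves together with a neighborhood of the spine exhaust a neighborhood in $\widehat W$ of all of $M$; a component of $M\setminus M_0$ would contain Reeb orbits appearing as the asymptotic limit of no leaf, contradicting completeness of the foliation. For uniformity of $\pi$: the defining numerical identities of \S\ref{subsec: spinalobd} are exactly the compatibility relations between the sheet numbers of the proper branched covers $\Pi|_{S_i}\colon S_i\to\cM$ and the covering multiplicities of the asymptotic orbits prescribed by Theorem~\ref{thm: lvw_foliation}; reading these off leaf-by-leaf and vertebra-by-vertebra yields uniformity.

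Now assume in addition that $\pi$ is Lefschetz-amenable, i.e.\ $\cM_{\exot}=\varnothing$. By the discussion recalled after Theorem~\ref{thm: lvw_foliation}, each $\Pi|_{S_i}$ is then an honest covering map, and the foliation consists only of regular fibers (embedded planar leaves) and singular fibers (nodal, a union of two embedded planar leaves meeting transversely once). The singular fibers are isolated, and the local model of two transverse embedded leaves is precisely the local model of a Lefschetz critical point. Truncating the cylindrical ends of $\cM$, this exhibits $W$ as a bordered Lefschetz fibration with genus-zero fibers over a compact surface, restricting on $\partial W = M$ to the spinal open book $\pi$. Because each of the two leaves meeting at a node is itself asymptotic to at least one binding orbit, the associated vanishing cycle is homotopically essential in its page, so the bordered Lefschetz fibration is allowable. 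This is the step where planarity is used essentially: on a genus-zero page, positivity of intersections in $\widehat W$ is what controls the vanishing cycles and forbids null-homotopic degenerations or sphere-component degenerations (the latter also using minimality of $W$).

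The last step upgrades the allowable bordered Lefschetz fibration to a Stein filling up to deformation. A Thurston--Gompf construction deforms $\omega$, rel a neighborhood of $\partial W$, through symplectic forms to one that is a fiberwise area form plus a large multiple of a base area form and is adapted to the Lefschetz fibration, staying within the symplectic deformation class of fillings. The bordered analogue of the Loi--Piergallini / Akbulut--Ozbagci correspondence then produces a Stein structure on $W$ realizing this Lefschetz fibration --- the interior critical points become Weinstein $2$-handles attached along Legendrian realizations of the vanishing cycles, while the spine and vertebral structure on $\partial W$ (consistent precisely because $\pi$ is uniform) is the induced boundary spinal open book --- and the Gompf symplectic form and this Stein structure are deformation equivalent. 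Chaining the deformations gives that $(W,\omega)$ is symplectic deformation equivalent to a Stein filling.

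I expect the main obstacle to be the middle paragraph: converting the abstract foliation of Theorem~\ref{thm: lvw_foliation} into a genuine bordered Lefschetz fibration and certifying its allowability. The delicate points are (i) that the only degenerations are the prescribed nodal ones, occurring with the Lefschetz sign --- which rests on automatic transversality in the relevant low index and on Siefring's intersection theory for punctured holomorphic curves --- and (ii) that the structure induced on $\partial W$ is \emph{literally} the spinal open book $\pi$ rather than merely some open book supporting $\xi$, which is where uniformity must be invoked with care. Once the fibration is in hand, the Gompf deformation and the handle-attachment picture are comparatively standard, modulo the bordered generalization of the Loi--Piergallini theorem.
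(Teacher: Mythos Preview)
This theorem is not proved in the present paper; it is quoted verbatim as \cite[Theorem 1.5]{spinal_II} and used as background. There is therefore no ``paper's own proof'' to compare against here. Your sketch is a reasonable outline of how the argument in \cite{spinal_II} actually proceeds: one feeds a minimal strong filling into the foliation machinery (Theorem~\ref{thm: lvw_foliation} here, which is \cite[Proposition~1.30]{spinal_II}), reads off $M=M_0$ and uniformity from the global structure of the foliation and the multisections $S_i$, and in the Lefschetz-amenable case ($\cM_{\exot}=\varnothing$) obtains an allowable bordered Lefschetz fibration, which is then upgraded to Stein via the result recorded in this paper as Theorem~\ref{thm: blf_stein}. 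Your identification of the delicate points --- that the only degenerations are the nodal Lefschetz ones with the correct sign, and that the induced boundary structure is literally $\pi$ --- is accurate; these are handled in \cite{spinal_II} by the SFT compactness and intersection-theoretic analysis summarized here as Theorem~\ref{thm:spinal_ii_6.3}.
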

Note that the uniformity conclusion in Theorem \ref{thm: lef_amenable_LVW} indeed implies that the partially planar spinal open book that one starts with is in fact planar. For this reason, we will state our results in this paper in terms of planar spinal open books.

\begin{remark}
  Notice that Theorem \ref{thm: lvw_foliation} and Theorem \ref{thm: lef_amenable_LVW} is stated with the condition of strong fillings. In fact, Theorem 1.10 in \cite{spinal_II} shows that every weak filling of a partially planar contact $3$-manifold $(M,\xi)$ that is exact on the spine is weakly symplectically deformation equivalent to a strong filling of $(M,\xi)$. Therefore, we may state Theorem \ref{thm: lvw_foliation}, Theorem \ref{thm: lef_amenable_LVW}, as well as our main theorems in terms of weak fillings exact on the spines and weak symplectic deformation equivalences. However, in our paper we are mostly interested in strong fillings up to symplectic deformation equivalence, so we will state the theorems in terms of strong fillings. 
\end{remark}

The use of pseudoholomorphic foliations to provide rigidity results in symplectic geometry and dynamics is a well-established, powerful, and fruitful tool. The proof of Theorem \ref{thm: lvw_foliation} by Lisi--Van Horn-Morris-Wendl in \cite{spinal_II} hinges upon the analysis of the SFT compactification of the moduli space of index two pseudoholomorphic foliations in a symplectic completion. Using this method to study finite-energy foliations for dynamical applications started with \cite{Hofer_Weinstein_conjecture} and many of the more general technical ingredients are developed in \cite{HWZI, HWZII, HWZ03}. Recently, this strategy has been applied in \cite{Hryniewicz_Salomao, Hryniewicz_Al_Salomao, Broken_books} with great success for dynamical questions. For more geometric topological questions, this has been a long-term development of Wendl, starting primarily in \cite{Wendl_strongly, Wendl_hierarchy} by using finite-energy foliations to understand fillings of planar open books and develop filling obstructions. It is also the main technical tool in \cite{nearby_Lag_T*T2}. The results of \cite{spinal_I,spinal_II} can be regarded as a continuation of the above mentioned projects. As previously mentioned, these ideas using foliations of pseudoholomorphic curves to classify symplectic $4$-manifolds date back to Gromov \cite{gromov85} and McDuff \cite{mcduff_rational_ruled}. In the usual game of using moduli spaces of pseudoholomorphic curves, there are mainly two concerns: compactness and transversality. The corresponding technical tools used for the setting of punctured finite-energy foliations are in \cite{Wendl_compactness} and \cite{Wendl_aut_tran}, together with the intersection theory of punctured pseudoholomorphic curves developed in \cite{Siefring_asymptotics, Siefring_intersection_theory, Hutchings_index_inequality}, building upon \cite{HWZI, HWZII, HWZ03} previously mentioned. We note that the automatic transversality criterion is satisfied for the moduli spaces of interest in the proof of Theorem \ref{thm: lvw_foliation} as a result of the curves considered being all of genus zero. To analyze the compactness problem, \cite{spinal_II} carefully analyzes all possible configurations arising as the SFT limit and obtains the classification of curves as cited in Theorem \ref{thm: lvw_foliation}. 

The series of works \cite{spinal_I, spinal_II} assembles multitudes of important techniques and geometric insights. However, due to its technical nature, the development of the spinal open book machinery, at the moment of their preprints, was ahead of its time and did not generate as many direct applications based on it. One exception is \cite{baykurvhm} finding contact $3$-manifolds with arbitrarily large Stein fillings. In contact and symplectic geometry of higher-dimensional manifolds, the technology of spinal open book has also inspired surprising applications. For example, Massot-Niederkr\"uger-Wendl found weakly but not Stein fillable manifolds \cite{Massot_Niederkruger_Wendl} using the spine removal surgery (see \S \ref{subsec:spine_removal}). Work of Bowden-Gironella-Moreno and Bowden-Gironella-Moreno-Zhou on the striking discovery of the abundance of tight non-fillable contact structures \cite{bowden_gironella_moreno_22, bowden2022tight} also used the technology of spinal open books. 

%Our paper obtains a complete understanding of symplectic fillings of planar spinal open books, introducing a new element of a type of singularity at infinity to the usual open book and Lefschetz fibration machinery. 
Via the full topological understanding of the associated pseudoholomorphic foliations with exotic fibers, we expect the technology of spinal open books and nearly Lefschetz fibrations to generate a new wave of activities. For example, nearly Lefschetz fibrations make a key contribution to the program of understanding fillings of links of sandwiched singularities by Plamenevskaya-Starkston \cite{plamenevskaya2025sandwichedsingularitiesnearlylefschetz}. In addition, in a sequel to the present article, the authors are working on classifying Stein and symplectic fillings of integer surgeries on various torus knots. Currently in the literature, a full classification of symplectic fillings for a given 3-manifold only exists for lens spaces \cite{Lisca:lensFillings,christian_li,etnyre_roy} and parabolic torus bundles; the latter done in this article combined with \cite{spinal_II}. By applying our new technology, the sequel contributes yet another large class of full classification results previously inaccessible without the spinal open book tools.

%----------------------------
\subsection{Main results} \label{subsec:main_results} 
%----------------------------
We extend the results in the previous section to strong fillings of contact structures supported by uniform, non-Lefschetz-amenable spinal open books. The main contribution of our paper is to study the mysterious exotic fibers naturally arising in the non-Lefschetz-amenable situation, give local models and use them to classify fillings. As a result, we obtain a strong fillability criterion for non-Lefschetz-amenable planar spinal open books. This enlarges the types of singularities in the literature that is usually considered in the classification problem for compact symplectic manifolds in a significant way. See Remark \ref{ref:exotic_in_literature} for other places where this type of singularity has been observed and relation to other works. In light of understanding these exotic fibers, we prove the following main result.

\begin{mainthm} \label{thm: main}
  Let $(M, \xi)$ be a contact $3$-manifold supported by a planar spinal open book and $(W,\omega)$ a minimal strong symplectic filling of $(M,\xi)$. Then, $(W,\omega)$ is symplectic deformation equivalent to the complement of a neighborhood of positive multisections in a bordered Lefschetz fibration.
\end{mainthm}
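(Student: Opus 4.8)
The plan is to take the pseudoholomorphic foliation $\Pi \colon \widehat{W} \to \cM$ furnished by Theorem~\ref{thm: lvw_foliation} together with the associated vertebral surfaces $S_i$ (one for each vertebra $\Sigma_i$), and to promote it to a bordered Lefschetz fibration on the complement of a neighborhood of the $S_i$. First I would fix the reductions: by Theorem~\ref{thm: lef_amenable_LVW} applied with $M_0 = M$, the given spinal open book is uniform, hence planar, so $\cM$ is a compact surface with cylindrical ends, one for each component of the paper, and each $\Pi|_{S_i} \colon S_i \to \cM$ is a proper branched cover with only simple branch points. The $S_i$ are therefore (possibly branched) multisections of $\Pi$; their positivity --- positive self-intersection data and positive mutual intersections, together with the complex (i.e. $z \mapsto z^2$) nature of the branch points --- follows from intersection positivity and $\widehat{J}$-holomorphicity, so they constitute positive multisections in the required sense.

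Next I would analyze $\Pi$ stratum by stratum over $\cM$. Over $\cM_{\reg}(\widehat{J})$ the fibers are embedded, pairwise disjoint, and foliate their preimage, so automatic transversality and the normal-form arguments of \cite{Wendl_strongly} show that $\Pi$ is there a locally trivial bundle with planar fiber, which after a compatible deformation of $(\widehat{J},\omega)$ can be taken symplectic. Over $\cM_{\sing}(\widehat{J})$ the nodal fibers are pairs of embedded curves meeting transversely once, and the standard local analysis identifies these with interior Lefschetz critical points. This part runs essentially as in \cite{Wendl_strongly} and the Lefschetz-amenable case of \cite{spinal_II}, and already produces the Lefschetz skeleton of the fibration away from $\cM_{\exot}(\widehat{J})$.

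The heart of the argument is the behavior near $\cM_{\exot}(\widehat{J})$, where I would establish the explicit local model promised in the introduction. An exotic fiber has exactly one asymptotic end doubly covering a Reeb orbit, and the branch points of $\Pi|_{S_i}$ lie precisely over $\cM_{\exot}(\widehat{J})$; the geometric content is that near such a point two sheets of the multisection come together in a simple branch point while the fibers of $\Pi$ undergo a controlled degeneration in which two simply covered punctures merge into a single doubly covered puncture ``at infinity''. I would pin this local model down using the asymptotic analysis of punctured $\widehat{J}$-holomorphic curves (relative writhe and winding-number bounds), intersection positivity, and index/Fredholm computations to exclude any other configuration of low-index curves with the prescribed asymptotics, and show that the model is stable under deformations of $\widehat{J}$ within the relevant class. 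The main obstacle lies here: unlike the Lefschetz case, the degeneration occurs in the non-compact direction, so neither the SFT-type compactness nor the gluing needed to patch the local model to the regular and Lefschetz pieces is formal, and one must check that removing a neighborhood of $S_i$ excises exactly this ``singularity at infinity'' and leaves a fiber that is a compact planar surface with boundary.

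Finally I would assemble the global conclusion. Removing a neighborhood $\nu(\bigcup_i S_i)$ from $\widehat{W}$, equivalently from a suitable compactification, the restriction of $\Pi$ is a bordered Lefschetz fibration over a compact surface with boundary: the fibers are compact planar surfaces whose boundary components come from the cut locus $\partial\nu(\bigcup_i S_i)$ and from the cylindrical ends of $\cM$, the only interior critical points are the Lefschetz ones arising from $\cM_{\sing}(\widehat{J})$, and the exotic fibers have been absorbed into the branch locus of the multisections. A compatible deformation of $(\widehat{J},\omega)$ as in \cite{Wendl_strongly, spinal_II} makes this a genuine symplectic bordered Lefschetz fibration, while minimality of $(W,\omega)$ rules out exceptional spheres that would obstruct the normal form; reversing the construction exhibits $(W,\omega)$ as symplectic deformation equivalent to the complement of a neighborhood of the positive multisections $\bigcup_i S_i$ in a bordered Lefschetz fibration. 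That the contact structure induced on the boundary is the given one, supported by the original planar spinal open book, follows from tracking $\Pi$ over the cylindrical ends of $\cM$ together with the uniformity statement of Theorem~\ref{thm: lef_amenable_LVW}.
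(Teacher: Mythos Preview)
Your middle two steps --- the stratum-by-stratum analysis over $\cM_{\reg}$, $\cM_{\sing}$, $\cM_{\exot}$ and the local model near an exotic fiber --- are on the right track and match the paper's Theorem~\ref{thm: intro-localmodel}. The real gap is in your final assembly, where the direction of the construction is reversed.

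You write that removing $\nu(\bigcup_i S_i)$ from $\widehat{W}$ (or a compactification) yields a bordered Lefschetz fibration in which ``the exotic fibers have been absorbed into the branch locus of the multisections.'' This is backwards. Removing the multisection from a Lefschetz fibration is precisely what \emph{creates} the exotic (PANLF) points, not what removes them: near a branch point, the complement of $\{(z^2,z)\}$ in $\C^2$ with the projection $(z_1,z_2)\mapsto z_1$ is the exotic local model (Remark~\ref{rem: exotic_nbd_branch}), so in $\widehat{W}\setminus\nu(S_i)$ the fibers near a former branch point still drop a boundary component and the fibration is nearly Lefschetz, not Lefschetz. Moreover, the $S_i$ live in the cylindrical end of $\widehat{W}$, so $\widehat{W}\setminus\nu(S_i)$ is neither compact nor equal to $W$; your phrase ``equivalently from a suitable compactification'' hides exactly the construction that is missing. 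You never build a compact symplectic $4$-manifold that both (a) contains the multisections and (b) has $W$ as their complement.

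The paper goes in the opposite direction. It first uses the exotic local model to show that the compact filling $W$ is itself a PANLF, and then \emph{attaches} a spine removal cobordism $C_{\Sigma_b}=\Sigma_b\times D^2$ (Theorem~\ref{thm: spineremoval}) to $W$ along the spine components whose vertebrae have branch points. Via Remark~\ref{rem: exotic_nbd_branch}, this cobordism glues back exactly the locus $\{(z^2,z)\}$ at each exotic point, so $W\cup C_{\Sigma_b}$ carries an extension of $\Pi$ with only Lefschetz singularities. The core $\Sigma_b\times\{0\}$ is then the positive multisection, and $W$ is its complement by construction. The ``suitable compactification'' you allude to is precisely this spine removal cap; without naming it and checking the local identification of Remark~\ref{rem: exotic_nbd_branch}, the proof does not close.
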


This result hinges on the explicit local model that we show in \S\ref{sec:exotic_fibers} as well as a work of Baykur-Hayano \cite{baykur2016multisections} in studying monodromy factorizations in the complement of multisections in Lefschetz fibrations. While the developments of spinal open books as well as multisection complements have been recent, the heuristics of the local models of exotic fibers can be traced back to 1980s, first in the work of \cite{rudolph2004algebraic}.

For the classification problem of interest, we define an object called {\bf positive allowable nearly Lefschetz fibration (PANLF)} in \S\ref{subsec: panlf}, which generalizes bordered Lefschetz fibrations and which we show to be the structure induced on a compact strong filling by the foliation in Theorem~\ref{thm: lvw_foliation}. In a similar way as positive allowable factorizations of open book monodromies correspond to fillings that are relatively minimal Lefschetz fibrations, we define {\bf positive admissible factorizations} of the page monodromy of a spinal open book with respect to $(B, \rho)$, where $B$ is a surface that the spinal open book is uniform with respect to, and $\rho$ is a representation of $\pi_1(B)$ into the mapping class group of the page of the spinal open book, detailed in \S\ref{subsec: factorization}. The second main result of this paper is showing that the classification problem of symplectic fillings of a planar spinal open books can be interpreted as a problem of factorizing the monodromy. However, in this case we have to consider mapping classes that are not necessarily supported in the interior, i.e., they can move boundary components. We will call the collection of these mapping classes the {\bf spinal mapping class group}, denoted as $\SMod(P)$, where $P$ denotes the page of a uniform spinal open book.

\begin{mainthm}\label{thm: monodromy_fact}
  Let $(M, \xi)$ be a contact 3-manifold supported by a planar spinal open book $\pi$ and $\mathcal{B}$ a collection of surfaces that $\pi$ is uniform with respect to. Then a minimal strong filling of $(M, \xi)$ corresponds to a positive admissible factorization of the monodromy of the spinal open book with respect to some surface $B \in \mathcal{B}$. 
    
  Furthermore, if $\pi$ is uniform with respect to $\bD^2$, then any minimal strong filling arising from a positive admissible factorization of the monodromy with respect to $\bD^2$ is deformation equivalent to a Stein filling. If $\pi$ is uniform with respect to $B$ with one vertebra component being an honest cover of $B$, then any minimal strong filling arising from a positive admissible factorization of the monodromy with respect to $B$ is deformation equivalent to an exact filling.
\end{mainthm}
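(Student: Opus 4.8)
The plan is to obtain the first sentence as a translation of Theorem~\ref{thm: main} into the combinatorics of the spinal mapping class group, and then to derive the two refinements from the handle structure of the fibration. For the forward direction, let $(W,\omega)$ be a minimal strong filling of $(M,\xi)$. By Theorem~\ref{thm: main}, $(W,\omega)$ is deformation equivalent to the complement of a neighborhood of positive multisections in a bordered Lefschetz fibration; concretely this realizes $W$ as a PANLF $\Pi\colon W\to B$ in the sense of \S\ref{subsec: panlf}, over a surface $B\in\mathcal{B}$ with respect to which $\pi$ is uniform, whose regular fibers are copies of the page $P$, whose singular values carry Lefschetz fibers (from $\cM_{\sing}(\widehat J)$) or exotic fibers (from $\cM_{\exot}(\widehat J)$), and whose vertebrae lift to the properly embedded multisections $S_i$ of Theorem~\ref{thm: lvw_foliation}. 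I then read the monodromy of this PANLF: parallel transport of $P$ around a small loop encircling a single critical value gives a right-handed Dehn twist at a Lefschetz value and the distinguished positive element of $\SMod(P)$ extracted from the local model of \S\ref{sec:exotic_fibers} at an exotic value, while transport along the remaining generators of $\pi_1(B)$ reproduces the representation $\rho\colon\pi_1(B)\to\Mod(P)$ that the uniform structure assigns to $B$. Positivity of the resulting word is precisely the statement, already present in Theorem~\ref{thm: lvw_foliation}, that Lefschetz vanishing cycles contribute right-handed twists and exotic fibers contribute the chosen ``positive'' local element; admissibility is the statement that each $S_i$ intersects every fiber positively and with the framing forced by $S_i$ being $\widehat J$-holomorphic, which is exactly the allowability condition in \S\ref{subsec: factorization}; and relative minimality of the fibration (no vanishing cycle bounding a disk in the page) reflects minimality of $(W,\omega)$. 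Hence we obtain a positive admissible factorization of the page monodromy with respect to $(B,\rho)$.

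Conversely, given a positive admissible factorization of the page monodromy with respect to $(B,\rho)$, I would assemble the bordered Lefschetz fibration over $B$ with the prescribed vanishing cycles and global monodromy $\rho$, splice in the exotic local model of \S\ref{sec:exotic_fibers} at each exotic letter, and attach a neighborhood of the prescribed positive multisections using the multisection-complement framework of Baykur--Hayano \cite{baykur2016multisections}; positivity and admissibility make the resulting manifold a minimal strong filling of $(M,\xi)$, since the PANLF induces on its boundary the given planar spinal open book. That deformation-equivalence classes of fillings match factorizations up to Hurwitz moves, together with the evident conjugation and stabilization ambiguities in the data $(B,\rho)$, is then verified as in \cite{Wendl_strongly,spinal_II}, using uniqueness up to homotopy of $\widehat J$ of the foliation in Theorem~\ref{thm: lvw_foliation}. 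This establishes the claimed correspondence.

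For the two refinements, suppose first that $\pi$ is uniform with respect to $\bD^2$. Then $\pi_1(B)$ is trivial, so $\rho$ is trivial and the factorization expresses the page monodromy as an ordered product of positive elements of $\SMod(P)$, and the PANLF constructed above fibers over the disk. A PANLF over $\bD^2$ is built from $P\times\bD^2$ by attaching one $2$-handle for each Lefschetz letter along a vanishing cycle, by the near-the-boundary exotic local-model modification for each exotic letter, and by the multisection neighborhoods; I would check that each of these pieces carries a Weinstein structure compatible with the convex boundary---using allowability to make the $2$-handle attaching circles Legendrian, and the fact that the exotic local model of \S\ref{sec:exotic_fibers} is ``exact at infinity'' with its own Weinstein handle decomposition---so that the whole filling is Stein by Eliashberg's criterion. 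Since being uniform over $\bD^2$ is in general \emph{not} Lefschetz-amenable---any connected vertebra that is not itself a disk must branch over $\bD^2$, forcing $\cM_{\exot}\neq\varnothing$---this genuinely extends Theorem~\ref{thm: lef_amenable_LVW}. In the second refinement some vertebra component is an honest, i.e.\ unbranched, cover of $B$, so the corresponding multisection $S_i$ has no branch points; attaching Weinstein handles along the Lefschetz vanishing cycles as before and using this unbranched $S_i$ to extend a Liouville form over the multisection neighborhoods then yields an exact filling.

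The main obstacle throughout is the exotic fibers. One must (i) pin down which element of $\SMod(P)$ an exotic fiber contributes and show that parallel transport of the page is well defined across an exotic value; (ii) incorporate these elements into the Baykur--Hayano factorization calculus so that the reverse construction genuinely outputs a smooth PANLF with the correct induced boundary; and (iii) show that the exotic local model obstructs neither exactness nor the Stein condition in the two special cases. All three reduce to the explicit local model built in \S\ref{sec:exotic_fibers}, which is why that section is the technical core on which the argument rests.
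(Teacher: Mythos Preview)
Your overall architecture matches the paper's: the first sentence is obtained by combining Theorem~\ref{thm: intro-localmodel} (filling $\Rightarrow$ PANLF $\Rightarrow$ factorization) with Theorem~\ref{thm: panlf_construct} (factorization $\Rightarrow$ filling), and the two refinements are Theorems~\ref{thm: panlf_construct_stein} and~\ref{thm: panlf_construct_exact}. Your forward and converse directions are essentially correct in outline, though your description of ``admissibility'' is off: in Definition~\ref{def: pos_adm_fact} admissibility is the combinatorial condition that the factorization contains exactly $n_i$ boundary interchanges between the boundary components meeting $\Sigma_i$, matching the branch-point count of $\pi_i\colon\Sigma_i\to B$, not a positivity/framing condition on $S_i$.

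The genuine gap is in your Stein and exact arguments. You assert that the exotic local model ``is `exact at infinity' with its own Weinstein handle decomposition,'' and that this lets you assemble a Stein structure handle-by-handle. The paper never establishes such a Weinstein structure on the exotic piece, and it is not clear how to do so directly---the local model is a multisection complement near a branch point, not an obvious Weinstein handle. Instead the paper takes a global route: realize the entire PANLF (before attaching the Lefschetz $2$-handles) as the complement of a quasipositive symplectic surface in a Stein bordered Lefschetz fibration. For exactness (Theorem~\ref{thm: panlf_construct_exact}) this uses the Mark--Tosun style result Proposition~\ref{prop: marktosun}; for the Stein upgrade over $\bD^2$ (Theorem~\ref{thm: panlf_construct_stein}) one additionally invokes Boileau--Orevkov (Theorem~\ref{thm: sympl_complex}) to isotope the surface to a complex curve in $\bD^4$, and then the classical fact (Theorem~\ref{thm: complement_stein}) that the complement of a complex hypersurface in a normal Stein space is Stein. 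Only after this does one attach the Weinstein $2$-handles along vanishing cycles via Lemma~\ref{lem: weinstein2handles}. Your proposed handle-by-handle Weinstein assembly bypasses these ingredients without supplying a substitute, so as written the refinements do not go through.
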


\begin{remark}\label{rem: equivalence of fillings}
  The equivalence relation in Theorem \ref{thm: monodromy_fact} is intentionally vague and will be clarified in \S\ref{sec: constructions}. To be precise, we are considering both sides up to deformation equivalence. We say two factorizations are deformation equivalent, if their correspondingly constructed symplectic fillings in \S\ref{sec: constructions} are deformation equivalent. In \cite{Baykur_Hayano_Hurwitz}, it was shown that the generalized Hurwitz equivalence identifies different factorizations of the same filling up to additional equivalences for complements of multisections in Lefschetz fibrations over $S^2$. However, there could be further equivalence relations between fillings coming from positive admissible factorizations with respect to distinct bases $B_1, B_2 \in \mathcal{B}$, that we do not explore further in this paper. For the Weinstein Lefschetz fibrations, this equivalence is explored in \cite[Corollary 1.3.3]{breen2024girouxcorrespondencearbitrarydimensions}. 
\end{remark}

We prove several technical results in \S\ref{sec:exotic_fibers} and \S\ref{sec: constructions} that allow us to conclude Theorem~\ref{thm: monodromy_fact}. Firstly, Theorem~\ref{thm: lvw_foliation} tells us that the completion of any strong filling of a partially planar spinal open book consists of a partition by regular, singular and exotic fibers. We establish a local model for exotic fibers in Proposition \ref{prop:unique_local_model}, which gives us a count of exotic fibers in the filling, as well as the monodromy around an exotic fiber. In terms of spinal open books, we state the following theorem.

\begin{mainthm}\label{thm: intro-localmodel}
  Let $(W, \omega)$ be a minimal strong filling of a contact manifold $(M,\xi)$ supported by a planar spinal open book. Let $\cM$, $\cM_{\exot}$, and $\Pi|_{S_i}$ be as in the statement of Theorem~\ref{thm: lvw_foliation}. Then, $|\cM_{\exot}|$ is equal to the number of branch points of $\Pi|_{S_i}$. Furthermore, the monodromy around an exotic fiber in a compactified filling is a boundary interchange on the nearby regular fibers, given by Figure~\ref{fig:boundarytwist}, and $(W,\omega)$ has the structure of a PANLF.
\end{mainthm}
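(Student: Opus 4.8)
The plan is to work from Theorem~\ref{thm: lvw_foliation}, which already gives us the partition $\cM = \cM_{\reg} \cup \cM_{\sing} \cup \cM_{\exot}$ together with the branched covers $\Pi|_{S_i}: S_i \to \cM$. The first task is to pin down exactly what the almost complex structure looks like near a point of $\cM_{\exot}$. By the description in Theorem~\ref{thm: lvw_foliation}, an exotic fiber is an embedded $\widehat{J}$-holomorphic curve with one end that doubly covers a Reeb orbit. I would first analyze the intersection theory: a curve in $\cM_{\reg}$ and the exotic curve it is ``colliding'' with must have algebraic intersection number forced by the relative adjunction/writhe bounds of Siefring--Wendl-style punctured intersection theory. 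This should show that near an exotic value the regular fibers come in a configuration where two boundary components (on the vertebra side) are being exchanged as one goes around the exotic value. That is the source of the ``boundary interchange'' monodromy in Figure~\ref{fig:boundarytwist}: the doubly-covered asymptotic end is the limit of two simply-covered ends that swap.

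Next I would establish the local model itself, which is the content of Proposition~\ref{prop:unique_local_model}. The strategy is to write down an explicit holomorphic model --- essentially the family $z \mapsto w^2 = z$ branch picture crossed with the page, or more precisely a local Lefschetz-type fibration where the ``critical value at infinity'' is modeled on the double cover $\mathbb{C} \to \mathbb{C}$, $w \mapsto w^2$, in the spine/vertebra direction --- and then argue that any $\widehat{J}$-holomorphic foliation with the asymptotic behavior dictated by Theorem~\ref{thm: lvw_foliation} is, after a deformation of $\widehat{J}$ supported near the exotic fiber, biholomorphic to this model. Uniqueness follows from automatic transversality for the relevant index-$0$ or index-$2$ curves (the punctured curves here have small enough genus and enough constraints that the normal Cauchy--Riemann operator is surjective) together with a Gromov-compactness-plus-gluing argument: the moduli space near the exotic value is a single parametrized family, and positivity of intersections rules out any other bubbling. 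The identification of the monodromy as the boundary interchange of Figure~\ref{fig:boundarytwist} is then read off directly from monodromy of $w \mapsto w^2$ acting on the two preimage boundary circles.

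With the local model in hand, the count $|\cM_{\exot}| = (\text{number of branch points of } \Pi|_{S_i})$ follows by matching local pictures: near each branch point of $\Pi|_{S_i}$ the curve $S_i$ is locally $w \mapsto w^2$ over $\cM$, and this is exactly where the fiber over $\cM$ has the exotic (doubly-covered) asymptotic end; conversely each exotic value of $\cM$ produces, by the local model, exactly one simple branch point on each $S_i$. (One should note the count is independent of $i$ because each $S_i$ is a branched cover of the \emph{same} base $\cM$ with branching concentrated over the same finite set $\cM_{\exot}$.) Finally, to conclude that $(W,\omega)$ has the structure of a PANLF, I would restrict the foliation $\Pi$ to the compact filling $W \subset \widehat{W}$: over $\cM_{\reg}$ it gives the honest fibration with planar fibers, over $\cM_{\sing}$ it gives the Lefschetz-type nodal critical fibers, and over $\cM_{\exot}$ it gives exactly the new ``nearly Lefschetz'' critical fibers whose monodromy is the boundary interchange --- which is precisely the data packaged in the definition of a PANLF in \S\ref{subsec: panlf}. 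The allowability and positivity statements follow because the vanishing cycles (for the singular fibers) and the boundary-interchange data (for the exotic fibers) come from honestly $\widehat{J}$-holomorphic curves, so they are non-separating/positive by the same arguments as in the Lefschetz-amenable case of \cite{spinal_II}.

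The main obstacle I expect is the uniqueness part of the local model: controlling the $\widehat{J}$-holomorphic moduli space near an exotic fiber requires a delicate mix of punctured intersection theory (to rule out extra components and self-intersections), automatic transversality for curves with a doubly-covered asymptotic end (the standard statements need to be adapted, since the asymptotic operator at that puncture has an even multiple cover), and a careful Gromov compactness argument to see that no other degeneration competes with the branch-point picture. Getting the correct normal bundle / Conley--Zehnder bookkeeping at the doubly-covered puncture is the crux, and it is what makes this genuinely different from the compactly-supported Lefschetz singularities already handled in \cite{Wendl_strongly} and \cite{spinal_II}.
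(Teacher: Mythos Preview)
Your overall architecture matches the paper's: establish a local model for the exotic fiber, read off the monodromy, match exotic fibers to branch points, and assemble the PANLF. But the key technical step---uniqueness of the local model (Proposition~\ref{prop:unique_local_model})---is done quite differently in the paper, and your proposed route has a gap.

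You propose to obtain uniqueness by intersection theory, automatic transversality, and a Gromov-compactness-plus-gluing argument near the doubly-covered end. The paper does not do this. Instead, it works directly with the HWZ-type asymptotic expansion of the regular fibers $u_c$ near $\gamma_e$. The crucial ingredient is Lemma~\ref{lem:local_diff}: because $c_N(u_c)=0$, the evaluation map to the leading asymptotic eigenspace is a local diffeomorphism, so one can \emph{parametrize} the nearby regular fibers by the coefficient $\sqrt{c}$ of their leading eigenfunction. The paper then deforms the higher-order asymptotic terms to zero by cutoff functions (as in \cite[Lemma~5.5]{Wendl_strongly}), invokes analytic continuation to match the curves globally in the chart, and finishes with a short algebraic argument showing the remaining freedom $\alpha(c)$ must be $c$. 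No gluing, no compactness analysis near the multiply-covered end, and intersection theory plays no role in the uniqueness step. Your proposed approach may be workable, but you have correctly flagged exactly the point where it becomes delicate (automatic transversality and compactness at a doubly-covered elliptic end), and the paper sidesteps this entirely by the asymptotic-coefficient trick.

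There is also a small error in your branch-point count: it is not true that each exotic value produces a branch point on \emph{each} $S_i$. An exotic fiber has a single doubly-covered end at a Reeb orbit lying in \emph{one} spine component, so the corresponding branch point lies on exactly one $S_i$. The paper's Theorem~\ref{thm:branchpoints} proves the equality by picking a vertebra representative $\Sigma_t$ high in the completion, splitting $\widehat{W}$ into the exotic neighborhoods $W_{\exot}$ and their complement, and checking directly (via Lemma~\ref{lem:intersect}) that $\Pi|_{\Sigma_t}$ is an honest cover outside $W_{\exot}$ and a degree-$2$ branched cover with one branch point inside each $\widehat{N}_c$. The total count over all components then matches $|\cM_{\exot}|$; Remark~\ref{rem:branchcovermaps} reconciles this with $\Pi|_{S_i}$ via Riemann--Hurwitz.
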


\begin{figure}[htbp]{
  \vspace{0.2cm}
  \begin{overpic}[tics=20]
  {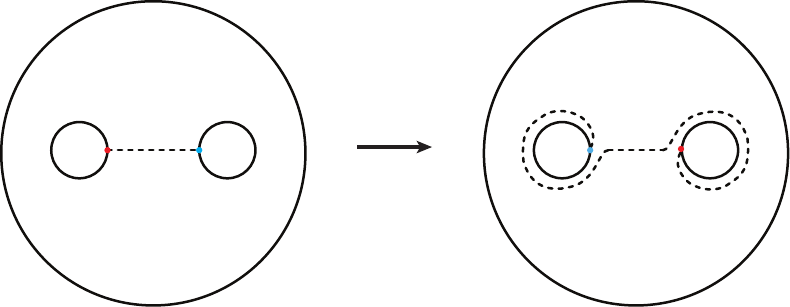}
   \put(36,95){\scriptsize$1$}
   \put(108,95){\scriptsize$2$}
   \put(74,85){\scriptsize$\gamma$}
   
   \put(269, 98){\scriptsize$2$}
   \put(342, 98){\scriptsize$1$}
   \put(306, 85){\scriptsize$\gamma$}
  \end{overpic}}
  \vspace{0.2cm}
  \caption{A local description of a \textbf{boundary interchange} $\tau_{\gamma}$, where $\gamma$ is the equatorial arc connecting the two boundary components in the interior.}
  \label{fig:boundarytwist}
\end{figure}

The more detailed definition of the boundary interchange monodromy and background on the spinal mapping class group are in \S\ref{subsec: framed_mcg}. The comparison to existing monodromy in the framed mapping class defined by \cite{baykur2016multisections} is in Remark \ref{rem:framed_mcg}.

\begin{remark} \label{ref:exotic_in_literature}
  This boundary interchange monodromy does not occur accidently. In addition to its local model being rigidly given by pseudoholomorphic foliation (Proposition \ref{prop:unique_local_model}), this singularity has also been encountered in the context of mirror symmetry. See \S8.2.11 in \cite{aganagic2023knotcategorificationmirrorsymmetry}.
\end{remark}

Secondly, the proof of Theorem~\ref{thm: monodromy_fact} involves the construction of symplectic fillings corresponding to positive admissible factorizations, which we will state now. The main idea behind the constructions is that PANLF correspond to complements of neighborhoods of positive multisections in bordered Lefschetz fibrations, building on works of Baykur-Hayano \cite{baykur2016multisections, Baykur_Hayano_Hurwitz}. 

\begin{mainthm}\label{thm: panlf_construct}
  Let $(M, \xi)$ be a contact 3-manifold supported by a planar spinal open book with page $P$ and monodromy $\phi$. Suppose the spinal open book is uniform with respect to $B$, and let $\rho: \pi_1(B) \to \Mod(P)$ be a monodromy representation. Then any positive admissible factorization of $\phi$ with respect to $(B, \rho)$ is Hurwitz equivalent to one induced by the nearly Lefschetz fibration structure of a minimal strong filling of $(M, \xi)$. 
\end{mainthm}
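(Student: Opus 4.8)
The strategy is to go from a positive admissible factorization of $\phi$ with respect to $(B,\rho)$ to an abstract PANLF, then realize that PANLF as the multisection complement of an honest bordered Lefschetz fibration, and finally argue that the nearly Lefschetz fibration structure carried by the foliation of Theorem~\ref{thm: lvw_foliation} on any minimal strong filling produces a factorization in the same Hurwitz class. I would organize the argument in three steps. First, starting from the factorization data $(B,\rho, \text{word in } \SMod(P))$, I build the total space $X$ of a PANLF over a once-punctured genus-$g$ surface (reflecting the base $B$), attaching handles along the vanishing cycles of the positive Dehn twists in the word and installing one exotic (boundary-interchange) singularity of the local model of Proposition~\ref{prop:unique_local_model} for each boundary-interchange generator $\tau_\gamma$ appearing in the word. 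The admissibility hypothesis ensures each ordinary vanishing cycle is non-separating in the fiber $P$ and each exotic point is modeled on Figure~\ref{fig:boundarytwist}; together with the $B$-monodromy representation $\rho$ this assembles into a well-defined PANLF whose boundary is $(M,\xi)$ with its planar spinal open book. This is essentially the construction in \S\ref{sec: constructions}, so I would cite it rather than redo it.

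Second, I invoke the correspondence between PANLFs and multisection complements: by Theorem~\ref{thm: intro-localmodel} (equivalently Proposition~\ref{prop:unique_local_model}) the neighborhood of an exotic fiber is the standard boundary-interchange model, which is precisely the local model of a branch point of a double multisection in a Lefschetz fibration (this is the Baykur--Hayano picture in \cite{baykur2016multisections}, cf.\ Remark~\ref{rem:framed_mcg}). Filling in the $\cM_{\exot}$ points by their branched-double-cover caps turns the PANLF $X$ into a genuine bordered Lefschetz fibration $\widehat X$ over $B$ containing a positive multisection $\Sigma$, with $X = \widehat X \setminus \nu(\Sigma)$. Under this dictionary the spinal monodromy word of $X$ corresponds to a framed-mapping-class factorization of the Lefschetz fibration $\widehat X$ relative to $\Sigma$, and Hurwitz moves on one side match Hurwitz moves on the other. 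Here I would use \cite{Baykur_Hayano_Hurwitz} to know that the generalized Hurwitz equivalence is the right notion on the multisection-complement side.

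Third, and this is the part needing real work, I must show that \emph{every} minimal strong filling $(W,\omega)$ of $(M,\xi)$ realizes such a factorization, and does so in the prescribed Hurwitz class. Theorem~\ref{thm: lvw_foliation} gives the foliation $\Pi:\widehat W\to\cM$ with regular, singular and exotic fibers, and Theorem~\ref{thm: intro-localmodel} identifies the local structure at each singular/exotic point, giving $W$ the structure of a PANLF over the base surface $\cM$ (which by uniformity is modeled on $B$). Reading the monodromy of this PANLF around a system of arcs/loops generating $\pi_1$ of the base produces a word in $\SMod(P)$; one checks it is positive (all Lefschetz singularities contribute positive Dehn twists because $\widehat J$ is compatible with $\omega$) and admissible (vanishing cycles are homologically essential in $P$ since the regular fibers are the planar pages and the filling is minimal, so no vanishing cycle bounds a disk — this is where minimality is used). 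Finally, two such readings differ by a change of the arc system and of the chosen holomorphic data, which are exactly Hurwitz moves and (generalized) Hurwitz moves on the multisection side; so all factorizations coming from minimal strong fillings lie in one Hurwitz class, which by the construction in the first step is the class of the given factorization.

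The main obstacle I anticipate is the third step, specifically controlling the dependence of the extracted monodromy word on choices: passing between different transverse arc systems on the base $\cM$, different generic almost complex structures $\widehat J$, and different identifications of nearby regular fibers with the model page $P$ must all be shown to act by Hurwitz (and generalized Hurwitz) moves, including near the exotic fibers where the fiber identification has the subtle $2:1$ asymptotic behavior of Figure~\ref{fig:boundarytwist}. Handling the exotic points requires the rigidity of the local model (Proposition~\ref{prop:unique_local_model}) to guarantee that any two generic choices are connected through the one-parameter family without creating or destroying exotic fibers — i.e.\ that $|\cM_{\exot}|$ is a deformation invariant equal to the branch number of $\Pi|_{S_i}$, which Theorem~\ref{thm: intro-localmodel} already supplies. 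With that invariance in hand, the wall-crossing analysis for the remaining (Lefschetz) singularities is the standard one, so the bulk of the difficulty is genuinely concentrated at the exotic fibers.
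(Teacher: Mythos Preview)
You have misread the statement of Theorem~\ref{thm: panlf_construct}. The theorem is a \emph{construction} result: given a positive admissible factorization, produce a minimal strong filling whose PANLF structure induces (a Hurwitz-equivalent) factorization. It does \emph{not} assert that all minimal strong fillings of $(M,\xi)$ yield factorizations in a single Hurwitz class. Your Step~3 attempts to prove precisely this stronger statement, and it is false: the paper itself exhibits contact manifolds with several non-deformation-equivalent fillings arising from non-Hurwitz-equivalent factorizations (see Theorem~\ref{thm:eg1_34} and Remark~\ref{rem: high_genus_difficulty}). The other direction --- extracting a factorization from a given minimal strong filling --- is the content of Theorem~\ref{thm: intro-localmodel}, not of Theorem~\ref{thm: panlf_construct}, and there is no claim that distinct fillings yield Hurwitz-equivalent words.

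Even restricting to your Steps~1--2, there is a genuine gap: you build the PANLF topologically and identify it as a multisection complement, but you never establish that this $4$-manifold carries a \emph{symplectic} structure making it a \emph{strong} filling of $(M,\xi)$. This is the actual content of the paper's proof. The paper constructs the PANLF in three stages (Lemma~\ref{lem:constructing_nearlyLF_top}): first take the complement of a positive multisection in a bordered Lefschetz fibration \emph{without} vanishing cycles; then attach Weinstein $2$-handles along the vanishing cycles; then, if no $n_i=0$, attach spine-removal cobordisms. The symplectic structure comes from Theorem~\ref{thm: blf_stein} (Stein on the initial bordered Lefschetz fibration), Hayden's result that the multisection boundary is quasipositive, and Proposition~\ref{prop: marktosun} (a Mark--Tosun style argument) giving an exact filling on the complement. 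The $2$-handles are Weinstein by Lemma~\ref{lem: weinstein2handles}, preserving exactness; finally, when spine-removal is needed, \cite[Theorem~1.10]{spinal_II} deforms the result to a strong filling. None of this appears in your proposal, and the Baykur--Hayano correspondence you invoke is purely topological.
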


See \S\ref{subsec: framed_mcg} for a uniform spinal open book and the generalized Hurwitz equivalence. There are two sub-cases of the above theorem where we can obtain exact or Stein fillings.

\begin{mainthm}\label{thm: panlf_construct_stein}
  Let $(M, \xi)$ be a contact 3-manifold supported by a planar spinal open book with page $P$ and monodromy $\phi$. Suppose the spinal open book is uniform with respect to $\mathbb{D}^2$, and that at least one of the branched cover maps $\pi_i$ has no branch points. Then any positive admissible factorization of $\phi$ with respect to $\mathbb{D}^2$ is Hurwitz equivalent to one induced by the nearly Lefschetz fibration structure of a Stein filling of $(M, \xi)$. 
\end{mainthm}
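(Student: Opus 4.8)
The plan is to specialize the construction behind Theorem~\ref{thm: panlf_construct} to the case $B = \bD^2$ with one branched cover map $\pi_i$ unbranched, and then upgrade the exact/symplectic conclusion to a Stein one. First I would recall the output of Theorem~\ref{thm: monodromy_fact}: when $\pi$ is uniform with respect to $\bD^2$, any minimal strong filling arising from a positive admissible factorization of $\phi$ with respect to $\bD^2$ is already deformation equivalent to a Stein filling; so the content of Theorem~F is really the converse direction, namely that \emph{every} positive admissible factorization with respect to $\bD^2$ (with the stated unbranchedness hypothesis) is Hurwitz equivalent to one \emph{induced by an honest nearly Lefschetz fibration structure on a Stein filling}. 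So the target is to manufacture, from the factorization data, a PANLF whose total space is Stein, and then invoke the uniqueness/Hurwitz-equivalence statement of Theorem~\ref{thm: panlf_construct} to identify it with the given factorization up to generalized Hurwitz moves.

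The key steps, in order, are as follows. (1) Start from a positive admissible factorization of $\phi$ with respect to $(\bD^2,\rho)$; since $B = \bD^2$ is simply connected, the representation $\rho:\pi_1(\bD^2)\to\Mod(P)$ is trivial, so the spinal monodromy data reduces to: a word of positive Dehn twists in $\Mod(P)$ (the interior vanishing cycles), together with the boundary-interchange generators $\tau_\gamma$ of $\SMod(P)$ accounting for the exotic fibers, whose count by Theorem~\ref{thm: intro-localmodel} matches the number of branch points of the branched covers $\pi_i$. (2) Feed this word into the construction of \S\ref{sec: constructions}: build the bordered Lefschetz fibration over $\bD^2$ with these vanishing cycles, with a positive multisection recorded by the $\tau_\gamma$'s, and take the complement of a neighborhood of the multisection — this is the PANLF, and by Theorem~\ref{thm: panlf_construct} it is a minimal strong filling inducing a Hurwitz-equivalent factorization. (3) Now use the unbranchedness hypothesis on $\pi_i$: when one of the branched cover maps has no branch points, the corresponding vertebra $S_i$ covers $\cM$ honestly, which is precisely the condition under which the ambient bordered Lefschetz fibration has genus-zero fibers (planarity) and the complement of the multisection neighborhood is built only from $1$- and $2$-handles attached along Legendrians in a manner compatible with a Weinstein/Stein structure. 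Concretely, planar Lefschetz fibrations over $\bD^2$ with allowable (boundary-nonseparating) vanishing cycles carry Stein structures by Loi--Piergallini / Akbulut--Ozbagci, and the multisection complement operation, being a removal of a neighborhood of a $J$-holomorphic multisection transverse to the fibration, preserves the existence of a compatible Stein structure — this is exactly the step where the "exact filling" of the general case becomes "Stein". (4) Finally, assemble: the resulting Stein filling carries the PANLF structure, its induced admissible factorization is Hurwitz equivalent to the one we started with by step (2), and the contact structure on its boundary is the one supported by $\pi$ by the uniformity statement of Theorem~\ref{thm: lef_amenable_LVW}. This yields the claimed equivalence.

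The main obstacle I expect is step (3): upgrading the symplectic/exact conclusion to a genuine Stein (Weinstein) structure in the presence of the exotic fibers. The PANLF is not literally a Lefschetz fibration — the $\tau_\gamma$ monodromy around an exotic fiber is a boundary interchange, not a Dehn twist — so one cannot directly quote Loi--Piergallini. The right approach is presumably to pass to the multisection picture: realize the PANLF as the complement of a neighborhood of a positive multisection $\Sigma$ in an honest bordered Lefschetz fibration $X\to\bD^2$ with planar fibers (planarity forced by the unbranched $\pi_i$), put a Weinstein structure on $X$ adapted to the fibration, and argue that the neighborhood of $\Sigma$ can be chosen to be a Weinstein subdomain so that the complement is again Weinstein — here one needs $\Sigma$ to be a symplectic (indeed $\widehat J$-holomorphic, by Theorem~\ref{thm: lvw_foliation}) multisection meeting the fibers positively, so that a standard neighborhood theorem gives a convex/concave boundary of the correct type. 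A secondary subtlety is bookkeeping the generalized Hurwitz equivalence: one must check that the stabilization and Hurwitz moves used in \S\ref{sec: constructions} to normalize factorizations with respect to $\bD^2$ respect the Stein (rather than merely exact) structure, i.e. that they correspond to Weinstein handle slides and creation/cancellation of canceling pairs; this should follow from the Weinstein-Lefschetz formalism but needs to be stated carefully in the planar, multisection-complement setting.
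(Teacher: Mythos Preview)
Your overall architecture (steps (1), (2), (4)) matches the paper, but step (3) contains a genuine gap, and the mechanism the paper uses to close it is one you do not mention.

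The problem is your proposed route to a Stein structure on the multisection complement. You suggest putting a Weinstein structure on the ambient bordered Lefschetz fibration $X\to\bD^2$ and then arguing that a neighborhood of the multisection $\Sigma$ is a Weinstein subdomain, so that its complement is again Weinstein. But the complement of a Weinstein subdomain in a Weinstein domain is only a Liouville \emph{cobordism}: the Liouville vector field points \emph{inward} along the new boundary created by excising $\Sigma$, not outward, so you do not get a Weinstein domain this way. Equivalently, there is no general statement that the complement of a $J$-holomorphic (or symplectic) curve in a Stein surface is Stein; this is precisely why Proposition~\ref{prop: marktosun} only produces an \emph{exact} filling, and why Theorem~\ref{thm: panlf_construct_exact} stops at ``exact'' rather than ``Stein''. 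Loi--Piergallini / Akbulut--Ozbagci apply to the ambient PALF, not to the PANLF obtained by deleting a multisection.

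The paper's fix is to work in the complex category, and this requires an extra reduction that uses the hypothesis $B=\bD^2$ with one $\pi_i$ unbranched in an essential way. That hypothesis forces one vertebra to be a disk, so the ``before 2-handles'' piece $E_1$ (the multisection complement in the trivial $P$-bundle over $\bD^2$) can be re-expressed as the complement of a surface $S'\cup\sqcup_n\bD^2$ in the standard $\bD^4=\bD^2\times\bD^2$, with quasipositive boundary link. One then invokes Boileau--Orevkov (Theorem~\ref{thm: sympl_complex}) to isotope this symplectic surface to a genuine \emph{complex} curve $\mathcal{S}\subset\bD^4$, and finally applies the classical result (Theorem~\ref{thm: complement_stein}, Grauert--Remmert/Simha) that the complement of an analytic hypersurface in a normal Stein space of complex dimension two is Stein. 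This yields a Stein structure on $E_1$; attaching the Weinstein 2-handles for the Dehn-twist factors then gives the Stein PANLF. The passage through $\bD^4$ and the complex-analytic input are the missing ideas in your proposal.
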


\begin{remark}\label{rem: steindeformation_equiv}
  We conjecture that the Stein structures on the fillings obtained in Theorem~\ref{thm: panlf_construct_stein} are unique up to Stein homotopy -- this would follow from the existence of a unique complex curve up to complex isotopy corresponding to a given quasipositive braid factorization of a quasipositive knot in $(S^3, \xi_{st})$. To our best knowledge, both \cite{rudolph2004algebraic} and \cite{hedden} give constructions of such complex curves, but it is unclear whether their constructions are complex isotopic. However the symplectic structure obtained in Theorem~\ref{thm: panlf_construct} is canonical since the symplectic surface corresponding to a quasipositive factorization of its boundary link is unique up to deformation.
\end{remark}

\begin{mainthm}\label{thm: panlf_construct_exact}
  Let $(M, \xi)$ be a contact 3-manifold supported by a planar spinal open book with page $P$ and monodromy $\phi$. Suppose the spinal open book is uniform with respect to $B$, and let $\rho: \pi_1(B) \to \Mod(P)$ be a monodromy representation. Suppose further that at least one of the branched cover maps $\pi_i$ has no branch points. Then any positive admissible factorization of $\phi$ with respect to $(B, \rho)$ is Hurwitz equivalent to one induced by the nearly Lefschetz fibration structure of an exact filling of $(M, \xi)$.
\end{mainthm}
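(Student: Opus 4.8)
The plan is to feed the given factorization into the filling construction of \S\ref{sec: constructions} underlying Theorem~\ref{thm: panlf_construct}, producing a minimal strong filling with a PANLF structure, and then to use the unbranched vertebra to show that this PANLF is actually a genuine bordered Lefschetz fibration over a compact surface with boundary, which one endows with an exact symplectic structure exactly as in the disk case of Theorem~\ref{thm: panlf_construct_stein} --- except that, the base not being a disk, the outcome is only exact rather than Stein. Concretely, I would first invoke Theorem~\ref{thm: panlf_construct}: the positive admissible factorization of $\phi$ with respect to $(B,\rho)$ is Hurwitz equivalent to one induced by the PANLF structure on a minimal strong filling $(W,\omega)$ of $(M,\xi)$, and up to deformation $W$ is the complement of a neighborhood of positive multisections $S_1,\dots,S_n$ in a bordered Lefschetz fibration $p\colon X\to\mathcal{M}$, where $\mathcal{M}$ is a compact oriented surface with boundary whose fundamental group carries $\rho$, the Lefschetz critical values support the positive Dehn twists of the factorization, and each $S_i\to\mathcal{M}$ is the branched multisection corresponding to $\pi_i$ as in Theorem~\ref{thm: lvw_foliation}.

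Next, I would use the hypothesis that some $\pi_{i_0}$ has no branch points. By Theorem~\ref{thm: intro-localmodel} the exotic fibers of the PANLF are in bijection with the branch points of $\Pi|_{S_{i_0}}$, which realizes $\pi_{i_0}$ and hence has none, so no boundary-interchange monodromy (Figure~\ref{fig:boundarytwist}) appears and the PANLF structure on $W$ is an honest bordered Lefschetz fibration. Unwinding this: $S_{i_0}$ is an unbranched cover of $\mathcal{M}$ disjoint from every vanishing cycle, so excising a fibered neighborhood of $S_{i_0}$ converts $p$ into a bordered Lefschetz fibration over $\mathcal{M}$ with the same critical values and with vanishing cycles that are interior-supported curves in the smaller fiber obtained from the page $P$; absorbing the remaining $S_j$ as honest sections in the same way presents $W$ itself as a bordered Lefschetz fibration $p'\colon W\to\mathcal{M}$ with bordered fiber and only positive, interior-supported vanishing cycles.

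Finally, I would equip $p'\colon W\to\mathcal{M}$ with an exact symplectic structure by the standard recipe recorded in \S\ref{sec: constructions}: away from the Lefschetz critical locus, $W$ fibers over $\mathcal{M}$ with bordered fibers and monodromy $\rho$, and such a fiber bundle over a compact surface with boundary carries an exact symplectic form; one then attaches positive Lefschetz handles along the vanishing cycles. The resulting exact filling is minimal (since $p'$ is relatively minimal) and has contact boundary the manifold supported by the planar spinal open book, namely $(M,\xi)$; and the factorization read off from $p'$ differs from the one of Theorem~\ref{thm: panlf_construct} only by the absorptions just performed, which are instances of the generalized Hurwitz moves of \S\ref{subsec: framed_mcg}, so the two factorizations are Hurwitz equivalent. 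When $\mathcal{M}=\mathbb{D}^2$ the monodromy $\rho$ is trivial, the exact structure may be taken Weinstein, and we recover Theorem~\ref{thm: panlf_construct_stein}; for a general base $\mathcal{M}$ the bundle monodromy $\rho$ may permute boundary components of $P$, so one obtains only an exact (Liouville) filling.

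The hard part will be the step claiming that an unbranched vertebra eliminates all exotic fibers and hence that $W$ is an honest bordered Lefschetz fibration: this relies on the rigidity of the local model in Proposition~\ref{prop:unique_local_model} together with the count in Theorem~\ref{thm: intro-localmodel}, and it requires checking that after absorbing $S_{i_0}$ the remaining multisections $S_j$ can be arranged as honest (unbranched) sections, so that no exotic behavior is reintroduced. A secondary subtlety is constructing the exact symplectic structure over a non-disk base with boundary-permuting monodromy and confirming that its contact boundary is precisely $(M,\xi)$ and not some other contact structure supported by the same spinal open book --- this is exactly the place where ``Stein'' must be relaxed to ``exact.''
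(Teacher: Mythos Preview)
Your central step is a genuine gap. You claim that because $\pi_{i_0}$ is unbranched, Theorem~\ref{thm: intro-localmodel} forces $\mathcal{M}_{\exot}=\varnothing$ and hence the PANLF on $W$ is an honest bordered Lefschetz fibration. But the count in Theorem~\ref{thm: intro-localmodel} (made precise in Theorem~\ref{thm:branchpoints} and Remark~\ref{rem:branchcovermaps}) matches $|\mathcal{M}_{\exot}|$ with the branch points of the \emph{union} $\Pi|_{\Sigma_t}=\bigcup_i \Pi|_{S_i}$, not with those of any single $\Pi|_{S_{i_0}}$. If some other $\pi_j$ has branch points ($n_j\neq 0$), exotic fibers are present and boundary-interchange monodromies do appear. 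Your attempted fix --- ``absorbing the remaining $S_j$ as honest sections'' --- cannot succeed: the $S_j$ are $k_j$-fold branched covers with $n_j$ branch points dictated by the spinal open book data, and no rearrangement makes them unbranched. So the filling you obtain is in general a PANLF with exotic fibers, not a bordered Lefschetz fibration, and your exactness argument (which relies on the latter) collapses.

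The paper's proof does not try to eliminate exotic fibers. The role of the unbranched $\pi_{i_0}$ is purely to place us in the first case of Lemma~\ref{lem:constructing_nearlyLF_top}, so that no spine-removal cobordism (Step~3) is needed. The PANLF is then built in two steps: Step~1 is the complement of a positive multisection in a bordered Lefschetz fibration over $B$ with \emph{no} vanishing cycles; this ambient fibration is Stein by Theorem~\ref{thm: blf_stein}, the multisection boundary is quasipositive by \cite{hayden2021quasipositive}, and the complement is an exact filling by the Mark--Tosun argument of Proposition~\ref{prop: marktosun}. Step~2 attaches Weinstein $2$-handles along the vanishing cycles via Lemma~\ref{lem: weinstein2handles}, and Weinstein cobordisms preserve exactness. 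The output is an exact filling whose PANLF structure may well contain exotic fibers.
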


\begin{proof}[Proof of Theorem \ref{thm: monodromy_fact}]
  This follows from Theorem \ref{thm: intro-localmodel}, Theorem \ref{thm: panlf_construct}, Theorem \ref{thm: panlf_construct_stein} and Theorem \ref{thm: panlf_construct_exact}.
\end{proof}

%---------------------------------------------------
\subsection{Symplectic fillings of torus bundles} 
%---------------------------------------------------
Ever since Eliashberg \cite{Eliashberg:T3} determined the strong fillability of contact structures on $T^3$, there has been extensive studies on the symplectic fillability of torus bundles. In particular, Stein and strong fillability of contact structures on torus bundles are completely determined except for negative hyperbolic ones, through extensive studies by \cite{DG:fillings,EH:nofillings,Gay:fillings,Van_horn_morris_thesis,GL:fillings,ding2018strong, Christian:fillings}. In addition, several classification results for these torus bundles are also known, see \cite{EH:nofillings,Christian:fillings, Kaloti:fillings, Lisca:lensFillings,spinal_II, PVHM:lensFillings} for examples.

In this section, we utilize our main results to complete the classification of strong symplectic fillings of negative parabolic torus bundles and some elliptic torus bundles. While these are already new results previously inaccessible, we expect the techniques developed in our paper to be of use for many other classification problems as well in the future.

We first review contact structures on torus bundles. Let $A \in SL_2(\Z) = \Mod(T^2)$. Then we define the torus bundle with monodromy $A$ by
\begin{align*}
  T_A := T^2 \times \R / (x,t) \sim (Ax,t-1) 
\end{align*}
$T_A$ is called {\bf elliptic, parabolic}, and {\bf hyperbolic}, respectively, as $|\mathrm{tr}(A)|$ is less than 2, equal to 2, or greater than 2. Tight contact structures on torus bundles were classified by Giroux \cite{Giroux:classification} and Honda \cite{honda2}. We consider a {\bf rotational} contact structure $\xi_n$ for an integer $n \geq 0$ defined by 
\begin{align*}
  \xi_n = \ker[\sin \phi_n(t)\,dx + \cos \phi_n(t)\,dy],\;\; (x,y) \in T^2, \,t \in \R / \Z
\end{align*}
where $\phi_n \colon \R/\Z \to \R$ satisfies
\[
  n\pi < \sup_{t \in \R / \Z}(\phi_n(t+1) - \phi_n(t)) \leq (n+1)\pi.
\]
We say $\xi_n$ has \textbf{$n \pi$-twisting}. Each torus bundle only admits either odd or even rotational contact structures. If $A$ is parabolic, then $A$ is conjugate to 
\[  
  \pm A_k = \pm\begin{pmatrix}1 & k \\ 0 & 1\end{pmatrix}
\]
for some $k \in \Z$. We say a parabolic torus bundle $T_A$ is {\bf positive} if $A$ is conjugate to $A_k$ and {\bf negative} if $A$ is conjugate to $-A_k$. Let $T_+(k) := T_{A_k}$ and $T_-(k) := T_{-A_k}$. Notice that $T_{+}(k)$ admits rotational contact structures $\xi_{2n}$ for $n \geq 0$ and $T_-(k)$ admits rotational contact structures $\xi_{2n+1}$ for $n \geq 0$. There are additional contact structures $\eta_n$ on a parabolic torus bundle $T_{\pm}(k)$ for $n \in \N$. 
\begin{align*}
  \eta_n = \ker \Big(|k+1|\cos(2\pi ny) dt + \sin(2\pi ny) dx - kt\sin(2\pi n y)dy \Big).
\end{align*}
In \cite{spinal_II}, strong fillings of contact structures on parabolic torus bundles were classified except for $(T_-(k), \xi_{1})$ as they do not admit a Lefschetz-amenable partially planar open book.  

We show that a rotational contact structure on a torus bundle with at least $\pi$-twisting admits a partially planar uniform spinal open book decomposition.  

\begin{theorem}\label{thm: torusbundle_spinal}
  Let $\xi_n$ be a rotational contact structure on a torus bundle $T_A$ with $n \geq 1$. Then it admits a uniform planar spinal open book with a connected page, disk vertebrae, and one annulus vertebra.
\end{theorem}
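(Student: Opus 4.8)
The plan is to build the spinal open book explicitly from the rotational normal form $\xi_n = \ker[\sin\phi_n(t)\,dx + \cos\phi_n(t)\,dy]$ on $T_A = T^2\times\R/(x,t)\sim(Ax,t-1)$, by exhibiting a contact vector field transverse to a carefully chosen family of surfaces. First I would slice $\R/\Z$ (the $t$-coordinate) at the values of $t$ where $\phi_n(t)$ is a multiple of $\pi$; there the contact plane is spanned by one of $\partial_x,\partial_y$, and these slices $\{t=t_j\}$ are pre-Lagrangian tori, candidates for the spine. Between consecutive such slices $\phi_n$ sweeps through exactly $\pi$ worth of angle (after arranging $\phi_n$ to be monotone, which is harmless since we only need $n\pi<\sup(\phi_n(t+1)-\phi_n(t))\le(n+1)\pi$ and we can choose a convenient representative in the isotopy class), so each slab $T^2\times[t_j,t_{j+1}]$ carries a one-parameter family of convex surfaces whose characteristic foliation is a linear foliation rotating from one rational slope to the next; these are the pages. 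The pages are annuli or, after the monodromy identification, one page becomes connected — this is where the annulus vertebra versus disk vertebrae dichotomy in the statement comes from.

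The key steps, in order: (1) normalize $\phi_n$ within its twisting class so that it is monotone and takes the value $\equiv 0\pmod\pi$ exactly at finitely many points $t_0<t_1<\cdots<t_{N}=t_0+1$ with $N = n$ or $n+1$ depending on parity and on whether the total twist hits the endpoint $(n+1)\pi$; (2) identify the spine: $S^1$-fibrations over the vertebra surfaces living in neighborhoods of the pre-Lagrangian tori $\{t=t_j\}$, checking the contact form restricts correctly so the binding orbits are Reeb orbits of an adapted form; (3) identify the paper: on each slab, use the foliation by translates of the torus with the characteristic foliation giving the page, take the quotient by the foliation to see the pages are planar (annuli, hence genus zero), and track how the gluing by $A$ at $t\equiv 0$ permutes/identifies the boundary components so that the vertebra count is "disk vertebrae plus one annulus vertebra" and the page is connected; (4) verify uniformity in the sense of \S\ref{subsec: spinalobd} — this amounts to checking the combinatorial compatibility condition between the $S^1$-fibration Euler numbers on the spine and the branched-cover data, which for these explicit linear models should reduce to an arithmetic identity involving $\mathrm{tr}(A)$ and $k$; (5) confirm partial planarity, which is immediate once the pages are annuli.

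I expect the main obstacle to be step (3)–(4): correctly bookkeeping the boundary identifications under the monodromy $A$ so that the resulting spinal open book has \emph{exactly} the claimed vertebra type (disks plus a single annulus) and, crucially, is \emph{uniform}. The subtlety is that $A\in SL_2(\Z)$ with $|\mathrm{tr}(A)|$ arbitrary enters only through how the top slice is glued to the bottom slice, and one must see that this gluing is by a map isotopic to one respecting the $S^1$-fibration structure of the spine — for parabolic $A$ conjugate to $\pm A_k$ this should be transparent, but the theorem is stated for general $T_A$ with $n\ge 1$, so the argument must be uniform in $A$. A secondary technical point is checking that the contact condition is genuinely satisfied in the neighborhoods where the pages limit onto the spine (the "adapted" condition of a supporting spinal open book), which I would handle by writing down the local model $\sin\phi\,dx+\cos\phi\,dy$ near each pre-Lagrangian torus and comparing with the standard neighborhood theorem for the spine, as in \cite{spinal_I, spinal_II}. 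The rest — planarity of annular pages, partial planarity — is formal.
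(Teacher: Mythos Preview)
Your geometric approach --- slicing at the pre-Lagrangian tori where $\phi_n(t)\equiv 0\pmod\pi$ and treating the slabs between them as paper --- does not produce the structure claimed in the theorem. Each such slice has a neighborhood $T^2\times I \cong (S^1\times I)\times S^1$, which as a spine component has an \emph{annulus} vertebra; with $N$ slices you would get $N$ annulus vertebrae and $N$ paper components each with annulus pages, not a single connected page together with disk vertebrae and exactly one annulus vertebra. Your sentence ``after the monodromy identification, one page becomes connected --- this is where the annulus vertebra versus disk vertebrae dichotomy comes from'' does not explain how disk vertebrae would ever appear: nothing in your construction produces a solid-torus ($\mathbb{D}^2\times S^1$) spine component. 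The monodromy $A$ only identifies the top and bottom slabs and cannot convert an annulus vertebra into a disk vertebra or merge the separate annulus pages into a single $(k+2)$-punctured sphere.

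The paper's proof takes a completely different route and avoids this difficulty. It begins with Van Horn-Morris's \emph{honest} open book for $(T_{-A},\xi_{n+1})$, built by concatenating the relative open book blocks $a,a^{-1},b^{-1}$, and arranged so that the word contains a contiguous sub-word $(aba)^{-2}$ (Figure~\ref{fig:piTwisting}). The page of this honest open book is already a connected planar surface, and its bindings are the source of the disk vertebrae. Proposition~\ref{prop:obd_ann_vertebra} then shows that the relative open book corresponding to $(aba)^{-2}$ is exactly a $\pi$-twisting $T^2\times I$, i.e.\ contactomorphic to a spine component with annulus vertebra. Replacing that single block by an annulus spine yields the spinal open book with the claimed page and vertebra types. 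In short, the disk vertebrae are inherited from genuine binding components of an ordinary open book, and the single annulus vertebra comes from swapping out one $(aba)^{-2}$ block --- neither of which is visible from your pre-Lagrangian slicing.
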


Theorem \ref{thm: torusbundle_spinal} equips these torus bundles with a planar spinal open book to which we can now apply Theorem \ref{thm: monodromy_fact}. Heuristically, this means that the classification question of the symplectic fillings of these torus bundles can be reduced to a monodromy factorization problem, modulo certain Hurwitz equivalences. However, as in Remark \ref{rem: equivalence of fillings}, we may only use the indirect equivalence relation going through the deformation classes of the symplectic fillings constructed from monodromies directly. In this circuitous path, we are able to classify fillings completely for the elliptic and parabolic torus bundles that fall in the above category. 

As mentioned above, $\pi$-twisting parabolic torus bundles $(T_-(n), \xi_1)$ were not accessible since they are supported by non-Lefschetz-amenable planar spinal open books \cite{spinal_II}. It was shown by Ding and Li \cite{ding2018strong}, extending work of Van Horn-Morris \cite{Van_horn_morris_thesis}, that they are Stein fillable if and only if $n \geq -4$. We provide an independent proof of this fillability result and give a full classification.

\begin{restatable}{theorem}{parabolic}\label{thm: parabolicbundles}
  The torus bundle $(T_-(n), \xi_1)$ is strongly fillable if and only if $n \geq -4$, and for $n \geq -4$, it admits a unique Stein filling up to symplectic deformation.
\end{restatable}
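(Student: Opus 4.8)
The plan is to make the spinal open book of Theorem~\ref{thm: torusbundle_spinal} explicit, convert strong fillability and the classification into a monodromy factorization problem via Theorem~\ref{thm: monodromy_fact}, and then solve that problem. By Theorem~\ref{thm: torusbundle_spinal} the contact manifold $(T_-(n),\xi_1)=(T_{-A_n},\xi_1)$ carries a uniform planar spinal open book $\pi$ with connected genus-zero page $P$, some disk vertebrae, and exactly one annulus vertebra; I would first write down $P$, the partition of $\partial P$ among the vertebrae, and the page monodromy $\phi$ as an explicit function of $n$, together with the monodromy representation $\rho\colon\pi_1(B)\to\Mod(P)$ appearing in Theorem~\ref{thm: panlf_construct}. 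Since a disk and an annulus can simultaneously branch-cover only $\bD^2$, the uniformity family $\cB$ of $\pi$ reduces to $\{\bD^2\}$: the disk vertebrae are degree-one unbranched covers of $\bD^2$, while the annulus vertebra is a double branched cover of $\bD^2$, which by Riemann--Hurwitz has exactly two branch points. Hence by Theorem~\ref{thm: intro-localmodel} the foliation on any minimal strong filling has exactly two exotic fibers, and in particular $\pi$ is not Lefschetz-amenable---the reason $(T_-(n),\xi_1)$ was not handled in \cite{spinal_II}.

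By Theorem~\ref{thm: monodromy_fact} together with Theorem~\ref{thm: panlf_construct}, minimal strong fillings of $(T_-(n),\xi_1)$ are in correspondence (up to deformation and generalized Hurwitz equivalence) with positive admissible factorizations of $\phi$ with respect to $\bD^2$, that is, expressions of $\phi$ in $\SMod(P)$ as a product of positive Dehn twists along curves interior to $P$ together with exactly two boundary interchanges $\tau_\gamma$ as in Figure~\ref{fig:boundarytwist}, the two interchanges realizing the two exotic fibers. The theorem thus follows once we show that (a) such a factorization exists precisely when $n\ge-4$, and (b) for $n\ge-4$ it is unique up to the generalized Hurwitz moves of Remark~\ref{rem: equivalence of fillings}. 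For existence I would exhibit a concrete positive admissible factorization at $n=-4$, produced by a lantern-type relation on the planar page $P$ that trades the two boundary interchanges and a fixed finite collection of interior positive Dehn twists against the contribution of the annulus vertebra to $\phi$; all larger $n$ are then obtained by inserting $n+4$ additional positive Dehn twists parallel to an appropriate curve.

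For $n\le-5$ I would obstruct the existence of any positive admissible factorization by abelianizing $\SMod(P)$ (equivalently, comparing the total multiplicities of boundary-parallel and interior curve classes, noting that the two mandatory boundary interchanges contribute a fixed amount): the relation forced by $\phi$ would require the coefficient of some curve class to equal $n+4$ while positivity requires it to be nonnegative, a contradiction. A cross-check against Ding--Li \cite{ding2018strong} and Van Horn-Morris \cite{Van_horn_morris_thesis} confirms the threshold $n=-4$. For the uniqueness statement when $n\ge-4$, a normal-form analysis in $\SMod(P)$---using the same commutation relations between boundary interchanges and Dehn twists needed just above---shows any two positive admissible factorizations of $\phi$ differ by Hurwitz moves and the extra moves of \cite{Baykur_Hayano_Hurwitz}, hence give symplectic-deformation-equivalent fillings.

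To finish: because $\pi$ is uniform with respect to $\bD^2$ and at least one branched cover map (a disk vertebra) has no branch points, Theorem~\ref{thm: panlf_construct_stein} upgrades the resulting filling to a Stein filling; since Stein fillings are minimal and every minimal strong filling arises from such a factorization, $(T_-(n),\xi_1)$ admits a unique Stein filling up to symplectic deformation when $n\ge-4$, while for $n\le-5$ no minimal---hence no---strong filling exists. The main obstacle is the bookkeeping in $\SMod(P)$: since $\phi$ and its factors can move boundary components, ruling out \emph{all} positive admissible factorizations for $n\le-5$ (not merely the evident ones) and establishing uniqueness for $n\ge-4$ both require a sufficiently robust invariant---a carefully chosen abelianization of $\SMod(P)$, or an Euler-characteristic and signature count on the associated PANLF, that is insensitive to Hurwitz moves and to the choice of the two boundary interchanges.
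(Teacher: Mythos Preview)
Your overall strategy matches the paper's, but you are missing the single computation that collapses the problem. For $(T_-(n),\xi_1)$ the spinal open book produced by Proposition~\ref{prop: bundle_planar} has $k=0$: the page $P$ is an \emph{annulus}, the monodromy is $\phi=t_\gamma^{\,n}$ with $\gamma$ the core curve, and there is a \emph{single} vertebra, namely the annulus---there are no disk vertebrae at all. Once this is known, $\SMod(P)$ is tiny: up to framing conjugation there is a unique arc $\alpha$ joining the two boundary circles, the only essential simple closed curve is $\gamma$, and the one relation needed (the annulus specialization of Proposition~\ref{prop:twotwists}) is $\tau_\alpha^2=t_\gamma^{-4}$. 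Any positive admissible factorization of $t_\gamma^{\,n}$ then consists of the two mandatory boundary interchanges together with some $m\ge0$ positive twists about $\gamma$, hence equals $t_\gamma^{\,m-4}$; a factorization exists iff $n\ge-4$, and it is visibly unique up to Hurwitz equivalence. The lantern-type relation, the abelianization obstruction, and the normal-form analysis you outline are all unnecessary---the factorization problem is one line once the page is identified.

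Your Stein upgrade also needs repair: you invoke Theorem~\ref{thm: panlf_construct_stein} through ``a disk vertebra has no branch points'', but since this spinal open book has no disk vertebra that hypothesis is not available. The paper still obtains a Stein structure (it cites Theorem~\ref{thm: stein_filling}), but the mechanism is the one in Lemma~\ref{lem:constructing_nearlyLF_top}: when every $n_i>0$ one first punctures the page to create an auxiliary unbranched disk section, realizes the filling as the complement of a complex curve in $\bD^4$ via Theorems~\ref{thm: sympl_complex} and~\ref{thm: complement_stein}, and then caps the auxiliary section back off. You should route through that construction rather than assume a disk vertebra is already present.
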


This theorem, combined with the results in \cite[\S1.6.3]{spinal_II}, completes the classification of strong fillings of parabolic torus bundles. 

An interesting corollary of the above result is that mapping classes of surfaces, which are monodromies of spinal open books with fixed vertebra types, that correspond to Stein fillable contact $3$-manifolds, do not form a monoid in $\SMod$, in contrast with the results of \cite{baldwin_monoid} for regular open books.

\begin{restatable}{corollary}{notMonoid}\label{cor: not_monoid}
  Let $P$ and $\Sigma$ be a page and a vertebra of a spinal open book, respectively. Assume further that $P$ and $\Sigma$ are annuli. Consider the subset $\Stein(P,\Sigma) \subset \SMod(P)$ consisting of monodromies of $P$ that makes the open book Stein fillable. Then $\Stein(P,\Sigma)$ is not closed under multiplication, hence not a monoid.
\end{restatable}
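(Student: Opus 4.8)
The plan is to derive Corollary~\ref{cor: not_monoid} directly from Theorem~\ref{thm: parabolicbundles}, together with the explicit spinal open books of Theorem~\ref{thm: torusbundle_spinal}. For each integer $k$ we fix the uniform planar spinal open book supporting $(T_-(k),\xi_1)$ furnished by Theorem~\ref{thm: torusbundle_spinal}, normalized so that its page is the annulus $P$ and its distinguished vertebra is the annulus $\Sigma$ (the disk vertebrae contribute nothing to the page monodromy). Write $\phi_k \in \SMod(P)$ for the resulting page monodromy. By construction $\phi_k \in \Stein(P,\Sigma)$ if and only if $(T_-(k),\xi_1)$ is Stein fillable, which by Theorem~\ref{thm: parabolicbundles} holds exactly for $k \geq -4$; in particular $\phi_0 \in \Stein(P,\Sigma)$, while $\phi_{-5} \notin \Stein(P,\Sigma)$ because $(T_-(-5),\xi_1)$ is not even strongly fillable.

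Next I would record the recursive structure of the family $\{\phi_k\}$. Since $T_-(k) = T_{-A_k}$ and $-A_{k+1} = (-A_k)\matrixs{1}{1}{0}{1}$ in $SL_2(\Z) = \Mod(T^2)$, passing from $T_-(k)$ to $T_-(k+1)$ amounts to inserting one additional (positive) twist region into the spine; at the level of monodromies this yields a relation $\phi_{k+1} = \phi_k \cdot \delta$ for a single mapping class $\delta \in \SMod(P)$ independent of $k$, whose image in $\Mod(T^2)$ is $\matrixs{1}{1}{0}{1}$ and which is explicitly a positive boundary interchange of Figure~\ref{fig:boundarytwist} composed with a positive Dehn twist in $P$ (the boundary interchange is forced because this open book is not Lefschetz-amenable). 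In particular $\phi_k = \phi_0 \cdot \delta^k$, so that $\phi_{-5} = \phi_0 \cdot \delta^{-5}$.

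To conclude I would show $\delta^{-5} \in \Stein(P,\Sigma)$. Unwinding the construction, the spinal open book with page $P$, vertebra $\Sigma$ and monodromy $\delta^{-5}$ supports a rotational contact structure on the positive parabolic torus bundle $T_+(-5) = T_{\matrixs{1}{-5}{0}{1}}$ --- equivalently, a Milnor-fillable contact structure on the link of a simple elliptic singularity --- which is Stein fillable by the (now complete) classification of symplectic fillings of parabolic torus bundles \cite{Van_horn_morris_thesis, ding2018strong, Christian:fillings, spinal_II}. Granting this, $\phi_0$ and $\delta^{-5}$ both lie in $\Stein(P,\Sigma)$, yet their product $\phi_0 \cdot \delta^{-5} = \phi_{-5}$ does not; hence $\Stein(P,\Sigma)$ is not closed under multiplication, and in particular is not a monoid.

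The step I expect to be the crux is the last one: correctly identifying the contact $3$-manifold supported by the ``bare'' power $\delta^{-5}$ and certifying its Stein fillability. This is delicate for two reasons. First, $\delta^{-5}$ involves negative twists, so Theorems~\ref{thm: panlf_construct_stein} and \ref{thm: panlf_construct_exact} do not apply to it directly, and one must instead recognize the underlying manifold and invoke an external fillability result; there is also a parity subtlety, since a positive parabolic torus bundle carries only even rotational contact structures, so the contact structure appearing is not literally $\xi_1$ and must be matched against the one in the existing classification. Second, depending on the orientation (Euler-number sign) convention, $T_+(-5)$ is either manifestly Milnor fillable or requires a less obvious Stein filling --- in either case its fillability is settled in the literature, but this is where care is needed. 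If one prefers to sidestep the analysis of $\delta^{-5}$, the same contradiction follows from $\phi_{-5} = \phi_{-4}\cdot\delta^{-1}$, with $\phi_{-4} \in \Stein(P,\Sigma)$ by Theorem~\ref{thm: parabolicbundles} and $\delta^{-1}$ treated exactly as above.
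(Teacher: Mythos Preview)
There is a genuine gap. From the proof of Theorem~\ref{thm: parabolicbundles}, the page monodromy of the spinal open book supporting $(T_-(k),\xi_1)$ is simply $\phi_k = t_\gamma^k$, the $k$-th power of the Dehn twist about the core of the annulus $P$. In particular $\phi_0 = \mathrm{id}$, so your recursive generator is $\delta = t_\gamma$, and hence $\delta^{-5} = \phi_{-5}$. Your ``crux'' step therefore asks you to prove that $\phi_{-5}$ itself lies in $\Stein(P,\Sigma)$, which is precisely what you are trying to contradict. The spinal open book $(P,\delta^{-5},\Sigma)$ does not support a positive parabolic torus bundle; it supports $(T_-(-5),\xi_1)$.

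The source of the confusion is the claim that $\delta$ involves a boundary interchange. It does not: the page monodromy of a spinal open book is an honest element of $\SMod(P)$, and here it sits in $\Mod_\partial(P)$. Boundary interchanges enter only when one seeks a positive admissible \emph{factorization} of that monodromy (Definition~\ref{def: pos_adm_fact}); they are not part of the monodromy itself. Once you know $\phi_k = t_\gamma^k$, the corollary is immediate: $\phi_{-5} = \phi_{-2}\cdot\phi_{-3}$, with $\phi_{-2},\phi_{-3}\in\Stein(P,\Sigma)$ and $\phi_{-5}\notin\Stein(P,\Sigma)$ by Theorem~\ref{thm: parabolicbundles}. (Your fallback $\phi_{-5} = \phi_{-4}\cdot\delta^{-1}$ also works, but only because $\delta^{-1} = \phi_{-1}$ supports $(T_-(-1),\xi_1)$, which is Stein fillable again by Theorem~\ref{thm: parabolicbundles}---not for the positive-parabolic reason you suggest.)
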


The elliptic torus bundles were shown to have unique fillings up to diffeomorphism by Golla and Lisca \cite{GL:fillings}. We improve their result to symplectic deformation equivalence.

\begin{restatable}{theorem}{ellipticbundles}\label{thm: ellipticbundles}
  Any rotational contact structure on an elliptic torus bundle with $\pi$-twisting admits a unique Stein filling up to symplectic deformation.
\end{restatable}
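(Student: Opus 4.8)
The plan is to feed the spinal open book produced by Theorem~\ref{thm: torusbundle_spinal} into the monodromy‑factorization machinery of Theorems~\ref{thm: monodromy_fact}, \ref{thm: intro-localmodel}, and \ref{thm: panlf_construct_stein}, and then to carry out a finite, case‑by‑case classification of positive admissible factorizations. First I would fix the $\pi$‑twisting rotational contact structure $\xi_n$ (with $n\geq 1$) on an elliptic torus bundle $T_A$ and apply Theorem~\ref{thm: torusbundle_spinal} to obtain a uniform planar spinal open book with connected page $P$, several disk vertebrae, and one annulus vertebra. Since $A\in SL_2(\Z)$ is elliptic it has finite order, so $\mathrm{tr}(A)\in\{0,\pm 1\}$ and there are only finitely many conjugacy classes (together with their negatives $-A$) to treat; for each I would read off $P$ and the page monodromy $\phi\in\SMod(P)$ explicitly from the construction underlying Theorem~\ref{thm: torusbundle_spinal}.

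Next I would verify that this spinal open book is uniform with respect to $\bD^2$: a disk vertebra is an honest (degree‑one) cover of $\bD^2$, so at least one branched cover map $\pi_i$ has no branch points, while the annulus vertebra, having Euler characteristic $0$, is realized as a degree‑two branched cover of $\bD^2$ with exactly two simple branch points. Note the presence of a disk vertebra forces the base to be $\bD^2$, and an annulus cannot cover $\bD^2$ without branching, so the open book is necessarily non‑Lefschetz‑amenable — which is precisely why these examples fell outside the scope of \cite{spinal_II}. By Theorem~\ref{thm: intro-localmodel} the two branch points are exactly the exotic fibers, so the positive admissible factorization of $\phi$ with respect to $\bD^2$ involves exactly two boundary interchanges $\tau_\gamma$ (Figure~\ref{fig:boundarytwist}) in addition to positive Dehn twists supported in the interior of $P$. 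Theorem~\ref{thm: monodromy_fact} then identifies minimal strong fillings of $(T_A,\xi_n)$ with positive admissible factorizations of $\phi$ with respect to $\bD^2$, and Theorem~\ref{thm: panlf_construct_stein} guarantees each such factorization is Hurwitz equivalent to one realized by a Stein filling.

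It then remains to prove that $\phi$ admits a unique positive admissible factorization up to generalized Hurwitz equivalence: by Remark~\ref{rem: equivalence of fillings} this produces a single deformation class of fillings, all of which are Stein, giving the desired uniqueness. The number of interior Dehn twists in such a factorization is pinned down by an Euler‑characteristic and homological count for the associated PANLF (equivalently, the multisection complement in a bordered Lefschetz fibration), matched against the homology of the fillings computed by Golla--Lisca \cite{GL:fillings}; with that constraint in hand, I would enumerate the positive words in $\SMod(P)$ representing $\phi$. Because $P$ is planar, the admissible Dehn‑twist curves are highly restricted (only lantern‑type relations are available), and I would show that any two of the resulting positive admissible factorizations are related by a sequence of Hurwitz moves. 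Finally, since Golla--Lisca \cite{GL:fillings} already establish uniqueness up to diffeomorphism, the single deformation class obtained is consistent with their classification, and the Stein structure is the one produced by Theorem~\ref{thm: panlf_construct_stein}.

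The main obstacle is this last step: classifying positive factorizations once boundary interchanges are allowed. The standard planar‑mapping‑class‑group arguments of \cite{Wendl_strongly} and \cite{Kaloti:fillings} do not transfer verbatim to $\SMod(P)$, since boundary components may now be permuted, so one must carefully analyze how the two boundary interchanges interact with the interior Dehn twists and rule out spurious positive words (which would otherwise yield non‑minimal or extra fillings). The finiteness of the list of elliptic monodromies and the homological input from \cite{GL:fillings} are what make this tractable, but verifying Hurwitz‑uniqueness in the spinal mapping class group $\SMod(P)$ — rather than in the ordinary mapping class group of a planar surface — is the genuinely new difficulty.
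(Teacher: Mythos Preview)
Your overall strategy matches the paper's: reduce to a planar spinal open book via Theorem~\ref{thm: torusbundle_spinal}, verify uniformity with respect to $\bD^2$ with one unbranched (disk) vertebra and one annulus vertebra contributing two branch points, invoke Theorems~\ref{thm: intro-localmodel}, \ref{thm: monodromy_fact}, and \ref{thm: panlf_construct_stein}, and then classify positive admissible factorizations. That part is fine.

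Where you diverge from the paper is in the final step, and here you miss the decisive simplification. The paper observes that for the three elliptic bundles with $\pi$-twisting the page $P$ is a \emph{pair of pants} with a single disk vertebra and a single annulus vertebra (not ``several disk vertebrae''). In a pair of pants every essential simple closed curve is boundary-parallel and $\Mod_\partial(P)$ is free abelian on the three boundary twists, so there are no lantern-type relations to worry about. The paper then computes directly, via Proposition~\ref{prop:twotwists}, that the product of the two required boundary interchanges is $\tau_\alpha^2 = t_{d_1}^{-3}t_{d_2}^{-1}t_{d_3}$; since the remaining factors must be positive boundary-parallel twists and the target monodromies $\phi_1,\phi_2,\phi_3$ are explicit products of boundary twists, the factorization is forced by elementary bookkeeping in an abelian group. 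No Euler-characteristic count, no homological input from Golla--Lisca, and no delicate Hurwitz-equivalence analysis is needed.

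Your proposed route through homology of the fillings and enumeration of positive words in $\SMod(P)$ would in principle work, but it treats as a ``genuinely new difficulty'' what the paper dispatches in two lines once the page is identified as $\Sigma_{0,3}$. The obstacle you flag (interaction of boundary interchanges with interior Dehn twists, spurious positive words) simply does not arise in a pair of pants: there are no interior essential curves, and the arc $\alpha$ joining the two annulus-vertebra boundaries is unique up to framing conjugation (Lemma~\ref{lem:framing_conj}) and Hurwitz moves. So your plan is not wrong, but it overlooks the concrete computation that makes the argument immediate.
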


%-----------------------------------------
\subsection{Further examples}
%-----------------------------------------
We classify strong symplectic fillings of some contact 3-manifolds that are supported by non-Lefschetz-amenable planar spinal open books discussed in \cite{spinal_II}, specifically in Examples 1.33 and 1.34. The page monodromies of these examples are not supported in the interior of the page.

\begin{theorem}\label{thm:eg1_33}
  Consider the quotient contact manifold 
  \[
    (M, \xi) = (S^1 \times S^2, \xi_{st}) \big/ (t,\theta,\phi) \sim (-t, \pi - \theta, \phi + \pi)
  \]
  which is a non-orientable circle bundle over $\R P^2$ with an orientable total space. $(M,\xi)$ admits a unique Stein filling which is a PANLF, whose completion contains a single exotic fiber.
\end{theorem}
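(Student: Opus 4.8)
The plan is to produce the spinal open book on $(M,\xi)$ explicitly, verify it is planar and uniform but \emph{not} Lefschetz-amenable, and then invoke Theorem~\ref{thm: monodromy_fact} together with the local model of Theorem~\ref{thm: intro-localmodel} to pin down the filling. First I would set up the description of $(M,\xi)$: the standard tight $(S^1\times S^2,\xi_{st})$ carries the open book with annular page and trivial monodromy (binding the two $S^1$-fibers over the poles), and the free involution $(t,\theta,\phi)\sim(-t,\pi-\theta,\phi+\pi)$ descends this to a spinal open book on the quotient. Concretely, the involution swaps the two binding circles while reversing the $t$-direction, so on the quotient the two disk vertebrae of the $S^1\times S^2$ picture get identified, with the spinal $S^1$-fibration becoming a \emph{non-orientable} $S^1$-bundle over a single vertebra; the page is still planar (an annulus, or a quotient thereof), and one checks uniformity directly from the fact that the paper is connected and the vertebra covers a disk base. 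The key structural point is that the covering map $\Pi|_{S_i}$ from the vertebra surface to $\cM$ has a branch point precisely because of the $\phi\mapsto\phi+\pi$ twist, i.e. the identification is not free on the level of the foliation near one fiber; this is what forces $\cM_{\exot}\ne\varnothing$ and, by Theorem~\ref{thm: intro-localmodel}, gives exactly one exotic fiber. I would verify the branch-point count carries through the Riemann--Hurwitz bookkeeping for the relevant vertebra to confirm $|\cM_{\exot}|=1$.

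Next I would apply Theorem~\ref{thm: monodromy_fact}: since the spinal open book is uniform with respect to a base $B$ having a vertebra component that is an honest cover of $B$ — here I need to identify which vertebra this is; with disk vertebrae available in the $S^1\times S^2$ model, after quotienting one checks the surviving disk vertebra still covers the base without branching — any minimal strong filling corresponds to a positive admissible factorization of the page monodromy in $\SMod(P)$ with respect to $(B,\rho)$, and any such factorization gives an exact (indeed, in the uniform-over-$\bD^2$ subcase, Stein) filling. So the classification reduces to enumerating positive admissible factorizations of the monodromy. Since the page is planar with very few boundary components, $\SMod(P)$ is small, and the monodromy is a single boundary interchange $\tau_\gamma$ (Figure~\ref{fig:boundarytwist}) coming from the one exotic fiber, possibly composed with interior Dehn twists forced by the $S^1\times S^2$ relation. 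I would show the only positive admissible factorization is the tautological one — a single boundary interchange and no interior twists — using that any interior Dehn twist factor would have to appear in a positive way yet the total monodromy (read off from the trivial-monodromy $S^1\times S^2$ model) admits no such splitting; this is essentially a Hurwitz-rigidity computation in a rank-one or rank-two free/abelian piece of $\SMod$.

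Finally, uniqueness up to symplectic deformation: by Theorem~\ref{thm: panlf_construct_exact} (or \ref{thm: panlf_construct_stein} if the base is $\bD^2$) the unique factorization yields a PANLF filling, which by Theorem~\ref{thm: intro-localmodel} is realized as the complement of a neighborhood of a positive multisection in a bordered Lefschetz fibration with exactly one exotic fiber in its completion; and by the generalized Hurwitz equivalence of \cite{Baykur_Hayano_Hurwitz} cited in Remark~\ref{rem: equivalence of fillings}, factorizations related by Hurwitz moves give deformation-equivalent fillings, so the single factorization gives a single deformation class. That this class is Stein (not merely exact) follows in the $\bD^2$-uniform case from Theorem~\ref{thm: monodromy_fact}; I would double-check that the relevant base here is in fact $\bD^2$ (the $\R P^2$ base enters only through the non-orientable $S^1$-bundle, while the foliation base $B$ over which one factorizes is a disk).

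\textbf{Main obstacle.} The delicate step is the first one: correctly producing the spinal open book on the quotient — in particular verifying uniformity and, crucially, computing that the vertebra-to-$\cM$ branched cover has exactly one branch point so that the completion has a \emph{single} exotic fiber. Getting the equivariant open book on $S^1\times S^2$ to descend cleanly, and tracking how the antipodal-type twist $\phi\mapsto\phi+\pi$ interacts with the $S^1$-fibration of the spine (orientability of the resulting bundle, and the location of the fixed-point behavior of the induced map on the foliation), is where the real content lies; once the spinal open book and its (non-)amenability are nailed down, the filling classification is a short appeal to Theorems~\ref{thm: monodromy_fact}--\ref{thm: panlf_construct_exact} plus a small Hurwitz-equivalence check.
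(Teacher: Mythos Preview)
Your overall strategy is the paper's: identify the planar spinal open book, apply Theorem~\ref{thm: monodromy_fact}, and check that the page monodromy has a unique positive admissible factorization. The paper shortcuts your first (and, as you say, most delicate) step by simply stating the equivalent reformulation taken from \cite[Example~1.33]{spinal_II}: $(M,\xi)$ is supported by the spinal open book with annulus page $P$, a \emph{single} disk vertebra $\Sigma=\bD^2$, and monodromy $\phi=\tau_\alpha$ already equal to one boundary interchange. From there the argument is two lines: the page boundary meets the unique spine with multiplicity $2$, so the only possible base is $B=\bD^2$ with $\Sigma\to B$ the degree-$2$ branched cover $\bD^2\to\bD^2$ having one simple branch point; hence exactly one exotic fiber by Theorem~\ref{thm: intro-localmodel}, and since $\phi=\tau_\alpha$ is itself a single boundary interchange, the positive admissible factorization is tautological and unique.

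There is one genuine confusion in your proposal. You search for a vertebra that ``still covers the base without branching'' so as to invoke the honest-cover (exact-filling) clause of Theorem~\ref{thm: monodromy_fact}. No such vertebra exists here: there is exactly one vertebra, it is a disk, and its cover of $B=\bD^2$ is the degree-$2$ \emph{branched} cover with one branch point --- that branch point is precisely the source of the single exotic fiber you are trying to account for. The Stein conclusion comes instead from the first ``furthermore'' clause of Theorem~\ref{thm: monodromy_fact} (uniformity with respect to $\bD^2$), which carries no honest-cover hypothesis. Relatedly, your remark that the spine becomes a ``non-orientable $S^1$-bundle over a single vertebra'' is off: the spine over a disk vertebra is necessarily a trivial $S^1$-bundle (a solid torus); the non-orientability in the statement refers to the ambient circle bundle $M\to\bR P^2$, not the spine fibration. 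Once these two points are corrected your argument collapses to the paper's, and the anticipated Hurwitz computation is immediate: on an annulus every arc joining the two boundary components gives the same boundary interchange in $\SMod(P)$ (the core twist commutes with $\tau_\alpha$, cf.\ Lemma~\ref{lem:framing_conj}), so $\tau_\alpha = \tau_{\alpha'}\cdot t_\gamma^k$ forces $k=0$.
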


The next application concerns a contact 3-manifold obtained by taking the boundary of the quotient of a product symplectic manifold. The resulting contact $3$-manifold is supported by a planar spinal open book that is uniform with respect to two distinct surfaces. The construction is explained in \cite[Example 1.34]{spinal_II}, and we follow the same notations. Let $\psi_1$ and $\psi_2$ be involutions of $\Sigma_{2,2}$ arising as the deck transformations of the double branched covers $\phi_1: \Sigma_{2,2} \to \Sigma_{1,2}$ and $\phi_2: \Sigma_{2,2} \to \Sigma_{0,2}$, where $\phi_1$ is unbranched and $\phi_2$ has four simple branch points. Also let $\sigma$ be an involution $\sigma(s,t) = (-s,-t)$ on $\Sigma_{0,2} = [-1,1] \times S^1$. Then the following Weinstein domains 
\begin{align}\label{eq:E}
  E_i = (\Sigma_{2,2}\times \Sigma_{0,2}) \big/ (z,w) \sim (\psi_i(z), \sigma(w)).
\end{align}
induce the same boundary contact 3-manifold $(M,\xi)$, and it is supported by a spinal open book with two annulus pages. In addition, each annulus page $A_i$ for $i \in \{1,2\}$ is equipped with a monodromy exchanging its two boundary components, and these two boundary components are attached to the same boundary component $\partial_i$ for $i \in \{1,2\}$ of the spine with vertebra $\Sigma_{2,2}$.

It was shown in \cite{spinal_II} that $E_1$ admits a bordered Lefschetz fibration structure inducing the above planar spinal open book on the boundary, while $E_2$ does not. We claim that $(M,\xi)$ admits two types of fillings: one is a bordered Lefschetz fibration, and the other is $E_2$.

\begin{theorem}\label{thm:eg1_34}
  The manifold $(M, \xi)$ described above admits exactly two types of Stein fillings up to symplectic deformation: a family of Stein fillings that are bordered Lefschetz fibrations with $\Sigma_{1,2}$ base and annulus fibers, and the other is a PANLF with annulus base and annulus fibers, whose completion contains four exotic fibers. 
\end{theorem}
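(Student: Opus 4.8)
The plan is to show that any minimal strong filling of $(M,\xi)$ must be one of the two explicit Weinstein domains, and that these are distinct up to symplectic deformation. By Theorem \ref{thm: torusbundle_spinal}'s cousin in \cite{spinal_II}, $(M,\xi)$ is supported by the planar spinal open book with two annulus pages $A_i$ described above, which is uniform with respect to \emph{two} distinct surfaces: $B_1 = \Sigma_{1,2}$ (so that the vertebra $\Sigma_{2,2}$ covers $B_1$ without branch points, the unbranched cover $\phi_1$) and $B_2 = \Sigma_{0,2}$ (via the four-branch-point cover $\phi_2$). By Theorem \ref{thm: monodromy_fact}, a minimal strong filling $(W,\omega)$ corresponds to a positive admissible factorization of the page monodromy with respect to one of these two bases, up to deformation equivalence. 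So the first step is to enumerate the possible positive admissible factorizations with respect to each base.

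Next I would carry out the monodromy-factorization analysis in the spinal mapping class group $\SMod(A)$ of the annulus $A$. The page monodromy exchanges the two boundary components of each annulus $A_i$; by Theorem \ref{thm: intro-localmodel} the relevant nontrivial elements are the boundary interchange $\tau_\gamma$ of Figure \ref{fig:boundarytwist} and ordinary positive Dehn twists along the core curve of the annulus (which, since the annulus has a single interior curve, is the only interior vertex). One checks that $\tau_\gamma^2$ equals a single core Dehn twist (this is the key relation in $\SMod(A)$), so a positive admissible factorization of the monodromy with respect to $(B,\rho)$ amounts to choosing how many boundary interchanges occur along each of the relevant loops — and by Theorem \ref{thm: intro-localmodel}, the total number of boundary interchanges equals the number of branch points of the corresponding branched cover $\pi_i = \phi_i$. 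With respect to $B_1 = \Sigma_{1,2}$ the cover $\phi_1$ is unbranched, so there are no exotic fibers and the filling is a bordered Lefschetz fibration over $\Sigma_{1,2}$ with annulus fibers; the residual freedom in the monodromy representation $\rho: \pi_1(\Sigma_{1,2}) \to \Mod(A)$ (powers of the core twist) gives the stated \emph{family}. With respect to $B_2 = \Sigma_{0,2}$ the cover $\phi_2$ has four branch points, so Theorem \ref{thm: intro-localmodel} forces exactly four exotic fibers, giving a PANLF with annulus base and annulus fibers; one identifies the resulting construction with $E_2$ via the explicit construction of \S\ref{sec: constructions}. Since $E_1$ is by \cite{spinal_II} a bordered Lefschetz fibration inducing this spinal open book while $E_2$ provably is not, these are the only two possibilities, and Theorem \ref{thm: panlf_construct_stein} (applicable in the $B_1$ case since $\phi_1$ has no branch points) together with the exactness/Stein statement for $E_2$ established in \cite{spinal_II} upgrades both to Stein fillings.

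Finally I would verify the two families are genuinely distinct up to symplectic deformation, which should follow on the level of underlying smooth $4$-manifolds or their symplectic-topological invariants: the $\Sigma_{1,2}$-base Lefschetz fibrations and the annulus-base PANLF have different homology / intersection data inherited from the two non-isotopic fiber-sum-type decompositions, exactly as distinguished in \cite{spinal_II} when showing $E_1$ fibers and $E_2$ does not. A clean way to see it: the existence of an honest (unbranched) $S_i \to \cM$ cover versus a genuinely four-times-branched one is a deformation invariant by Theorem \ref{thm: intro-localmodel} (the number $|\cM_{\exot}|$ is intrinsic to the completion of the filling), so a filling with four exotic fibers cannot be deformation equivalent to one with none.

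The main obstacle I expect is the completeness of the monodromy-factorization enumeration — showing that \emph{every} positive admissible factorization with respect to $B_1$ is Hurwitz equivalent to one of the stated Lefschetz fibrations and every one with respect to $B_2$ recovers $E_2$, with no extra exotic possibilities. This requires a careful computation in $\SMod(A)$ for the annulus, controlling the interaction of boundary interchanges with core Dehn twists under Hurwitz moves and under the change-of-basis relating the two uniformizing surfaces (cf. Remark \ref{rem: equivalence of fillings}), and ruling out factorizations that would force $W$ to be non-minimal or to carry a different number of exotic fibers than dictated by the branch data.
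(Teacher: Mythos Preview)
Your overall strategy matches the paper's: determine the possible bases $B$, then for each base enumerate positive admissible factorizations via Theorem~\ref{thm: monodromy_fact} and Theorem~\ref{thm: intro-localmodel}. However, several concrete steps are either missing or incorrect.

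First, you take as given that $B_1=\Sigma_{1,2}$ and $B_2=\Sigma_{0,2}$ are the \emph{only} bases with respect to which the spinal open book is uniform. The paper actually derives this: since each page boundary has multiplicity~$2$ and the vertebra is $\Sigma_{2,2}$, any base $B$ must admit a degree-$2$ branched cover $\Sigma_{2,2}\to B$ restricting to honest double covers on the boundary, and Riemann--Hurwitz then forces $B\in\{\Sigma_{1,2},\Sigma_{0,2}\}$. Without this step the classification is incomplete.

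Second, your key relation in $\SMod(A)$ is wrong: you claim $\tau_\gamma^2$ equals a single core Dehn twist, but in fact $\tau_\alpha^2 = t_\gamma^{-4}$ for the annulus (this is exactly the relation used in the proof of Theorem~\ref{thm: parabolicbundles}). This error propagates: in the $B_2=\Sigma_{0,2}$ case the paper computes, using Proposition~\ref{prop:twotwists} and Remark~\ref{rem: boundaries}, that a PANLF with four exotic fibers and no singular fibers induces on one page component a monodromy equal to a single boundary interchange composed with $t_\gamma^{-8}$. Matching the prescribed monodromy then forces exactly eight singular fibers with core vanishing cycle, and this is what gives \emph{uniqueness} of the PANLF filling. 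You assert the identification with $E_2$ without carrying out this computation; the paper instead notes that $E_2$ contains eight Lagrangian spheres, so by uniqueness it must be the PANLF just constructed.

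Third, you invoke Theorem~\ref{thm: panlf_construct_stein} in the $B_1$ case, but that theorem requires the base to be $\mathbb{D}^2$, which $\Sigma_{1,2}$ is not. The Stein structure for the $B_1$ family comes instead from Theorem~\ref{thm: blf_stein} (allowable bordered Lefschetz fibrations are Stein), and the Stein structure on $E_2$ was already established in \cite{spinal_II} as a Weinstein domain.

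Finally, your proposed distinguishing argument via $|\cM_{\exot}|$ being a deformation invariant is not justified by anything in the paper: the foliation depends on the almost complex structure, and the same filling could in principle admit foliations over different bases. The paper does not argue this way; it relies on the identification of the unique PANLF with $E_2$ and the result from \cite{spinal_II} that $E_2$ does not admit a bordered Lefschetz fibration structure inducing this spinal open book.
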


We further show that there exist symplectic Lefschetz fibrations with arbitrarily high genus fibers that also admit a PANLF with planar fibers. This family of examples also shows how certain open books with arbitrarily high genus can be converted into planar spinal open books, which makes understanding their fillings a more tractable problem. However these examples also highlights the difficulty of the classification problem in this setting, see Remark \ref{rem: high_genus_difficulty}.

\begin{theorem}\label{thm: highgenus}
  Consider a $3$-manifold supported by the genus-$g$ open book shown in Figure~\ref{fig:highgenus}, for $g \geq 2$. Each of these contact $3$-manifolds has a Stein filling with a Lefschetz fibration structure with genus $g$ regular fibers that induce a positive factorization of the genus-$g$ open book monodromy, but also a nearly Lefschetz fibration structure that induces a positive admissible factorization of the monodromy of a planar spinal open book supporting the same contact manifold.
\end{theorem}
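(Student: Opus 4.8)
\textbf{Proof proposal for Theorem~\ref{thm: highgenus}.}
The plan is to exhibit a single Stein domain $W$ that visibly carries two distinct fibration structures — a genus-$g$ Lefschetz fibration over $\bD^2$ and a planar nearly Lefschetz fibration — by building $W$ from a hyperelliptic, i.e.\ $\Z/2$-equivariant, branched-cover picture in the spirit of \cite[Example 1.34]{spinal_II}. The page of the genus-$g$ open book of Figure~\ref{fig:highgenus} is $\Sigma_{g,2}$, which a hyperelliptic involution $\psi$ presents as a double cover of the annulus $\Sigma_{0,2}$ branched over $2g$ points; one reads off from the figure that the monodromy $\phi$ is the associated hyperelliptic word, a product of positive Dehn twists about the $\psi$-invariant preimages of a spanning arc system in $\Sigma_{0,2}$.

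First I would record the genus-$g$ Lefschetz fibration. Since $\phi=\tau_{c_1}\cdots\tau_{c_k}$ is a positive factorization into non-separating curves, the Lefschetz fibration $W\to\bD^2$ with fiber $\Sigma_{g,2}$ and vanishing cycles $c_1,\dots,c_k$ is, by the theorems of Loi--Piergallini and Akbulut--Ozbagci, a Stein domain with $\partial W=(M,\xi)$ supported by the given open book; allowability is immediate because no $c_i$ is boundary-parallel, so $W$ is a \emph{minimal} strong filling. This is the first advertised structure.

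Next I would produce the planar spinal open book. Because each $c_i$ is $\psi$-invariant, $\psi$ extends to a symplectic involution of $W$, and the corresponding branched-quotient picture — modelled on the domains $E=(\Sigma_{g,2}\times\Sigma_{0,2})/\langle(\psi,\sigma)\rangle$ of \cite[Example 1.34]{spinal_II}, with $\sigma(s,t)=(-s,-t)$ on $\Sigma_{0,2}=[-1,1]\times S^1$ (resolving the isolated quotient singularities if necessary) — displays $M=\partial W$ as the boundary of a fibration with \emph{planar} (annulus) fibers and $2g$ singular disk fibers. Restricting to $M$ this is a planar spinal open book with annulus pages, vertebra $\Sigma_{g,2}$, and page monodromy given by the boundary interchange of Figure~\ref{fig:boundarytwist} at each of the $2g$ branch values; one checks that it genuinely supports $(M,\xi)$ by descending an equivariant Thurston--Winkelnkemper supporting form, as in the construction of the Weinstein structures on the $E_i$ in \cite{spinal_II}. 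Now that $(M,\xi)$ is known to be supported by a planar spinal open book and $W$ is a minimal strong filling, Theorem~\ref{thm: main} — through Theorem~\ref{thm: lvw_foliation} and Theorem~\ref{thm: intro-localmodel} — endows $W$ with a PANLF structure with planar fibers, and Theorem~\ref{thm: monodromy_fact} together with Theorem~\ref{thm: panlf_construct} identifies the induced factorization of the spinal monodromy as a positive admissible one; alternatively this factorization may be written down by hand as the product of the $2g$ boundary interchanges conjugated around the annulus base.

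The main obstacle is the contact-geometric content of the last step rather than the topology: one must produce a genuinely \emph{supporting} equivariant form for the spinal open book and control its behaviour through the $2g$ branch points, and then verify that the resulting spinal monodromy factorization meets the positivity and admissibility conditions of \S\ref{subsec: factorization}, not merely the underlying topological factorization condition. A secondary technical point is to check that any perturbation needed to turn the quotient fibration $W\to\Sigma_{0,2}/\sigma=\bD^2$ into an honest genus-$g$ Lefschetz fibration can be carried out without destroying the planar PANLF on the same $W$ — equivalently, that the genus-$g$ Lefschetz filling and the planar PANLF are deformation equivalent as fillings — so that Theorem~\ref{thm: main} applies to the former; the remaining difficulties about classifying \emph{all} such factorizations are precisely what Remark~\ref{rem: high_genus_difficulty} flags and play no role here.
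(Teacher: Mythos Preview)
Your approach is genuinely different from the paper's, and considerably more elaborate. The paper's proof is short: it cites \cite[Proposition 3.2]{ding2018strong} for the positive factorization giving the genus-$g$ Lefschetz fibration, and then observes that each ``handle'' of the page in Figure~\ref{fig:highgenus} carries exactly the relative open book corresponding to the word $(aba)^{-2}$. Proposition~\ref{prop:obd_ann_vertebra} then lets one excise each such handle and replace it by a spine component with \emph{annulus} vertebra, producing a planar spinal open book with $g$ annulus vertebrae and page $\Sigma_{0,2g}$ (cf.\ Remark~\ref{rem: high_genus_difficulty} for $g=2$). Uniformity with respect to $\bD^2$ is immediate, and Theorem~\ref{thm: monodromy_fact} then forces the already-constructed Stein filling to carry a PANLF structure with planar fibers. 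No equivariance, no quotient construction, no singularity resolution is needed.

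Your route via a hyperelliptic involution produces a \emph{different} spinal open book --- a single $\Sigma_{g,2}$ vertebra and annulus pages --- and rests on an assumption you have not verified: that the specific monodromy drawn in Figure~\ref{fig:highgenus} is a product of Dehn twists about $\psi$-invariant curves. The figure is built from $(aba)^{-2}$ blocks in the sense of \S\ref{subsec:relobd}, and nothing in the paper asserts or uses hyperellipticity of the resulting word; absent that verification, your extension of $\psi$ to a symplectic involution of $W$ is unjustified. Even granting that step, you then face the obstacles you yourself list --- producing an equivariant Giroux form through $2g$ branch points and controlling the quotient singularities --- none of which arise in the paper's argument. In short, the paper exploits the visible $(aba)^{-2}$ sub-patterns directly via Proposition~\ref{prop:obd_ann_vertebra}, whereas you are trying to pass through a global $\Z/2$-symmetry that may not be present and in any case is not needed.
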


%-----------------------------
\subsection{Organization} 
%-----------------------------
The background material on contact and symplectic topology, regarding spinal open books, bordered Lefschetz fibrations, PANLFs, multisections, and monodromy factorizations is covered in \S\ref{sec: background_top}. The background material on pseudoholomorphic curves and double completions is discussed in \S\ref{sec: pseudoholomorphic}. The proofs of Theorem~\ref{thm: intro-localmodel} and Theorem~\ref{thm: main} are given in \S\ref{sec:exotic_fibers}. In \S\ref{sec: constructions}, we prove Theorems~\ref{thm: panlf_construct}, \ref{thm: panlf_construct_exact}, and \ref{thm: panlf_construct_stein}. Then finally in \S\ref{sec: classification_filling} we explore the applications and prove Theorems~\ref{thm: parabolicbundles}, \ref{thm: ellipticbundles}, \ref{thm:eg1_33}, \ref{thm:eg1_34}, and Theorem~\ref{thm: highgenus}.

%------------------------------
\subsection{Acknowledgements} 
%------------------------------
We are grateful to Chris Wendl and Samuel Lisi for generous help in understanding their results and for crucial suggestions and input. We would also like to thank Mohammed Abouzaid, Inanc Baykur, Georgios Dimitroglou Rizell, Yakov Eliashberg, John Etnyre, Matt Hedden, Helmut Hofer, Michael Hutchings, Olga Plamenevskaya, Laura Starkston, and Jeremy Van-Horn Morris for helpful conversations over the course of this project. The project was significantly helped by conferences which allowed us to collaborate in-person -- we extend our thanks to the organizers of GSTGC 2022, Kylerec 2022, BIRS 2023 workshop on Interactions between Symplectic and Holomorphic Convexity in 4 Dimensions (23w5123), and the MSRI-PIMS 2022 Summer school on Floer Homotopy Theory. AR also acknowledges partial support from Georgia Tech and Louisiana State University, and the NSF grants DMS-1906414, DMS-2203312, and DMS-1907654. LW acknowledges support from NSF Grants DGE-2146752 and DMS-2303437 and IAS Giorgio and Elena Petronio Fellow II Fund.

%%%%%%%%%%%%%%%%%%%%%%%%%
\section{Background: Contact and symplectic topology}\label{sec: background_top}
%%%%%%%%%%%%%%%%%%%%%%%%%
In this section we review and establish some necessary background regarding low-dimensional contact and symplectic topology. We will assume the knowledge of open book decompositions, convex surfaces, Stein structures, and weak and strong symplectic fillings. The reader is invited to refer to the article \cite{ozbagci_survey} for background on symplectic fillings and Stein structures, and to the lecture notes \cite{etnyre_obd} for a thorough exposition on open book decompositions and convex surfaces.

%----------------------------------------------------------------------------
\subsection{Spinal open book decompositions and contact structures}\label{subsec: spinalobd}
%----------------------------------------------------------------------------
In this section we review the definition of spinal open book decompositions first appeared in \cite{spinal_I, spinal_II}. We focus on spinal open books supporting closed contact $3$-manifolds, the objects of which we aim to classify the symplectic fillings in this paper. 
\begin{definition}\label{def:spinalobd}
  A \textbf{spinal open book decomposition} of a closed oriented $3$-manifold $M$ is a decomposition $M = M_\Sigma \cup M_P$ (called the \textbf{spine} and \textbf{paper}, respectively) together with a pair of fibrations
  \begin{align*}
    \pi_\Sigma &: M_\Sigma \to \Sigma \\
    \pi_P &: M_P \to S^1
  \end{align*}
  where 
  \begin{enumerate}
    \item $\Sigma$ is a compact oriented surface whose connected components (called \textbf{vertebrae}) have nonempty boundary and $\pi_\Sigma$ is a trivial fibration with $S^1$ fiber;
    \item The fibers of $\pi_P$ are compact oriented surfaces whose connected components (called \textbf{pages} and denoted by $P$) have nonempty boundary and intersect transversely to $\partial M_P$. The intersection of $P$ and $M_\Sigma$ consists of fibers of $\pi_\Sigma$.
    \item For every component of $\partial M_P$, there exist local coordinates $(\phi, t, \theta) \in S^1 \times (-1,0] \times S^1$ on a collar neighborhood such that $\pi_P(\phi,t,\theta) = m\phi$ for some $m \in \mathbb{N}$. On every component of $\partial M_\Sigma$, there exist local coordinates $(s, \phi, \theta) \in (-1,0] \times S^1 \times S^1$ such that $\pi_{\Sigma}(s, \phi, \theta) = (s,\phi)$. The number $m$ is called the {\bf multiplicity} of $\pi_P$ at that boundary component of $M_P$. On the overlap between $M_P$ and $M_\Sigma$ the 2-torus coordinates $(\phi, \theta)$ agree, while the interval coordinates are related by $s = -t$.
    \item The paper $M_P$ can be identified with a mapping torus 
    \[
      M_P = \R \times P \big/ \sim, \quad ( \tau, p) \sim (\tau-1,\phi(p))
    \]
      and $\pi_P: M_P \to S^1$ is given by $[(\tau, p)] \to [\tau]$. Here, $\phi \in \SMod(P)$ is the monodromy in the \textbf{spinal mapping class group} of $P$. See \S\ref{subsec: framed_mcg} for the definition.
  \end{enumerate}
\end{definition}

Note that when all vertebrae are disks and the multiplicity of each component of $\bd M_P$ is $1$, the definition of spinal open books recovers the definition of ordinary open books. A contact form $\lambda$ on a spinal open book is called a \textbf{Giroux form} if $d\lambda$ is positive on the interior of each page and the Reeb vector field associated to $\lambda$ is positively tangent to every fiber of $\pi_\Sigma$. We say that a contact structure $\xi$ on $M$ is \textbf{supported by $\pi$} if $\xi$ admits a Giroux form for $\pi$. 

\begin{remark}
  We will frequently use the $3$-tuple $(P, \phi, \Sigma)$ to refer to spinal open books, denoting the page, monodromy, and vertebra. This $3$-tuple does not uniquely determine a spinal open book, but is rather just a short-hand for notation.
\end{remark}

\begin{definition}\label{def:symmetricsobd}
  A spinal open book $\pi = (\pi_{\Sigma}, \pi_P)$ of $M$ is \textbf{symmetric} if 
  \begin{enumerate}
    \item all pages are diffeomorphic,
    \item for each vertebra $\Sigma_i$ $(\Sigma = \Sigma_1 \sqcup \cdots \sqcup \Sigma_r)$, there exists a number $k_i$ such that every page has exactly $k_i$ boundary components in ${\pi_\Sigma}^{-1}(\partial \Sigma_i)$.
  \end{enumerate} 
\end{definition}

We also say that $\pi$ is \textbf{uniform} if it is symmetric and there exists a compact oriented surface $\Sigma_0$ such that 
\begin{enumerate}
  \item connected components of $\partial \Sigma_0$ are in bijection with the components of $M_P$
  \item for each vertebra $\Sigma_i$ $(\Sigma = \Sigma_1 \sqcup \cdots \sqcup \Sigma_r)$, there exists a $k_i$-fold branched cover $\pi_i: \Sigma_i \to B$ such that for each $\gamma \subset \partial \Sigma_i$, $\pi_i|_{\gamma}$ is an $m_\gamma$-fold cover where $m_\gamma$ denotes the multiplicity of $\pi_P$ at $\pi_\Sigma^{-1}(\gamma) \subset M_P$.
\end{enumerate}

In the above case we further say that $\pi$ is \textbf{uniform with respect to $B$}, and we will refer to $B$ as the base of the spinal open book. We say that $\pi$ is \textbf{Lefschetz-amenable} if it is uniform and all branched covers have no branch points. This is the situation extensively studied in \cite{spinal_II}. In this paper, we focus on the non-Lefschetz-amenable cases that require us to consider exotic fibers. See \S\ref{sec:exotic_fibers} for more details on exotic fibers and their relations to the branch points.

%-----------------------------------------------------------------------
\subsection{Monodromy factorizations} \label{subsec: factorization}
%-----------------------------------------------------------------------
Suppose we have a spinal open book $(\pi_{\Sigma},\pi_P)$ of a closed $3$-manifold $M$ with (possibly disconnected) page $P$, monodromy $\phi$, and vertebrae $\Sigma = \Sigma_1 \sqcup \dots \sqcup \Sigma_r$. For $i = 1, \dots, r$, let $k_i$ denote the number of boundary components of $P$ that meet $\Sigma_i$. Let $B$ be a compact oriented surface with connected boundary such that for every $i$, there is a degree $k_i$ branched covering map $\pi_i :\Sigma_i \to B$ with $n_i$ simple branch points, and $(\pi_{\Sigma},\pi_P)$ is uniform with respect to $B$. Consider the following subset of $\partial P$:
\[
  \partial_B P := \{ c \in \partial P \mid c \text{ meets a spine component } \Sigma_i \times S^1 \text{ such that } n_i \neq 0\}.
\]
Fix a representation $\rho\colon \pi_1(B) \to Mod(P)$. Assume further that any image of $\rho$ is the identity near $\bd_B P$. Given such a representation, we can construct a fiber bundle over $B$ with fiber $P$. By restricting the base of the bundle to $\bd B$, we obtain a $P$-bundle over $S^1$ and denote the monodromy of this bundle by $\phi_{\rho} \in Mod(P)$. 

\begin{definition}\label{def: pos_adm_fact}
  Let $P$, $\phi$, $B$, $\rho$, and $n_i$ be as above. A {\bf positive admissible factorization of $\phi$ with respect to $(B, \rho)$} is given by expressing $\phi \circ \phi_{\rho}^{-1}$ as a product of
  \begin{enumerate}
    \item $n_i$ boundary interchanges between pairs of boundary components in $\partial_B P$ along arcs $\gamma_i$, joining the boundary components for $i=1,\ldots,r$, and
    \item positive Dehn twists about essential curves in $P$.
  \end{enumerate}
\end{definition}

\begin{definition}
  The collection of positive admissible factorizations of $\phi$ with respect to $(B, \rho)$, as $\rho$ runs over all possible representations, will be called the set of {\bf positive admissible factorizations of $\phi$ with respect to $B$}.
\end{definition}

In \S\ref{subsec: framed_mcg}, we will see that the mapping classes in (1) and (2) are the elements of the spinal mapping class group of $P$.

\begin{remark}\label{rem: boundaries}
  When there exist multiple page components, we do not need to consider factorization of the monodromy of each page component. First, recall that all page components are diffeomorphic since we assume the spinal open book is uniform. Let $P_1, \ldots, P_k$ be the page components, diffeomorphic to $P$, and $B$ a base of the spinal open book. Again, since the open book is uniform with respect to $B$, the page components are in one-to-one correspondence with the boundary components of $B$. Denote the monodromies of the page components $\phi_1, \ldots, \phi_k$. Let $\rho$ be a monodromy representation defining a fibration over $B$ with fibers and $\phi_{\rho,i}$ the monodromy on the corresponding page component induced by $\rho$. Let $\phi'_i = \phi_i\circ\phi_{\rho,i}^{-1}$. 
  We need to consider the factorization of each $\phi_i'$, but we can obtain a new spinal open book supporting the same contact 3-manifold by moving the monodromies across the page components. In fact, the new open book has the same pages and vertebrae. The new monodromies are 
  \begin{align*}
    &\phi^{\text{new}}_1 = \phi_k' \circ \phi_{k-1}' \cdots \circ\phi_1' \circ \phi_{\rho,1},\\  
    &\phi^{\text{new}}_2 = \phi_{\rho,2},\\
    &\dots\\
    &\phi^{\text{new}}_k=\phi_{\rho,k}.
  \end{align*}
  Therefore, we only need to consider the factorization of $\phi_k' \circ \phi_{k-1}' \cdots \circ\phi_1'$.
\end{remark}

%-----------------------------------------------------
\subsection{Bordered Lefschetz fibrations}
%-----------------------------------------------------
A spinal open book decomposition naturally arises as the boundary of a bordered Lefschetz fibration. In this section we recall the definition of a bordered Lefschetz fibration by following the conventions of \cite{spinal_I}. Let $E$ be a compact oriented connected 4-manifold with corners. We decompose the boundary of $E$ into two parts along the corners: 
\[
  \partial E = \partial_h E \cup \partial_v E.
\]
We further assume that the intersection of $\partial_h E$ and $\partial_v E$ is a collection of $2$-tori. 

Unlike \cite{spinal_I}, since we will not be carrying out any analysis near the corners, we will treat $E$ as a smooth manifold, assuming we already rounded the corners. 
Let $B$ denote a compact oriented connected surface with connected boundary.

\begin{definition}
  A \textbf{bordered Lefschetz fibration} of $E$ over $B$ is a smooth map $\Pi: E \to B$ with finitely many interior critical points $E^{\crit} \subset E^{\circ}$ and critical values $B^{\crit} \subset B^{\circ}$ such that
  \begin{enumerate}
    \item  $\Pi^{-1}(\partial B) = \partial_v E$ and $\Pi|_{\partial_v E} :\partial_v E \to \partial B$ is a smooth fiber bundle. %This corresponds to the paper of the boundary spinal open book.
    \item $\Pi|_{\partial_h E} :\partial_h E \to B$ is a smooth fiber bundle. %This corresponds to the spine on the boundary spinal open book.
    \item $(\Pi|_{\partial_h E}, \Pi|_{\partial_v E})$ is a spinal open book decomposition of $\bd E$. 
    \item For each $p \in E^{\crit}$ and $\Pi(p) \in B^{\crit}$, there are local holomorphic coordinates on $E$ centered at $p$ and on $B$ centered at $\Pi(p)$ such that $\Pi(z_1, z_2) = z_1^2 + z_2^2$.
    \item For each $z\in B$, the fiber $E_z := \Pi^{-1}(z)$ is connected and has nonempty boundary in $\partial_h E$.
  \end{enumerate}
\end{definition}

The fibers $\Pi^{-1}(z)$ for $z \in B \setminus B^{\crit}$ are called \textbf{regular fibers}, and for $z \in B^{\crit}$ are called \textbf{singular fibers}. The regular fibers are all homeomorphic smooth compact oriented surfaces with boundary. The singular fibers are smoothly immersed connected surfaces with positive transverse self-intersections -- topologically they are obtained by taking a regular fiber and pinching a curve on it down to a point; the curve that gets pinched is called a \textbf{vanishing cycle}. A Lefschetz fibration $\Pi$ is \textbf{allowable} if all the vanishing cycles are homologically essential curves on a regular fiber.

It is well-known that allowable bordered Lefschetz fibrations admit a Stein structure. 

\begin{theorem}{\cite[Theorem 3.9]{baykurvhm}\cite[Theorem B]{spinal_I}}\label{thm: blf_stein}
  Let $E$ be a 4-dimensional allowable bordered Lefschetz fibration and $\pi$ is an induced spinal open book of $\partial E$. Then, $E$ admits a (well-defined up to Stein homotopy) Stein structure which is a Stein filling of the contact manifold $(\partial E, \pi)$. Moreover, given two bordered Lefschetz fibrations $E_1$ and $E_2$ that fill $(\partial E, \pi)$, their Stein structures can be chosen to induce the same contact structure on the boundary.
\end{theorem}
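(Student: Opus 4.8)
The statement to prove is Theorem~\ref{thm: blf_stein}, which asserts that a $4$-dimensional allowable bordered Lefschetz fibration $E$ over a surface $B$ admits a Stein structure (well-defined up to Stein homotopy) filling the induced spinal open book on $\partial E$, and that two such fibrations filling the same boundary can be arranged to induce the same contact structure. Since this is cited as \cite[Theorem 3.9]{baykurvhm}\cite[Theorem B]{spinal_I}, the ``proof'' should be a guided recapitulation of the standard Loi--Piergallini / Akbulut--Ozbagci construction adapted to the bordered setting, invoking those references for the details.

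\medskip

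\textbf{Proof approach.} The plan is to build the Stein structure handle-by-handle using the Lefschetz fibration, following the classical picture in the closed/bordered case. First I would establish the model over a disk with no critical points: $\Pi^{-1}(D^2)$ is (after rounding corners) a product $P \times D^2$ where $P$ is a regular fiber, and one equips $P$ with a Stein structure whose completion makes $P \times D^2$ a Stein domain; the horizontal boundary $\partial_h E$ and vertical boundary $\partial_v E$ are contact-type with respect to this structure, and the induced contact form restricts to a Giroux form for the trivial spinal open book $(P, \mathrm{id}, \Sigma)$ on the relevant part of the boundary. This is the base case. Next I would treat the passage across a single critical value: by condition (4), near a critical point the fibration is the complex Morse singularity $z_1^2 + z_2^2$, so up to deformation $E$ is obtained from the trivial piece by attaching a Weinstein $2$-handle along the vanishing cycle sitting in a page of the open book, with framing one less than the page framing; here the allowability hypothesis (vanishing cycles homologically essential) is exactly what guarantees the attaching Legendrian is not contained in a disk, so the handle attachment can be performed Weinsteinly and the result is again Stein. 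Iterating over all critical values and then over the handle decomposition of the base $B$ (first a $0$-handle giving the trivial piece, then $1$-handles of $B$ which correspond to $1$-handles attached to $E$ on which the monodromy representation $\rho$ is realized), one assembles a global Weinstein/Stein structure on all of $E$.

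\medskip

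\textbf{Boundary identification and uniqueness.} Having built the Stein structure, I would verify that the induced contact structure on $\partial E$ is supported by the spinal open book $\pi$: this is where one uses that the monodromy of the page fibration $\Pi|_{\partial_h E}$ is the product of the positive Dehn twists about the vanishing cycles composed with the bundle monodromy $\phi_\rho$, which by Giroux--Goodman-type arguments (in the spinal setting, the support results of \cite{spinal_I}) pins down the contact structure up to isotopy from the open book data alone. The Stein-homotopy well-definedness follows because the only choices made are (a) the Stein structure on the fiber $P$, unique up to Stein homotopy since $P$ is a surface, (b) the ordering of the $2$-handle attachments, which changes the factorization only by Hurwitz moves and hence the Weinstein structure only by Weinstein homotopy, and (c) Legendrian isotopy representatives of the vanishing cycles in a page, again a homotopy. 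For the last sentence of the theorem, given $E_1, E_2$ filling the same $(\partial E, \pi)$, one observes that the contact structure on the boundary depends only on $\pi$ (and the orientation data), not on the interior Lefschetz structure, by the support statement just invoked; so the two Stein structures, restricted to collar neighborhoods of the boundary, induce the same contact structure, and one may rescale the Liouville forms near the boundary to make this literal.

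\medskip

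\textbf{Main obstacle.} The genuinely delicate point is the corner and boundary behavior: the classical theorems are stated for closed Lefschetz fibrations over $D^2$ or $S^2$, and one must check that attaching Weinstein $2$-handles along Legendrians lying in pages of a \emph{spinal} open book (where pages can have several boundary components attached to the same vertebra, and the monodromy is only required to be the identity near the relevant part of $\partial_B P$) still produces a Stein domain whose boundary is the expected contact manifold. I expect to defer this to \cite[Theorem B]{spinal_I}, which carries out precisely this analysis near the corners $\partial_h E \cap \partial_v E$; the present paper has explicitly declared (in the paragraph preceding the definition of bordered Lefschetz fibration) that it will treat $E$ as smooth with corners already rounded, so the proof here should simply cite that the rounded model is Weinstein and that the rounding is compatible with the contact structure on $\partial E$, rather than re-deriving the corner analysis.
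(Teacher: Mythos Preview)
The paper does not give its own proof of this theorem: it is stated as a background result with attribution to \cite[Theorem 3.9]{baykurvhm} and \cite[Theorem B]{spinal_I}, and no argument follows. You correctly recognized this, and your sketch is a faithful outline of the handle-by-handle construction carried out in those references (trivial piece over a disk, Weinstein $1$-handles for the base, Weinstein $2$-handles along Legendrianized vanishing cycles, with allowability ensuring the attachments are genuinely subcritical/critical rather than cancelling). Since there is no in-paper proof to compare against, there is nothing further to evaluate; your deferral of the corner analysis to \cite{spinal_I} is exactly what the paper itself does.
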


%-------------------------------
\subsection{Nearly Lefschetz fibrations}\label{subsec: panlf} 
%-------------------------------
For a symplectic filling of an honest planar open book decomposition with suitable conditions, a pseudoholomorphic foliation gives a Lefschetz fibration \cite{Wendl_strongly}. However, symplectic fillings of a general spinal open book admit a generalized version of a Lefschetz fibration, which we call a \textbf{nearly Lefschetz fibration}. In particular, it gives rise to a uniform spinal open book on its boundary, which is non-Lefschetz-amenable in general.

\begin{definition}\label{def: panlf}
  A \textbf{nearly Lefschetz fibration} of $E$ over $B$ is a smooth map $\Pi: E \to B$ with finitely many interior critical points $E^{\crit} \subset E^{\circ}$ and critical values $B^{\crit} \subset B^{\circ}$, and finitely many \textbf{exotic points} $B^{\exot} \subset B^{\circ}$ such that:
  \begin{enumerate}
    \item $\Pi^{-1}(\partial B) = \partial_v E$ and $\Pi|_{\partial_v E} :\partial_v E \to \partial B$ is a smooth fiber bundle.
    \item For each $p \in E^{\crit}$ and $\Pi(p) \in B^{\crit}$, there are local holomorphic coordinates on $E$ centered at $p$ and on $B$ centered at $\Pi(p)$ such that $\Pi(z_1, z_2) = z_1^2 + z_2^2$.
    \item All fibers $E_z := \Pi^{-1}(z)$ for $z\in B$ are connected and have nonempty boundary in $\partial_h E$.
    \item For each $c \in B^{\exot}$, there are local holomorphic coordinates $\psi \colon U_c \to \C$, where $U_c \subset B^{\circ}$, sending $c$ to $0$ and $\phi\colon V_c \to (\C - D) \times \C$, where $V_c \subset N(\partial_h (E)) \cap \Pi^{-1}(U_c)$ and $D$ is a small neighborhood of $0 \in \C$, such that the following diagram commutes
    \[
    \begin{tikzcd}
      V_c \arrow{r}{\phi}  \arrow{d}{\Pi} &(\C - D) \times \C\arrow{d}{\Pi_c}\\
      U_c \arrow{r}{\psi} & \C
    \end{tikzcd}
    \]
    where $\Pi_c(z_1,z_2) = z_2^2-z_1$.
    \item The restricted projection $\Pi|_{\partial_h E \setminus (\cup_{c \in B_{\exot}} V_c)} :\partial_h E \setminus (\cup_{c \in B_{\exot}} V_c) \to B \setminus \cup_{c \in B_{\exot}} \psi(V_c)$ is a smooth fiber bundle, that extends to a smooth fiber bundle $\Pi_h: \partial_h E \to B$. 
  \end{enumerate}
\end{definition}

In the case of a nearly Lefschetz fibration, we have \textbf{regular} and \textbf{singular fibers} as in the case of a bordered Lefschetz fibration. The difference lies in the exotic points. To illustrate the topology of the neighborhood of an exotic point, let $c\in B^{\exot}$. For $z \in U_c \subset B^o$, $\Pi^{-1}(z)$ has the topology of a regular fiber near the boundary of $U_c$, but has one fewer boundary component in a neighborhood of the origin in $U_c$. We call all the fibers that have one fewer boundary component the \textbf{exotic fibers}.\footnote{In \cite{spinal_II}, the authors discuss exotic fibers in the context of completed fillings, and a finite number of exotic fibers are present in the completed filling. However, since we are looking at compact fillings, cutting off a cylindrical end, we obtain a disk worth of exotic fibers for every single exotic fiber in the completed filling.} \S\ref{sec:exotic_fibers} will discuss the topology of these exotic fibers in more details and identify them with the exotic fibers arising from the pseudoholomorphic foliations defined in \cite{spinal_II}.

A nearly Lefschetz fibration $\Pi$ is \textbf{allowable} if all vanishing cycles are homologically essential curves on a regular fiber. In analogy with positive allowable Lefschetz fibrations that are studied in the literature and denoted PALFs, we will also refer to these structures as positive allowable nearly Lefschetz fibrations and abbreviate them to PANLFs. 

\begin{remark}\label{rem: exotic_nbd_branch}
  Under the biholomorphism $(z_1, z_2) \mapsto (z_2^2 - z_1, z_2)$ from $\C^2 \setminus \{(z^2,z)\}$ to $\C \setminus \{0\} \times \C$, we can identify the neighborhood of an exotic fiber in the above, with the complement of a branch point of a multisection in a Lefschetz fibration as described in Definition~\ref{def: multisection}.
\end{remark}

%--------------------------------------------------------------
\subsection{Positive multisections and spinal mapping classes} \label{subsec: framed_mcg}
%--------------------------------------------------------------
Consider a compact connected oriented surface $F_g^n$ with genus $g$ and $n$ boundary components. We define its \textbf{spinal mapping class group} $\SMod(F_g^n)$ to be a subgroup of $\Mod(F_g^n)$ generated by mapping classes in $\Mod_{\partial}(F)$ (which are identity near $\partial F$) and mapping classes that interchange two boundary components along an arc in a counterclockwise way and rotates each boundary in a clockwise way. See Figure~\ref{fig:boundarytwist} for an example. Denote by $\tau_{\gamma}$ the {\bf boundary interchange} mapping class along the arc $\gamma$.

\begin{remark}\label{rem:framed_mcg}
  The boundary interchange mapping class can be interpreted as an element of the framed mapping class group defined by Baykur and Hayano \cite{baykur2016multisections}. The difference arises because we trivialize the bundle at the page boundary components, whereas they track the bundle at the boundary components of the fibers using marked points. The reader should observe that Figure \ref{fig:monodromy1} in our paper agrees with Figure 1 in \cite{baykur2016multisections}. 
\end{remark}

\begin{definition} \label{def: multisection}
  Let $\Pi\colon E \to B$ be a bordered Lefschetz fibration. A properly embedded compact oriented surface $\Sigma \subset E$ is called a {\bf positive multisection} or a {\bf positive $n$–section} of $E$ if it satisfies the following conditions: 

  \begin{enumerate}
    \item The restriction $\Pi|_{\Sigma}$ is an $n$–fold simple branched covering map.
    \item For any branch point $p \in  \Sigma \setminus \crit(\Pi)$, there exist local coordinates $\phi: U \to \C^2$ of $p$ and local coordinates $\psi: V \to \C$ of $q = \Pi(p)$ such that the following diagram commutes: 
    \[
    \begin{tikzcd}
      (U, U \cap \Sigma) \arrow{r}{\phi}  \arrow{d}{\Pi} &(\C^2, \Sigma_0)\arrow{d}{f_0}\\
      V \arrow{r}{\psi} & \C
    \end{tikzcd}
    \]
    where $f_0: \C^2 \to \C$ is projection to the first component, and $\Sigma_0 = \{(z^2,z)\} \subset \C^2$.
    \item If a branch point $p \in \Sigma$ of $\Pi|_{\Sigma}$ is in $\crit(\Pi)$, then there are complex coordinates $(U,\phi)$ and $(V, \psi)$ as above such that $\phi(U \cap \Sigma)$ is equal to $\{(z,z) \in \C^2\}$. 
  \end{enumerate}
\end{definition}

The following theorem restates \cite[Theorem 1.1]{baykur2016multisections} in the context of bordered Lefschetz fibrations (refer to Remarks~\ref{rem: exotic_nbd_branch} and \ref{rem:framed_mcg} for translation from the original statement).  
We will restrict our attention to multisections that have no branch points in $\crit(\Pi)$, which can always be arranged by a symplectic deformation. 

\begin{theorem}{\cite[Theorem 1.1]{baykur2016multisections}} \label{thm: bh_multisections}
  Let $\Pi\colon E \to B$ be a positive allowable bordered Lefschetz fibration with $k$ critical points and regular fiber $F$, and $\rho\colon \pi_1(B)\to \Mod(F)$ the monodromy representation for $E$. Suppose that $E$ induces a monodromy factorization $t_{c_1}t_{c_2}\dots t_{c_k}\in Mod_{\partial}(F)$ on $\partial E$ with respect to $(B,\rho)$, where $t_{c_i}$ is a positive Dehn twist about a vanishing cycle $c_i$. Consider a positive $n$-section $\Sigma \subset E$ with $m$ branch points disjoint from $\crit(\Pi)$. Then, the complement of a neighborhood of $\Sigma$ in $E$ admits a positive allowable nearly Lefschetz fibration, with a regular fiber $F^n$, which is $F$ with $n$ disks removed, and induces the positive allowable monodromy factorization $t_{c_1}t_{c_2}\dots t_{c_k}\tau_{\gamma_1}\tau_{\gamma_2}\dots \tau_{\gamma_m} \in \SMod(F^n)$ for some properly embedded arcs $\gamma_1,\ldots,\gamma_m$ in $F^n$.

  Conversely, from a positive allowable monodromy factorization $t_{c_1}t_{c_2}\dots t_{c_k}\tau_{\gamma_1}\tau_{\gamma_2}\dots \tau_{\gamma_m} \in \SMod(F^n)$, we can construct a positive allowable bordered Lefschetz fibration with a positive $n$-section.
\end{theorem}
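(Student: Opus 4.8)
The statement (Theorem~\ref{thm: bh_multisections}) is essentially a translation of Baykur--Hayano's theorem \cite[Theorem 1.1]{baykur2016multisections} into the bordered Lefschetz fibration setting, so the plan is to reduce to their result by carefully matching conventions rather than redoing the topological construction. First I would recall the local structure: by Remark~\ref{rem: exotic_nbd_branch}, a neighborhood of a branch point of the multisection $\Sigma$ (with the branch point disjoint from $\crit(\Pi)$, as assumed) is biholomorphic, via $(z_1,z_2)\mapsto(z_2^2-z_1,z_2)$, to the local model $\Pi_c(z_1,z_2)=z_2^2-z_1$ appearing in Definition~\ref{def: panlf}(4). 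Thus removing a tubular neighborhood of each branch point of $\Sigma$ from $E$ produces exactly an exotic point in the complement, and away from the $m$ branch points and the $n$-section itself the fibration $\Pi$ restricts to a genuine bordered Lefschetz fibration with fiber $F^n$ (the regular fiber $F$ with $n$ open disks removed, one for each sheet of the multisection over a regular value). This shows the complement $E\setminus\nu(\Sigma)$ carries a nearly Lefschetz fibration structure in the sense of Definition~\ref{def: panlf}; allowability is inherited because the vanishing cycles $c_i$ of $\Pi$ are unaffected by deleting a neighborhood of $\Sigma$ (they can be chosen disjoint from $\Sigma$ over a regular value, and removing disks from $F$ keeps a homologically essential curve homologically essential provided we are careful, which holds here since the $c_i$ were essential in the interior).

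Next I would identify the monodromy factorization. The critical points of $\Pi$ persist in the complement and still contribute the positive Dehn twists $t_{c_1}\cdots t_{c_k}$, now viewed as elements of $\Mod_{\partial}(F^n)\subset\SMod(F^n)$ via the inclusion $F^n\hookrightarrow F$. The new contributions come from the $m$ exotic points: I would compute the local monodromy of $\Pi_c$ around $c$. Going around a small loop encircling $0\in\C$ in the base, the model $\Pi_c(z_1,z_2)=z_2^2-z_1$ exhibits the two boundary components of $F^n$ created by the two sheets of $\Sigma$ near the branch point being swapped, which is precisely the boundary interchange $\tau_\gamma$ of \S\ref{subsec: framed_mcg}, for an arc $\gamma$ determined by the position of the two sheets. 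This is the step where Figure~\ref{fig:boundarytwist} and the comparison with \cite[Figure 1]{baykur2016multisections} (Remark~\ref{rem:framed_mcg}) do the work: their ``framed mapping class group'' monodromy, read through the dictionary of trivializing at the page boundary rather than tracking marked points, is exactly our $\tau_{\gamma_j}$. Multiplying the contributions in the order dictated by a choice of Hurwitz system on $B$ (critical values first, exotic values after), we get the factorization $t_{c_1}\cdots t_{c_k}\tau_{\gamma_1}\cdots\tau_{\gamma_m}\in\SMod(F^n)$.

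The main obstacle I expect is the bookkeeping around the boundary of the fiber and the base: in Baykur--Hayano the ambient Lefschetz fibration is over $S^2$, and the monodromy factorization is a relation in the (closed-surface, or once-punctured) mapping class group, whereas here $B$ has nonempty boundary, there is a horizontal boundary $\partial_h E$ carrying the spinal open book, and the monodromy representation $\rho$ of $\pi_1(B)$ enters. So I would need to check that the translation commutes with the $\rho$-twisting: the factorization asserted is of $\phi\circ\phi_\rho^{-1}$ (or in the closed-$B$-fiber-bundle language, the monodromy over $\partial B$) and I must ensure that removing $\nu(\Sigma)$ does not alter $\rho$ away from the exotic points and changes it near the exotic points exactly by the $\tau_{\gamma_j}$. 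A secondary subtlety is the precise identification of the arcs $\gamma_j$: the theorem only claims existence of \emph{some} properly embedded arcs, which is all we need, but I would want to confirm that each $\gamma_j$ is indeed properly embedded in $F^n$ (connecting the two newly-created boundary components) and not, say, based at an old boundary component — this follows from the local model since both relevant boundary circles of $F^n$ come from the two sheets of $\Sigma$ near the same branch point. Finally I would note the reduction ``no branch points in $\crit(\Pi)$, which can always be arranged by a symplectic deformation'' is legitimate: a branch point of $\Sigma$ coinciding with a critical point of $\Pi$ (case (3) of Definition~\ref{def: multisection}) can be pushed off by a $C^0$-small deformation of $\Sigma$ keeping it symplectic, so the general case follows from the generic one.
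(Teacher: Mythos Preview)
The paper does not give its own proof of this statement: it is presented as a restatement of \cite[Theorem~1.1]{baykur2016multisections} in the bordered setting, with the translation left to Remarks~\ref{rem: exotic_nbd_branch} and~\ref{rem:framed_mcg}. Your proposal is exactly this translation carried out in detail---matching the local model via Remark~\ref{rem: exotic_nbd_branch} and the monodromy via Remark~\ref{rem:framed_mcg}---so it is correct and follows the same approach the paper indicates.
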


We note that it is further true that any PANLF is the complement of a positive multisection in some bordered Lefschetz fibration. 

The following definition is due to Baykur and Hayano \cite{Baykur_Hayano_Hurwitz}.

\begin{definition}[Generalized Hurwitz equivalence] \label{def: generalized_hurwitz_equivalence}
  Let $F$ be a compact oriented surface with boundary. Two positive allowable monodromy factorizations of $\phi \in \SMod(F)$ will be said to be \textbf{Hurwitz equivalent} if they are related by the following moves:
  \begin{enumerate}
    \item {\bf Elementary transformation}, which changes a factorization as follows:
    \[
      \eta_{k+l}\cdots\eta_{i+1}\eta_i\cdots\eta_1 \longleftrightarrow \eta_{k+l}\cdots(\eta_{i+1}\eta_i\eta_{i+1}^{-1})\eta_{i+1}\cdots\eta_1
    \]
    \item {\bf Global conjugation}, which changes each member of a factorization by the conjugation of some mapping class $\psi \in Mod_\partial(F)$:
    \[
      \eta_{k+l}\cdots\eta_1 \longleftrightarrow (\phi\eta_{k+l}\phi^{-1})\cdots(\psi\eta_1\psi^{-1})
    \]
    \item {\bf Framing conjugation}, which changes a factorization as follows:
    \[
      \eta_{k+l}\cdots\eta_{i+1}\eta_i\eta_{i-1}\cdots\eta_1 \longleftrightarrow \eta_{k+l}\cdots\eta_{i+1}(t_{\delta}\eta_it_{\delta}^{-1})\eta_{i-1}\cdots\eta_1
    \]
    where $\delta$ is a simple closed curve parallel to a boundary component of $F$.
  \end{enumerate}
\end{definition}

The generalized Hurwitz moves naturally arise as a consequence of choices made when obtaining a factorization from a PANLF. As a result, two equivalent monodromy factorizations give rise to two symplectic PANLF that are deformation equivalent.

% Will add topological properties of multisections

%-----------------------------------------------------
\subsection{The relative ordinary open book of an annulus vertebra}\label{subsec:relobd}
%-----------------------------------------------------
We recall a relative open book decomposition, introduced in \cite{Van_horn_morris_thesis}. Let $P$ be a compact oriented surface with boundary and $\phi \in \Mod_{\partial}(P)$. Consider the mapping torus 
\[
  P_{\phi} = P \times [0,1] / \sim
\]
where $(p,1) \sim (\phi(p),0)$ for $p \in P$. Now pick a partition of $\partial P$ into two sets $\partial_B$ and $\partial_T$. Each component of $\partial_B$ is called {\bf binding circles}, and each component of $\partial_T$ is called {\bf boundary circles}. We foliate the boundary tori $\partial_B \times S^1 \subset \partial P_{\phi}$ into $\{x\} \times S^1$, where $x \in \partial_B$. From this, we can construct a $3$-manifold $M$ with torus boundary components by collapsing the foliating circles in each component of $\partial_B \times S^1$. We say $(P,\phi,(\partial_B,\partial_T))$ is a {\bf relative open book decompostion} of $M$. If $\partial_T = \varnothing$, we obtain a closed manifold with a regular open book decomposition.

As in the case of regular open book decompositions, there is a contact structure compatible with a given relative open book decomposition.

\begin{definition}
  Let $M$ be a compact oriented $3$-manifold and $(P,\phi,(\partial_B,\partial_T))$ a relative open book of $M$. A contact structure $\xi$ on $M$ is said to be {\bf compatible with} $(P,\phi,(\partial_B,\partial_T))$ if there exists a contact form $\alpha$ where $d\alpha$ restricts to a symplectic form on each page, and the foliation given by $\xi$ on $\partial M$ agrees with the foliation given by boundary circles in $\partial_T$.
\end{definition} 

One may glue relative open books along boundaries to build a contact structure supported by the glued open book decomposition. 

\begin{theorem}{\cite[Proposition 3.0.7]{Van_horn_morris_thesis}}\label{thm: gluing_rel_obd}
  Let $\mathcal{O}' = (P',\phi',(\partial_B',\partial_T'))$ be a relative open book of a (possibly disconnected) manifold $M'$. Let $T_1$ and $T_2$ be two boundary tori of $M'$. Let $\psi \colon T_1 \to T_2$ be an orientation reversing homeomorphism which identifies the foliations given by boundary circles in $\partial_T'$ on both. Let $M = M' \big/ \sim$ be the manifold formed by identifying $T_1$ and $T_2$ via $\psi$. Also, let $\mathcal{O} = (P,\phi,(\partial_B, \partial_T))$ be the open book of $M$ obtained by gluing the two boundary circles on $P'$ corresponding to $T_1$ and $T_2$. Then there exists a contact structure on $M$ compatible with $\mathcal{O}$, unique up to an isotopy fixing the boundary, that restricts to the contact structure on $M'$ compatible with $\mathcal{O}'$.
\end{theorem}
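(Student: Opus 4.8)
The plan is to prove the theorem by a contact normal-form argument in a collar of the gluing torus: I would glue the two Giroux forms directly to get existence, and run Moser's trick for the uniqueness clause. Write $T \subset M$ for the torus obtained by identifying $T_1$ with $T_2$ via $\psi$, let $c_1, c_2 \subset \partial P'$ be the boundary circles of the page of $\mathcal{O}'$ corresponding to $T_1$ and $T_2$ (so the page $P$ of $\mathcal{O}$ is $P'$ with $c_1$ glued to $c_2$), and let $\xi'$ be the contact structure on $M'$ compatible with $\mathcal{O}'$, with Giroux form $\alpha'$. The mechanism that makes everything go is that compatibility of a contact form with a relative open book pins down its germ along a $\partial_T$-type boundary torus up to a contractible set of choices, so the Giroux forms on the two sides of $T$ can be matched, and essentially uniquely.

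For existence, I would first place $\alpha'$ in normal form near $T_1$ and $T_2$. Near a component $c$ of $\partial_T'$ the relative open book has the product model $[0,\varepsilon)_s \times c_\theta \times S^1_\varphi$, with pages $\{\varphi = \mathrm{const}\}$ and $\partial M'$ along $\{s = 0\}$; a contact neighborhood theorem lets me isotope $\alpha'$, rel the complement of a collar of $\partial M'$, to the model $\alpha' = g(s)\,d\theta + f(s)\,d\varphi$ where $g(0) = 0$, and $g'(s) > 0$, $f(s) > 0$, $fg' - gf' > 0$ for small $s$ — these inequalities encode precisely that $d\alpha'$ is positive on pages, that $\xi'$ is transverse to $\partial M'$ with characteristic foliation the page-boundary circles $c \times \{\mathrm{pt}\}$, and that $\alpha'$ is contact, and the germs of $(f,g)$ at $s=0$ satisfying them form a convex set modulo positive rescaling. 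After an isotopy through diffeomorphisms preserving the foliations by page-boundary circles, $\psi$ becomes affine in $(\theta,\varphi)$, taking $\theta$-circles to $\theta$-circles orientation-reversingly and $\varphi$-circles to $\varphi$-circles; rescaling $\alpha'$ near $T_1$ by a positive constant and reflecting the collar coordinate, I can arrange the two model germs to agree along $T$ under $\psi$, with the sign dictated by requiring a cooriented contact structure on the oriented $M$. The resulting $1$-form $\alpha$ on $M$ is then smooth across $T$ (the two collars assemble into one collar $(-\varepsilon,\varepsilon)_s \times T^2$ carrying the model form) and contact (the contact inequality held on each side); it agrees with $\alpha'$ away from $T$, $d\alpha$ is symplectic on each page of $\mathcal{O}$ since such a page is a page of $\mathcal{O}'$ with $c_1$ glued to $c_2$ and near $T$ the model form is standard and nondegenerate there, and the characteristic foliation along the remaining boundary $\partial_T$ of $\mathcal{O}$ is untouched. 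Hence $\ker\alpha$ is compatible with $\mathcal{O}$ and restricts to $\xi'$.

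For uniqueness, let $\xi_0,\xi_1$ be contact structures on $M$ compatible with $\mathcal{O}$ and restricting to $\xi'$ on the interior of $M'$. Compatibility with $\mathcal{O}$ forces each $\xi_i$ into the same contractible family of model germs near $T$, so after an isotopy supported in a neighborhood of $T$ and fixing the complement I may take $\xi_0 = \xi_1$ near $T$; since $\xi_0 = \xi_1 = \xi'$ away from $T$ the two structures then coincide, and the isotopy, being compactly supported near $T$, extends by the identity to an ambient isotopy of $M$ fixing $\partial M$. (If one prefers to begin with ``both restrict to something isotopic rel boundary to $\xi'$'', one first runs a relative Gray-stability isotopy on $M'$ fixing $\partial M' \supseteq T$, using the relative Giroux uniqueness for $\mathcal{O}'$.) The step I expect to be the main obstacle is the germ-matching in the second paragraph: one must verify the normal forms near $T_1$ and $T_2$ can be made to agree as \emph{germs}, not merely that their characteristic foliations coincide, so that the glued form is genuinely smooth and contact. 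This requires carefully tracking the coorientation and the twisting direction of $\xi'$ near each torus and checking that the orientation reversal of $\psi$ is exactly compensated by the sign convention for a cooriented contact structure on the oriented $M$, so that no contradiction arises; the convexity of the space of admissible germs then upgrades ``can be matched'' to ``matched up to a contractible choice'', which is what powers the uniqueness argument. The remaining ingredients — the contact neighborhood theorem, the affine straightening of $\psi$, and Moser's trick — are standard.
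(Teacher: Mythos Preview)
The paper does not supply its own proof of this statement: it is quoted verbatim as \cite[Proposition 3.0.7]{Van_horn_morris_thesis} and immediately followed by the next paragraph of the text, with no argument given. So there is nothing in the paper to compare your proposal against.

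For what it is worth, your outline is the standard one and is essentially how such gluing results are established (and how Van Horn-Morris proceeds): put the Giroux form into a rotational normal form $g(s)\,d\theta + f(s)\,d\varphi$ on a collar of each $\partial_T$-torus, match the two germs across $\psi$, and invoke Gray stability for uniqueness. One small point to tighten: the hypothesis that the glued object $\mathcal{O}$ is again an open book on $M$ implicitly forces $\psi$ to respect the page fibration $\pi_{P'}$ as well as the $\theta$-foliation, and you should say so explicitly before asserting that $\psi$ can be straightened to an affine map in $(\theta,\varphi)$; the theorem statement only records that $\psi$ preserves the $\theta$-foliation, and without the $\varphi$-compatibility the pages would not glue and your normal-form matching would not go through. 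Once that is made explicit, the rest of your argument is sound.
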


Next, we identify a contact structure supported by a relative open book and a contact structure supported by a spinal open book restricted to a spine component. A spine component with an annulus vertebra is homeomorphic to $S^1 \times I \times S^1$, a thickened torus. Van-Horn-Morris \cite{Van_horn_morris_thesis} introduced relative open books of $S^1 \times I \times S^1$. They correspond to the set of letters $\{a,a^{-1},b,b^{-1}\}$ as depicted in Figure~\ref{fig:relativeBook}. These were used as building blocks for a relative open book decomposition supporting a contact structure on $T^2 \times I$, and an open book decomposition supporting a rotational contact structure on a torus bundle over $S^1$. See Figure~\ref{fig:vanhornmorris} for an example. A torus bundle obtained by gluing these pieces is denoted by the corresponding word in $a$, $a^{-1}$, and $b^{-1}$. According to \cite{hkm_rightveering}, any contact structure corresponding to a word containing $b$ would be overtwisted since the monodromy is not right-veering. We find a relative open book of a spine component with annulus vertebra that agrees on the boundary of the spine component with the page foliation of the spinal open book.

\begin{figure}[htbp]{
  \vspace{0.2cm}
  \begin{overpic}[tics=20]
  {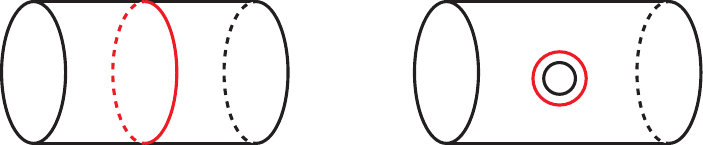}
  \end{overpic}}
  \vspace{0.2cm}
  \caption{Relative open book decompositions for $a$ and $b^{-1}$. For $a^{-1}$ and $b$ we use the negative Dehn twists.}
  \label{fig:relativeBook}
\end{figure}

\begin{figure}[htbp]{
  \vspace{0.2cm}
  \begin{overpic}[tics=20]
  {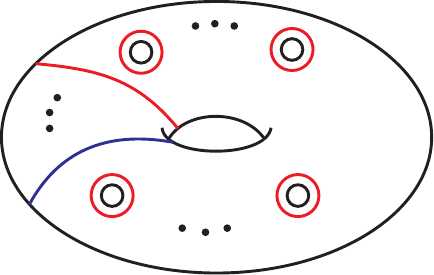}
  \end{overpic}}
  \vspace{0.2cm}
  \caption{An open book decomposition supporting a rotational contact torus bundle $(T_A,\xi_n)$.}.
  \label{fig:vanhornmorris}
\end{figure}

\begin{proposition}\label{prop:obd_ann_vertebra}
  The relative open book for a spine component of an annulus vertebra corresponds to the word $(aba)^{-2}$.
\end{proposition}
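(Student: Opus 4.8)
The plan is to identify the spine component of an annulus vertebra explicitly with a torus bundle piece $T^2 \times I$ carrying a relative open book, and then read off the word in the alphabet $\{a, a^{-1}, b, b^{-1}\}$ that the page foliation forces. Concretely, a spine component with annulus vertebra $\Sigma = S^1 \times I$ has $M_\Sigma = \Sigma \times S^1 \cong S^1 \times I \times S^1$, a thickened torus, with two boundary tori $\partial_\pm M_\Sigma = \{0\} \times S^1 \times S^1$ and $\{1\} \times S^1 \times S^1$. Each of these is glued to a collar of a page boundary component in $M_P$, where (by Definition~\ref{def:spinalobd}(3)) the page boundary wraps around the $\phi$-coordinate with some multiplicity $m_\pm$; for a uniform planar spinal open book with annulus vertebra in the torus-bundle setting, these multiplicities are both $1$, so each page boundary is a longitude-type curve on $\partial_\pm M_\Sigma$. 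The key point is that the Reeb field of a Giroux form is positively tangent to the $S^1$-fibers of $\pi_\Sigma$, so on $M_\Sigma$ the contact structure is forced (up to the usual isotopy) to be the ``standard'' rotational model, and the induced characteristic foliation on $\partial_\pm M_\Sigma$ is by the page-boundary curves.

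\textbf{Key steps.} First I would recall from Van Horn-Morris's thesis \cite{Van_horn_morris_thesis} the precise description of the $\{a, a^{-1}, b, b^{-1}\}$ pieces: each is a relative open book on $S^1 \times I \times S^1$ whose page is an annulus and whose monodromy is a single (positive or negative) Dehn twist, glued so that the total twisting of the boundary foliation across $I$ is $\pm \pi/2$ (in the $\phi_n(t)$ normalization of the introduction). Next, I would compute the twisting that a spine component of an annulus vertebra contributes. In a planar spinal open book with annulus vertebra, the two page boundary components attached to $\partial_\pm M_\Sigma$ are connected \emph{through the paper} by the page, and the monodromy of the spinal open book that is relevant here is the boundary interchange $\tau_\gamma$ (Figure~\ref{fig:boundarytwist}), which rotates each of the two boundary circles by a full clockwise turn while swapping them. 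Pushing this monodromy information onto the spine via the correspondence between spinal open books and relative open books (using Theorem~\ref{thm: gluing_rel_obd} to glue), one finds the characteristic foliation on $\partial M_\Sigma$ must rotate by a total of $3\pi$ across the thickened torus — this is the content of the claim $(aba)^{-2}$, since the word $aba$ contributes three quarter-turns ($3\pi/2$) and the word is traversed with the orientation that makes it $(aba)^{-2}$, i.e. $6$ quarter-turns $= 3\pi$ total, matching the $3\pi$-worth of twisting that the doubled boundary-interchange monodromy produces. I would verify the sign (negative powers rather than positive) by checking right-veering: the boundary interchange is a positive/right-veering mapping class, and under the orientation conventions relating the spine's $I$-coordinate to the torus-bundle gluing direction this translates to the inverses $a^{-1}$, matching the overtwistedness criterion of \cite{hkm_rightveering} only when a $b$ appears — which it does not here.

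\textbf{Main obstacle.} The hard part will be pinning down the exact normalization and orientation conventions so that the count comes out to $(aba)^{-2}$ rather than, say, $(aba)^{2}$, $(ab a^{-1})^{\pm 2}$, or $(a^3)^{-2}$: there are several competing orientations (the orientation of $\Sigma$, the direction of the $S^1$-fiber of $\pi_\Sigma$, the direction one traverses the collar coordinate $t$ versus $s$, and the handedness convention in the boundary interchange of Figure~\ref{fig:boundarytwist}), and a sign error in any one of them changes the word. I would resolve this by tracking a single explicit curve — the boundary of a page — across one full period of the mapping torus, drawing its image under each quarter-turn and matching against Figure~\ref{fig:relativeBook}, and cross-checking the answer against a known case: a planar spinal open book with one annulus vertebra and the boundary-interchange monodromy should reproduce a rotational contact structure with exactly $\pi$-twisting on a parabolic torus bundle (consistent with Theorem~\ref{thm: torusbundle_spinal} and the $\xi_1$ examples), and $(aba)^{-2}$ is precisely the word that Van Horn-Morris's dictionary assigns to that twisting. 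Once the conventions are fixed, the remainder is a direct comparison of pictures and is routine.
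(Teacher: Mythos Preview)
Your proposal contains a genuine error in the twisting computation and a conceptual confusion about what determines the spine's relative open book.

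\textbf{The numerical error.} You claim that each letter $a$ or $b$ contributes a quarter-turn ($\pi/2$) of twisting, so that $(aba)^{-2}$ yields $6 \cdot \pi/2 = 3\pi$. This is not correct. In Van Horn-Morris's dictionary, the full word $(aba)^{-4}$ corresponds to the identity in $SL_2(\Z)$ and adds exactly $2\pi$ of twisting to a torus bundle; hence $(aba)^{-2}$ adds $\pi$ of twisting (and corresponds to $-\mathrm{id}$). Equivalently, it is the composite $aba$, not each individual letter, that plays the role of a quarter-turn. The paper's proof proceeds by first showing directly from the coordinates in Definition~\ref{def:spinalobd}(3) that the boundary-circle foliations on the two ends of $M_\Sigma \cong T^2 \times I$ differ by an odd multiple of $\pi$, and then invoking the explicit Giroux form from \cite[\S2.3]{spinal_I} to pin down that multiple as exactly $\pi$. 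Your $3\pi$ would correspond to the word $(aba)^{-6}$, not $(aba)^{-2}$.

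\textbf{The conceptual issue.} You try to extract the twisting from the page monodromy, specifically from the boundary interchange $\tau_\gamma$. But the relative open book of the spine component is determined entirely by the contact structure on $M_\Sigma$ (the Giroux form) together with the page-boundary foliation on $\partial M_\Sigma$ coming from the gluing data in Definition~\ref{def:spinalobd}(3); it does not depend on what monodromy the paper $M_P$ carries. The proposition is about \emph{any} spine component with annulus vertebra, and indeed the paper's proof never mentions $\tau_\gamma$ or any page monodromy. Your cross-check at the end (comparing against $\pi$-twisting parabolic bundles) is exactly the right instinct and would have caught the discrepancy: those bundles carry $\xi_1$, i.e.\ $\pi$-twisting, which already tells you the annulus spine contributes $\pi$, not $3\pi$.
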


\begin{proof}
  A spine component with annulus vertebra is homeomorphic to $M = S^1 \times S^1 \times [0,1]$. Thus we can pick coordinates $(\phi, \theta, s)$ with $s \in [0,1]$, $\phi \in S^1$, and $\theta \in S^1$, following Definition~\ref{def:spinalobd}. We also orient $M$ using these coordinates. Now consider a relative open book $(P, \psi)$ of $M$. Then, the leaves of foliations on $\partial M$ given by the boundary circles of the relative open book should agree with the fibers of $\pi_\Sigma$. The foliations are given by 
  \[
    \bigcup_{\phi_0 \in S^1}\{(\phi_0, -\theta,0) \mid \theta \in S^1\}, \quad \bigcup_{\phi_0 \in S^1}\{(\phi_0, \theta,1) \mid \theta \in S^1\}.
  \]
  Thus the boundary circle foliation on $T^2 \times \{0\}$ rotates through an odd multiple of $\pi$ to obtain the foliation on $T^2 \times \{1\}$. By the construction of the Giroux form on $M_\Sigma$ in \cite[\S2.3]{spinal_I}, it follows that the boundary circle foliation rotates by $\pi$. 

  In \S4.4 of \cite{Van_horn_morris_thesis}, a correspondence is set up between words in $a$, $a^{-1}$, $b$, and $b^{-1}$ and the monodromy of the torus bundle, the element in $SL(2, \Z)$, under the identification 
  \[
    \Phi: \{(a,b) \mid aba = bab, (ab)^6 = 1\} \to SL(2,\Z).
  \]
  Van-Horn-Morris showed that $\Phi((aba)^{-4}) = \text{id}$ and by inserting the word $(aba)^{-4}$, we can add $2\pi$-twisting to a torus bundle without changing the monodromy. 
  Thus by inserting the word $(aba)^{-2}$, we can add $\pi$-twisting to a torus bundle, but change the monodromy by $-\text{id}$. Without identifying the two ends of $T^2 \times I$, the relative open book corresponding to the word $(aba)^{-2}$ gives the $\pi$-twisting contact $T^2 \times I$, which is contactomorhpic to an annulus spine component.
\end{proof}

%-------------------------------------
\subsection{Spine removal cobordisms} 
%-------------------------------------
\label{subsec:spine_removal}
Let $(M, \xi)$ be a contact manifold supported by a spinal open book decomposition $(S, \phi, \Sigma)$ and $\Sigma_{\text{rem}} \subset \Sigma$ an open and closed subset of the vertebra. Lisi, Van-Horn-Morris and Wendl \cite{spinal_I} define a weak symplectic cobordism called a \textbf{spine removal cobordism}, obtained by capping off the spine component $\Sigma_{\text{rem}}$.

\begin{theorem}{\cite[Theorem D]{spinal_I}}\label{thm: spineremoval}
  Consider the contact manifold $(\widetilde{M}, \widetilde{\xi})$, obtained from $(M, \xi)$ by deleting the spine component $\Sigma_{\text{rem}} \times S^1$ and capping off the corresponding boundary components of the pages by disks. Then there exists a symplectic cobordism with strongly concave boundary $(M, \xi)$ and weakly convex boundary $(\widetilde{M}, \widetilde{\xi})$, defined by attaching $\Sigma_{\text{rem}}\times \mathbb{D}^2$ with a product symplectic structure along $\Sigma_{\text{rem}} \times S^1$.
\end{theorem}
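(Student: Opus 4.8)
The cobordism $W$ is built by grafting the product piece $(\Sigma_{\text{rem}}\times\mathbb{D}^2,\Omega)$, with $\Omega=\omega_{\Sigma_{\text{rem}}}\oplus\omega_{\mathbb{D}^2}$ a product of area forms, onto the top of a symplectization collar of $(M,\xi)$ over the spine locus $\Sigma_{\text{rem}}\times S^1$, and then checking that the two resulting boundary components behave as claimed. First I would put a Giroux form $\lambda$ for $\pi$ into the normal form of \cite[\S2.3]{spinal_I} near the spine: over a collar $\Sigma_{\text{rem}}\times S^1_\theta\times(-\delta,0]$ of $\Sigma_{\text{rem}}\times S^1$ (with $\theta$ the $\pi_\Sigma$-fiber coordinate), $\lambda$ has the product shape $\lambda=\beta+c\,d\theta$, where $c>0$ is constant, $\beta$ is pulled back from $\Sigma_{\text{rem}}$ with $d\beta=\omega_{\Sigma_{\text{rem}}}$ a fixed area form, and across $\partial\Sigma_{\text{rem}}$ this interpolates to the standard collar model of $\partial M_P$ of Definition~\ref{def:spinalobd}; here the Reeb field is $c^{-1}\partial_\theta$, positively tangent to the $\pi_\Sigma$-fibers, and $\lambda\wedge d\lambda=c\,d\theta\wedge\omega_{\Sigma_{\text{rem}}}>0$.

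Next I would analyze the attaching piece. Choose $\omega_{\mathbb{D}^2}=\sigma(r)\,dr\wedge d\theta$ rotationally symmetric, with $\sigma>0$ away from $r=0$, smooth at the origin, and $\int_0^1\sigma=c$. Then $\mu:=\beta+\big(\int_0^r\sigma\big)d\theta$ is a global primitive of $\Omega$ on $\Sigma_{\text{rem}}\times\mathbb{D}^2$, and its restriction to each hypersurface $\Sigma_{\text{rem}}\times S^1_{\{r=r_0\}}$ is a contact form (since $\mu\wedge d\mu=\big(\int_0^{r_0}\sigma\big)d\theta\wedge\omega_{\Sigma_{\text{rem}}}>0$), with a Liouville vector field $Z=Z_\beta+\big(\int_0^r\sigma\big)\sigma(r)^{-1}\partial_r$ transverse to these hypersurfaces. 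At $r=1$ the induced contact structure is exactly $\ker(\beta+c\,d\theta)=\xi|_{\Sigma_{\text{rem}}\times S^1}$, so $(\Sigma_{\text{rem}}\times\mathbb{D}^2,\Omega)$ is a weak filling of the spine component with the correct boundary data; near $\partial\Sigma_{\text{rem}}\times\mathbb{D}^2$ one deforms $\Omega$ by a Thurston-type argument so that the solid-torus part of the boundary becomes a weakly convex hypersurface carrying the paper-model contact structure — this is precisely the familiar operation of capping a binding circle of an ordinary open book by a disk, with the slope bookkeeping dictated by the multiplicities $m$ in Definition~\ref{def:spinalobd}.

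I would then set $W=(M\times[0,1],d(e^\tau\lambda))\cup(\Sigma_{\text{rem}}\times\mathbb{D}^2,\Omega)$, glued along $\Sigma_{\text{rem}}\times S^1\times\{1\}$ and $\Sigma_{\text{rem}}\times S^1_{\{r=1\}}$, after smoothing the corner. On the topological side: deleting $\Sigma_{\text{rem}}\times S^1$ from $M$ leaves torus boundary components $\partial\Sigma_{\text{rem}}\times S^1$, and gluing in the solid tori $\partial\Sigma_{\text{rem}}\times\mathbb{D}^2$ caps the page boundary circles lying over $\partial\Sigma_{\text{rem}}$ by meridian disks, producing exactly the spinal open book underlying $(\widetilde M,\widetilde\xi)$; that the induced contact structure is $\widetilde\xi$ follows from uniqueness of the compatible contact structure near the capped region, in the spirit of Theorem~\ref{thm: gluing_rel_obd}. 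The bottom boundary $M\times\{0\}$ is strongly concave, since on $M\times[0,\tfrac12]$ the form is the honest symplectization $d(e^\tau\lambda)$, giving a standard concave collar. Weak convexity at $\partial_+W$ holds on the symplectization part (where it is even strongly convex) and, on the capped solid tori, because $\Omega$ is positive on the contact planes of $\widetilde\xi$ once the total $\mathbb{D}^2$-area is taken small.

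\textbf{Main obstacle.} The analytic heart is the symplectic gluing across a neighborhood of $\Sigma_{\text{rem}}\times S^1$: the symplectization form $d(e^\tau\lambda)=e^\tau d\tau\wedge(\beta+c\,d\theta)+e^\tau\omega_{\Sigma_{\text{rem}}}$ scales the $\Sigma_{\text{rem}}$-part by $e^\tau$, whereas the product form keeps it constant, so there is no reparametrization making $\Omega$ a symplectization collar of $\xi$ near the gluing hypersurface — one must instead interpolate between the two forms in a neck $\Sigma_{\text{rem}}\times S^1\times[-\epsilon,\epsilon]$ by a Moser/Thurston argument, keeping closedness, nondegeneracy, and the positivity conditions on the page planes and on $\widetilde\xi$. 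This area mismatch is exactly why the top boundary is only \emph{weakly}, rather than strongly, convex, and why strengthenings require additional exactness hypotheses on the spine. The remaining points — the normal form near the spine, smoothness of the glued $W$, the identification of $\widetilde M$ with its spinal open book, and the boundary orientations — are routine.
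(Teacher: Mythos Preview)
The paper does not actually prove this statement: it is quoted as background from \cite[Theorem~D]{spinal_I} and no argument is given here, so there is no ``paper's own proof'' to compare against. What you have written is a plausible sketch of how the original proof in \cite{spinal_I} proceeds, and your identification of the main obstacle --- that the symplectization form and the product form on $\Sigma_{\text{rem}}\times\mathbb{D}^2$ scale the $\Sigma_{\text{rem}}$-area differently, forcing an interpolation that destroys strong convexity at the top --- is the correct conceptual point and the reason the resulting cobordism is only weakly convex at $(\widetilde M,\widetilde\xi)$.

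That said, a couple of places in your outline are vaguer than they should be. First, the claim that near $\partial\Sigma_{\text{rem}}\times\mathbb{D}^2$ ``one deforms $\Omega$ by a Thurston-type argument'' to make the solid-torus boundary weakly convex with the correct contact structure hides real work: this is where the corner between $\partial_h$ and $\partial_v$ of the cobordism lives, and the interpolation between the spine model and the paper model across $\partial M_\Sigma\cap\partial M_P$ is exactly where \cite{spinal_I} invests effort. Second, your gluing $W=(M\times[0,1])\cup(\Sigma_{\text{rem}}\times\mathbb{D}^2)$ is not quite the right topology: the cap is attached along $\Sigma_{\text{rem}}\times S^1\subset M\times\{1\}$, not replacing the whole top, so the upper boundary is $(M\setminus(\Sigma_{\text{rem}}\times S^1))\cup(\partial\Sigma_{\text{rem}}\times\mathbb{D}^2)$ after corner rounding; you allude to this in the topological paragraph but the formula for $W$ obscures it. These are fixable, but since the present paper simply cites the result, your write-up is going beyond what is needed here.
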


While a spine removal cobordism in general is only a weak symplectic cobordism, it gives a natural way to alter the spinal open books. We will use this cobordism in \S\ref{sec: constructions} when we construct  strong fillings of given spinal open books.

%%%%%%%%%%%%%%%%%%%%%%%%%%%%%%%%%%%%%%%%%%%%%%%%%%%%%%%%%%%%%%%%%%%%%%%%%%%%%%%%
\section{Background: Pseudoholomorphic curves}\label{sec: pseudoholomorphic}
%%%%%%%%%%%%%%%%%%%%%%%%%%%%%%%%%%%%%%%%%%%%%%%%%%%%%%%%%%%%%%%%%%%%%%%%%%%%%%%%
In this section we review some necessary background regarding pseudoholomorphic foliations and asymptotics of pseudoholomorphic curves. The reader is welcome to refer to \cite{ Wendl_low_dim_curves, Wendl_lecture_contact3, spinal_II} for a more detailed treatment of the material.

%------------------------------------------------------------------------
\subsection{Stable Hamiltonian structures and symplectic completions}  \label{subsec:shs}
%------------------------------------------------------------------------

\begin{definition}
  A \textbf{stable Hamiltonian structure} on a $(2n-1)$-manifold is a pair $\mathcal{H} = (\Omega, \Lambda)$ where $\Omega$ is a $2$-form and $\Lambda$ is a $1$-form satisfying $\Lambda \wedge \Omega^{n-1}>0$, $d\Omega = 0$ and $\ker \Omega \subset d\Lambda$.
\end{definition}

Similar to contact structures, we may consider the hyperplane field $\Xi:=\ker \Lambda$ and associate to $\mathcal{H}$ a unique vector field $R_\cH$, also called the \textbf{Reeb vector field}, given by $\Omega(R_\cH, \cdot)\equiv 0$ and $\Lambda(R_\cH) \equiv 1$. We may also define the \textbf{symplectization of $(M, \cH)$} to be $\bR \times M$ with the symplectic form
$$\omega_\phi:=d((e^{\phi(r)}-1)\Lambda) + \Omega$$
where $\phi$ is any smooth function $\phi: \bR \to (-\delta, \delta)$ with $\phi'>0$ and $\phi(r) = r$ for $r$ near $0$. The \textbf{symplectic completion} of a compact symplectic manifold $(W, \omega)$ with stable Hamiltonian boundary $\partial W = -M_-\sqcup M_+$ is the pair $(\widehat{W}, \omega_\phi)$ where
$$\widehat{W}:=((-\infty, 0] \times M_-)\cup W \cup ([0,\infty) \times M_+)$$
and
$\omega_\phi:=
\begin{cases}
  \omega_\phi:=d((e^{\phi(r)}-1)\Lambda_-) + \Omega_- & \text{ on } (-\infty, 0] \times M_-\\
  \omega & \text{ on } W \\
  \omega_\phi:=d((e^{\phi(r)}-1)\Lambda_+) + \Omega_+ & \text{ on } [0,\infty) \times M_+.
\end{cases}$

As in the contact case, we consider \textbf{compatible almost complex structure} $J$ on $\bR_r \times M$ with respect to the stable Hamiltonian structure $\cH:=(\Omega, \alpha)$. This means that $J \partial r = R_\cH$, $J(\Xi) = \Xi$ and $\Omega(\cdot, J\cdot)$ defines a bundle metric on $\Xi$. On the completion $(\widehat{W}, \omega_\phi)$, we consider the \textbf{compatible almost complex structure} $J$ to be such that $J|_W$ is compatible with $\omega$ and compatible with $\cH_-$ and $\cH_+$ on the symplectizations $(-\infty, 0] \times M_-$ and $[0,\infty) \times M_+$. Note that any such compatible $J$ is $\omega_\phi$-tame on $\widehat{W}$ for every $\phi\in \cT$. Therefore, the \textbf{energy of a $J$-holomorphic curve} $u:(S, j)\to (\bR\times M, J)$ defined as
$$E(u):=\sup_{\phi\in \cT} \int_S u^*\omega_\phi$$
is always greater or equal to zero, with equality exactly when the curve is constant. For more detailed introduction and references for stable Hamiltonian structures and curves in their symplectizations, see for example, \cite{Niederkruger_Wendl_weak_filling,Cieliebak_Volkov_stableHam, wendl2016lectures,Wendl_lecture_contact3, spinal_II}.

%------------------------------------------------------------------------------
\subsection{Moduli spaces of pseudoholomorphic foliations in a double completion} \label{subsec:moduli_space_foliation}
%------------------------------------------------------------------------------
Pseudoholomorphic foliations present an important tool in classifying symplectic fillings of contact manifolds via considering compactified moduli spaces of curves in their completions. In this section, we define the symplectic completion and moduli spaces considered in the proofs of Theorem \ref{thm: lvw_foliation} as well as our main theorems.

Given a strong filling of a contact manifold $(M,\xi)$ supported by a spinal open book, a compatible almost complex structure on a specific noncompact symplectic model is carefully constructed in \cite[\textsection 3]{spinal_II} in order to ensure the existence of pseudoholomorphic foliations. 

Specifically, given a spinal open book $M = M_p \cup M_\Sigma$, \cite[\textsection 3]{spinal_II} associates a  $(\widehat{E}, \omega_E)$ called a \textbf{double completion} and a weakly contact hypersurface $(M^0, \xi_0)$ contactomorphic to $(M, \xi)$. Here, an oriented hypersurface together with a co-oriented contact structure in a symplectic $4$-manifold is called \textbf{weakly contact} if the restricted symplectic form on the contact structure is positive. Topologically, the double completion takes the form
$$\widehat{E}:=(-1, \infty)_t \times \widehat{M}_\Sigma\cup (-1, \infty)_s\times \widehat{M}_P$$
where $\widehat{M}_\Sigma:=M_\Sigma\cup [0,\infty) \times \partial M_\Sigma$ and $\widehat{M}_P:=M_\Sigma\cup [0,\infty) \times \partial M_P$. The symplectic structure $\omega_E$ on $\widehat{E}$ is described in \textsection 3.3 in \cite{spinal_II}. In addition, a compatible stable Hamiltonian $\cH = (\Omega_0, \Lambda_0)$ can be endowed on $(M^0, \xi_0)$, which allows a nondegenerate perturbation to $(M^+, \cH_+ = (\Omega_+, \Lambda_+))$. Since we do not need the specific form of the symplectic structure or the stable Hamiltonian structure in our paper, we refer the interested reader to \cite[\textsection 3]{spinal_II} for details.

By construction, $\widehat{E}$ contains an unbounded closed subset $\widehat{N}_+(\partial E)\subset \widehat{E}$ which is identified with $[0,\infty) \times M^+$. Now, any $J_+$-compatible with $\cH_+$ determines an $\omega_E$-compatible almost complex structure on $[0,\infty) \times M^+ \cong \widehat{N}_+(\partial E)$. In \cite[\textsection 3]{spinal_II}, a particular $J_+$ is carefully chosen so that there exists a $\bR$-invariant foliation $\cF_+$ on $[0, \infty) \times M^+$ that is $J_+$-holomorphic. The construction of the foliation $\cF_+$ is in \textsection 3.8 in \cite{spinal_II}. Again, we do not need the specific form of the foliation $\cF_+$, but note that it includes ``holomorphic pages'' as shown in Proposition 3.12 in \cite{spinal_II}.

Now we discuss a specific model of the completion $(\widehat{W}, \widehat{\omega})$ of a symplectic filling $(W,\omega)$ which is a variation of the construction in \textsection \ref{subsec:shs}, but more suited for the discussion of pseudoholomorphic curves. Let $(M^-, \xi_-)$ be a weakly contact hypersurface obtained from $M^0$ by translation in the $s-$ and $t-$coordinates by a small negative value. In particular, $(M^-, \xi_-)$ is also contactomorphic to $(M,\xi)$. The completion is now given by
$$(\widehat{W}, \widehat{\omega}):= (W, \omega)\cup_{M^-} \big( \widehat{N}_-(\partial E), \omega_E\big)$$
where $\widehat{N}_-(\partial E)$ is the unbounded closed subset in $\widehat{E}$ with boundary $-M^-$ and the symplectic gluing is justified in \cite[\textsection 6.1]{spinal_II}. Note that $\widehat{N}_-(\partial E) \supset \widehat{N}_+(\partial E)$ and the almost complex structure $J_+$ constructed in the previous paragraph can be extended to an appropriate $\widehat{\omega}$-tame almost complex structure $\widehat{J}$ on $\widehat{W}$. Therefore, $(\widehat{W}, \widehat{J})$ contains a cylindrical end $([0,\infty) \times M^+, J_+)$ compatible with $\cH_+$.

Finally, we are ready to review the moduli spaces of interest defined in \textsection 6.1 of \cite{spinal_II}. Let $\cM(\widehat{J})$ be the moduli space of finite-energy holomorphic curves in $(\widehat{W}, \widehat{J})$ with arithmetic genus zero and $\overline{\cM}(\widehat{J})$ be its compactification. Furthermore, define
$$\widehat{\cM}(\widehat{J}):=\overline{\cM}(\widehat{J})/\sim,$$
where two curves are equivalent if and only if their asymptotic ends are the same up to permutation and their bottom-most nonempty levels are the same. It is easy to see that pseudoholomorphic leaves in the foliation $\cF_+$ are elements in $\widehat{\cM}(\widehat{J})$ as well. 

In fact, identify $[0,\infty) \times M^+$ and $\R \times M^+$ in the usual way, and let $\overline{\cM}^{\cF_+}_2(J_+)$ denote the compactified moduli space of unparameterized index $2$ $J_+$-holomorphic curves in $\bR \times M^+$ which belong to $\cF_+$ modulo $\bR$-translation, and
$$\widehat{\cM}^{\cF_+}(J_+):=\overline{\cM}^{\cF_+}_2(J_+)/\sim$$
with the same equivalence relation $\sim$ as above. Then $$\widehat{\cM}^{\cF_+}(J_+) \subset \widehat{\cM}(\widehat{J}).$$
Now define
$$\widehat{\cM}^{\cF}(\widehat{J}) \subset \widehat{\cM}(\widehat{J})$$
to be the smallest open and closed subset that contains $\widehat{\cM}^{\cF_+}(J_+)$. The main technical part of \cite{spinal_II} is to give a partition of the interior of $\widehat{\cM}^{\cF}(\widehat{J})$ in terms of regular, singular and exotic buildings. Define $\widehat{\cM}_{\reg}^\cF(\widehat{J})$, $\widehat{\cM}_{\sing}^\cF(\widehat{J})$ and $\widehat{\cM}_{\exot}^\cF(\widehat{J})$ to consist of building whose main levels are curves in $\cM_{\reg}(\widehat{J})$, $\cM_{\sing}(\widehat{J})$ and $\cM_{\exot}(\widehat{J})$, respectively, as in Theorem \ref{thm: lvw_foliation}.

\begin{theorem}{\cite[Proposition 6.3]{spinal_II}} \label{thm:spinal_ii_6.3}
  For a generic choice of $\widehat{J}$ on $\widehat{W}$ satisfying the above conditions, we have the following partition
  $$\widehat{\cM}^\cF(\widehat{J}) = \widehat{\cM}_{\reg}^\cF(\widehat{J})\cup \widehat{\cM}_{\sing}^\cF(\widehat{J}) \cup \widehat{\cM}_{\exot}^\cF(\widehat{J})\cup \widehat{\cM}^{\cF_+}(J_+),$$
  where $\widehat{\cM}_{\reg}^\cF(\widehat{J})$ is an open set and $\widehat{\cM}_{\sing}^\cF(\widehat{J})$ and $\widehat{\cM}_{\exot}^\cF(\widehat{J})$ are each a finite set. In addition, $\partial \widehat{\cM}^{\cF}(\widehat{J}) = \widehat{\cM}^{\cF_+}(J_+)$ and every point in $\widehat{W}$ is the image of the main level of a unique curve in the interior of $\widehat{\cM}^{\cF}(\widehat{J})$ via the continuous surjection
  $$\Pi\colon \widehat{W} \to \widehat{\cM}^\cF(\widehat{J})\setminus \widehat{\cM}^{\cF_+}(J_+): x \mapsto \text{the curve through } x.$$
\end{theorem}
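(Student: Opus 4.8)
The statement is really a theorem about the global structure of a moduli space of finite–energy punctured pseudoholomorphic curves of arithmetic genus zero in the completed filling $(\widehat W,\widehat J)$, so the natural toolkit is SFT compactness together with intersection theory for punctured curves (Siefring, Wendl) and automatic transversality for low-dimensional genus-zero curves, exactly in the form developed in \cite{Wendl_low_dim_curves,Wendl_lecture_contact3} and applied in \cite{spinal_II}. The plan is: (1) show $\widehat{\cM}^{\cF}(\widehat J)$ is compact in the SFT topology; (2) use automatic transversality to make the top stratum a smooth open surface of embedded, mutually disjoint curves, giving a foliation; (3) classify the codimension-one and -two boundary strata produced by SFT compactness, identifying exactly the nodal (singular) and exotic buildings listed in Theorem \ref{thm: lvw_foliation}, plus the fully-broken stratum $\widehat{\cM}^{\cF_+}(J_+)$; (4) deduce the map $\Pi$ and its stated properties from positivity of intersections.

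\textbf{Transversality and the foliation property.} For generic $\widehat J$ compatible with $\cH_+$ at the cylindrical end (and with the prescribed behaviour near $\widehat{N}_+(\partial E)$), the relevant curves have index $2$, genus zero, and punctures asymptotic to simply covered Reeb orbits coming from the binding; Wendl's automatic transversality criterion then applies, so $\widehat{\cM}^{\cF}_{\reg}(\widehat J)$ is cut out transversely and is a smooth $2$-manifold, hence open. The key structural input is the punctured adjunction formula: the relative intersection number of such a curve with itself, combined with the asymptotic writhe bounds, forces every regular curve to be embedded and forces two distinct regular curves to be disjoint. Since the curves in $\cF_+$ already foliate the region $[0,\infty)\times M^+$, connectedness of $\widehat{\cM}^{\cF}(\widehat J)$ (it is the smallest open–closed set containing $\widehat{\cM}^{\cF_+}(J_+)$) together with openness of the regular stratum propagates the foliation inward.

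\textbf{Classifying the degenerations.} By SFT compactness, any sequence in $\widehat{\cM}^{\cF}(\widehat J)$ has a subsequence converging to a holomorphic building. One bounds the energy and fixes the asymptotics (they are determined by membership in the connected component $\widehat{\cM}^{\cF}$), so only finitely many buildings that are \emph{not} single embedded curves can occur. Intersection-positivity and index counting then pin down the possibilities: either the main level is a nodal curve splitting as two embedded index-zero-after-quotient components meeting transversely once (the \emph{singular} fibers), or it is an embedded curve with exactly one end doubly covering a Reeb orbit and all other ends simply covered (the \emph{exotic} fibers), or the main level has been pushed entirely into the symplectization end and the building lies in $\widehat{\cM}^{\cF_+}(J_+)$. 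Each singular or exotic building contributes a definite positive amount to a bounded intersection/Euler-characteristic count, so $\widehat{\cM}^{\cF}_{\sing}$ and $\widehat{\cM}^{\cF}_{\exot}$ are finite, and the remaining boundary is exactly $\widehat{\cM}^{\cF_+}(J_+)$, giving $\partial\widehat{\cM}^{\cF}(\widehat J)=\widehat{\cM}^{\cF_+}(J_+)$. Existence and uniqueness of the curve through a given $x\in\widehat W$ — hence well-definedness, continuity, and surjectivity of $\Pi$ — follow: existence from the foliation filling $\widehat W$ by compactness/connectedness, uniqueness again from positivity of intersections applied to the main levels, with continuity from Gromov–SFT convergence.

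\textbf{Main obstacle.} The technical heart is the intersection-theoretic bookkeeping in the punctured/SFT category: setting up Siefring's relative intersection numbers and the punctured adjunction inequality precisely enough that (i) generic curves are forced embedded and disjoint, (ii) only the two listed degeneration types survive, and (iii) each such degeneration is ``expensive'' so only finitely many arise — all while controlling breaking at the double-completion ends $\widehat M_\Sigma$ and $\widehat M_P$, which is why one must use the specially constructed $J_+$ and the $\bR$-invariant foliation $\cF_+$ of \cite[\S3]{spinal_II}. This is exactly the main analytic work of \cite{spinal_II}, and I would follow their argument, citing \cite{Wendl_low_dim_curves,Wendl_lecture_contact3,wendl2016lectures} for the transversality and intersection-theoretic ingredients and the SFT compactness theorem for the limiting behaviour.
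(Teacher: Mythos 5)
This statement is not proved in the paper at all: it is quoted verbatim as \cite[Proposition 6.3]{spinal_II} and used as background, so the only ``proof'' here is the citation. Your outline (automatic transversality for the index-$2$ genus-zero curves, Siefring intersection theory and punctured adjunction to force embeddedness and disjointness, SFT compactness to classify the nodal, exotic, and fully-broken degenerations, with the special $J_+$ and the foliation $\cF_+$ controlling the ends) is essentially a faithful summary of the argument in \cite{spinal_II} that the paper defers to — which you yourself acknowledge — so it takes the same approach as the cited source rather than offering an independent one.
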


The main technical contribution of our paper, detailed in \textsection \ref{sec:exotic_fibers}, is to give an explicit local model to the main level of an exotic building. This significantly improves the understanding of $\widehat{W}$ and its classification in terms of nearly Lefschetz fibrations.

%----------------------------------------------------------------------
\subsection{Asymptotic properties of pseudoholomorphic foliations}
%----------------------------------------------------------------------
In this subsection, we review the asymptotic properties of punctured pseudoholomorphic curves and behaviors of the moduli spaces described in \textsection \ref{subsec:moduli_space_foliation}. More details can be found in \cite{HWZI, HT_gluing1, Siefring_asymptotics}. Given a Reeb orbit $\gamma \subset Y$ and a pseudoholomorphic curve in $\R \times Y$ asymptotic to $\gamma$, one can naturally associate an \textbf{asymptotic neighborhood} $\R \times S^1 \times \R_\xi^2$ to this asymptotic end, where the choice of $\R_\xi^2$ depends on a trivialization of the contact structure $\xi$.

\begin{definition}[Asymptotic operator]
  The asymptotic operator\footnote{Our asymptotic operator is defined with an opposite sign from others in the literature, e.g. \cite{HWZI}.} $L_\gamma$ associated to a simply covered Reeb orbit $\gamma$ is a self-adjoint operator
  $$L_\gamma: = J_t \nabla^R_t: C^\infty(S^1, \gamma^* \xi) \to C^\infty(S^1, \gamma^* \xi),$$
  where $\nabla^R_t = \partial t + S_t$ is the symplectic connection determined by linearized Reeb flow, where $S_t$ is a symmetric matrix determined by $J(t)$ and $\omega$.
\end{definition}

Note that while asymptotic operators can be defined for multiply covered orbits, in this paper, we are only concerned with asymptotic operators associated to simple Reeb orbits. While exotic fibers contain an orbit that is doubly covered, in the local model in \S \ref{sec:exotic_fibers}, we focus on the behaviors of the underlying simple orbit in the nearby regular fibers.

Now fixing a unitary trivialization $\tau$, we may obtain a linearized Reeb flow $\Psi(t)$ and write 
$$L_\gamma = J_0 \frac{d}{dt} + S_t,$$
where $$S_t:= - J_0 \frac{d \Psi(t)}{dt} \Psi^{-1}(t)$$
is a symmetric matrix for each $t\in \R/\Z$.

Let $\eta(t)$ be an eigenfunction of $L_\gamma$ with eigenvalue $\lambda\in \sigma(L_\gamma)$. Then $\eta$ solves the ODE
$$\frac{d\eta(t)}{dt} = J_0(S_t - \lambda) \eta(t),$$
and hence $\eta$ is nonvanishing, if it's nonzero. Therefore, we may define $\wind_\tau(\eta)$ to be the winding number of the loop $\eta: \R/2\pi \Z \to \C$ around zero. As in \cite{Wendl_lecture_contact3}, we define the \textbf{extremal winding numbers} to be
\begin{align*}
  \alpha^{+}_{\tau}(\gamma)&:=\max\{\wind_{\tau}(\lambda)|\lambda\in \sigma(L_\gamma)\cap (0,\infty)\},\\
  \alpha^{-}_{\tau}(\gamma)&:=\min\{\wind_{\tau}(\lambda)|\lambda\in \sigma(L_\gamma)\cap (-\infty, 0)\}.
  %p(\gamma)&:= \alpha^{+}_{\tau} - \alpha^{-}_{\tau}.
\end{align*}

Now we review the \textbf{asymptotic expansion} of an asymptotically cylindrical pseudoholomorphic curve at a given nondegenenerate orbit. From now on, we focus on the case when a curve is positively asymptotic to a Reeb orbit, i.e. $s>>0$. The case for the negative asymptotic end is completely analogous.

Let $\gamma$ be a nondegenerate $T$-periodic Reeb orbit. Let $u$ be a $J$-holomorphic curve positively asymptotic to $\gamma$. Then by \cite{HWZII, Siefring_asymptotics}, for $(s,t)\in [R, \infty)\times S^1$ with some $R>>0$, we can write
\begin{equation}\label{eqn: hwz}
  \tilde{u}(s,t) = (Ts, \exp_{\gamma(t)}(\sum_{i=1}^N e^{-\lambda_i s} e_i(t) + o_{\infty,N})),
\end{equation}
where $\exp$ is the exponential map associated to a Riemannian metric on $Y$ and $o_\infty$ is a function $f: [R, \infty) \times S^1 \to \mathbb{R}^2$ satisfying the decay estimate   
\begin{equation}
\label{eq:error_estimate}
  |\nabla_s^i \nabla_t^j o_{\infty,N}(s,t)|\leq M_{ij} e^{-ds}
\end{equation}
for every $(i,j)\in \mathbb{N}^2$, where $M_{ij}$ and $d$ consist positive constants.

A very useful tool often providing ``rigidity'' phenomenon in symplectic geometry is pseudoholomorphic foliations. For this, one often assumes that the index of the pseudoholomorphic curves is $2$. If we further assume that the curves have an asymptotic end at an elliptic orbit, we can show the following local diffeomorphism. Recall that $c_N$ is the \textbf{normal Chern number} given by
$$c_N(u):=\frac{\ind(u) -2 + 2 g(u) +\# \Gamma_0(u)}{2},$$
where $\# \Gamma_0(u)$ denotes the number of asymptotic ends of $u$ with even Conley-Zehnder indices.

First, we recall ``automatic transversality'' in dimension four, which guarantees transversality for certain curves without genericity assumption on the almost complex structures. The general version is in \cite{Wendl_aut_tran}. However, we only need the following special case.

\begin{theorem}{\cite[Proposition A.1]{Wendl_strongly}}
\label{thm: auto_tran}
    If $u$ is an immersed finite energy pseudoholomorphic curve with $\ind(u)>c_N$, then $u$ is unobstructed.
\end{theorem}

\begin{lemma}[Local diffeomorphism]\label{lem:local_diff}
  Consider the moduli space $\mathcal{M}$ of index $2$ immersed curves with a simple asymptotic end at an elliptic  $\gamma$. Let $u\in \mathcal{M}$ be such that $c_N(u) = 0$. Then, there is a local diffeomorphism from $\text{ev}:\mathcal{M} \to \C\setminus \{0\}$ near $u$ at the asymptotic end $\gamma$ given by the evaluation map from the curve to its leading asymptotic eigenspace.
\end{lemma}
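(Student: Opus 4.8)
\textbf{Proof proposal for Lemma~\ref{lem:local_diff}.}

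The plan is to combine the standard automatic-transversality/regularity package for index-$2$ curves with an asymptotic-eigenvalue analysis of the linearized Cauchy--Riemann operator at the elliptic puncture. First I would recall that, since $u$ is immersed of index $2$ with $c_N(u)=0$, the normal Chern number vanishes and the relevant automatic transversality criterion (in the form used by Wendl, see \cite{Wendl_lecture_contact3}) applies: the moduli space $\mathcal{M}$ is a smooth $2$-dimensional manifold near $u$, and the normal deformation operator $\mathbf{D}_u^N$ is surjective with $2$-dimensional kernel. The kernel consists of sections of the normal bundle $N_u$ that control, to first order, how the curve moves inside $\widehat W$; in particular every such section that does not vanish identically is nowhere zero, by the similarity-principle argument that underlies automatic transversality when $c_N=0$.

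Next I would set up the evaluation map $\mathrm{ev}\colon \mathcal{M}\to \C\setminus\{0\}$ at the elliptic end $\gamma$. Using the asymptotic expansion \eqref{eqn: hwz}, a curve $u'$ near $u$ is, on the cylindrical end $[R,\infty)\times S^1$, controlled by its leading asymptotic coefficient: there is a distinguished eigenvalue $\lambda$ of the asymptotic operator $L_\gamma$ — the one governing the slowest exponential decay of the normal part of $u$ — with eigenspace of real dimension $2$ (the elliptic hypothesis guarantees the eigenfunction winds, i.e.\ $\mathrm{wind}_\tau$ of the leading eigenfunction is the relevant extremal winding number, and for a simply covered elliptic orbit the corresponding eigenspace is a copy of $\C\setminus\{0\}$ once we project to the leading eigenfunction's span). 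The map $\mathrm{ev}$ sends $u'$ to its leading coefficient $e_1\in \C\setminus\{0\}$ in this eigenspace. I would check that $\mathrm{ev}$ is well-defined and smooth on a neighborhood of $u$ in $\mathcal{M}$, using the exponential decay estimates \eqref{eq:error_estimate} so that the leading coefficient depends continuously (indeed smoothly) on the curve in the relevant weighted Sobolev topology.

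Then I would compute the differential of $\mathrm{ev}$ at $u$. Its domain is $\ker \mathbf{D}_u^N$, which is $2$-dimensional; the differential sends a kernel element $\eta$ to the leading asymptotic coefficient of $\eta$ in the chosen eigenspace. The key point is injectivity: if $\eta\in\ker\mathbf{D}_u^N$ has vanishing leading coefficient at $\gamma$, then $\eta$ decays strictly faster than $e^{-\lambda s}$ at that end, and one shows this forces $\eta\equiv 0$. This is where the hypotheses $c_N(u)=0$ and $\mathrm{ind}(u)=2$ do the work: a nonzero element of the kernel cannot vanish (similarity principle / Wendl's nonvanishing argument for $c_N=0$), hence in particular its leading coefficient is nonzero. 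Therefore $d\,\mathrm{ev}$ is an injective linear map between $2$-dimensional spaces, hence an isomorphism, and the inverse function theorem gives that $\mathrm{ev}$ is a local diffeomorphism near $u$.

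\textbf{Main obstacle.} The delicate step is the injectivity of $d\,\mathrm{ev}$, i.e.\ the assertion that a kernel element with trivial leading asymptotic coefficient must vanish identically. This requires: (i) correctly identifying which eigenvalue of $L_\gamma$ is "leading" for $u$ (the one realizing the extremal winding $\alpha^-_\tau(\gamma)$ or the constraint imposed by $c_N(u)=0$), so that "trivial leading coefficient" genuinely means faster decay and not merely a different eigenfunction in the same eigenspace; (ii) the Siefring-type asymptotic analysis guaranteeing that a solution of the linearized equation decaying faster than the leading rate actually lies in a strictly smaller weighted space where the relevant operator is still injective; and (iii) tying this back to the automatic-transversality bookkeeping so that the dimension count $\dim\ker\mathbf{D}_u^N = 2$ matches the real dimension of the leading eigenspace exactly. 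Once these are in place the conclusion is immediate, but getting the winding numbers and weighted-space exponents to line up is the part that needs care.
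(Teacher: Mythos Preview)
Your overall strategy---automatic transversality to get a smooth $2$-dimensional moduli space, asymptotic analysis of kernel elements, then the inverse function theorem---matches the paper's. However, there are two genuine gaps that the paper fills with ingredients you do not supply.

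First, the assertion that the leading asymptotic eigenspace of $L_\gamma$ is real $2$-dimensional is not automatic: for a generic asymptotic operator the eigenvalues are simple, so the eigenspace realizing the extremal winding is only $1$-dimensional, and your dimension count $\dim\ker\mathbf{D}_u^N=2=\dim(\text{leading eigenspace})$ would fail. The paper handles this by first deforming $J$ (and the trivialization) through admissible almost complex structures so that the asymptotic operator has $S(t)\equiv\theta$ constant; in this complex-linear situation the leading eigenspace is genuinely a copy of $\C$. You flag this as item~(iii) in your obstacle list but do not resolve it.

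Second, your injectivity argument ``a nonzero kernel element is nowhere zero, hence its leading coefficient is nonzero'' is a non sequitur: a nowhere-vanishing section can perfectly well have its leading eigenfunction lie in a \emph{non-extremal} eigenspace (different winding), in which case its projection to the extremal eigenspace vanishes. The paper's argument is more direct and avoids any weighted-space injectivity detour: from the identity $Z(v)+Z_\infty(v)=c_N(u)=0$ (Equation~\eqref{eq:zz_cn}) and nonnegativity of both terms one gets $Z_\infty(v)=0$, which says precisely that the leading eigenfunction $f_v$ of every nonzero $v\in\ker D_u^N$ realizes the extremal winding and therefore lies in the leading eigenspace. Combined with the deformation above, this immediately makes $v\mapsto f_v$ an isomorphism onto $\C$.
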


\begin{proof}
  This is an adaptation of the proof of Lemma A.2 in \cite{Wendl_strongly}. Since we are at an elliptic orbit, we may deform our almost complex structure via a path of admissible almost complex structure to $J'$ such that $S(t) = \theta$, after further deforming the trivialization. See for example \S2.6 of \cite{HT_gluing1}. Notice that both choices of deformations are contractible and that local diffeomorphisms are preserved under these deformations.

  Given $u\in \mathcal{M}$, recall that with respect to the splitting $u^* TW = T_u \oplus N_u$, we have a normal Cauchy--Riemann operator $D_u^N$ that is asymptotic to $L_\gamma$ and that $T_u \mathcal{M} = \ker D_u^N$. See \cite[\textsection 3.4]{Wendl_aut_tran}. By Theorem \ref{thm: auto_tran}, we also know that $\dim \ker D_u^N = 2$. For a curve asymptotic to $\gamma$, one can choose coordinates $(s,t)\in [0,\infty) \times S^1$ near the puncture asymptotic to $\gamma$ so that for large $s,t$, we have the linearized asymptotic formula by by \cite{HWZI} (see also \cite{Siefring_asymptotics}). Specifically, for any section $v\in \ker D_u^N$, we have that
  \begin{equation} \label{eq:local_expansion}
    v(s,t) = e^{\lambda s}(f_{v}(t) + r_v(s,t)),
  \end{equation}
  where $\lambda$ is an eigenvalue of $L_\gamma$, $f(t)$ is an eigenfunction of $L_\gamma$ with eigenvalue $\lambda$ and $r(s,t)\in \xi|_{\gamma}$ is smooth and converges uniformly to $0$ in $t$ as $s\to \infty$.

  Combining Equation (2.7) and Proposition 3.18 in \cite{Wendl_aut_tran}, since we are considering immersed curves, we have that
  \begin{equation} \label{eq:zz_cn}
    Z(v) + Z_\infty (v) = c_N(u),
  \end{equation}
  where $Z(v)$ is the algebraic count of zeros of $u$ and 
  \begin{equation}
    Z_\infty(v) := \sum_{\gamma\in \Gamma^+}[\alpha_{\tau}^-(L_\gamma)-\wind_{\tau}^\gamma(v)] +  \sum_{\gamma\in \Gamma^-}[\wind_{\tau}^\gamma(v) - \alpha_{\tau}^+(L_\gamma)].
  \end{equation}
    
  Since both $Z(v)$ and $Z_\infty(v)$ take nonnegative values, by the assumption $c_N(u) = 0$, we have that in particular $Z_\infty(v) = 0$, i.e. the extremal possible winding numbers are achieved, and in particular, $f_v$ belongs to the leading eigenspace at $\gamma$. 

  Furthermore, since we have deformed to the complex linear situation where $S(t) = \theta$, we know that the leading asymptotic eigenspace is $2$-dimensional. Therefore, $\text{ev}$ is a local diffeomorphism.
  %t \in [0,1]
\end{proof}

%%%%%%%%%%%%%%%%%%%%%%%%
\section{Exotic fibers} \label{sec:exotic_fibers}
%%%%%%%%%%%%%%%%%%%%%%%%

Let $M$ be a connected closed contact $3$-manifold with contact structure $\xi$ supported by a spinal open book $(\pi_\Sigma: M_\Sigma \to \Sigma, \pi_P: M_P \to S^1)$. In this section, we will first give a local model of a neighborhood of an exotic fiber and determine its local monodromy in terms of a boundary interchange \S\ref{subsec:local_model} and then show the uniqueness of such a local model up to symplectic deformation in an appropriate chart in \S\ref{subsec:unique_local_model}.

%--------------------------------------------
\subsection{A local model of exotic fibers} \label{subsec:local_model}
%--------------------------------------------
In this subsection we give a local model for the neighborhood of an exotic fiber arising as the SFT limit of a sequence of regular fibers. The setting is similar to that in Appendix of \cite{Wendl_low_dim_curves}, where an analogous model is given showing that Dehn twists characterize monodromies around a singular fiber in Lefschetz fibrations. Throughout this section, we will refer to the simple asymptotic orbit doubly-covered by an end of the exotic fiber as $\gamma_e$.

First, we identify the asymptotic neighborhood of an exotic fiber near the doubly-covered Reeb orbit with underlying simple orbit $\gamma_{e}$ with $(\mathbb{C}\backslash\{0\})\times \mathbb{C}$.

\begin{lemma}[Trivialization]\label{lem: orbit-nbd}
  The asymptotic neighborhood of a nondegenerate Reeb orbit $\gamma_{e} \subset (M,\xi)$, where $\xi$ is the stable Hamiltonian structure on $M$, admits a trivialization $\tau_0$ identified with $\R \times S^1 \times \R^2$, with a local almost complex structure and a stable Hamiltonian structure inherited from the identification with $(\C \backslash \{0\}) \times \C$ via the map
  \begin{equation} \label{eq:trivialization}
  \begin{split}
    \phi: (\C \backslash \{0\}) \times \C &\to \R \times S^1 \times \R^2\\
    \phi(r_1e^{i\theta_1}, r_2e^{i\theta_2}) &= (1/r_1, -\theta_1, r_1r_2e^{i(\theta_1 + \theta_2)})
  \end{split}
  \end{equation}
\end{lemma}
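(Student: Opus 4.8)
The plan is to read Lemma~\ref{lem: orbit-nbd} as a normalization statement: exhibit $\phi$ of \eqref{eq:trivialization} as a diffeomorphism, record which structures it transports, and then invoke the standard asymptotic normal form at a nondegenerate Reeb orbit to realize the transported structures as a model for a neighborhood of $\gamma_e$ inside $(\widehat{W},\widehat{J})$.

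First I would check that $\phi$ is a diffeomorphism by writing down the inverse, $\phi^{-1}(s,t,w) = \big((1/s)e^{-it},\, s w e^{it}\big)$, defined on $\{s > 0\}\times S^1\times\R^2$ (after renaming the $\R$-coordinate of the target to be $1/r_1$; the ``$s\to\infty$'' end corresponds to $r_1\to 0$, i.e.\ to approaching the removed multisection in the picture of Remark~\ref{rem: exotic_nbd_branch}). The structural point is that $w = r_1 r_2\,e^{i(\theta_1+\theta_2)} = z_1 z_2$, so $\phi$ factors as the biholomorphism $(z_1,z_2)\mapsto(z_1, z_1 z_2)$ of $(\C\setminus\{0\})\times\C$ followed by passing to polar coordinates on (the reciprocal of, with reversed angle) the first factor. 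Hence, transporting the integrable complex structure $i$ and the split stable Hamiltonian structure $\cH_0 = (\Omega_0,\Lambda_0)$ on $(\C\setminus\{0\})\times\C$ — with $\Lambda_0 = d(\arg z_1)$ the rotation one-form of the first factor and $\Omega_0 = \tfrac{i}{2}\,dz_2\wedge d\bar z_2$ the area form of the second — yields a local almost complex structure $J_0$ and a stable Hamiltonian structure on $\R\times S^1\times\R^2$. I would then carry out the routine verification that $(\Omega_0,\Lambda_0)$ satisfies the axioms of \S\ref{subsec:shs} ($\Lambda_0\wedge\Omega_0>0$, $d\Omega_0 = 0$, $\ker\Omega_0\subset\ker d\Lambda_0$), that its Reeb vector field generates the rotation of the $z_1$-factor with the simple orbit $\gamma_e$ identified with $\{w=0\}$ (the $\R$-invariant trivial cylinder), and that $J_0$ is $\widehat\omega$-tame, preserves $\ker\Lambda_0$, and sends $\partial_s$ to a positive multiple of $R_{\cH_0}$.

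The substantive step is to match this model with the actual data near $\gamma_e$. Here I would use exactly the device from the proof of Lemma~\ref{lem:local_diff}: since $\gamma_e$ is nondegenerate — and, in the situation of interest, elliptic — we may choose a unitary trivialization $\tau_0$ of $\xi|_{\gamma_e}$ and, after a contractible deformation through admissible almost complex structures, arrange the symmetric matrix $S_t$ of $L_{\gamma_e}$ to be constant, so that a neighborhood of the trivial cylinder over $\gamma_e$ in $(\widehat{W},\widehat{J})$, with its perturbed stable Hamiltonian structure $\cH_+$, is identified with $(\R\times S^1\times\R^2, J_0, \cH_0)$ and therefore, via $\phi$, with $\big((\C\setminus\{0\})\times\C,\, i,\, (\Omega_0,\Lambda_0)\big)$. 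The space of such choices of $\tau_0$ and of the admissible deformation is nonempty and contractible, so the model is well defined.

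I expect the main (mild) obstacle to be bookkeeping around the $\R$-direction: $\phi$ uses $1/r_1$ rather than the usual logarithmic cylindrical coordinate, so $J_0\partial_s$ picks up a factor of $1/s$ and $s$ is not the normalized symplectization coordinate. This is harmless — $J_0$ differs from a normalized compatible almost complex structure only by a strictly monotone reparametrization of the $\R$-factor, which preserves tameness, and one may equally absorb the discrepancy into the smoothing function used to define the symplectization form in \S\ref{subsec:shs} — but it is the step where one must be slightly attentive. Everything else is a direct computation.
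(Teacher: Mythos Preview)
Your proposal is correct, but it is considerably more elaborate than what the paper actually does, and it front-loads work that the paper deliberately postpones. The paper's proof of this lemma is essentially three sentences: it cites \cite{HWZ03}, Example~3.1(4), for the fact that $\bigl((\C\setminus\{0\})\times\C,\, i\oplus i\bigr)$ has cylindrical ends; it writes down the stable Hamiltonian structure on the level sets $\{r_1 e^{i\theta_1}\}\times\C$ as $\xi_{\theta_1}=\operatorname{span}(\partial_x,\partial_y)$ with one-form $d\theta_1$; and it then declares $\tau_0$ to be the trivialization identifying $\xi$ with $\phi_*\xi_{\theta_1}$. No inverse is written, no SHS axioms are checked by hand, and no deformation to match the model with the genuine data near $\gamma_e$ is performed here.

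The main divergence is your ``substantive step'': you invoke the contractible deformation of admissible almost complex structures (making $S_t$ constant) to identify the model with the actual asymptotic neighborhood. In the paper that maneuver belongs not to this lemma but to the proof of Lemma~\ref{lem:local_diff} and then to Step~1 of Proposition~\ref{prop:unique_local_model}; Lemma~\ref{lem: orbit-nbd} is read purely as setting up the target chart. What your approach buys is a self-contained verification that does not lean on the HWZ citation and that makes the compatibility with $\widehat J$ and $\cH_+$ explicit up front; what the paper's approach buys is brevity and a cleaner separation of concerns, so that the deformation argument is only run once, at the point where the foliation is actually being normalized. Your bookkeeping remark about $1/r_1$ versus the logarithmic coordinate is accurate and harmless, as you note.
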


\begin{proof}
  From \cite{HWZ03} Example 3.1(4), we know that $(\C \backslash \{0\}) \times \C, i\oplus i)$ has cylindrical ends. The stable Hamiltonian structure associated to the level sets \[\{r_1e^{i\theta_1} \times \C | \theta_1 \in S^1\} \subset (\C \backslash \{0\}) \times \C\] is $\xi_{\theta_1} := \text{span}(\partial x, \partial y)$ where $x+iy = r_2e^{i\theta_2}$, and the 1-form is $d\theta_1$. Then, we may pick the trivialization identifying $\xi$ with $\phi_{*}\xi_{\theta_1}$.
\end{proof}

Now we discuss a \textbf{local model} of the foliation in a neighborhood of a doubly covered end of an exotic fiber. In Proposition \ref{prop:unique_local_model}, we will show that this local model is unique in an appropriate sense. 

\begin{definition}[Local model] \label{def:model} 
  Consider the sequence of curves
  $$u_c: \mathbb{C}\backslash\{\pm 1\} \to (\mathbb{C}\backslash\{0\}) \times \mathbb{C}$$
  defined by 
  \begin{equation}
    \label{eq:local_model}
    u_c(z)= (c(z^2-1), \sqrt{c}z).
  \end{equation}
  We see that as $c\to 0$, $u_c \mapsto (0,0)$, i.e., the limit does not exist, so we need to reparameterize. Let $z':= \sqrt{c}z$, so we have
  \begin{equation}\label{eq:u_c_z'}
    u_c(z') = ((z')^2-c, z').
  \end{equation}
  Now as $c\to 0$, $u_c \mapsto ((z')^2,z')$ and we see the double-covering of the orbit.
\end{definition}

\begin{remark}
  The moduli space of curves $u_c(z)$ in Equation~(\ref{eq:local_model}) is parameterized by the domain of $c\in \C\setminus\{0\}$. The reparameterization by $z':= \sqrt{c}z$ allows us to compactify the moduli space at $c = 0$ with a limiting building as in Figure \ref{fig:building}. The precise behavior of this SFT limit and the neighborhood of the fiber at $c=0$, which is in fact the exotic fiber, is detailed in the proof of Proposition \ref{prop:unique_local_model}. 
\end{remark}

\begin{definition}[Terminology after cutting off]\label{def:cpt_model}
  In light of Theorem \ref{thm: lvw_foliation}, we define the \textbf{compactified local model} to be the restriction of the above model to $(\C \setminus B) \times \C$, which corresponds to cutting off the cylindrical ends at the punctures. Here $B$ is defined to be the closed $\epsilon$-disk about the origin in $\C$. We look at the restricted foliation on $(\C \setminus B) \times \C$. For $u_c$, $\im(u_c) \cap (\C \setminus B) \times \C = \{u_c(z) | |z^2-1| > \frac{\epsilon}{|c|}\}$. Thus, for $|c| > \epsilon$,
  $\im(u_c) \cap (\C \setminus B) \times \C$ is topologically a pair of pants, while for $|c| < \epsilon$ $\im(u_c) \cap (\C \setminus B) \times \C$ is topologically an annulus, with the curves corresponding to $|c| = \epsilon$ having a singularity. So in the \textbf{compactified local model}, we get \textbf{regular curves}, corresponding to $\im(u_c)$ for $c > \epsilon$, and a $\mathbb{D}^2$-worth of \textbf{exotic curves}, corresponding to $\im(u_c)$ for $c \leq \epsilon$, which we call an \textbf{exotic neighborhood}.
\end{definition}

\begin{remark}
  The objective of the two definitions \ref{def:model} and \ref{def:cpt_model} is to have the vocabulary to talk about the neighborhood of the exotic fiber in the compact filling $W$, and separately in the completed filling $\widehat{W}$. 
\end{remark}

For now, we assume Proposition \ref{prop:unique_local_model}, i.e., that we have a unique local model around an exotic fiber and characterize the ``monodromy'' around an exotic fiber. To this end, we consider the projection map in a neighborhood of the exotic fiber in $\widehat{W}$ given in Proposition \ref{prop:unique_local_model},
\begin{equation}\label{eq:lefschetz_projection}
  \Pi_0: (\mathbb{C}\backslash\{0\})\times \mathbb{C} \to \mathbb{C} \text{ sending } (z_1,z_2)\mapsto z_2^2-z_1,
\end{equation}
whose level sets are identified with the pseudoholomorphic foliation near the positive asymptotic ends $\gamma_e \cup \gamma_e$.

We will see in Lemma~\ref{lem:monodromy} that this gives a model that interchanges the two punctures as one goes around $0\in \mathbb{C}$. Further, this local model of the exotic fiber also allows us to show that the number of exotic points is equal to the number of the branch points of $\Pi|_{\Sigma}$ in Theorem \ref{thm:branchpoints}. See also Remark \ref{rem:branchcovermaps}. To describe the monodromy of the fibers in a compactified filling around the exotic neighborhood, we will talk about \textbf{boundary components} of the fibers in the compactified model, which correspond to punctures in the local model.

\begin{lemma}\label{lem:monodromy}
  Consider the fibers in the compactified local model as in Definition~\ref{def:cpt_model}. Outside the exotic neighborhood, the fibers are pairs of pants. The monodromy of the bundle with pairs of pants fibers over $S^1$, around the exotic fiber $u_0$, is given by interchanging two boundary components in a counter-clockwise fashion along an arc and simultaneously rotating the boundary components by $\pi$ clockwise.
\end{lemma}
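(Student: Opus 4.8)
\textbf{Proof strategy for Lemma~\ref{lem:monodromy}.}
The plan is to compute the monodromy directly from the explicit projection $\Pi_0(z_1,z_2) = z_2^2 - z_1$ of Equation~\eqref{eq:lefschetz_projection}, restricted to the compactified local model $(\C \setminus B) \times \C$ of Definition~\ref{def:cpt_model}. Fix a large circle $c = \varepsilon_0 e^{i\vartheta}$ in the base $\C$ of $\Pi_0$ (with $\varepsilon_0 > \varepsilon$, outside the exotic neighborhood), so that each fiber $\Pi_0^{-1}(c) \cap (\C\setminus B)\times\C$ is a pair of pants, as recorded in Definition~\ref{def:cpt_model}. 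First I would parametrize these fibers concretely: over the value $c$, the fiber consists of points $(z^2 - c, z)$ with $|z^2 - c| > \varepsilon$, whose ``inner'' two boundary components are the two loops where $|z^2 - c| = \varepsilon$, i.e. $z^2 - c$ runs over the circle $\partial B$, and whose ``outer'' boundary is the large circle coming from $\partial_v E$. The two inner boundary components near $c \neq 0$ are distinguished by which of the two branches $z = \pm\sqrt{c + \varepsilon e^{i\psi}}$ they sit on.

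The key step is to track these branches as $\vartheta$ goes from $0$ to $2\pi$, i.e. as $c$ traverses the loop once around $0$. Since the two inner boundary components are indexed by the two square roots of $c$ (up to the bounded perturbation $\varepsilon e^{i\psi}$), going once around $0 \in \C$ sends $\sqrt{c} \mapsto -\sqrt{c}$, hence \emph{interchanges} the two inner boundary components. I would then determine the geometric nature of this interchange: parametrizing the whole family by $z' = \sqrt{c}\, z$ as in Equation~\eqref{eq:u_c_z'}, the fiber over $c$ is $\{((z')^2 - c, z')\}$ and the monodromy map $z' \mapsto (\text{rotation})\cdot z'$ induced by $c \mapsto e^{2\pi i}c$ is, after unwinding, a half-rotation of the $z'$-plane; this half-rotation drags the two inner punctures halfway around each other, giving the ``interchange along an arc $\gamma$'' of Figure~\ref{fig:boundarytwist}, and at the same time rotates each puncture disk. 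The sign and amount of the boundary rotation ($\pi$ clockwise on each boundary) I would pin down by examining the induced map on a collar $S^1 \times (-1,0]$ of each inner boundary, using the local coordinate $w = z^2 - c$ near that boundary: as $c \to e^{2\pi i}c$ with $w$ held on $\partial B$, the preimage coordinate $z = \pm\sqrt{c + w}$ picks up the relevant half-turn, and comparing orientations (recall the counterclockwise convention for arcs in $\SMod(P)$ from \S\ref{subsec: framed_mcg}) shows the residual boundary rotations are clockwise by $\pi$ on each component, i.e. exactly the boundary interchange $\tau_\gamma$.

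The main obstacle I anticipate is bookkeeping the \emph{framings}, i.e. proving that the boundary rotations are precisely $\pi$ clockwise rather than some other multiple of $\pi$ — the interchange-of-punctures part is forced by the monodromy of $\sqrt{c}$, but the boundary-rotation part depends on a careful choice of trivialization of the normal bundle of each inner boundary, which must be made compatible with the trivialization $\tau_0$ of Lemma~\ref{lem: orbit-nbd} and with the identification of fibers as subsurfaces of a fixed page $P$. I would handle this by working in the coordinates of Lemma~\ref{lem: orbit-nbd}, where the asymptotic neighborhood of $\gamma_e$ is $\R \times S^1 \times \R^2$ with the standard framing $\tau_0$, and computing the winding of the loop $\vartheta \mapsto (\text{boundary of } \Pi_0^{-1}(\varepsilon_0 e^{i\vartheta}))$ relative to $\tau_0$; the outcome $\pi$ is then a direct consequence of the quadratic form $z_2^2 - z_1$ (the ``$z_2^2$'' being responsible for both the branch interchange and the half-turn framing correction). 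Once the framings are verified, the statement follows by definition of the boundary interchange mapping class $\tau_\gamma$ in \S\ref{subsec: framed_mcg}.
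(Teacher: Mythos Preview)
Your proposal is correct and follows essentially the same route as the paper. Both arguments parametrize the circle of fibers by $z \mapsto (e^{2\pi i t}(z^2-1), e^{\pi i t}z)$ (equivalently your $z' = \sqrt{c}\,z$ reparametrization), read off the half-rotation $z \mapsto e^{\pi i t}z$ as the source of the puncture interchange, and then isolate the residual boundary framing as the delicate step. The only additional ingredient the paper makes explicit, which you should incorporate, is a two-stage bump-function trick: first a radial cutoff $f(|z|)$ that kills the rotation outside a compact pair of pants (so the monodromy is supported there), and then a second cutoff near the inner boundaries to force agreement with the spinal model $\pi_P(\phi,\theta)=2\phi$ on the multiplicity-$2$ torus $T \subset \partial M_P$; it is this second trivialization, not a direct winding-number count against $\tau_0$, that produces the clockwise $\pi$ boundary rotation.
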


\begin{proof}
    
  In the model from Definition~\ref{def:model}, consider the following disk neighborhood $V$ of $0\in \bC$. Let $\delta>2\epsilon$ be large enough so that the $\delta$-disk neighborhood $ V\subset \bC$ contains the image of the exotic neighborhood. We parameterize $V$ so that $\Pi^{-1}(0)$ corresponds to the exotic fiber. Let $\zeta:= \partial V$. Over each point $x\in\zeta$, $\Pi^{-1}(x)$ is a pair of pants. We will characterize the monodromy of this bundle with pair of pants fibers around $\zeta$. Also consider a chart $U$ of the filling $W$ around the exotic neighborhood, as above. By construction, topologically $\Pi^{-1}(V) - U$ is a trivial $\Sigma - P$ bundle over the disk $V$, where $P$ is a pair of pants. In other words, the monodromy is supported in $P$.

  Now we parameterize $\zeta = \{\delta e^{2\pi i t}| t \in [0,1]\}$ and understand the monodromy around it. First, fix a trivialization on the fibers $\Pi^{-1}(\delta e^{2\pi i t})$, such that the bundle over $\zeta$ is trivial outside $P$. We can parameterize the fibers in $\Pi^{-1}(\zeta) \cap U$ as in Definition~\ref{def:model}, i.e. the trivialization is given by
  \begin{equation}
    z \mapsto (e^{2\pi i t}(z^2-1),e^{\pi i t}z)
  \end{equation}
  for $z \in \Pi^{-1}(\delta) \cap U$. Projecting to the second coordinate, the trivialization we need to consider is $H_t$ such that
  \[
    H_t: z \mapsto e^{\pi i t}z.
  \]
  We can parameterize $P$ as $\{|z|^2\leq2\} \cap \Pi^{-1}(\delta) \cap U$, so outside that we need to modify the above trivialization so that the bundle over $\zeta$ is trivial. Consider the function $f$ as shown in Figure~\ref{fig:trivialization}, and consider the trivialization
  \begin{equation}\label{eqn:monodromy_trivialization}
    h_t: z \mapsto e^{f(|z|)\pi i t}z.
  \end{equation}
    
  \begin{figure}[htbp]{
  \vspace{0.2cm}
  \begin{overpic}[tics=20]
  {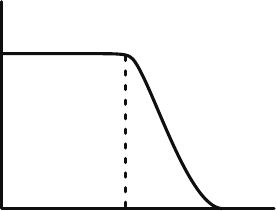}
   \put(-9, 74){\scriptsize$1$}
   \put(54, -10){\scriptsize$1.5$}
   \put(100, -10){\scriptsize$2$}
   \put(82, 100){\scriptsize$f(z)$}
  \end{overpic}}
  \vspace{0.2cm}
  \caption{The bump function needed to trivialize the monodromy for large $|z|$.}
  \label{fig:trivialization}
  \end{figure}

  \begin{figure}[htbp]{
  \vspace{0.2cm}
  \begin{overpic}[tics=20]
  {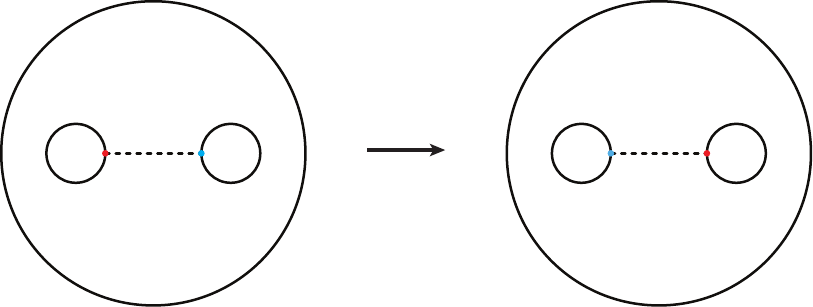}
   \put(35, 92){\scriptsize$1$}
   \put(109, 92){\scriptsize$2$}
   \put(72, 83){\scriptsize$\gamma$}
   
   \put(279, 92){\scriptsize$2$}
   \put(353, 92){\scriptsize$1$}
   \put(317, 83){\scriptsize$\gamma$}
  \end{overpic}}
  \vspace{0.2cm}
  \caption{Comparing the trivializations $h_0$ and $h_1$ after making the bundle trivial away from the pair of pants neighborhood.}
    \label{fig:monodromy1}
  \end{figure}

  Now, compare the trivializations $h_1$ and $h_0$. This is shown in Figure~\ref{fig:monodromy1} for the compactified model. 

  The above modification of the trivializations ensures that we have localized the monodromy into the pair of pants neighborhood on a fiber. But we are not done yet, since to understand the monodromy of the bundle, in the sense of (3) and (4) in Definition~\ref{def:spinalobd}, we further need to trivialize the bundle near the boundaries of the fibers, to agree with the model of $\pi_P$ near this component of $\partial M_P \subset \Pi^{-1}(\zeta)$. Observe that the component of $\partial M_P$ in this local model has multiplicity 2, again in the sense of Definition~\ref{def:spinalobd}. Let us refer to the component of $\partial M_P$ as $T$, and give it coordinates $(\phi, \theta)$, where $\theta$ is the $S^1$-coordinate for the boundary component on a page. Locally $\pi_P$ has the form $\pi_P|_T(\phi, \theta) = 2\phi$. However, the trivializations in Equation~\ref{eqn:monodromy_trivialization} do not agree on $T$, for different fibers and their intersections with $T$. To make them agree, a clockwise $2\pi$-rotation of each of the boundary components of $P$ needs to happen. Using a bump function as before, which is 0 on the boundary components and 1 just outside, the trivialization $h_1$ looks like as in Figure~\ref{fig:boundarytwist}. By an isotopy, we can see that the monodromy is exactly a positive half-twist and a half clockwise twist of each of the boundary components, as described in Figure~\ref{fig:boundarytwist}. 
\end{proof}

%---------------------------------------------
\subsection{Uniqueness of the local model} \label{subsec:unique_local_model}
%---------------------------------------------
In this subsection, we show that the local model given in Definition \ref{def:model} is unique up to deformation equivalence. This is obtained by a careful analysis of the asymptotic behaviors of pseudoholomorphic curves in a neighborhood of an exotic fiber, at the special elliptic orbit $\gamma_e$, which is doubly covered in the exotic fiber. The main technical argument is in Proposition \ref{prop:unique_local_model}. We start with the following lemma to prepare for computations in the asymptotic coordinates.

\begin{lemma}\label{lem:local_coord}
  The local model describes a foliation such that in the asymptotic coordinates near each puncture $\{\pm 1\} \in \bC$, the curves have leading terms being complex multiples of $e^{-it}$.
\end{lemma}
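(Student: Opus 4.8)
The plan is to compute the leading asymptotic behavior of the curves $u_c$ in Definition~\ref{def:model} directly from the explicit formula \eqref{eq:local_model}, using the trivialization $\tau_0$ of Lemma~\ref{lem: orbit-nbd}. Fix $c \neq 0$. Near the puncture $z = 1$ (the puncture $z = -1$ is symmetric), write $z = 1 + w$ with $w$ small; then $z^2 - 1 = 2w + w^2 = 2w(1 + O(w))$, so the first component of $u_c$ is $c(z^2-1) = 2cw(1 + O(w))$, which tends to $0$ as $w \to 0$, confirming this is the asymptotic (cylindrical) end where $r_1 = |c(z^2-1)| \to 0$, i.e. the $\R$-coordinate $1/r_1 \to \infty$. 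First I would push $u_c$ through the map $\phi$ of \eqref{eq:trivialization}: with $r_1 e^{i\theta_1} = c(z^2-1)$ and $r_2 e^{i\theta_2} = \sqrt{c}\,z$, the image point is $(1/r_1, -\theta_1, r_1 r_2 e^{i(\theta_1+\theta_2)})$. The key simplification is that $r_1 r_2 e^{i(\theta_1+\theta_2)}$ is exactly the product $c(z^2-1)\cdot \sqrt c\, z / |\sqrt c\, z| \cdot |\sqrt{c}z|$... more cleanly, $r_1 e^{i\theta_1} \cdot r_2 e^{i\theta_2} = c(z^2-1)\sqrt{c}z$, so the $\R^2$-coordinate (the one landing in $\gamma_e^*\xi \cong \C$) is $c^{3/2} z (z^2 - 1)$.

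Next I would introduce the cylindrical coordinate $(s,t)$ on the asymptotic end adapted to this puncture. Since $r_1 = |c(z^2-1)| = |c|\,|2w|\,(1+O(w))$, the natural choice is $s \sim -\log|2cw|$ (so $s \to +\infty$ as $w \to 0$) and $t = -\theta_1 = -\arg(c(z^2-1)) = -\arg(2cw) + O(w)$, matching $\phi(r_1 e^{i\theta_1},\cdot) = (1/r_1, -\theta_1, \cdot)$ up to the monotone reparametrization $1/r_1 \leftrightarrow e^s$ built into the definition of the symplectization in \S\ref{subsec:shs}. Solving for $w$ in terms of $(s,t)$: $2cw \approx e^{-s} e^{-it}$ (up to a constant phase absorbed into the trivialization, which I can normalize away), so $w \approx \tfrac{1}{2c} e^{-s - it}$. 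Substituting into the $\C$-valued normal coordinate $c^{3/2} z(z^2-1) = c^{3/2}\,z \cdot 2w(1+O(w))$ and using $z = 1 + w \to 1$, this becomes $c^{3/2} \cdot 1 \cdot \tfrac{1}{c} e^{-s-it}(1 + O(e^{-s})) = \sqrt{c}\, e^{-s}\, e^{-it}(1 + O(e^{-s}))$. Thus the leading term, in the form of the asymptotic expansion \eqref{eq:local_expansion}, is $e^{-s}$ times an eigenfunction of $L_{\gamma_e}$ proportional to $e^{-it}$ — exactly the claim.

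The main conceptual step — and the place where a little care is needed — is identifying $e^{-it}$ as genuinely an eigenfunction of the asymptotic operator $L_{\gamma_e}$ rather than just the coordinate form of one, and checking that the decay rate is consistent with being the \emph{leading} eigenfunction (eigenvalue $+1$ in these normalized coordinates). For this I would invoke Lemma~\ref{lem:local_diff}: the exotic-fiber curves are index-$2$ immersed curves with a simple end at the elliptic orbit $\gamma_e$ and normal Chern number $c_N = 0$ (this is part of what the foliation of Theorem~\ref{thm: lvw_foliation} guarantees, since regular and exotic fibers form a foliation with embedded/immersed leaves), so the asymptotic winding number is extremal and $f_v$ lies in the $2$-dimensional leading eigenspace. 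Under the standard normalization $S(t) \equiv \theta$ used in the proof of Lemma~\ref{lem:local_diff} (coming from \S2.6 of \cite{HT_gluing1}), the leading eigenspace of $L_{\gamma_e}$ is spanned precisely by $\{e^{-it}, i e^{-it}\}$ (real span), i.e. the complex multiples of $e^{-it}$, matching the computation above. The only genuine obstacle is bookkeeping the constant phase and modulus factors ($\sqrt c$, the factor $2$, the ambiguity in choosing $s$ vs.\ $\log(1/r_1)$) so that one lands cleanly in the trivialization $\tau_0$ of Lemma~\ref{lem: orbit-nbd}; since a constant complex scalar multiple of an eigenfunction is again an eigenfunction, and constant shifts in $s$ are harmless, these do not affect the stated conclusion, so the lemma follows.
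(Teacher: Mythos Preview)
Your computational core is correct and follows essentially the same approach as the paper: substitute $w=z-1$ (the paper writes $z'=z-1$), push through the trivialization $\phi$ of Lemma~\ref{lem: orbit-nbd} to see that the $\C$-valued normal coordinate is $c^{3/2}z(z^2-1)=\sqrt{c}(1+z')\cdot cz'(z'+2)$, and read off the leading term $\sqrt{c}\,e^{-it}$ after identifying $(s,t)$. The only stylistic difference is that the paper takes the \emph{exact} cylindrical coordinates $s=-\ln|cz'(z'+2)|$, $t=-\arg(cz'(z'+2))$, so that $cz'(z'+2)=e^{-s}e^{-it}$ on the nose and the remainder is exactly $\sqrt{c}\,z'e^{-it}$, whereas you work with the leading-order approximation $2cw\approx e^{-s-it}$; both give the same conclusion.

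Your final paragraph, however, is unnecessary and slightly misdirected. The lemma is a purely computational statement about the explicit model curves $u_c$ of Definition~\ref{def:model} in the explicit chart of Lemma~\ref{lem: orbit-nbd}; nothing about actual moduli spaces, $c_N=0$, or extremal winding is needed here. The paper simply notes that in this chart the metric is standard and the exponential map is the identity, so the expansion \eqref{eqn: hwz} is literal and the computation \emph{is} the proof. The argument you sketch (invoking Lemma~\ref{lem:local_diff} and $c_N(\widehat{u_c})=0$) is exactly what is used later, in Step~1 of Proposition~\ref{prop:unique_local_model}, to show that \emph{genuine} regular fibers near an exotic fiber also have leading eigenfunction in the span of $e^{-it}$---but that is a separate statement about curves in an arbitrary filling, not about the hand-built model.
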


\begin{proof}
  To understand the asymptotic presentation of the curves $u_c$ near the punctures in the local model in Definition~\ref{def:model}, we will perform a change of coordinates on the domain from $\C \setminus \{\pm 1\}$ given by $z' := z-1$. This will give asymptotic coordinates near the puncture at 1, and it will be symmetric for the -1 case. We will also write the image of $u_c$ in $\R \times S^1 \times \C$ coordinates as in Lemma~\ref{lem: orbit-nbd}. The Reeb orbit in question, corresponding to $\{0\} \times \C$ in our previous coordinates or $\{\infty\} \times S^1 \times \{0\}$ in the current coordinates, has period 1 and in the model, the level $3$-manifold $\{s\} \times S^1 \times \C$  has the standard metric, hence we can use the trivial connection, hence the exponential map is just identity.

  Given $z \in \C \setminus \{\pm 1\}$, let $\phi_1 := \arg(c(z^2-1))$, $\phi_2 := \arg(\sqrt{c}z)$. Now in the asymptotic coordinates $(r_1, \theta_1, r_2, \theta_2) \in \R \times S^1 \times \C$, we have from \eqref{eq:local_model} that
  \begin{align*}
    u_c(z) &= (-\ln(|c(z^2-1)|), -\phi_1, |c\sqrt{c}(z^2-1)z|, \phi_1+\phi_2)\\
    &\quad \mbox{which gives after $z' = z-1$}\\
    u_c(z') &= (-\ln(|cz'(z'+2)|), -\phi_1, |c\sqrt{c}z'(z'+1)(z'+2)|, \phi_1+\phi_2).
    \end{align*}

  As we are using the trivial connection in $\{.\} \times S^1 \times \R^2$, the exponential map is identity under identification of $\xi_{\theta}$ with $\{.\}\times \{\theta\} \times \R^2$. So we can work with the above expression to understand the leading terms in the asymptotic expansion. The $\R \times S^1$ coordinate for the curve is $u_{c, \R \times S^1} = (-\ln(|cz'(z'+2)|), -\phi_1)$. Let us call that $(s,t)$. We then focus on the $\R^2$ coordinates, call it $u_{c, \R^2}$. We have that
  \[
    u_{c,\R^2}(r,\theta) = c\sqrt{c}z'(z'+1)(z'+2) e^{\phi_1+\phi_2}.
  \]
  Then $u_{c,\R^2}$ can be rewritten as  
  \begin{align*}
    u_{c,\R^2} &= e^{-s}\sqrt{c}e^{-it}(1+z')\\
    &= e^{-s}(\sqrt{c}e^{-it}+r(s,t))
  \end{align*}
  Thus, we see the leading term $\sqrt{c}e^{-it}$, corresponding to $e(t)$ in Equation~\ref{eqn: hwz}.
\end{proof}

Now we prove uniqueness of such a local model for an exotic fiber. To prepare for the proof for the uniqueness of the local model, we first establish some notations and Lemma \ref{lem: nearby_regular_fiber} detailing a few asymptotic properties of a regular fiber in the neighborhood of an exotic fiber.

Recall that $\widehat{\mathcal{M}}^{\cF}(\widehat{J})$ is the specific moduli space of pseudoholomorphic curves in $(\widehat{W}, \widehat{\omega}, \widehat{J})$ described in \textsection \ref{subsec:moduli_space_foliation}. In particular, it allows a partition into regular, singular and exotic moduli spaces together with $\widehat{\cM}^{\cF_+}(J_+)$ and there is a continuous surjection 
$$\Pi: \widehat{W} \to \widehat{\mathcal{M}}^{\cF}(\widehat{J}) \setminus \widehat{\cM}^{\cF_+}(J_+)$$ 
by Theorem \ref{thm:spinal_ii_6.3}. To establish notation, given a neighborhood $D \cong \C$ of an exotic fiber in the moduli space $\widehat{\cM}^{\cF_+}(J_+)$, we let $\widehat{u_c}:= \Pi^{-1}(c)$ where $c\in \C$ denote the pseudoholomorphic map from $\Sigma_c$ into $\widehat{W}$ corresponding to either a regular fiber when $c\neq 0$ and an exotic fiber when $c=0$.

For $c\neq 0$, consider now a twice-punctured disk $P \subset \Sigma_c$ for which there is an almost complex chart to $\C\setminus\{\pm 1\}$. In addition, identify the neighborhood $\widehat{N}\subset \widehat{W}$ of the elliptic orbit $\gamma_e$ with $(\C\setminus\{0\}) \times \C$ using Lemma \ref{lem: orbit-nbd}. The image of $\partial P$ under $\widehat{u_c}$, by continuity, naturally lands in the closure $\overline{\widehat{N}}$. Note that this closure can be understood as a product of $\C\setminus\{0\}$ and $\C$ both compactified at infinity, and the boundary of $\overline{\widehat{N}}$ belongs to the interior of $\widehat{W}$, away from the infinite end. In particular, the projection maps $\Pi_0$ and $\Pi$ on $\widehat{N}$ make sense in its closure.  

\begin{lemma} \label{lem: nearby_regular_fiber}
  For a regular fiber $\widehat{u_c}$ as above, $c_N(\widehat{u_c}) = 0$ and the extremal asymptotic winding number is achieved at each asymptotic end.
\end{lemma}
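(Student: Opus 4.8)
The strategy is to compute the normal Chern number $c_N(\widehat{u_c})$ directly from its definition, using the known index and topological data of a regular fiber in $\widehat{\mathcal{M}}^{\cF}(\widehat{J})$, and then to invoke the general positivity relation $Z(v) + Z_\infty(v) = c_N(\widehat{u_c})$ of \cite{Wendl_aut_tran} (as already used in the proof of Lemma~\ref{lem:local_diff}) to force the extremal winding at each end. First I would recall that, by the partition in Theorem~\ref{thm:spinal_ii_6.3}, $\widehat{u_c}$ for $c\neq 0$ lies in $\cM_{\reg}(\widehat{J})$, so it is an embedded, arithmetic-genus-zero curve asymptotic to simply covered Reeb orbits; hence $g(\widehat{u_c}) = 0$. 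Since it arises as an index $2$ curve in the foliated moduli space (the leaves of $\cF_+$ have index $2$, and the moduli space $\overline{\cM}^{\cF_+}_2(J_+)$ is indexed accordingly), we have $\ind(\widehat{u_c}) = 2$. Plugging into
\[
  c_N(\widehat{u_c}) = \frac{\ind(\widehat{u_c}) - 2 + 2g(\widehat{u_c}) + \#\Gamma_0(\widehat{u_c})}{2} = \frac{\#\Gamma_0(\widehat{u_c})}{2},
\]
so the claim $c_N(\widehat{u_c}) = 0$ is \emph{equivalent} to $\#\Gamma_0(\widehat{u_c}) = 0$, i.e. none of the asymptotic orbits of a regular fiber has even Conley--Zehnder index. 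This is where the specific geometry of the spinal open book enters: the asymptotic orbits of a regular fiber are the binding orbits coming from the $S^1$-fibers of the spine, and these are elliptic orbits which, with respect to the natural page trivialization $\tau_0$ of Lemma~\ref{lem: orbit-nbd}, have Conley--Zehnder index $\pm 1$ (odd). This is precisely the content of the nondegenerate perturbation and the construction of $\cH_+$ and $\cF_+$ in \cite[\S3]{spinal_II}; I would cite the relevant statement there (the one giving that holomorphic pages have index $2$ and odd-index asymptotics). Therefore $\#\Gamma_0(\widehat{u_c}) = 0$ and $c_N(\widehat{u_c}) = 0$.

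For the second assertion, I would argue exactly as in the last paragraph of the proof of Lemma~\ref{lem:local_diff}: for any section $v\in \ker D_{\widehat{u_c}}^N = T_{\widehat{u_c}}\widehat{\cM}^{\cF}(\widehat{J})$, the count of zeros $Z(v)$ and the asymptotic defect
\[
  Z_\infty(v) = \sum_{\gamma\in\Gamma^+}\big[\alpha_\tau^-(L_\gamma) - \wind_\tau^\gamma(v)\big] + \sum_{\gamma\in\Gamma^-}\big[\wind_\tau^\gamma(v) - \alpha_\tau^+(L_\gamma)\big]
\]
are both nonnegative, and their sum equals $c_N(\widehat{u_c}) = 0$. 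Hence $Z_\infty(v) = 0$, and since each summand is nonnegative this forces every summand to vanish, i.e. the winding number of the leading asymptotic eigenfunction of $v$ equals the extremal value $\alpha_\tau^\pm(L_\gamma)$ at each puncture. To turn this into a statement about the curve $\widehat{u_c}$ itself (rather than a section $v$ in its normal kernel), I would note that since the foliation is unobstructed near $\widehat{u_c}$ (this is part of Theorem~\ref{thm:spinal_ii_6.3}: the regular locus is open and the curves genuinely foliate), the evaluation of $v$ at a puncture spans the leading eigenspace — this is the same mechanism as in Lemma~\ref{lem:local_diff}, giving that the leading asymptotic term of $\widehat{u_c}$ near each end is governed by the extremal-winding eigenspace, consistent with the explicit leading term $\sqrt{c}\,e^{-it}$ computed in Lemma~\ref{lem:local_coord}.

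\textbf{Main obstacle.} The delicate point is justifying $\#\Gamma_0(\widehat{u_c}) = 0$ cleanly, i.e. pinning down the Conley--Zehnder indices of the asymptotic orbits of a regular fiber with respect to the correct trivialization. One must be careful that the trivialization $\tau_0$ of Lemma~\ref{lem: orbit-nbd} is the one in which these indices are odd, and that the index formula $\ind = 2$ is stated in a compatible normalization (the paper's footnote warns that its asymptotic operator has the opposite sign convention to much of the literature, so sign bookkeeping must be done consistently). I expect this to reduce to directly quoting the relevant index computations from \cite[\S3]{spinal_II} — in particular the statement that the holomorphic pages in $\cF_+$ are index $2$ with all asymptotics at elliptic orbits of odd Conley--Zehnder index — rather than recomputing them, since the construction of $J_+$ and $\cF_+$ was designed precisely to arrange this. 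Once that input is in hand, the rest is the formal positivity-of-zeros argument already rehearsed in Lemma~\ref{lem:local_diff}, applied verbatim.
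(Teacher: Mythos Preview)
Your proposal is correct and follows essentially the same strategy as the paper: establish $c_N(\widehat{u_c})=0$ and then run the positivity argument $Z(v)+Z_\infty(v)=c_N$ exactly as in Lemma~\ref{lem:local_diff} to force extremal winding at every end.

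The only difference is in how the first step is packaged. You propose to argue $\ind(\widehat{u_c})=2$ and $\#\Gamma_0(\widehat{u_c})=0$ separately, the latter by checking that the binding orbits are elliptic with odd Conley--Zehnder index in the page trivialization. The paper instead cites a single combined equality $\ind(\widehat{u_c})+\#\Gamma_0(\widehat{u_c})=2$ from the proof of \cite[Lemma~6.20]{spinal_II}, which immediately gives $c_N=0$ once $g=0$ is noted. Your route is slightly more hands-on and requires the CZ-parity input you flag as the ``main obstacle''; the paper's route sidesteps that by quoting the ready-made identity. Both are valid, and your caution about trivialization and sign conventions is well placed but ultimately unnecessary once you locate the cited lemma.
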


\begin{proof}
  The proof of Lemma 6.20 in \cite{spinal_II} implies that for regular curves $\widehat{u_c}$, we have
  $$\ind(\widehat{u_c}) + \#\Gamma_0(\widehat{u_c}) = 2.$$ 
  Therefore, $c_N(\widehat{u_c}) = 0$. Now, we may apply a similar trick as in the proof of Lemma \ref{lem:local_diff}: using Equation (\ref{eq:zz_cn}), we conclude that $Z_\infty(v) = 0$, i.e., the extremal winding numbers are achieved.
\end{proof}

\begin{proposition}[Uniqueness of local model] \label{prop:unique_local_model}
  Consider an exotic fiber in a completed filling $(\widehat{W}, \widehat{\omega},\widehat{J})$. Let $\widehat{N}$ be as above -- a noncompact neighborhood of the doubly covered orbit $\gamma_e$ in the exotic fiber. Then there exists a smooth coordinate chart $\phi:\widehat{N} \to \mathbb{C}\setminus \{0\} \times \C$ such that the intersection of the the nearby neighborhood regular fibers with $\widehat{N}$ is sent to the foliation $\mathcal{F}_0:=\{u_c|c\in \C \backslash\{0\}\}$, whose leaves are described in \eqref{eq:local_model} and are the non-zero level sets of the map 
  $$(z_1, z_2) \mapsto z_2^2 - z_1.$$
  It follows that the map $\Pi|_{\widehat{N}}$ is $\Pi_0$.
\end{proposition}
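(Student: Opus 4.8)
\textbf{Proof strategy for Proposition~\ref{prop:unique_local_model}.}
The plan is to use the asymptotic rigidity of the regular fibers near the special orbit $\gamma_e$ to build the desired chart $\phi$ directly out of the moduli parameter and the leading asymptotic eigenfunctions, and then to check that in this chart the foliation is exactly $\cF_0$. First I would use the trivialization of Lemma~\ref{lem: orbit-nbd} to identify $\widehat N$ with $(\C\setminus\{0\})\times \C$ as a smooth manifold equipped with the model stable Hamiltonian structure, so that $\gamma_e$ corresponds to $\{0\}\times \C$ (equivalently $\{\infty\}\times S^1\times\{0\}$ in the cylindrical picture), and fix the unitary trivialization $\tau_0$ supplied there. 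Next, for $c\neq 0$ I would restrict attention to the twice-punctured disk $P\subset \Sigma_c$ whose almost complex chart is $\C\setminus\{\pm1\}$, as set up before Lemma~\ref{lem: nearby_regular_fiber}; by that lemma $c_N(\widehat{u_c})=0$ and the extremal winding number is attained at each end. This is precisely the hypothesis of Lemma~\ref{lem:local_diff} (local diffeomorphism): the evaluation map to the leading asymptotic eigenspace at $\gamma_e$ is a local diffeomorphism of the moduli space onto $\C\setminus\{0\}$, and the leading eigenspace is $2$-dimensional after deforming to the complex-linear normal form $S(t)=\theta$. In particular the leading coefficient of each regular fiber at the puncture $+1$ (resp.\ $-1$) is a nonzero complex number, depending diffeomorphically on the fiber, and by Lemma~\ref{lem:local_coord} in the model these coefficients are $\sqrt{c}\,e^{-it}$ at $+1$ and (by the symmetry $z\mapsto -z$) $-\sqrt{c}\,e^{-it}$ at $-1$ — so the two leading data are determined by a single square root, matching the structure of \eqref{eq:local_model}.

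From here the construction of $\phi$ proceeds as follows. For $c$ in a small punctured disk $D\setminus\{0\}$ in the moduli space $\widehat{\cM}^{\cF}(\widehat J)$ near the exotic fiber, I would define coordinates on $\widehat N$ by declaring the first coordinate $z_1$ to be the value of the projection $\Pi$ (reparameterized so that $0$ is the exotic fiber and the identification with $D$ is holomorphic — this uses that $\Pi$ restricted to a neighborhood of an index-$2$ curve near an elliptic end is a local diffeomorphism onto the moduli space, again via Lemma~\ref{lem:local_diff}), and the second coordinate $z_2$ to be the rescaled asymptotic evaluation that records where a point of $\widehat N$ sits along its fiber, normalized using the leading eigenfunction $e^{-it}$. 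Concretely, a point $x\in\widehat N$ lies on a unique fiber $\widehat{u_c}$; pulling $x$ back to $\Sigma_c$ and following the asymptotic expansion \eqref{eqn: hwz}–\eqref{eq:local_expansion} in the trivialization $\tau_0$, the pair $(c, \text{position on fiber})$ is sent to $(z_1,z_2)$ so that the fiber $\widehat{u_c}$ becomes $\{(z_1,z_2): z_1 = z_2^2 - c\}$ — i.e.\ the graph of $z_2\mapsto z_2^2-c$, which is the reparameterized local model \eqref{eq:u_c_z'}. One then checks that this prescription is smooth and invertible on a neighborhood of $\gamma_e$: smoothness and the local-diffeomorphism property of each ingredient come from Lemma~\ref{lem:local_diff} and the exponential-decay estimates \eqref{eq:error_estimate}, while injectivity near $\gamma_e$ follows because the error terms $r_v$, $o_{\infty,N}$ are higher order in $e^{-s}$ and hence negligible compared to the leading term that already separates fibers and points. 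Rewriting $z_1 = z_2^2 - c$ as $c = z_2^2 - z_1$ shows the leaves of $\cF_0$ are exactly the nonzero level sets of $(z_1,z_2)\mapsto z_2^2 - z_1$, and since $z_1$ was defined to be $\Pi$, this identity says precisely $\Pi|_{\widehat N} = \Pi_0$, which is the final claim.

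\textbf{Main obstacle.}
The genuinely hard part is the last step above: upgrading the asymptotic-leading-term data into an honest smooth chart $\phi$ defined on a full (open) neighborhood of $\gamma_e$, rather than just a statement about leading coefficients. The leading-eigenfunction evaluation of Lemma~\ref{lem:local_diff} controls the curves only to first order at $s=\infty$, so one must argue that the map $x\mapsto(z_1,z_2)$ assembled from these data (i) extends smoothly over the part of $\widehat N$ at finite $s$, (ii) has everywhere nonsingular differential, and (iii) is globally injective on the neighborhood — and all three require carefully propagating the exponential decay estimates \eqref{eq:error_estimate} and using that the foliation $\cF$ is a genuine smooth foliation near $\gamma_e$ (from Theorem~\ref{thm:spinal_ii_6.3}) so that the "position along the fiber" coordinate varies smoothly and transversally. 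I would handle this the same way as the analogous Lefschetz-fiber argument in the Appendix of \cite{Wendl_low_dim_curves}: first normalize $\widehat J$ to the integrable model near $\gamma_e$ (permissible since the relevant space of deformations is contractible, as in the proof of Lemma~\ref{lem:local_diff}), then observe that in that model the foliation by index-$2$ curves asymptotic to $\gamma_e$ with $c_N=0$ is rigid enough to be identified leaf-by-leaf with $\cF_0$ by matching leading terms, and finally transport this identification back through the (smooth, contractible) deformation. A secondary technical point to be careful about is the doubly-covered end: the exotic fiber itself double-covers $\gamma_e$, so one works with the underlying simple orbit and its simple asymptotic operator throughout (as flagged after Lemma~\ref{lem:local_diff} and in Lemma~\ref{lem: nearby_regular_fiber}), and only at $c=0$ does the graph $z_1=z_2^2$ degenerate onto the double cover — consistency there is exactly what the reparameterization \eqref{eq:u_c_z'} was designed to record.
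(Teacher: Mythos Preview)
Your overall plan correctly identifies the right ingredients (Lemma~\ref{lem: orbit-nbd} for the trivialization, Lemma~\ref{lem: nearby_regular_fiber} for $c_N=0$ and extremal winding, Lemma~\ref{lem:local_diff} for the evaluation-to-leading-eigenspace local diffeomorphism, and Lemma~\ref{lem:local_coord} for the model leading terms $\pm\sqrt{c}\,e^{-it}$), but the execution has a genuine gap and the paper proceeds rather differently.

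There is an internal inconsistency in your construction of the chart. You declare $z_1$ to be ``the value of the projection $\Pi$'', which is the moduli parameter $c$ and hence constant along each fiber; but you then assert that the fiber $\widehat{u_c}$ becomes $\{z_1 = z_2^2 - c\}$, on which $z_1$ manifestly varies. In the model $\Pi_0(z_1,z_2) = z_2^2 - z_1$ equals $c$, so $z_1$ is \emph{not} $\Pi$; your final sentence ``since $z_1$ was defined to be $\Pi$, this identity says precisely $\Pi|_{\widehat N}=\Pi_0$'' is therefore circular. Relatedly, defining $z_2$ as ``the rescaled asymptotic evaluation'' does not give a coordinate on $\widehat N$: the leading asymptotic coefficient is a single complex number attached to each end, not a function of points along the fiber. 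What you need is a genuine fiberwise coordinate, and the leading eigenfunction alone cannot supply one --- this is exactly the obstacle you flag, and it is not resolved by the sketch you give.

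The paper's argument avoids building the chart from asymptotic data. It fixes the trivialization of Lemma~\ref{lem: orbit-nbd} once and for all, writes each nearby regular fiber as $u_c(z) = (q_c(z), p_c(z))$ with $q_c, p_c$ holomorphic on $\C\setminus\{\pm1\}$, and uses a purely algebraic observation you are missing: since $q_c$ lands in $\C\setminus\{0\}$ and extends to $0$ at $\pm1$, necessarily $q_c(z) = \alpha(c)(z^2-1)$ for some nonzero $\alpha(c)$. This pins down the first coordinate exactly, without asymptotics. The proof then runs two explicit \emph{deformations} of $\widehat J$ (hence of $\widehat\omega$): the first modifies the higher-order asymptotic terms of the fibers via cutoffs, as in \cite[Lemma~5.5]{Wendl_strongly}, so that near the punctures they match the model, and then invokes \emph{analytic continuation} to conclude the curves agree globally with the model's second coordinate $p_c(z)=\sqrt{c}\,z$; the second perturbs $\alpha(c)$ to a polynomial and uses the foliation condition (unique $c$ through each point of $(\C\setminus\{0\})\times\C$) to force $\alpha(c)=c$ after a global rescaling. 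The chart $\phi$ is then simply the fixed trivialization, applied to the deformed foliation. Your proposed resolution (normalize $\widehat J$ to the integrable model and transport back) is in the right spirit, but the specific mechanisms --- the factorization $q_c=\alpha(c)(z^2-1)$, the cutoff-then-analytic-continuation step, and the algebraic argument forcing $\alpha(c)=c$ --- are what actually close the argument, and none of them appear in your proposal.
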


\begin{proof}
  Given $c\in \C\setminus\{0\}$, let $u_c := \widehat{u_c} |_P$ be a regular curve in a neighborhood of the exotic fiber in the moduli space $\widehat{\cM}^\cF(\widehat{J})$, restricted to a non-compact neighborhood of the two asymptotic end at $\gamma_e$ as above. By the continuous surjection in Proposition \ref{thm:spinal_ii_6.3}, or more directly, by the SFT compactness in the proof of Proposition \ref{thm:spinal_ii_6.3}, $u_c$ can be parameterized by maps of the form 
  $$u_c: \C\setminus\{\pm 1\} \to (\C\setminus\{0\}) \times \C,$$
  given by 
  $$u_c(z) = (q_c(z), p_c(z)),$$
  in a non-compact neighborhood of the collapsing punctures, where $f_c$ and $p_c$ are both holomorphic functions on $\C \setminus \{\pm 1\}$. See Figure \ref{fig:building}.

  Now, by assumption of the asymptotic neighborhood, $q_c$ can be extended to have removable singularities at $\pm 1$ with value $0$ and is non-vanishing on $\C \setminus \{\pm 1\}$. This means $q_c$ is a multiple of $(z^2 - 1)$, call it 
  $$q_c(z) = \alpha(c)(z^2 - 1),$$ 
  where $\alpha: \C \setminus \{0\} \to \C \setminus \{0\}$ is a smooth function in $c$. The $p_c$ coordinate is determined by the asymptotics of the holomorphic curves near the two ends at $\pm 1$, whereas the coordinate $q_c$ determines a simultaneous identification between $z \in P$ and $(s,t)$  as in Equation~(\ref{eqn: hwz}), near $\pm 1$, for all the curves $u_c$ for $c \in \C \setminus \{0\}$. 

  Note that the trivialization in the asymptotic neighborhood of $\gamma$ is given by $\phi$ in Lemma \ref{lem: orbit-nbd}, where $\gamma$ is the underlying simple elliptic orbit of the double cover $\gamma^2$ that appears in a positive asymptotic end of an exotic fiber. 

  We will now prove the proposition in three steps.

  {\bf Step 1 (change of coordinates):} First, since our orbit $\gamma$ is an elliptic orbit, we know that two leading asymptotic eigenfunctions have the same winding number. Therefore, we may change coordinates so that the leading asymptotic eigenfunctions becomes a complex multiple of $e^{i\Theta t}$. In fact, we can compute $CZ_\phi(\gamma) = 2\lfloor -1 \rfloor +1 = -1$ from our choice of trivialization in Equation (\ref{eq:trivialization}), so $\text{wind}_\phi(\gamma) =  -1$ because the leading eigenvalue is achieved by Lemma \ref{lem: nearby_regular_fiber}. In particular, we in fact have that $\Theta = -1$. 

  In addition, by Lemma \ref{lem: nearby_regular_fiber}, we also have that $c_N(\widehat{u_c}) = 0$. Therefore, we may apply Lemma \ref{lem:local_diff} to obtain a local diffeomorphism between the moduli space of regular curves in the neighborhood of an exotic fiber at the asymptotic end $\gamma$ and $\mathbb{C} \setminus \{0\}$. The local diffeomorphism allows us to pick asymptotic cylinders that we call $Z_c^1$ and $Z_c^{-1}$ in the neighborhood $D$ of an exotic fiber in the moduli space $\widehat{\mathcal{M}}^{\cF}(\widehat{J})$,  for which the leading eigenfunctions have coefficients $ \sqrt{c}$ and $-\sqrt{c}$, respectively.

  {\bf Step 2 (first deformation):} Observe that Lemma~\ref{lem:local_coord} also shows that our local model consists of curves with leading eigenfunction coefficients $\pm \sqrt{c}$, where the $\sqrt{c}$ coefficient corresponds to the asymptotic end of the curve $u_c$ near the puncture at +1, while $-\sqrt{c}$ corresponds to the the asymptotic end of $u_c$ near the puncture at $-1$.

  Now we modify the foliation $\widehat{\mathcal{M}}^{\cF}(\widehat{J})$ smoothly on a neighborhood $N \subset \widehat{N}$ near the asymptotic ends of the curves. This can be done by a similar analysis of Lemma 5.5 in \cite{Wendl_strongly}. Choose $R_1$ sufficiently large so that the tangent spaces of the curves in $N$ is uniformly close to the tangent space of the asymptotic orbit cylinders in $[R_1, \infty) \times Y$. Now we modify the rest of the asymptotic terms of $Z_c^1$ other than the leading term by cutoff functions to match the expansion in Lemma~\ref{lem:local_coord}, for the curve with leading eigenfunction coefficient $\sqrt{c}$. As in Lemma 5.5 in \cite{Wendl_strongly}, the modified foliation is still $J'$-holomorphic, for some $J'$ uniformly closed to $\widehat{J}$, so it's also tamed by $\widehat{\omega}$. Let $\omega_\epsilon$ be a symplectic form tamed by $J_\epsilon$. We then have that the modification in the pseudoholomorphic foliation is induced by a symplectic deformation $\omega_t = (1-t) \widehat{\omega}+t \omega_\epsilon$.

  Simultaneously, for $Z_c^{-1}$, we modify the rest of the asymptotic terms to match the expansion in Lemma~\ref{lem:local_coord}, for the curve with leading eigenfunction coefficient $-\sqrt{c}$. Doing this for all $c$ means that near $\pm 1$, the curves $u_c$ agree with the curves in Lemma~\ref{lem:local_coord}. By analytic continuation, it follows that they agree over the whole domain. 

  {\bf Step 3 (second deformation):}
  At this point, we know that after a reparameterization, all the curves in the neighborhood $\widehat{N}$ are parameterized as $u_c(z) = (\alpha(c)(z^2-1), \sqrt{c}z)$. This is a foliation, which means that given $(z_1, z_2) \in (\C \setminus \{0\}) \times \C$, there exists a unique pair $(z, c) \in (\C \setminus \{\pm 1\}) \times (\C \setminus \{0\})$ such that $u_c(z) = (z_1,z_2)$. This implies there is a unique solution in $c$ to the expression
  \[
    \alpha(c)z_2^2 - \alpha(c)c - z_1c = 0
  \] 
  for any $(z_1, z_2) \in (\C \setminus \{0\}) \times \C$.

  Now we argue that we can make $\alpha(c) = c$ via a symplectic deformation. First, we deform $\widehat{J}$, which induces a deformation of the foliation, in the $z_1$ direction via a $C^\infty$-small perturbation so that $\alpha(c)$ becomes a polynomial. By the arguments in the previous step, we have that this perturbation is also induced by a symplectic deformation. 

  Now, since $\alpha(c)$ is a polynomial and $\lim_{c \to 0}\alpha(c) = 0$ corresponding to the SFT limit, we may factor $\alpha(c) = c(\alpha_1(c))$ and obtain that 
  $$\alpha_1(c)z_2^2-\alpha_1(c)c-z_1 = 0$$ 
  has a unique solution in $c$ for all $(z_1,z_2)$. This means that $\alpha_1(c)$ must be a constant, and then by global scaling we may assume that $\alpha_1(c) = 1$. This concludes the proof.
\end{proof}

\begin{figure}[htbp]{
  \vspace{0.2cm}
  \begin{overpic}[tics=20]
  {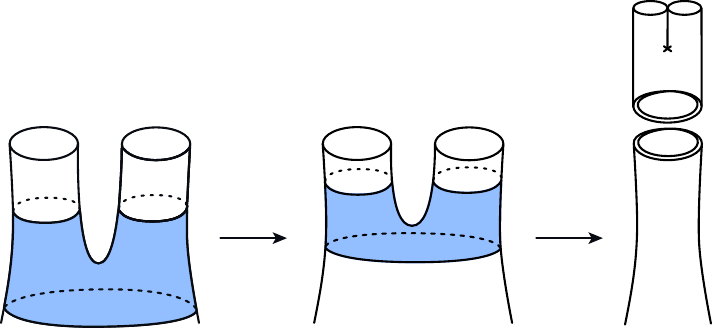}
  \end{overpic}}
  \vspace{0.2cm}
  \caption{A local description of the SFT limit from regular fibers to an exotic fiber.}
  \label{fig:building}
\end{figure}

\begin{lemma}\label{lem:intersect}
  In the chart $\widehat{N} \subset \widehat{W}$ as above, there is a unique non-transverse intersection of a fiber of the foliation with a representative of a vertebra, still denoted as $\Sigma$, near an exotic fiber, which corresponds to a simple branch point of the map $\Pi|_{\Sigma}: \Sigma \to \mathcal{M}$.
\end{lemma}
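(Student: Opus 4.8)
\textbf{Proof plan for Lemma~\ref{lem:intersect}.}
The plan is to leverage Proposition~\ref{prop:unique_local_model}, which identifies $\widehat{N}$ with $(\C\setminus\{0\})\times\C$ and the foliation with the level sets of $\Pi_0(z_1,z_2)=z_2^2-z_1$, together with the last clause of Theorem~\ref{thm: lvw_foliation}, which says that each vertebra representative $S_i$ maps to $\cM$ as a proper branched cover with simple branch points. First I would recall that $\Pi|_{S_i}$ is an honest covering map away from $\cM_{\exot}$ (this is \cite[Lemma 6.33]{spinal_II}), so any branch point of $\Pi|_{S_i}$ must lie over an exotic fiber. Thus it suffices to work inside a single chart $\widehat{N}$ around $\gamma_e$ and count the intersection of $S_i$ with the fibers there, and to identify the non-transversality with a simple branch point.

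The key computation is to describe $S_i\cap\widehat{N}$ as a subset of $(\C\setminus\{0\})\times\C$. By Theorem~\ref{thm: lvw_foliation}, $S_i$ is a properly embedded $J$-holomorphic curve homeomorphic to $\Sigma_i$, so in the chart it is a holomorphic submanifold; asymptotically it must approach the orbit $\gamma_e$ along an asymptotic cylinder. Since $\gamma_e$ is the \emph{doubly}-covered orbit in the exotic fiber but $S_i\to\cM$ is only a $k_i$-fold branched cover (and the multiplicity at this boundary component is $2$), the curve $S_i$ near $\gamma_e$ is asymptotic to the simply-covered orbit, so in the coordinates of Lemma~\ref{lem: orbit-nbd} it is a graph of the form $z_1\mapsto z_2 = g(z_1)$ for a holomorphic $g$ with $g(0)=0$ (or, after the natural reparameterization, a graph in the other direction). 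I would then compute $\Pi_0|_{S_i}$: composing with $(z_1,z_2)\mapsto z_2^2-z_1 = g(z_1)^2 - z_1$. The derivative with respect to $z_1$ is $2g(z_1)g'(z_1)-1$, which at $z_1=0$ equals $-1\neq 0$; hence $\Pi_0|_{S_i}$ is a local diffeomorphism near $\gamma_e$ itself. But I should instead parameterize $S_i$ the way the exotic fiber degeneration does — in the rescaled $z'$-coordinate of Definition~\ref{def:model}, where the exotic fiber is $\{(z'^2,z')\}$ and the vertebra, passing through this fiber, meets it; the relevant map is then $\Pi_0$ restricted to a holomorphic disk transverse to the exotic fiber, and one reads off from $\Pi_c(z_1,z_2)=z_2^2-z_1$ that the branch locus is exactly $\{z_1 = z_2^2\}$, i.e.\ the exotic fiber itself, and the branching is simple (order $2$), matching the quadratic vanishing in $\Pi_c$. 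So the single point where $S_i$ is tangent to a leaf is the point of $S_i$ lying on the exotic fiber, and it is a simple (fold) branch point.

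Concretely, the steps in order are: (1) invoke \cite[Lemma 6.33]{spinal_II} to localize all branch points over $\cM_{\exot}$; (2) apply Proposition~\ref{prop:unique_local_model} to normalize $\widehat{N}$ and the foliation; (3) show $S_i\cap\widehat{N}$ is a holomorphic graph asymptotic to the simple orbit $\gamma_e$, using that the multiplicity at the relevant boundary component is $2$ while $S_i$ is embedded, so $S_i$ cannot itself be multiply covered near $\gamma_e$; (4) compute $\Pi_0|_{S_i}$ (equivalently $\Pi_c$ restricted to the image of $S_i$) and show its differential degenerates to order exactly one at the unique point of $S_i$ lying on the exotic fiber, using the explicit form $z_2^2-z_1$; (5) conclude this is a simple branch point and that it is the only non-transverse intersection in this chart. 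The main obstacle I anticipate is step (3): rigorously establishing that $S_i$, which a priori is only known to be a properly embedded $J$-holomorphic curve asymptotic to $\gamma_e$, is in this chart precisely a graph over the $z_1$-plane asymptotic to the \emph{simply}-covered orbit — one must rule out that $S_i$ wraps around $\gamma_e$ or is asymptotic to the double cover, which requires an asymptotic-winding argument (along the lines of Lemma~\ref{lem: nearby_regular_fiber} and the extremal winding analysis) together with the fact that the exotic fiber meets $S_i$ transversally in the $z_2$-direction. Once the graph description is in hand, the remaining computation is immediate from the explicit formula for $\Pi_0$.
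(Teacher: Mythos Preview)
Your overall strategy---use the local model from Proposition~\ref{prop:unique_local_model} and then read off the branch point of $\Pi_0|_{\Sigma}$---is the right one, and it is exactly what the paper does. But your step~(3) rests on a misconception about how the vertebra sits in the chart, and this is why your computation in step~(4) gives $-1\neq 0$ and then stalls.

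You describe $S_i\cap\widehat N$ as a graph $z_2=g(z_1)$ with $g(0)=0$, i.e.\ as a curve \emph{asymptotic} to the Reeb orbit $\gamma_e$. This is the geometry of the \emph{pages} (the fibers $u_c$), not of the vertebra. In the spine $M_\Sigma\cong\Sigma\times S^1$ the Reeb orbits are the $S^1$-fibers and each vertebra is a \emph{section}, hence \emph{transverse} to $\gamma_e$. In the coordinates of Lemma~\ref{lem: orbit-nbd} (and the double completion of \S\ref{subsec:moduli_space_foliation}), this means the representative of the vertebra in $\widehat N$ is the vertical plane $\Sigma_0=\{r=r_0,\ \theta=0\}$, i.e.\ $\{z_1=\text{const}\}\times\C$, not a graph over the $z_1$-factor. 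Once you have this, the computation is one line: $\Pi_0|_{\Sigma_0}(z_2)=z_2^2-r_0$, which has a single simple branch point at $z_2=0$, and a direct Jacobian check shows $u_c$ is transverse to $\Sigma_0$ everywhere except at $u_c(0)=(-c,0)$, which lies on $\Sigma_0$ only for $c=-r_0$.

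So the ``main obstacle'' you anticipate---an asymptotic-winding argument to rule out multiple covering of $\gamma_e$ by $S_i$---is a phantom; the vertebra is not asymptotic to $\gamma_e$ at all. The paper sidesteps all of this by simply choosing the explicit representative $\Sigma_0=\{r=r_0,\theta=0\}$ high in the symplectization, where the product structure of the spine makes the description immediate.
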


\begin{proof}
  Parameterize $\widehat{N}$ by $(r, \theta, p,q) \in \R \times S^1 \times \R^2 \cong (\mathbb{C}\backslash\{0\}) \times \C$ by the identification via Lemma \ref{lem: orbit-nbd}. In particular, as $r\to 0$, $t\to\infty$. Choose $\epsilon>0$ small enough so that $\widehat{N}_{r \leq \epsilon}$ corresponds to the symplectization in the double completion $\widehat{E}$  as described in \S\ref{subsec:moduli_space_foliation}, where all the curves in the foliation are cylinders over their asymptotic ends. Since $\widehat{N}$ is the symplectization over a neighborhood of a Reeb orbit, by the discussion in \cite[\S3]{spinal_II}, $\widehat{N}_{r \leq \epsilon} \subset (-1,\infty) \times M_{\Sigma} \subset \widehat{E}$. Comparing the symplectic structures and stable Hamiltonian structures in the local model $\widehat{N}$ and in $(-1,\infty) \times M_{\Sigma} \subset \widehat{E}$, it follows that the sets $\{r_0, \theta_0\} \times \C$ can be identified with the vertebrae in $\{r_0\} \times M_\Sigma$. Therefore, the intersection of the vertebrae with $\widehat{N}$ are holomorphic, and the projection map $\Pi: \widehat{W} \to \mathcal{M}$ takes the form $(z_1, z_2) \mapsto z_2^2 - z_1$.
 
  We now fix a representative $\Sigma$ of the vertebra, whose intersection with $\widehat{N}$ is $\Sigma_0 := \{r = r_0, \theta = 0\}$ where $r_0<\epsilon$. We now examine the intersections of $\Sigma_0$ with the fibers $\im(u_c)$. Parameterizing $\mathbb{C}\setminus \{\pm 1\}$ by $(x,y)$, with the natural identification $z = x+iy$, we see that 
  \[
    u_c = \begin{pmatrix} \Rea(\sqrt{c})x - \Ima(\sqrt{c})y\\ \Rea(\sqrt{c})y + \Ima(\sqrt{c})x\\ \Rea(c)(x^2-y^2-1) - \Ima(c)2xy\\ \Rea(c)2xy + \Ima(c)(x^2-y^2-1) \end{pmatrix}
  \]
  which implies that
  \[
    Du_c|_{(x,y)} =  \begin{pmatrix} \Rea(\sqrt{c}) & -\Ima(\sqrt{c})\\ \Ima(\sqrt{c}) & \Rea(\sqrt{c})\\ 2x\Rea(c)-2y\Ima(c) & -2y\Rea(c) - 2x\Ima(c)\\ 2y\Rea(c)+2x\Ima(c) & 2x\Rea(c)-2y\Ima(c) \end{pmatrix}.
  \]

  It follows that for any $c$, $\im(u_c)$ is transverse to $\Sigma_0$ at all points other than $u_c(0) = (-c,0)$. Therefore, the only non-transverse intersection of $\Sigma_0$ with a fiber $u_c$ happens when $u_c(0) \in \Sigma_0$, i.e., only when $c = -r_0$. 

  Since the projection map $\Pi$ in the completed filling is given by $(z_1, z_2) \mapsto z_2^2 - z_1$, we know that the restriction map $\Pi|_{\Sigma_0}$ locally has the form $z_2 \mapsto z_2^2 - r_0$, since we restrict at $\theta = 0$. This clearly has a unique branch point at $\{z_2=0\}$, corresponding to the non-transverse intersection point $(r_0,0)$ in the above paragraph.
\end{proof}

We now establish that there exists a representative $\Sigma_t$ of the vertebra $\Sigma$, such that the restriction $\Pi|_{\Sigma_t}$ is a simple branched cover, and there is a one-to-one correspondence between exotic fibers and branch points of $\Pi|_{\Sigma_t}$. 

\begin{theorem}\label{thm:branchpoints}
  Consider a contact $3$-manifold $(M, \xi)$ supported by a planar spinal open book $(P, \phi, \Sigma)$. Let $(W, \omega)$ be a symplectic filling of $(M, \xi)$ such that the completion $\widehat{W}$ admits a pseudoholomorphic foliation by Theorem~\ref{thm: lvw_foliation}. Then, there exists a representative $\Sigma_t$ of the vertebra such that $\Pi|_{\Sigma_t}$ is a branched cover, and the number of exotic fibers in the foliation is equal to the number of branch points.
\end{theorem}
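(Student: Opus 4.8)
The plan is to upgrade the local statement of Lemma~\ref{lem:intersect} — that near any one exotic fiber there is a chart in which the vertebra meets the foliation in a single non-transverse point, matching the branch point of $z_2 \mapsto z_2^2 - r_0$ — to a global statement by choosing a consistent representative $\Sigma_t$ of the vertebra and counting. First I would recall from Theorem~\ref{thm: lvw_foliation} that for each vertebra $\Sigma_i$ there is an $\widehat{J}$-holomorphic properly embedded curve $S_i$ homeomorphic to $\Sigma_i$, and $\Pi|_{S_i}\colon S_i \to \cM$ is a proper branched cover with simple branch points; so a priori $\Pi|_{S_i}$ is \emph{already} a simple branched cover. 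The content of the theorem is therefore the \emph{equality} between $|\cM_{\exot}|$ and the number of branch points of $\Pi|_{S_i}$.

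The key steps, in order: (1) By Theorem~\ref{thm:spinal_ii_6.3}, $\cM_{\exot}$ (equivalently $\widehat{\cM}^{\cF}_{\exot}(\widehat{J})$) is a finite set, so there are finitely many exotic fibers $\widehat{u}_{c^{(1)}}, \dots, \widehat{u}_{c^{(k)}}$, each with exactly one doubly-covered asymptotic end at an elliptic orbit $\gamma_e^{(j)}$. (2) Around each $\gamma_e^{(j)}$, Proposition~\ref{prop:unique_local_model} provides a chart $\widehat{N}^{(j)} \cong (\C\setminus\{0\})\times\C$ in which $\Pi|_{\widehat{N}^{(j)}}$ is $\Pi_0$, and Lemma~\ref{lem:intersect} shows that for a suitable representative of the vertebra inside $\widehat{N}^{(j)}$, the restriction of $\Pi$ to it has exactly one (simple) branch point, located at the unique non-transverse intersection of the vertebra with the exotic fiber. (3) Conversely — and this is the step requiring care — I would show that away from the charts $\widehat{N}^{(j)}$ the vertebra can be isotoped (through curves $\Sigma_t$, staying properly embedded and eventually holomorphic where needed, as already guaranteed by Theorem~\ref{thm: lvw_foliation}) so that it is transverse to every fiber of the foliation. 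Here one uses that over $\cM_{\reg}(\widehat{J})$ the projection $\Pi$ restricted to $S_i$ is a covering map away from branch points (cf.\ \cite[Lemma 6.33]{spinal_II}), and over $\cM_{\sing}(\widehat{J})$ the two components of a nodal fiber each meet the vertebra transversely, contributing no branch points. (4) Assemble: the branch points of $\Pi|_{\Sigma_t}$ are exactly the non-transverse intersections, each of which lies in some $\widehat{N}^{(j)}$ and corresponds bijectively to the exotic fiber $\widehat{u}_{c^{(j)}}$; hence the number of branch points equals $|\cM_{\exot}|$.

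The main obstacle I anticipate is step (3): making precise that \emph{every} branch point of $\Pi|_{\Sigma_t}$ must arise from an exotic fiber, i.e.\ that branch points cannot ``hide'' over regular or singular fibers or at the cylindrical ends. For the ends this follows because the vertebra representatives are cylinders over Reeb orbits in the symplectization region $\widehat{N}_{r\le\epsilon}$ (as in the proof of Lemma~\ref{lem:intersect}), where $\Pi$ has no critical points along $\Sigma$ once $r_0$ is chosen generically; for regular and singular fibers it follows from embeddedness and transversality of the foliation curves, but one must verify that a generic choice within the isotopy class of $\Sigma$ (equivalently, a generic value of the radial parameter $r_0$ and generic position relative to the singular fibers) achieves simultaneous transversality everywhere except at the finitely many prescribed points. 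I would handle this by a standard Sard–Smale / genericity argument applied to the evaluation of $\Pi$ along a family $\Sigma_t$, combined with the already-established local normal forms; the deformation does not change the contact boundary or the symplectic filling up to deformation equivalence. Once transversality is arranged outside the exotic charts, the count is immediate.
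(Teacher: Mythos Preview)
Your outline is headed in the right direction, and you correctly identify step~(3) as the crux. However, the paper resolves this difficulty by a quite different and more direct device than the Sard--Smale/genericity argument you sketch, and it is worth noting the difference.

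Rather than working with the holomorphic curves $S_i$ from Theorem~\ref{thm: lvw_foliation} and then perturbing to achieve transversality, the paper \emph{chooses} the vertebra representative to be a level set $\Sigma_t := \{t,\theta\}\times\Sigma \subset \{t\}\times M_\Sigma$ in the double completion $\widehat{E}$, for $t \gg 0$. The point is that high in the completion, every curve in the foliation is already an asymptotic cylinder over its Reeb orbits, so the intersection of any fiber with $\Sigma_t$ is governed entirely by the covering multiplicities of the asymptotic ends. Since regular fibers (and the components of singular fibers) have only simply covered asymptotics, each of their cylindrical ends meets $\Sigma_t$ transversely in a single point---there is no genericity needed, it is automatic. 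The only non-transverse intersections occur in the charts $\widehat{N}_c$ around the doubly covered orbits of exotic fibers, where Lemma~\ref{lem:intersect} supplies exactly one simple branch point per exotic fiber.

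Your approach, by contrast, starts from the $S_i$ (which sit in the interior of $\widehat{W}$ rather than far up in the cylindrical end) and tries to rule out branch points over $\cM_{\reg}\cup\cM_{\sing}$ by perturbation. This is not wrong in spirit, but the genericity argument you gesture at is delicate: you are asking for transversality of a fixed curve to a $2$-parameter family of curves, and while positivity of intersections for $J$-holomorphic curves constrains tangencies, turning that into the precise statement you need requires more than a standard Sard--Smale appeal. The paper's choice of $\Sigma_t$ sidesteps this entirely. (The relationship between your $S_i$ and the paper's $\Sigma_t$ is addressed a~posteriori in Remark~\ref{rem:branchcovermaps}: by Riemann--Hurwitz the three natural branched covers $\Sigma \to B$ have the same number of branch points.)
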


\begin{proof}    
  In the double completion $\widehat{E}$ as in \S\ref{subsec:moduli_space_foliation}, consider $\Sigma_t := \{t,\theta\}\times \Sigma \subset \{t\} \times M_\Sigma$ for $t>>0$, i.e., a representative of the vertebra that lives high enough in the completed filling where all the curves in the foliation are asymptotic cylinders over their asymptotic orbits. Further assume that $t$ is high enough and we have chosen local coordinates so that for every exotic fiber, there is a neighborhood $\widehat{N}$ such that $\{t\} \times M_\Sigma \cap \widehat{N} \subset \widehat{N} \cap \widehat{E}$, and that $\Sigma_t \cap \widehat{N} = \Sigma_0$ as in the proof of Lemma~\ref{lem:intersect}. 
    
  In the proof of Proposition~\ref{prop:unique_local_model}, we considered a subset $D$ of the moduli space which was a neighborhood of the image of an exotic fiber. For every $c \in \mathcal{M}_{\exot}$, denote such a disk neighborhood of $c$ as $D_c$. In the proof of Proposition~\ref{prop:unique_local_model}, for every exotic fiber, we also considered a subset $\widehat{N} \subset \widehat{W}$ of the asymptotic Reeb orbit that the exotic fiber doubly covers. For every $c \in \mathcal{M}_{\exot}$, denote such an asymptotic neighborhood as $\widehat{N}_c$. We can assume that the image of the map $\Pi$ restricted to $\widehat{N}_c$ is $D_c$. Further, define $D_{\exot}$ and $W_{\exot}$ as follows:
  \begin{align*}
    D_{\exot} &:= \bigcup_{c \in \mathcal{M}_{\exot}} D_c,\\
    W_{\exot} &:= \bigcup_{c \in \mathcal{M}_{\exot}} \widehat{N}_c.
  \end{align*}
    
  By our choice of $t$, each fiber in the foliation intersects $\Sigma_t$ in their cylindrical ends approaching the asymptotic orbits. For any fixed curve in the foliation, its cylindrical ends can be modified slightly smoothly to agree with the trivial cylinder over the Reeb orbit, thus the intersections of $\Sigma_t$ with each cylindrical end is in bijective correspondence with the intersection of the vertebra with the Reeb orbit in the $3$-manifold.

  Given the above, we examine the map $\Pi|_{\Sigma_t}$ on two complementary subsets of $\widehat{W}$:

  \begin{enumerate}[I)]
    \item $\widehat{W} \setminus \Pi|_{\Sigma_t}^{-1}(D_{\exot})$ and
    \item  $\Pi|_{\Sigma_t}^{-1}(D_{\exot})$
  \end{enumerate}

  \noindent
  {\bf Case (i):} On $\widehat{W} \setminus \Pi|_{\Sigma_t}^{-1}(D_{\exot})$, there are no exotic fibers, thus all the cylindrical ends of any curve in the foliation restricted to this subset cover their asymptotic Reeb orbit once. This means that each curve intersects $\Sigma_t$ transversely, so it follows that $\Pi|_{\Sigma_t}$ restricted to 
  \[
    \Sigma_t \bigcap \bigg(\widehat{W} \setminus \Pi|_{\Sigma_t}^{-1}(D_{\exot})\bigg)
  \]
  is an honest covering map over $\mathcal{M} \setminus \mathcal{M}_{\exot}$. Suppose this is a $k$-fold covering map. 
    
  \noindent
  {\bf Case (ii):} Now, on $\Pi|_{\Sigma_t}^{-1}(D_{\exot})$, consider the subset $W_{\exot}$. Every cylindrical end of the foliation that does not intersect $W_{\exot}$ covers their asymptotic Reeb orbit once, by the same argument as the above paragraph. Thus $\Pi|_{\Sigma_t}$ is again an honest covering map restricted to $\Sigma_t \bigcap \bigg(\Pi|_{\Sigma_t}^{-1}(D_{\exot}) \setminus W_{\exot} \bigg)$. Since every curve in the foliation that lives in $\bigg(\Pi|_{\Sigma_t}^{-1}(D_{\exot}) \bigg)$ has exactly $(k-2)$ cylindrical ends in $\bigg(\Pi|_{\Sigma_t}^{-1}(D_{\exot}) \setminus W_{\exot} \bigg)$, this is a $(k-2)$-degree covering map. 
    
  By Lemma~\ref{lem:intersect}, in each neighborhood $\widehat{N}_c$, $\Pi|_{\Sigma_t}$ is a branched cover of degree 2, with exactly one branch point inside the neighborhood. Thus on $W_{\exot}$, the restriction of $\Pi_{\Sigma_t}$ is a degree 2 branched cover map with as many branch points as points in $\mathcal{M}_{\exot}$.

  Combining the above, we get that over $\Pi|_{\Sigma_t}^{-1}(D_{\exot})$, the map is a degree $k$ branch point with exactly $|\mathcal{M}_{\exot}|$ many branch points. 

  Since $\Pi|_{\Sigma_t}$ is smooth, it follows that it is a branched cover map over $\mathcal{M}$ with as many branch points as $|\mathcal{M}_{\exot}|$. 
\end{proof}

\begin{remark}\label{rem:branchcovermaps}
  Given a planar spinal open book uniform with respect to $B$, with spine $\Sigma = \cup_{i=1}^r \Sigma_i$, we thus have three branched cover maps from $\Sigma$ to $B$, given by 
  \begin{itemize}
    \item $\Pi|_{\Sigma_t}$ (defined above),
    \item $\displaystyle \bigcup_{\Sigma_i} \Pi|_{S_i}$ (defined in Theorem~\ref{thm: lvw_foliation}) , and 
    \item $\displaystyle\bigcup_{\Sigma_i}\pi_i$ (defined in Definition~\ref{def:symmetricsobd}).
  \end{itemize}  
  By the Riemann-Hurwitz formula, these maps have the same number of branch points. For this reason, we sometimes use the notation $\Pi|_{\Sigma}$ to implicitly denote $\Pi|_{\Sigma_t}$ for a representative $\Sigma_t$ constructed in Theorem \ref{thm:branchpoints}.
\end{remark}

Combining Lemma~\ref{lem:monodromy}, Proposition~\ref{prop:unique_local_model}, Theorem~\ref{thm:branchpoints}, and Remark~\ref{rem:branchcovermaps}, we have proved Theorem~\ref{thm: intro-localmodel}.

\begin{proof}[Proof of Theorem~\ref{thm: main}]
  It follows from Theorem \ref{thm: intro-localmodel} that any minimal strong filling $W$ of a planar spinal open book, where the compactified moduli space of curves foliating it is homeomorphic to $B$,  has the structure of a PANLF over $B$ inducing a positive admissible factorization of the page monodromy. Now, consider the union of the vertebrae for which $\pi_{i}$ has branch points, denoted $\Sigma_b$. Choose a filling $W$ as in the proof of Theorem~\ref{thm:branchpoints}, so that $\Pi|_{\Sigma_t}$ which is the restriction of the map $\Pi$ defining the PANLF, is also a branched cover map restricted to the vertebra $\Sigma_t$ in the boundary $\partial W$.
 
  We attach to $W$ a spine removal cobordism $C_{\Sigma_b}$ that caps off $\Sigma_b \subset \Sigma_t$. Extend the map $\Pi$ for the PANLF over $W \cup C_{\Sigma_b}$ by defining it to be $\Pi(x,y) = x$ for $(x,y) \in \Sigma_b \times D^2$; here we abuse notation by denoting the extended map $W \cup C_{\Sigma_b} \to B$ as $\Pi$. By Remark~\ref{rem: exotic_nbd_branch}, every exotic point local model $V_c$ as in Definition~\ref{def: panlf} can be identified with the complement of a thickened neighborhood of $S_0 = \{z^2,z\}$ in $\bD^4 \subset \C^2$. Locally around $V_c$, the cobordism $C_{\Sigma_b}$ amounts to adding $S_0$ back to obtain a $4$-ball neighborhood in the interior of $W \cup C_{\Sigma_b}$, denoted as $B_c$. Thus $\Pi$ as defined over $W \cup C_{\Sigma_b}$ only has Lefschetz type singularities, hence $W \cup C_{\Sigma_b}$ is a Lefschetz fibration over $B$. We now claim that $\Sigma_b \times \{0\} \subset C_{\Sigma_b}$ is a positive multisection of $\Pi: W \cup C_{\Sigma_b} \to B$.   
 
  Note that by the choice of $W$, the map $\Pi_{\Sigma_b \times \{0\}}$ is exactly the map $\Pi|_{\Sigma_b}$, hence it is a branched cover over $B$. Further, the only branch points occur in the neighborhoods $B_c$ where $S_0$ is exactly $\Sigma_b \times \{0\} \cap B_c$ and $\Pi$ agrees with the form in Definition~\ref{def: multisection}. Thus we are done.   
\end{proof}

%%%%%%%%%%%%%%%%%%%%%%%%%%
\section{Construction of strong fillings}\label{sec: constructions}
%%%%%%%%%%%%%%%%%%%%%%%%%%

The goal of this section is to construct a strong filling of a contact manifold corresponding to a positive admissible factorization of a given spinal open book monodromy. 

Recall that for a spinal open book $(P,\phi,\{\Sigma_i\})$, we denote by $k_i$ the number of boundary components of $P$ that intersect $\Sigma_i$ and by $B$ a compact oriented surface with connected boundary such that for every $i$, there is a degree $k_i$ branched covering map $\pi_i\colon \Sigma_i \to B$ with $n_i$ simple branch points. Also, let $\rho\colon \pi_1(B)\to \Mod(P)$ be a monodromy representation of the spinal open book. We further assume that the spinal open book is uniform with respect to $B$. The following theorem is a detailed version of Theorem~\ref{thm: panlf_construct}.

\begin{theorem}[Theorem \ref{thm: panlf_construct}]\label{thm: factorization_filling}
  Let $(M,\xi)$ be a closed contact $3$-manifold supported by a spinal open book decomposition $(P, \phi, \{\Sigma_i\}_{i=1}^m)$. Let $k_i$, $\pi_i$, $n_i$, $B$ and $\rho$ be as above. Then, for every Hurwitz equivalence class of positive admissible factorizations of $\phi$ with respect to $(B, \rho)$, there exists a symplectic PANLF $(W, \omega)$ with regular fiber $P$ and base $B$, such that $(W, \omega)$ is a strong filling of $(M,\xi)$ inducing the given spinal open book decomposition.
\end{theorem}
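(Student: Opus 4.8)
## Proof Proposal

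The plan is to construct $(W,\omega)$ in stages, building the PANLF structure piece by piece from the data of the factorization, and then verifying that the resulting boundary contact manifold is the prescribed one. First I would observe that a positive admissible factorization of $\phi$ with respect to $(B,\rho)$ is, by Definition~\ref{def: pos_adm_fact}, a word expressing $\phi\circ\phi_\rho^{-1}$ as a product of $n := \sum_i n_i$ boundary interchanges $\tau_{\gamma_j}$ and some positive Dehn twists $t_{c_1},\dots,t_{c_k}$. The strategy is to realize the $\phi_\rho$ part as a flat $P$-bundle over $B$ determined by $\rho$, the Dehn twist part via Lefschetz critical points as in the standard PALF construction, and the boundary interchange part via the exotic-fiber local model of Proposition~\ref{prop:unique_local_model} and Lemma~\ref{lem:monodromy}. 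Concretely, I would start from the trivial $P$-bundle over $B$ minus small disks around the $n+k$ marked points in $B^\circ$, glued according to $\rho$; then over each of the $k$ marked Dehn-twist points insert a Lefschetz singularity with vanishing cycle $c_j$ (this is exactly the local model $\Pi(z_1,z_2)=z_1^2+z_2^2$); and over each of the $n$ marked boundary-interchange points glue in a copy of the exotic-fiber local model of Definition~\ref{def:cpt_model}, i.e. a neighborhood $V_c$ as in Definition~\ref{def: panlf}(4) with $\Pi_c(z_1,z_2)=z_2^2-z_1$. By Lemma~\ref{lem:monodromy} the monodromy contributed by such a piece is precisely $\tau_\gamma$, so the total monodromy around $\partial B$ equals $\phi_\rho\cdot t_{c_1}\cdots t_{c_k}\cdot\tau_{\gamma_1}\cdots\tau_{\gamma_n} = \phi$ in $\SMod(P)$, matching the spinal open book monodromy.

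The next step is to put a symplectic form $\omega$ on this smooth $4$-manifold $W$ making $\Pi$ a symplectic PANLF. I would use the standard Thurston–Gompf-type argument: on the flat part, take a product of an area form on $B$ (with small disks removed) and an area form on the fibers $P$; near each Lefschetz point, patch in the Kähler form from the local holomorphic model; near each exotic point, patch in the form from the local model of $(\C\setminus D)\times\C$ with the fibration $\Pi_c$, which is again Kähler in the given holomorphic coordinates. A partition-of-unity argument, using that all fibers are connected with nonempty boundary (Definition~\ref{def: panlf}(3)), produces a global $\omega$ positive on fibers and compatible with an almost complex structure for which $\Pi$ is pseudoholomorphic. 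I would then appeal to the construction in Theorem~\ref{thm: blf_stein} / \cite{baykur2016multisections}, combined with Remark~\ref{rem: exotic_nbd_branch} which identifies the exotic neighborhood with the complement of a branch point of a multisection in a bordered Lefschetz fibration: concretely, filling in the multisection $\Sigma_b\times\{0\}$ over the exotic points turns $W$ into an allowable bordered Lefschetz fibration $E = W\cup(\Sigma_b\times\bD^2)$, to which Theorem~\ref{thm: blf_stein} applies, and then $W$ is recovered as the complement of a positive multisection in $E$, hence carries the structure promised by Theorem~\ref{thm: bh_multisections}.

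The final step is to identify the induced contact structure on $\partial W$ with $(M,\xi)$. Here I would analyze $\partial W = \partial_h W\cup\partial_v W$: condition (1) of Definition~\ref{def: panlf} gives that $\Pi|_{\partial_v W}$ is a fiber bundle over $\partial B$ with fiber $P$, whose monodromy is $\phi$ by the computation above, so $\partial_v W$ is the paper $M_P$; condition (5) together with the local model near exotic points shows that $\partial_h W$ fibers over $B$ away from $B^{\exot}$ and that near each exotic point one boundary component of $P$ is ``folded'' by the $z\mapsto z^2$ branching, which is precisely the passage from $\pi_i$ being a branched cover with $n_i$ branch points to the multiplicities of $\pi_P$ — so $(\Pi|_{\partial_h W},\Pi|_{\partial_v W})$ is the spinal open book $(P,\phi,\{\Sigma_i\})$ with $\Sigma_i = \pi_i^{-1}$(thickened $B$). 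That this data assembles correctly into the uniform spinal open book — in particular that the branch points of the $\pi_i$ match the exotic points, hence the boundary interchanges, on the nose — is exactly the content of Theorem~\ref{thm:branchpoints} and Remark~\ref{rem:branchcovermaps} applied in reverse. Finally, I would invoke the Giroux-type uniqueness (the contact structure supported by a spinal open book is unique up to isotopy, cf.\ Theorem~\ref{thm: gluing_rel_obd} for the relevant gluing statement) to conclude that the contact structure induced on $\partial W$ by $\omega$ is contactomorphic to $(M,\xi)$, and that $(W,\omega)$ is a strong (indeed, by construction, exact away from the non-Lefschetz pieces) filling.

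I expect the main obstacle to be the second step: constructing a \emph{global} symplectic form making $\Pi$ a genuine PANLF with the correct convex boundary, rather than just a smooth fibration with compatible fiberwise forms. The subtlety is that the exotic local model $\Pi_c(z_1,z_2)=z_2^2-z_1$ lives near $\partial_h E$ (the horizontal boundary), not in the interior, so the usual interior patching arguments must be adapted to interact correctly with the boundary behavior prescribed in Definition~\ref{def: panlf}(4)–(5); one must ensure the patched form is still positive on fibers all the way out to $\partial_h W$ and induces the right contact form on the corner torus, where the multiplicity-$2$ behavior of $\pi_P$ appears. I would handle this by working in the ``double completion'' framework of \S\ref{subsec:moduli_space_foliation}: rather than building $\omega$ by hand, reverse-engineer it from the pseudoholomorphic foliation, i.e. show that the smooth $W$ we built admits an almost complex structure and a stable Hamiltonian structure on the boundary for which the fibers of $\Pi$ are pseudoholomorphic leaves of the type classified in Theorem~\ref{thm: lvw_foliation}, and then uniqueness of the local models (Proposition~\ref{prop:unique_local_model}) forces $\omega$ to exist with the desired properties. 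The Stein/exact refinements (Theorems~\ref{thm: panlf_construct_stein} and \ref{thm: panlf_construct_exact}) then follow by noting that when some $\pi_i$ has no branch points, the corresponding spine component can be capped by a spine removal cobordism (Theorem~\ref{thm: spineremoval}) turning $W$ into an allowable bordered Lefschetz fibration, to which Theorem~\ref{thm: blf_stein} applies directly.
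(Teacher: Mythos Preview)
Your topological construction of $W$ in the first paragraph is fine and matches the spirit of the paper's Lemma~\ref{lem:constructing_nearlyLF_top}. The genuine gap, which you yourself identify, is the symplectic form. The paper does \emph{not} attempt a Thurston--Gompf patching argument on the PANLF directly, precisely because the exotic local model lives near $\partial_h W$ and a fiberwise-area-form argument there is not straightforward. Your proposed fallback (fill in the multisection to get a bordered Lefschetz fibration $E$, apply Theorem~\ref{thm: blf_stein}, then take the complement) has two problems. First, when every $n_i\neq 0$ you would be capping off \emph{all} page boundary components, so $E$ would have closed fibers and Theorem~\ref{thm: blf_stein} no longer applies. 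Second, even when some fiber boundary remains, knowing that $E$ is Stein does not by itself tell you that the complement of a neighborhood of the multisection is a \emph{strong filling} with the correct contact boundary; one needs an argument controlling the induced structure on the new boundary piece, which is the content of the paper's Proposition~\ref{prop: marktosun} (a variant of Mark--Tosun).

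The paper's route is different and more robust: (i) start with the vanishing-cycle-free bordered Lefschetz fibration determined by $\rho$, which is Stein by Theorem~\ref{thm: blf_stein}; (ii) remove a neighborhood of the positive (hence symplectic, with quasipositive boundary by Hayden) multisection and invoke Proposition~\ref{prop: marktosun} to get an \emph{exact} filling with the correct spinal open book on its boundary; (iii) attach Weinstein $2$-handles along the vanishing cycles (Lemma~\ref{lem: weinstein2handles}), preserving exactness. When all $n_i\neq 0$, the paper handles this by first puncturing the page once more to create an auxiliary disk vertebra $\Sigma_0=B$ with $n_0=0$, running the above construction, and then attaching a spine removal cobordism (Theorem~\ref{thm: spineremoval}) to cap $\Sigma_0$ back off; exactness of the intermediate filling ensures exactness on the remaining spines, so \cite[Theorem 1.10]{spinal_II} upgrades the resulting weak filling to a strong one. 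Note this is the \emph{opposite} of what you wrote in your last paragraph: the spine removal is used to pass from the ``some unbranched'' case to the ``all branched'' case, not the other way around.
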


The proof of Theorem~\ref{thm: panlf_construct} shows that we obtain a slightly stronger result when at least one $\pi_i: \Sigma_i \to B$ has no branch points. 

The following theorem is a detailed version of Theorem \ref{thm: panlf_construct_exact}.

\begin{theorem}[Theorem \ref{thm: panlf_construct_exact}]\label{thm: exact_filling}
  Suppose $(M,\xi)$ is a closed contact $3$-manifold supported by a spinal open book decomposition $(P, \phi, \{\Sigma_i\}_{i=1}^m)$. Let $k_i$, $\pi_i$, $n_i$, $B$ and $\rho$ be as above, and suppose that at least one $\pi_i$ has no branch points (i.e., $n_i = 0$). Then, for every Hurwitz equivalence class of positive admissible factorizations of $\phi$ with respect to $(B, \rho)$, there exists a PANLF $(W, \omega)$ with regular fiber $P$ and base $B$, such that $(W, \omega)$ is an exact filling of $(M,\xi)$ inducing the given spinal open book decomposition.
\end{theorem}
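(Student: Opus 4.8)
The plan is to rerun the construction in the proof of Theorem~\ref{thm: factorization_filling} and observe that the hypothesis $n_j=0$ for some $j$ improves its output from a strong to an exact filling. From the positive admissible factorization $\phi\circ\phi_{\rho}^{-1}=t_{c_1}\cdots t_{c_k}\,\tau_{\gamma_1}\cdots\tau_{\gamma_m}$ one first builds a bordered Lefschetz fibration $E\to B$ whose fiber $\widehat P$ is $P$ with the boundary components lying in $\partial_B P$ capped off by disks, whose monodromy representation over $(B,\widehat\rho)$ realizes the Dehn--twist part $t_{c_1}\cdots t_{c_k}$, and all of whose vanishing cycles are homologically essential (the factorization being admissible); one then reinserts the boundary interchanges by excising a neighborhood of a positive multisection $\Sigma_b\subset E$ with $m$ branch points realizing the arcs $\gamma_i$, so that $W:=E\setminus\nu(\Sigma_b)$ is a PANLF with regular fiber $P$, base $B$, inducing the prescribed spinal open book on $(M,\xi)$ --- this is Theorem~\ref{thm: bh_multisections} read in reverse. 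The hypothesis enters here: if every $\pi_i$ had a branch point then every boundary component of $P$ would be capped, $\widehat P$ would be closed, and $E$ would violate condition~(5) of a bordered Lefschetz fibration; the unbranched vertebra $\Sigma_j$ leaves the page boundary components meeting $\Sigma_j$ uncapped, so $\widehat P$ has nonempty boundary and $E$ is a genuine positive allowable \emph{bordered} Lefschetz fibration.

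Consequently Theorem~\ref{thm: blf_stein} applies to $E$, giving it a Stein structure; in particular $\omega_E=d\lambda_E$ is exact. Taking $\omega:=\omega_E|_W$ --- legitimate up to the deformation equivalence allowed by the statement, since Theorem~\ref{thm: factorization_filling} already presents $W$ as the deformation class of this piece of $E$ --- we obtain an exact symplectic form on $W$. Since $(W,\omega)$ is a strong filling of $(M,\xi)$ by Theorem~\ref{thm: factorization_filling} and $[\omega]=0$ in $H^2(W;\R)$, it remains only to adjust the global primitive near $\partial W$ so that it restricts to a contact form for $\xi$, promoting $(W,\omega)$ to an exact filling.

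This adjustment is where I expect the real work: making the \emph{entire} boundary $M=\partial W$ Liouville--convex. The part of $M$ inherited from $\partial E$ is already convex, but along the part $\partial\nu(\Sigma_b)$ created by the excision the Liouville flow of $\lambda_E$ necessarily emanates from the positive symplectic submanifold $\Sigma_b$ and hence points into $W$. One repairs this by adding to $\lambda_E|_W$ a closed one--form representing a suitable multiple of the meridian class of $\Sigma_b$ in $H^1(W;\R)$, supported in $\nu(\Sigma_b)$ together with a collar of $\partial\Sigma_b\subset\partial E$; for an appropriate choice this reverses the normal Liouville field near the excised region while leaving the remainder of $\partial E$ untouched. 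The delicate point is the interpolation near $\partial\Sigma_b$ where the two boundary pieces meet, which relies on $\partial\Sigma_b$ lying in the already--convex $\partial E$. Carrying this out --- and, when $B=\mathbb{D}^2$, upgrading further to a genuine Stein structure, which is Theorem~\ref{thm: panlf_construct_stein} --- produces the desired exact PANLF filling of $(M,\xi)$ inducing the given spinal open book.
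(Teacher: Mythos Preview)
Your approach is correct in spirit but differs from the paper's in the order of operations, and this difference is worth noting. The paper first takes the bordered Lefschetz fibration \emph{without} vanishing cycles (just the $\widehat P$-bundle over $B_0$ with monodromy $\phi_\rho$), which is Stein by Theorem~\ref{thm: blf_stein}; then it removes the multisection from this simple object, invoking a result of Hayden to see the boundary link is quasipositive and Proposition~\ref{prop: marktosun} (a Mark--Tosun type statement) to conclude the complement is an exact filling; finally it attaches Weinstein $2$-handles along the vanishing cycles via Lemma~\ref{lem: weinstein2handles}, which preserves exact fillability. Your route instead builds the full bordered Lefschetz fibration $E$ with all vanishing cycles first, obtains the Stein structure on $E$, and only then excises $\nu(\Sigma_b)$, leaving you to repair the Liouville convexity along the new boundary by hand.

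What the paper's order buys is modularity: the delicate step you flag --- making $\partial\nu(\Sigma_b)$ Liouville-convex by adding a closed $1$-form supported near the multisection --- is exactly the content of Proposition~\ref{prop: marktosun}, carried out once in the clean setting of a surface in a Stein domain and then cited. Your order forces you to redo that argument inside the more complicated $E$ (with Lefschetz singularities already present), which works but is less economical. The sketch you give of the Liouville adjustment is correct and is essentially what the Mark--Tosun argument does; you should either carry it out carefully or simply cite Proposition~\ref{prop: marktosun} applied to $\Sigma_b\subset E$, which your hypothesis (some $n_j=0$, so $\widehat P$ has boundary and $E$ is genuinely bordered, hence Stein) makes legitimate.
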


To prove the theorems, we need to establish some useful facts. We first construct a topological PANLF from a given spinal open book. The value of the following lemma is the explicit structure of the associated PANLF, which allows us to see the symplectic and Stein structures later.

\begin{lemma}\label{lem:constructing_nearlyLF_top}
  Let $(P,\phi,\{\Sigma_i\}_{i=1}^m)$ be a spinal open book of a closed $3$-manifold $M$. Suppose that $k_i$, $\pi_i$, $n_i$, $B$ and $\rho$ are as above. Then for a positive admissible factorization of $\phi$ with respect to $(B,\rho)$, there exists a corresponding PANLF $\Pi\colon W \to B$ such that $\Pi$ induces the spinal open book on its boundary $\partial W = M$. Also, $W$ can be constructed as follows:
 \begin{itemize}
   \item Take the complement of a small neighborhood of a positive multisection in a bordered Lefschetz fibration without a vanishing cycle (i.e., a fiber bundle with monodromy $\phi_{\rho}$),
   \item add 2-handles along vanishing cycles,
   \item if $n_i \neq 0$ for all $i=1,\ldots, m$, attach  spine-removal cobordisms. 
 \end{itemize}
\end{lemma}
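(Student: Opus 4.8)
\textbf{Proof proposal for Lemma~\ref{lem:constructing_nearlyLF_top}.}

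The plan is to build $W$ by the three-step recipe stated in the lemma, starting from a trivial $P$-bundle over $B$ and successively attaching handles/cobordisms dictated by the terms of the positive admissible factorization, and then verifying that the resulting map $\Pi\colon W\to B$ is a PANLF whose boundary open book is the given one. First I would set up the base piece: recall from \S\ref{subsec: factorization} that the representation $\rho\colon\pi_1(B)\to\Mod(P)$ (which is the identity near $\partial_B P$) determines a $P$-bundle $W_0\to B$, and that restricting to $\partial B$ gives a $P$-bundle over $S^1$ with monodromy $\phi_\rho$; this $W_0$ is the bordered Lefschetz fibration "without vanishing cycles" referred to in the first bullet, and by Theorem~\ref{thm: bh_multisections} / Remark~\ref{rem: exotic_nbd_branch} one realizes the $n_i$ boundary interchanges in the factorization of $\phi\circ\phi_\rho^{-1}$ as the complement of a neighborhood of a positive multisection $\Sigma_b\subset W_0$ with $n:=\sum_i n_i$ branch points; alternatively one can introduce each exotic point by hand using the explicit local model $\Pi_c(z_1,z_2)=z_2^2-z_1$ of Definition~\ref{def: panlf}(4). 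In either description one gets a PANLF over $B$ with fiber $P$ and monodromy factorization $\tau_{\gamma_1}\cdots\tau_{\gamma_n}$ (up to $\phi_\rho$); I would track carefully that the induced boundary structure is the $P$-bundle piece of the target spinal open book.

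Next I would attach the positive Dehn-twist data. Each positive Dehn twist $t_c$ about an essential curve $c\subset P$ in the factorization is realized, in the standard way (following \cite{baykurvhm} and \S\ref{subsec: factorization}), by choosing a properly embedded arc in $B$ from $\partial B$ to a new interior critical value, pushing the vanishing cycle $c$ in along that arc, and attaching a $4$-dimensional $2$-handle along a Legendrian lift of $c$ in the relevant fiber; extending $\Pi$ over the handle in local holomorphic coordinates as $(z_1,z_2)\mapsto z_1^2+z_2^2$ gives the Lefschetz-type critical point of Definition~\ref{def: panlf}(2). Doing this once for each twist in the chosen factorization, in the prescribed order, produces a compact $4$-manifold $W$ with a map $\Pi\colon W\to B$ having interior critical points (the Lefschetz points) and interior exotic points (the boundary-interchange points), and the monodromy of $\Pi|_{\partial_v}$ reads off, along $\partial B$, exactly the product of all the chosen factors times $\phi_\rho$, i.e.\ $\phi$. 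One then checks Definition~\ref{def: panlf}(1),(3),(5): (1) is automatic since $\Pi^{-1}(\partial B)=\partial_v W$ is the $P$-bundle we never modified over $\partial B$; (3) holds since each handle attachment keeps fibers connected with nonempty horizontal boundary; (5) holds because away from the exotic values the horizontal boundary is still a genuine bundle. Comparing $(\Pi|_{\partial_h W},\Pi|_{\partial_v W})$ with Definition~\ref{def:spinalobd} and using Definition~\ref{def: pos_adm_fact} shows the induced spinal open book is $(P,\phi,\{\Sigma_i\})$ \emph{provided} the vertebrae match: this is where the third bullet enters. If every $n_i\neq 0$, the branched-cover description forces each $\Sigma_i$ to be recovered as $\pi_i\colon\Sigma_i\to B$ with its $n_i$ branch points coming from the exotic points assigned to the $i$-th spine component; attaching a spine-removal cobordism (Theorem~\ref{thm: spineremoval}) to the corresponding boundary torus components replaces pieces of $\partial_h W$ by the correct $S^1$-fibration over $\Sigma_i$, so that the final boundary is exactly $M$ with the desired spine. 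When some $n_i=0$ the construction already yields the correct disk- or cover-type vertebra without the extra cobordism, matching the parenthetical in the lemma.

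The main obstacle I expect is \textbf{bookkeeping of the monodromy/boundary identifications across the two kinds of local models}, i.e.\ reconciling the boundary-interchange mapping classes $\tau_\gamma\in\SMod(P)$ coming from the multisection-complement model of Theorem~\ref{thm: bh_multisections} with the page-monodromy convention of Definition~\ref{def:spinalobd}(3)--(4), including the multiplicity-$m_\gamma$ boundary behavior and the clockwise boundary rotations appearing in Figure~\ref{fig:boundarytwist}; this is precisely the normalization already pinned down in Lemma~\ref{lem:monodromy} and Remark~\ref{rem:framed_mcg}, so I would lean on those. A secondary subtlety is to ensure the \emph{order} of attachments and the arc choices in $B$ realize a given Hurwitz class consistently (so that Hurwitz-equivalent factorizations yield the same $W$ up to the moves of Definition~\ref{def: generalized_hurwitz_equivalence}); this is handled by the standard observation that sliding the feet of the handles/arcs around $\partial B$ and across each other realizes elementary transformations, framing conjugations, and global conjugations. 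Everything else—connectivity of fibers, the local holomorphic forms, and the fact that the resulting $\Pi$ restricted to $\partial W$ is a spinal open book—is routine once the base piece and the handle attachments are set up as above.
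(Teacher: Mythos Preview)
Your overall architecture (trivial bundle $\to$ introduce exotic points $\to$ attach Lefschetz $2$-handles $\to$ spine removal) is the right shape, but there is a genuine confusion in the first and third steps that makes the argument not go through as written.

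The issue is the fiber of the starting bundle. You take $W_0$ to be a $P$-bundle over $B$ and then remove a multisection $\Sigma_b\subset W_0$. But by Theorem~\ref{thm: bh_multisections}, the complement of an $n$-section in a fibration with fiber $F$ has fiber $F^n=F\setminus\bigsqcup_n\bD^2$, strictly smaller than $F$. So removing $\Sigma_b$ from a $P$-bundle yields fibers smaller than $P$, not $P$ itself; the resulting PANLF does not have page $P$ and does not induce the target spinal open book on its boundary. Your two descriptions, ``complement of a multisection in $W_0$'' versus ``introduce exotic points by hand via the local model $\Pi_c(z_1,z_2)=z_2^2-z_1$'', are not equivalent here: the local-model surgery happens near $\partial_h W_0$ and keeps the fiber $P$, whereas multisection removal happens in the interior and shrinks the fiber. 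Conflating them is what produces the mismatch.

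This confusion propagates to your account of the third bullet. You describe the spine-removal cobordism as ``replacing pieces of $\partial_h W$ by the correct $S^1$-fibration over $\Sigma_i$'', i.e.\ as \emph{creating} the desired vertebrae. That is not what spine removal does: it \emph{caps off} an existing spine component (Theorem~\ref{thm: spineremoval}). The paper's use of it is the opposite of what you sketch. When all $n_i\neq 0$, every boundary component of $P$ lies in $\partial_B P$, so capping $\partial_B P$ would produce a closed surface and there is no bordered Lefschetz fibration to start from. The paper's fix is to first pass to an auxiliary spinal open book $(P',\phi',\{\Sigma_i\}_{i=0}^m)$ with $P'=P\setminus\bigsqcup_{|\partial B|}\bD^2$ and an \emph{extra} vertebra $\Sigma_0=B$ with $n_0=0$; this artificially puts us back in the ``some $n_i=0$'' case, one builds the PANLF $W'\to B$ with fiber $P'$ there, and \emph{then} attaches the spine-removal cobordism along $\Sigma_0$ to cap the extra holes and recover fiber $P$. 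So spine removal is a reduction device, not a vertebra-builder. In the ``some $n_i=0$'' case the paper simply invokes Theorem~\ref{thm: bh_multisections} directly (the starting Lefschetz fibration has fiber $\bar P=P\cup_{\partial_B P}(\text{disks})$, which still has boundary because some $n_i=0$), and then pushes the critical values to a collar of $\partial B$ to read off the bullet-point description. Your bookkeeping worries about monodromy conventions and Hurwitz moves are secondary; fixing the fiber/spine-removal logic above is the real content.
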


\begin{proof} 
    
  We first assume that at least one $n_i = 0$. By Theorem~\ref{thm: bh_multisections}, a positive admissible factorization of $\phi$ with respect to $(B,\rho)$ corresponds to the complement of a positive multisection $\Sigma$ in a Lefschetz fibration $\Pi \colon E \to B$ with regular fiber $P$. Then we obtain PANLF $\Pi\colon W \to B$ by restricting $\Pi$ to the complement $W = E \setminus N(\Sigma)$. 
    
  Now we construct $W$ from scratch. We can assume that up to symplectic deformation, the critical points of $\Pi$ are located in a collar neighborhood $N$ of $\partial B$, and that all the branch points of $\Pi|_{\Sigma}$ are located in $B_0 = B - N$. Now we consider $E_0 := \Pi^{-1}(B_0)$, and the restriction $\Pi \colon E_0 \to B_0$ is a bordered Lefschetz fibration without a critical point. Now we obtain $W$ by taking the complement of the multisection $\Sigma \cap E_0$ in $E_0$ and then attaching $2$-handles along the vanishing cycles corresponding to the critical points on $N$.  
     
  Next, assume $n_i > 0$ for all $i$. Then consider another spinal open book $(P',\phi',\{\Sigma_i\}_{i=0}^m)$, where $P' = P \setminus \bigsqcup_{|\partial B|}\mathbb{D}^2$, $\phi'$ is a lift of $\phi$ on $P'$ and $\Sigma_0 = B$ such that $\pi_0 =\text{id}$ and $n_0=0$. Since $n_0 = 0$, there exists a PANLF $\Pi' \colon W' \to B$ corresponding to the spinal open book $(P',\phi',\{\Sigma_i\}_{i=0}^m)$. Let $C$ be a spine removal cobordism from $(P',\phi',\{\Sigma_i\}_{i=0}^m)$ to $(P,\phi,\{\Sigma_i\}_{i=1}^m)$. According to Theorem~\ref{thm: spineremoval}, we know $C = \mathbb{D}^2 \times B$ and we can extend $\Pi'$ to $\Pi \colon W = W' \cup C \to B$ and it is a desired PANLF corresponding to the spinal open book $(P,\phi,\{\Sigma_i\}_{i=1}^m)$.  
\end{proof}

We now claim that the PANLF we constructed in Lemma~\ref{lem:constructing_nearlyLF_top} admits a symplectic structure. To show this, we make use of the following observation, which is a special case of \cite[Theorem 1.2]{hayden2021quasipositive} and also follows from work of Baykur-Hayano \cite{baykur2016multisections}. 

\begin{observation}\label{obs: qpsurface}
  Any link in the boundary of a positive allowable bordered Lefschetz fibration with planar fibers and without vanishing cycles bounds a positive symplectic multisection in the bordered Lefschetz fibration if it is a quasipositive braid with respect to the induced positive allowable planar spinal open book on the boundary.
\end{observation}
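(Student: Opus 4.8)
The plan is to reduce the statement to a Rudolph-style band construction together with the symplectic (indeed holomorphic) realization results of Hayden \cite[Theorem~1.2]{hayden2021quasipositive} and Baykur--Hayano \cite{baykur2016multisections}. First I would record the structure of the ambient space: since $\Pi\colon E\to B$ has no vanishing cycles it is a surface bundle with planar fiber $P$, classified by a monodromy representation $\rho\colon\pi_1(B)\to\Mod(P)$, and by Theorem~\ref{thm: blf_stein} it carries a Weinstein structure whose induced contact boundary is the planar spinal open book $\pi$ on $\partial E$. By hypothesis the link $L\subset\partial_h E$ is a closed braid with respect to $\pi$: it is positively transverse to the pages, meeting each fiber $P$ in $n$ points, and its braid monodromy, an element of the $n$-strand surface braid group of $P$, is a product $\beta=\prod_{j=1}^m w_j\,\sigma_{\delta_j}\,w_j^{-1}$ of conjugates of positive half-twists $\sigma_{\delta_j}$ along embedded arcs $\delta_j\subset P$.

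Next I would build the multisection topologically. Associated to the composite $\pi_1(B)\xrightarrow{\rho}\Mod(P)\to\mathrm{Sym}(n)$, where $\mathrm{Sym}(n)$ acts on the chosen $n$-point set, there is a degree-$n$ cover $\widehat B\to B$; the corresponding $n$ disjoint parallel sheets form a branched cover of $B$ with no branch points whose boundary is the closure of the trivial $n$-braid. Realizing the factorization $\beta$ by successively attaching $m$ positive (half-twisted) bands, one per conjugate half-twist, as in \cite{rudolph2004algebraic} adapted to the fibered setting, one obtains a properly embedded surface $\Sigma\subset E$: each positive band introduces a single simple branch point of $\Pi|_\Sigma$ and alters the boundary by the corresponding band move, so after all $m$ bands $\Pi|_\Sigma$ is a simple branched cover of degree $n$ with $m$ branch points and $\partial\Sigma$ is the closure of $\beta$, namely $L$. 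This is precisely the combinatorics-to-topology dictionary of \cite{baykur2016multisections} recorded in Theorem~\ref{thm: bh_multisections}.

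Finally I would upgrade this to a symplectic picture. The $n$ parallel sheets over $\widehat B$ are isotoped to symplectic sections, e.g. graphs of sufficiently small closed $1$-forms, and each positive band is attached symplectically so that near the new branch point $\Sigma$ agrees with the local model $\{(z^2,z)\}\subset\C^2$ of Definition~\ref{def: multisection}; this positivity of half-twisted bands is the content of \cite[Theorem~1.2]{hayden2021quasipositive} (see also \cite{baykur2016multisections}), which guarantees that a quasipositive braid in the boundary of a Weinstein bordered Lefschetz fibration bounds a symplectic, in fact $\widehat J$-holomorphic, braided surface. The resulting $\Sigma$ is then a positive symplectic multisection of $\Pi\colon E\to B$ with $\partial\Sigma=L$, as claimed.

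I expect the main obstacle to be the symplectic realization rather than the combinatorics: the topological band surgery is routine, but turning it into an honest symplectic surface whose branch points match the prescribed positive model, while keeping $\Pi|_\Sigma$ a genuine branched cover, is the delicate point. This is exactly why \cite[Theorem~1.2]{hayden2021quasipositive} and \cite{baykur2016multisections} are invoked; the only extra verification is that their arguments, stated for a disk base, carry over to a general base $B$ with nontrivial bundle monodromy $\rho$, which they do since the construction is supported near the bands and near $\partial_h E$ and the ambient bundle enters only through the harmless $n$-fold subcover $\widehat B$.
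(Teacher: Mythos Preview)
Your proposal is correct, and in fact goes further than the paper does: the paper offers no proof of this observation at all. It is stated immediately after the sentence ``we make use of the following observation, which is a special case of \cite[Theorem 1.2]{hayden2021quasipositive} and also follows from work of Baykur--Hayano \cite{baykur2016multisections},'' and is treated as a direct citation rather than something to be argued. Your sketch is essentially an unpacking of why those citations suffice --- the Rudolph-style band construction for the topology, and Hayden's theorem for the symplectic (indeed holomorphic) realization --- and is sound. The one caveat you raise at the end, about extending from a disk base to a general base $B$ with nontrivial $\rho$, is the only nontrivial point not addressed by the bare citation; your justification that the construction is local near the bands is the right reason this goes through.
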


We slightly modify \cite[Theorem 1.2]{marktosun23} to obtain a symplectic structure on the complement of a multisection in a symplectic bordered Lefschetz fibration, and a spinal open book on the boundary. 

\begin{proposition}\label{prop: marktosun}
  Let $(M, \xi)$ a closed contact $3$-manifold supported by a Lefschetz-amenable spinal open book decomposition $(P, \phi, \{\Sigma_i\}_{i=1}^m)$, and $(W, \omega)$ a Stein filling of $(M,\xi)$ that is a bordered Lefschetz fibration inducing the spinal open book on the boundary. Suppose $K$ is a transverse link in $(M, \xi)$ which is an $n$-component quasipositive braid with respect to the spinal open book decomposition. Let $\Delta$ be a properly embedded symplectic surface in $W$ with boundary $K$. Then $\Delta$ has an arbitrarily small tubular neighborhood $U$ such that for $W' := W - U$ and $\omega' := \omega|_{W'}$, the complement  $(W', \omega')$ is an exact symplectic filling of $(M',\xi')$ where $M' = \partial W'$. Moreover, $(M', \xi')$ is supported by a planar spinal open book $(P',\phi',\{\Sigma_i\}_{i=1}^m)$ where $P' = P - \sqcup_n \mathbb{D}^2$, $\phi'$ is a lift of $\phi$ to $P'$, and $\Sigma_0 = \Delta$.  
\end{proposition}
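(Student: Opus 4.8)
The plan is to adapt the argument of \cite[Theorem 1.2]{marktosun23}, which handles the case where $\Pi\colon W\to B$ is an ordinary open book (i.e.\ $B = \mathbb{D}^2$, all vertebrae disks), to the Lefschetz-amenable spinal setting. The key point is that the local symplectic picture near the quasipositive braid $K$ is entirely dictated by the braid: in a neighborhood of each page of the spinal open book, $K$ looks like a braid in $P\times I$, and quasipositivity of the braid word with respect to the spinal open book means exactly that the Seifert-type surface $\Delta$ can be built from parallel copies of pages glued along positive bands. First I would fix the Giroux form $\lambda$ on $(M,\xi)$ adapted to the spinal open book, extend it to a Liouville form on the Stein filling $W$ (which exists since $W$ is Stein, hence exact, and the bordered Lefschetz fibration structure gives a compatible Liouville structure restricting to $\lambda$ on the boundary), and arrange $\Delta$ to be symplectic and transverse to the pages except along the positive bands, exactly as in \cite{marktosun23}.

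Next I would carry out the neighborhood analysis. Since $\Delta$ is symplectic and properly embedded with Legendrian-transverse boundary behaviour controlled by the braid, a neighborhood $U$ of $\Delta$ has the structure of a symplectic $D^2$-bundle over $\Delta$; removing $U$ and restricting $\omega$ gives $\omega' = \omega|_{W'}$, which is still exact because $[\omega']$ is the restriction of an exact class and $H^2(W';\mathbb{R})$ receives no new contribution from the removal (or, more carefully, because the Liouville vector field of $\lambda$ can be arranged to be outwardly transverse to $\partial U$, using that $\Delta$ is symplectic so its normal bundle carries a compatible symplectic structure). This is the step I would be most careful about: one must check that after removing the tubular neighborhood, the Liouville vector field is still outwardly transverse along the \emph{new} boundary component $\partial U\cap W'$, not just along $M$. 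This follows from the convexity of small symplectic tubular neighborhoods of symplectic submanifolds together with the fact that $K$ is a quasipositive (hence transverse) braid, so the contact structure induced on $\partial U$ is the one predicted by the neighborhood theorem for transverse links — this is precisely where quasipositivity, as opposed to mere bravity, is used, and it is the main obstacle: one needs the braid word to be a product of conjugates of positive generators so that the induced framing and the symplectic framing agree, making the complement exact rather than merely weakly filled.

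Finally I would identify the induced spinal open book on $M' = \partial W'$. Removing $n$ strands of the braid $K$ from each page amounts to removing $n$ disks from $P$, giving $P' = P - \sqcup_n \mathbb{D}^2$; the monodromy $\phi'$ is obtained by lifting $\phi$ through the covering $P'\to P$ obtained by filling in the disks, which is well-defined up to the choices already absorbed into Hurwitz equivalence; and the new spine component $\Sigma_0 = \Delta$ arises because $\Pi|_\Delta\colon \Delta\to B$ is, by hypothesis (Lefschetz-amenability and the absence of branch points forced by the quasipositive-braid-with-no-critical-points setup), an honest covering map, so $\Delta$ plays the role of a vertebra with its $S^1$-fibration given by the boundary of the $D^2$-fibers of $U$. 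That the resulting decomposition of $M'$ genuinely satisfies Definition~\ref{def:spinalobd} — in particular, that the multiplicities along $\partial M_{P'}$ on the new boundary components match the local degrees of $\Pi|_\Delta$ — follows from the local model of a braid strand, where the meridian of a strand bounds a disk meeting $\Delta$ once, giving multiplicity $1$ on the new page boundaries. Combining these three steps (Liouville extension, convex neighborhood removal, boundary identification) completes the proof; the planarity of $P'$ is inherited from that of $P$.
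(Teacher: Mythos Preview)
Your approach is essentially the same as the paper's: both adapt \cite[Theorem 1.2]{marktosun23} to the spinal setting, with the only modification being that the ``lower boundary'' of the removed neighborhood produces a new spine component with vertebra $\Delta$ rather than a new binding component, and exactness follows from the analogous argument to \cite[Theorem 3.10]{marktosun23}. The paper's proof is much terser than yours but follows the identical strategy.

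One small correction: your justification that $\Delta$ becomes a vertebra because $\Pi|_\Delta$ is an ``honest covering map'' forced by Lefschetz-amenability is not right. Lefschetz-amenability of the \emph{original} spinal open book says nothing about whether the multisection $\Delta$ is branched over $B$; in fact the whole point of this proposition (as used later in the paper) is to produce non-Lefschetz-amenable spinal open books on $M'$ when $\Delta$ does have branch points. The reason $\Delta$ is a vertebra is simply that the boundary circles of the normal $D^2$-fibers give the required $S^1$-fibration $\pi_\Sigma$ on the new spine component $\partial U$, regardless of branching. This does not affect the validity of your argument, only the stated rationale.
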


\begin{proof}
  The proof follows almost verbatim the proof of Theorem 1.2 in \cite{marktosun23}. For the reader's benefit we indicate where the situation is slightly different in our case. We consider a neighborhood of the symplectic surface $\Delta$, call it $\Delta_1 \times \Delta_2$. The smoothing for the lower boundary $H$ is done similarly as in \cite{marktosun23}, except that the functions $r_1(\sigma)$ and $r_2(\sigma)$ are defined near the relevant boundary components. In \cite{marktosun23}, the lower boundary gave rise to a new binding component, whereas here it gives rise to a new spine component, with vertebra $\Delta$. Since the original filling $(W, \omega)$ was a positive allowable Lefschetz fibration, hence Stein (by Theorem~\ref{thm: blf_stein}) and in particular exact, the manifold $W'$ is thus an exact filling of $M'$, by an analogous argument to Theorem 3.10 of \cite{marktosun23}.
\end{proof}

We will also establish the following technical fact which is similar to what happens for regular open books.

\begin{lemma}\label{lem: weinstein2handles}
  Let $(M,\xi)$ be a closed contact $3$-manifold supported by a spinal open book $(P, \phi, \{\Sigma_i\}_{i=1}^m)$. Suppose $(M', \xi')$ is another contact $3$-manifold supported by the spinal open book  $(P, \tau_{\gamma} \circ \phi, \{\Sigma_i\}_{i=1}^m)$, where $\gamma$ is a homologically essential curve on $P$. Then there exists a Legendrian representative of $\gamma$ such that the Weinstein $2$-handle cobordism along $\gamma$ sends $(M,\xi)$ to $(M',\xi')$.
\end{lemma}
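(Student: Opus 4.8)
The plan is to adapt the classical argument of Giroux--Loi--Piergallini and Plamenevskaya (see also the treatment in \cite{ozbagci_survey}) that attaching a Weinstein $2$-handle along a Legendrian realization of a curve $\gamma$ on a page of an open book corresponds to composing the monodromy with the positive Dehn twist $t_\gamma$. The key observation is that the handle attachment is entirely local: it takes place in a neighborhood of a single page $P \times \{pt\} \subset M_P$, away from the spine $M_\Sigma$ and away from all the other boundary-moving behavior encoded in $\phi$. Since $\gamma$ is homologically essential (hence in particular not boundary-parallel in a way that would interact with a boundary interchange), one can push $\gamma$ into the interior of a page, and there the situation is indistinguishable from the ordinary open book case.

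Concretely, first I would fix a Giroux form $\lambda$ for the spinal open book $(P,\phi,\{\Sigma_i\})$ on $(M,\xi)$ as in \S\ref{subsec: spinalobd}, and choose a page $P_0 = P\times\{0\}$ in the mapping-torus region $M_P = \R\times P/\!\sim$. The curve $\gamma$ lies in the interior of $P_0$, and since $d\lambda$ is positive on the interior of each page, $\gamma$ can be $C^0$-approximated by a Legendrian knot $\Lambda$ lying in $P_0$ whose contact framing agrees with the page framing; this is the standard argument that any curve on a page has a Legendrian representative with page framing, using that $\xi$ is arbitrarily close to the tangent planes of the page away from the binding/spine. Second, I would attach a Weinstein $2$-handle to $\Lambda$ with framing one less than the page framing, obtaining a Weinstein cobordism $(X,\omega)$ from $(M,\xi)$ to some $(M'',\xi'')$. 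Third, by the standard local model of the handle attachment (Weinstein's Lemma / the model in \cite{ozbagci_survey}), the effect on the open book is to stabilize the page near $\Lambda$ by a band and then perform a destabilization, with net effect on the monodromy of precomposition by $t_\gamma$; because all of this is supported in a collar neighborhood of $P_0$ disjoint from $M_\Sigma$ and from the arcs realizing the boundary-interchange part of $\phi$, the vertebrae $\Sigma_i$ and the spine structure are unchanged, and the new monodromy is $\phi \circ t_\gamma$ (equivalently $t_\gamma\circ\phi$ up to conjugation, matching the stated $\tau_\gamma\circ\phi$ convention). Thus $(M'',\xi'')$ is supported by $(P, t_\gamma\circ\phi, \{\Sigma_i\})$; reconciling the order of composition with the paper's convention for $\tau_\gamma\circ\phi$ (a relabeling, or running the cobordism with $\Lambda$ on the page of the already-twisted open book) identifies $(M'',\xi'')=(M',\xi')$.

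The main obstacle I anticipate is bookkeeping at the boundary of the page rather than anything deep: one must check that the Legendrian push-off of $\gamma$ and the resulting handle attachment do not interfere with the non-interior part of the monodromy (the boundary interchanges $\tau_{\gamma_i}$) and do not alter the identifications on $\partial M_P$ that match the spine model in Definition~\ref{def:spinalobd}. Since $\gamma$ is homologically essential it can be isotoped off any prescribed collar of $\partial P$, so the interaction is vacuous after a preliminary isotopy; the only care needed is that this isotopy is through curves on the page, which is automatic. A secondary point is verifying that the Weinstein structure on the trace cobordism restricts to the given symplectic/contact structures on the two ends compatibly with the Giroux forms — but this is exactly the content of the local Weinstein handle model and requires no new idea beyond what is already used for ordinary open books.
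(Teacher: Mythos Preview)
Your outline is correct and matches the paper's approach: realize $\gamma$ as a Legendrian on a page with contact framing equal to page framing, observe that Legendrian $(-1)$-surgery changes the monodromy by a positive Dehn twist about $\gamma$, and note that the attachment is supported away from the spine so the vertebrae are unaffected.

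The one place where your argument is imprecise is the mechanism for producing the Legendrian with the correct framing. Your claim that ``$\xi$ is arbitrarily close to the tangent planes of the page away from the binding/spine'' is backwards: it is near the binding/spine that the pages become nearly tangent to $\xi$, while in the interior the Reeb field is positively transverse to the pages, so $\xi$ is genuinely transverse there. A bare $C^0$-approximation does not by itself pin down the framing. The paper handles this via convex surface theory: it invokes \cite[Lemma~3.2]{etnyre_shea} to arrange that the page $P$ is convex with dividing set consisting of boundary-parallel arcs, isotopes $\gamma$ on $P$ to miss the dividing set (possible since $\gamma$ is essential), and then applies the Legendrian realization principle to make $\gamma$ Legendrian with contact framing equal to the page framing. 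With that step replaced, the remainder of your argument goes through as written.
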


\begin{proof}
  According to \cite[Lemma~3.2]{etnyre_shea}, the page $P$ can be made convex such that the dividing set consists of boundary parallel arcs. We can isotope $\gamma$ on $P$ to avoid the dividing set. Then, by applying the Legendrian realization principle, we can assume $\gamma$ is Legendrian with the contact framing equal to the framing coming from $P$. Since a positive Dehn twist about $\gamma$ corresponds to a $(-1)$-surgery with respect to the surface framing, it corresponds to a Legendrian surgery. Therefore, a Weinstein 2-handle attachment corresponding to the Legendrian surgery produces a Weinstein cobordism from $(M, \xi)$ to $(M',\xi')$.
\end{proof}

We are now ready to prove Theorems~\ref{thm: factorization_filling} and \ref{thm: exact_filling}.

\begin{proof}[Proof of Theorems~\ref{thm: factorization_filling} and \ref{thm: exact_filling}]
  By Lemma~\ref{lem:constructing_nearlyLF_top}, every positive admissible factorization of $\phi$ with respect to $B$ corresponds to a PANLF $(E\to B)$ with boundary $M$, obtained by removing a multisection $\Sigma$ from a symplectic Lefschetz fibration $(W,\omega)$. Further, according to the proof of Lemma~\ref{lem:constructing_nearlyLF_top}, we can construct $E$ as follows:
  \begin{enumerate}
    \item[Step 1.] Take the complement of a small neighborhood of a multisection in a bordered Lefschetz fibration with no vanishing cycles (i.e., a fiber bundle with monodromy $\phi_{\rho}$),
    \item[Step 2.] add 2-handles along vanishing cycles,
    \item[Step 3.] attach  spine-removal cobordisms if $n_i \neq 0$ for all $i=1,\ldots, m$.
  \end{enumerate}
  By Theorem~\ref{thm: blf_stein}, a bordered Lefschetz fibration without a vanishing cycle admits a Stein structure. According to Hayden \cite{hayden2021quasipositive}, the boundary of a positive multisection in a Stein surface is a quasipositive braid. Therefore, we can apply Proposition~\ref{prop: marktosun} and the 4-manifold constructed in Step 1 admits a symplectic structure which is an exact filling of the boundary. The 2-handles attached along vanishing cycles admits a Weinstein structure by Lemma~\ref{lem: weinstein2handles}. A Weinstein cobordism preserves the exact fillability and this establishes Theorem~\ref{thm: exact_filling}. 
    
  In case of $n_i \neq 0$ for all $i = 1,\ldots,m$, we need to attach spine removal cobordisms in Step 3 as shown in the proof of Lemma~\ref{lem:constructing_nearlyLF_top}. A spine removal cobordism is a weak symplectic cobordism. However, since the symplectic manifold constructed in Step 2 is exact, so the spines remain exact even after attaching the spine removal cobordisms. Hence by \cite[Theorem 1.10]{spinal_II}, the filling can be deformed into a strong filling and this completes the proof of Theorem~\ref{thm: factorization_filling}.
\end{proof}

We further show that for a specific class of planar spinal open books, we can deform the minimal strong fillings constructed in Theorem~\ref{thm: factorization_filling} into Stein fillings. The idea is that in the standard $\bD^4$ we can deform a positive multisection into a complex surface, and the complement of the complex surface admits a Stein structure.

In addition to Proposition \ref{prop: marktosun}, we use the following classical result to obtain a Stein structure in the complement of a complex hypersurface. We omit the definition of a {\bf normal Stein space} -- the relevant fact is that the standard $\R^4$ seen as a completion of the standard $4$-ball is a normal Stein space. We recall the following fact.
%p.130
\begin{theorem} \label{thm: complement_stein}\cite[p.130]{Theory_of_Stein_spaces}\cite[Theorem 1]{simha}
  If $X$ is a normal Stein space of complex dimension two and $H$ is an analytic hypersurface in $X$, then $X\backslash H$ is Stein.
\end{theorem}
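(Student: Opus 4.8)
The plan is to derive this from the solution of the Levi problem for open subspaces of Stein spaces. Recall that, by the theorem of Docquier--Grauert in the smooth case and its extension to reduced complex spaces due to Narasimhan and Andreotti--Narasimhan, an open subset $\Omega$ of a Stein space is Stein as soon as it is \emph{locally Stein} along its boundary, i.e. as soon as every boundary point has an open neighborhood $U$ in the ambient space with $U\cap\Omega$ Stein. Since the topological boundary of $\Omega:=X\setminus H$ is contained in $H$, it therefore suffices to show that every point $p\in H$ has a (small) Stein neighborhood $U\subseteq X$ for which $U\setminus H$ is Stein.

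First I would dispose of the locus where $p$ is a smooth point of $X$. There one can choose $U$ biholomorphic to a domain in $\C^2$ and small enough that $H\cap U=\{f=0\}$ for a single $f\in\cO(U)$, using that $\cO_{\C^2,p}$ is a unique factorization domain. Then $U\setminus H=\{f\neq 0\}$ is biholomorphic, via $x\mapsto(x,1/f(x))$, to $\{(x,w)\in U\times\C:\ f(x)w=1\}$, which is a closed analytic subset of the Stein space $U\times\C$ and hence Stein. The same remark shows, more globally, that the complement of any principal divisor in a Stein space is Stein, so that in effect only the components of $H$ through the singular points of $X$ remain an issue.

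The substantive case is $p\in\operatorname{Sing}(X)$, and this is where normality of $X$ is used: in complex dimension two a normal space has discrete singular locus, so $p$ is an isolated normal surface singularity. After shrinking, $H\cap U$ need no longer be locally principal — not even set-theoretically — so the trick above fails, and following Simha I would instead pass to a resolution $\pi\colon\widetilde U\to U$ with connected compact exceptional curve $E=\pi^{-1}(p)$. Since $\pi$ is biholomorphic away from $E$ and $p\in H$, one has $U\setminus H\cong\widetilde U\setminus(E\cup\widetilde H)$, where $\widetilde H$ is the strict transform of $H$, and $E\cup\widetilde H$ is an analytic curve in the complex surface $\widetilde U$. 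Now $\widetilde U$ is holomorphically convex (it is proper over the Stein space $U$) and $E$ has negative definite intersection form by Grauert's criterion; combining a Grauert-type plurisubharmonic function associated with $E$ with the logarithmic poles of a local defining function of $E\cup\widetilde H$ should produce a strictly plurisubharmonic exhaustion of $\widetilde U\setminus(E\cup\widetilde H)$, proving it Stein.

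The hard part is precisely this last step: constructing the local Stein neighborhoods at the normal singular points of $X$ that lie on $H$. This is the technical core of Simha's theorem, and the place where the dimension-two and normality hypotheses genuinely enter, since they are what make the offending singularities isolated and bring the negative-definite resolution picture into play. Once local Stein-ness has been verified at every point of $H$, the Levi problem solution quoted at the start upgrades it to Stein-ness of the whole of $X\setminus H$.
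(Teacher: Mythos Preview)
The paper does not give a proof of this statement at all: Theorem~\ref{thm: complement_stein} is simply quoted as a classical fact from the literature, with citations to Grauert--Remmert's \emph{Theory of Stein Spaces} and to Simha's paper, and is then used as a black box in the proof of Theorem~\ref{thm: stein_filling}. There is therefore nothing in the paper's own argument to compare your proposal against.

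That said, your outline is a faithful sketch of the classical argument, essentially following Simha: reduce to local Stein-ness along $H$ via the Docquier--Grauert/Andreotti--Narasimhan solution to the Levi problem for open subsets of Stein spaces; handle smooth points by the standard graph trick $x\mapsto(x,1/f(x))$ showing complements of principal divisors are Stein; and at the isolated normal surface singularities (the only place the hypotheses bite) pass to a resolution and exploit the negative-definite intersection form on the exceptional curve to build a strictly plurisubharmonic exhaustion. The last step is indeed the heart of the matter and is where you have, appropriately, left the most work undone --- constructing the exhaustion from Grauert's negativity is not a one-liner, and your phrase ``should produce'' is doing real load-bearing. If you were to flesh this out you would need to make that construction precise (or cite Simha's Lemma directly), but as an outline of the strategy your proposal is correct and matches the cited sources.
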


Notice that in the trivial Lefschetz fibration $D^4 = D^2 \times D^2$, a positive multisection of the base $D^2$ is equivalent to the quasipositive surface; see \cite[Definition 1.1]{boileau-orevkov}. Thus we can restate \cite[Theorem 2]{boileau-orevkov} as follows:

\begin{theorem}\label{thm: sympl_complex}\cite{boileau-orevkov}
  Any symplectic surface in $(\bD^4, \omega_{st})$ is symplectically isotopic to a positive multisection in the Lefschetz fibration $\mathbb{D}^2\times \mathbb{D}^2 \to \mathbb{D}^2$.
  %, and it is further symplectically isotopic to a complex curve.
\end{theorem}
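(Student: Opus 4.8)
The plan is to establish the two assertions in turn: the braided (multisection) position by a soft positivity argument, and the isotopy to a complex curve by invoking the pseudoholomorphic-curve techniques of Boileau--Orevkov.

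\emph{Step 1: braiding the surface.} Let $\Delta \subset (\bD^4, \omega_{st})$ be a symplectic surface with transverse boundary link $K = \partial\Delta \subset (S^3, \xi_{st})$. First I would put $K$ in closed-braid position with respect to the standard open book on $S^3$ (Bennequin's braiding of transverse links), which identifies a collar of $\partial \bD^4$ with a neighborhood of $\partial(\mathbb{D}^2 \times \mathbb{D}^2)$ in which $\Delta$ is already a multisection of $\Pi \colon \mathbb{D}^2 \times \mathbb{D}^2 \to \mathbb{D}^2$. Over the interior, the fibers $\{z_1\}\times \mathbb{D}^2$ are symplectic, so wherever $\Delta$ fails to be transverse to a fiber the two symplectic $2$-planes meet with strictly positive intersection number, exactly as in positivity of intersections for $J$-holomorphic curves. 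After a generic $C^\infty$-small symplectic perturbation the critical locus of $\Pi|_\Delta$ is finite, and a Moser-type argument in the $4$-dimensional symplectic normal form lets one isotope $\Delta$ near each critical point so that $\Pi|_\Delta$ has the standard local model $\{(z^2,z)\}$ of Definition~\ref{def: multisection}; away from the critical locus $\Pi|_\Delta$ is a covering map. Thus $\Delta$ is symplectically isotopic to a positive multisection of the trivial Lefschetz fibration $\mathbb{D}^2 \times \mathbb{D}^2 \to \mathbb{D}^2$, i.e.\ a braided surface in the sense of Rudolph \cite{rudolph2004algebraic}.

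\emph{Step 2: isotopy to a complex curve.} Next I would invoke the main theorem of \cite{boileau-orevkov}: the braid $K$ is quasipositive, and the braided symplectic surface produced in Step 1 is symplectically isotopic --- through braided surfaces --- to the algebraic braided surface associated to a quasipositive band factorization of $K$ (Rudolph's piece of an algebraic curve realizing that factorization). Their argument fills $\Delta$ by a family of $\Pi$-holomorphic disks for a fibered almost complex structure $J$ adapted to $\Pi$ and $\omega_{st}$; Gromov compactness together with automatic transversality and positivity in this low-dimensional setting keeps the disk foliation braided as $J$ is deformed to the integrable structure, and the algebraic braided surface is the endpoint of this deformation. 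Since a complex curve in $(\bD^4, \omega_{st})$ is automatically symplectic, chaining the isotopies of Steps 1 and 2 yields the statement.

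I expect Step 2 to be the main obstacle. The braiding in Step 1 is elementary once positivity is exploited, but the passage from a symplectic braided surface to an honestly holomorphic one is the genuinely hard input of \cite{boileau-orevkov}: one needs control of the moduli space of holomorphic disks filling $\Delta$ so that no bubbling destroys the braiding, the fact that quasipositive band factorizations realized symplectically and algebraically are related by Hurwitz moves, and Orevkov's isotopy results for braided surfaces with fixed combinatorial data. Since the result is used here only as a black box --- to deform a positive multisection complement into a Stein domain via Theorem~\ref{thm: complement_stein} --- it is legitimate to cite \cite{boileau-orevkov} (together with \cite{rudolph2004algebraic}) for these inputs rather than reproduce them.
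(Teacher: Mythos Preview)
The paper does not supply a proof of this statement at all: Theorem~\ref{thm: sympl_complex} is quoted with a bare citation to \cite{boileau-orevkov} and is used as a black box in the proof of Theorem~\ref{thm: stein_filling}. So there is no ``paper's own proof'' to compare your proposal against.

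That said, your outline is a fair sketch of the circle of ideas behind the cited result. Two small comments. First, your Step~1 is somewhat more elaborate than necessary for the paper's purposes: in the application the surface in question is already presented as a positive multisection (by Observation~\ref{obs: qpsurface}), so only the passage to a complex curve is actually used. Second, in Step~2 you should be careful about what Boileau--Orevkov literally prove versus what follows by combining it with Rudolph: their argument shows the boundary braid is quasipositive via a $J$-holomorphic disk foliation, and Rudolph's construction then produces a complex curve with that boundary; the symplectic isotopy to the complex curve is obtained by deforming the taming almost complex structure to the integrable one and tracking the braided surface along the disk foliation, which is exactly what you describe. Your identification of the compactness and transversality inputs as the delicate points is accurate, and citing \cite{boileau-orevkov} and \cite{rudolph2004algebraic} for them, as the paper does, is appropriate.
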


By applying Theorem~\ref{thm: complement_stein} and Theorem~\ref{thm: sympl_complex}, we can improve Theorem~\ref{thm: exact_filling} in the case of $B =\mathbb{D}^2$ and obtain a Stein filling of the given spinal open book.

\begin{theorem}[Theorem \ref{thm: panlf_construct_stein}]\label{thm: stein_filling}
  Suppose $(M,\xi)$ is a closed contact $3$-manifold supported by a planar spinal open book decomposition $(P, \phi, \{\Sigma_i\}_{i=1}^m)$ where $\Sigma_1 = \mathbb{D}^2$. Let $k_i$, $\pi_i$, $n_i$, $B$ and $\rho$ be as in Theorem~\ref{thm: factorization_filling}. Assume further that $\pi_1$ has no branch points, i.e., $n_1 = 0$. Then, for every Hurwitz equivalence class of positive admissible factorizations of $\phi$ with respect to $(B, \rho)$, there exists a symplectic PANLF $(W, \omega)$ with regular fiber $P$ and base $B$, such that $(W, \omega)$ is a Stein filling of $(M,\xi)$ inducing the given spinal open book decomposition.
\end{theorem}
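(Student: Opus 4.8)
The plan is to bootstrap from Theorem~\ref{thm: exact_filling} (i.e., Theorem~\ref{thm: panlf_construct_exact}) applied in the special case $B = \mathbb{D}^2$, and then upgrade the exact structure to a Stein structure by replacing the symplectic multisection with a complex curve. Concretely, since $\Sigma_1 = \mathbb{D}^2$ and $n_1 = 0$, the spinal open book is uniform with respect to $\mathbb{D}^2$ and has a distinguished branch-point-free branched cover $\pi_1 \colon \mathbb{D}^2 \to \mathbb{D}^2$. The construction in Lemma~\ref{lem:constructing_nearlyLF_top} then produces the PANLF $\Pi\colon W \to \mathbb{D}^2$ as the complement of a neighborhood of a positive multisection $\Sigma$ inside a bordered Lefschetz fibration $E \to \mathbb{D}^2$ built by: (Step 1) taking the complement of a neighborhood of $\Sigma$ in a Lefschetz fibration with no vanishing cycles, which here is literally the trivial fibration $\mathbb{D}^2 \times \mathbb{D}^2 \to \mathbb{D}^2$ (since the monodromy representation $\rho$ of $\pi_1(\mathbb{D}^2) = 1$ is trivial), then (Step 2) attaching Weinstein $2$-handles along the Lefschetz vanishing cycles, and (Step 3) attaching a spine-removal cobordism capping off $\Sigma_1 = \mathbb{D}^2$ — which by Remark~\ref{rem: exotic_nbd_branch} amounts to filling back the multisection.

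First I would handle Step 1 using the $B = \mathbb{D}^2$ rigidity: the base being a disk means the ambient Lefschetz fibration before attaching handles is exactly $\mathbb{D}^2 \times \mathbb{D}^2$, a completion of which sits inside the standard $\mathbb{R}^4$, a normal Stein space of complex dimension two. By Theorem~\ref{thm: sympl_complex} (Boileau--Orevkov), the symplectic multisection $\Sigma \subset \mathbb{D}^2 \times \mathbb{D}^2$ — which exists by Observation~\ref{obs: qpsurface} / Proposition~\ref{prop: marktosun} since its boundary is a quasipositive braid in the induced planar spinal open book on $\partial(\mathbb{D}^2\times\mathbb{D}^2)$ — is symplectically isotopic to a complex curve $H$. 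Now invoke Theorem~\ref{thm: complement_stein}: the complement $\mathbb{R}^4 \setminus H$ (equivalently $(\mathbb{D}^2\times\mathbb{D}^2) \setminus N(H)$ after completing) is Stein. This replaces the merely exact filling of Proposition~\ref{prop: marktosun} with an honest Stein filling of the boundary contact manifold $(M', \xi')$, where $(M',\xi')$ is supported by the planar spinal open book $(P', \phi', \{\Sigma_i\}_{i=1}^m)$ with $P' = P \setminus \sqcup_n \mathbb{D}^2$ as in Proposition~\ref{prop: marktosun}.

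Next I would carry out Step 2: attach Weinstein $2$-handles along Legendrian representatives of the vanishing cycles $c_1, \dots, c_k$ appearing in the positive admissible factorization. By Lemma~\ref{lem: weinstein2handles}, each such attachment is a Weinstein cobordism realizing the effect of composing the page monodromy with the corresponding positive Dehn twist, and Weinstein cobordisms preserve Stein fillability. This yields a Stein filling of the contact manifold supported by $(P', \tau_{c_1}\cdots\tau_{c_k}\circ\phi'_\rho, \dots)$, but since $n_1 = 0$ there is nothing left to do in Step 3 that would destroy the Stein property — wait, actually Step 3 attaches a spine-removal cobordism to cap off $\Sigma_1 = \mathbb{D}^2$ and recover the original page $P$. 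Here I use the dichotomy in Lemma~\ref{lem:constructing_nearlyLF_top}: because $n_1 = 0$, we are in the branch of that lemma where we may instead arrange the construction so that $\Sigma_1$ itself is the multisection-capping vertebra and no separate weak spine-removal cobordism is required beyond the one already absorbed by passing from $(P',\phi')$ back to $(P,\phi)$; the relevant cobordism $C = \mathbb{D}^2 \times \mathbb{D}^2$ attached along a pre-quantization of $\mathbb{D}^2$ is, in the $B=\mathbb{D}^2$ Stein picture, exactly the reinsertion of $N(H)$, which is compatible with the Stein structure by Theorem~\ref{thm: complement_stein} read in reverse. Finally, the resulting Stein domain $W$ carries the PANLF map $\Pi$ and induces the planar spinal open book $(P, \phi, \{\Sigma_i\})$ on its boundary, establishing the theorem.

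The main obstacle I expect is reconciling the spine-removal/capping step with the Stein structure: Theorem~\ref{thm: spineremoval} only guarantees a \emph{weak} symplectic cobordism in general, and Proposition~\ref{prop: marktosun} only gives exactness, so the genuinely new input — and the crux of the argument — is that when $B = \mathbb{D}^2$ and $n_1 = 0$ one can take the capped-off model to be the \emph{standard} $\mathbb{D}^2\times\mathbb{D}^2$ with its complex structure, so that ``adding the multisection back'' is literally ``adding the analytic hypersurface $H$ back'' and Theorem~\ref{thm: complement_stein} applies verbatim; one must check that the complex isotopy from Boileau--Orevkov can be performed rel boundary compatibly with the induced contact structure on $M$, and that the Weinstein handles of Step 2 can be slid to be disjoint from $N(H)$ so that the order of operations does not matter. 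I would organize the write-up so that this compatibility is the one nontrivial verification, with everything else being assembly of Theorems~\ref{thm: exact_filling}, \ref{thm: complement_stein}, \ref{thm: sympl_complex}, Proposition~\ref{prop: marktosun}, and Lemma~\ref{lem: weinstein2handles}.
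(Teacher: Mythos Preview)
Your overall strategy matches the paper's: get a Stein structure in Step~1 via Boileau--Orevkov plus the hypersurface-complement theorem, then preserve Stein through the Weinstein handles of Step~2. However, there is a genuine gap in your Step~1.

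You claim that the bordered Lefschetz fibration without vanishing cycles is literally $\mathbb{D}^2 \times \mathbb{D}^2$. The triviality of $\rho$ (since $\pi_1(\mathbb{D}^2) = 1$) only tells you the fibration is a \emph{product} $F \times \mathbb{D}^2$; it says nothing about the fiber $F$ being a disk. In fact $F$ is the planar page $P$ with only those boundary components corresponding to vertebrae with $n_i \neq 0$ capped off, so $F$ still carries a boundary component for every $\Sigma_i$ with $n_i = 0$ (in particular the one from $\Sigma_1$, but possibly others). In general $F \times \mathbb{D}^2 \cong \natural_n\, S^1 \times \mathbb{D}^3$ for some $n \geq 0$, and Theorem~\ref{thm: sympl_complex} is a statement about symplectic surfaces in the standard $(\mathbb{D}^4, \omega_{st})$, not in $\natural_n\, S^1 \times \mathbb{D}^3$.

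The paper closes this gap with a short but essential trick you are missing: one rewrites $E_1 = (\natural_n\, S^1 \times \mathbb{D}^3) \setminus N(S)$ as $\mathbb{D}^4 \setminus N(S' \cup \sqcup_n \mathbb{D}^2)$, where the $n$ extra trivial disks are sections of $\mathbb{D}^2 \times \mathbb{D}^2 \to \mathbb{D}^2$ whose removal from $\mathbb{D}^4$ yields $\natural_n\, S^1 \times \mathbb{D}^3$. The enlarged surface $S' \cup \sqcup_n \mathbb{D}^2$ now sits in $\mathbb{D}^4$ with quasipositive boundary in the disk open book of $S^3$, so Theorem~\ref{thm: sympl_complex} and then Theorem~\ref{thm: complement_stein} apply to it, making $E_1$ Stein.

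Separately, your treatment of Step~3 is muddled. Since $n_1 = 0$, Lemma~\ref{lem:constructing_nearlyLF_top} places you squarely in the branch where \emph{no} spine-removal cobordism is attached; there is nothing to cap off and nothing to reconcile with the Stein structure. (Even in the other branch, the cobordism caps off the auxiliary $\Sigma_0 = B$, not $\Sigma_1$.) The remarks about ``reinserting $N(H)$'' and reading Theorem~\ref{thm: complement_stein} in reverse are unnecessary and do not correspond to any step in the construction.
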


\begin{proof}
  By Lemma \ref{lem:constructing_nearlyLF_top} and its proof, $(M,\xi)$ has the spinal open book induced by a filling $E$ by PANLF which is constructed by the following. First, consider the complement of a multisection $S$ in $\natural_n S^1\times \bD^3$, with boundary a quasipositive link $K$ with respect to a planar open book supporting $(\#_n S^1 \times S^2, \xi_{st})$, for some $n$. Here $S$ is determined by $\{\Sigma_i\}_{n_i \neq 0}$, since we know $B= \bD^2$. Second, we add $2$-handles along vanishing cycles corresponding to $\phi$ to obtain $E$.

  Now in order to see that this procedure in fact gives rise to a Stein filling, we need to examine the multisection complement $E_1 := \natural_n S^1\times \bD^3 \setminus S$ in the first step. Observe that $E_1$ can also be obtained by removing a symplectic multisection $S'\cup \sqcup_{n} D^2$ from the $\bD^2 \times \bD^2$ fibration of $\bD^4$, where $S'$ is homeomorphic to $S$. The boundary of $S' \cup \sqcup_{n} \bD^2$ is a quasipositive link $K'$ in the disk open book obtained by considering the braid $K$ in the planar open book supporting $(\#_n S^1 \times S^2, \xi_{st})$, and the boundaries of the disks correspond to all but one boundary components in the same planar open book -- these give trivial components in the link $K'$. 

  By Theorem \ref{thm: sympl_complex}, we know that $S'\cup \sqcup_{n} \bD^2$ is symplectically isotopic to a positive multisection in $\bD^2 \times \bD^2$, denoted as $\mathcal{S'}$. Furthermore, by \cite{rudolph2004algebraic} we know that $\mathcal{S}'$ is smoothly isotopic to a complex curve, denoted as $\mathcal{S}$. Applying Theorem \ref{thm: complement_stein}, we obtain that $E_1$, the complement of $\mathcal{S}$ in $\bD^4$, is Stein. 
  %%%%%%%%%%more detailed description of psh%%%%%%%%%%%%%
  %Note that by the proof of Lemma 1 in \cite{simha}, we may pick the new plurisubharmonic function given in Theorem \ref{thm: complement_stein} to have a level set whose contact structure agrees with the boundary of the complement of the neighborhood of $\mathcal{S}$ in $\bD^4$, which by construction is the contact structure supported by the given spinal open book decomposition that we desire. 
  In fact, we may pick an explicit plurisubharmonic function on $\bD^2 \times \bD^2 \setminus \mathcal{S}$ to be $f:=\max(-\log(d_{\cS}), |z_1|^2 + |z_2|^2)$ where $d_{\mathcal{S}}$ denotes the distance function to the set $\mathcal{S}$ and $z_1, z_2$ denote the two complex coordinates on $\bD^2 \times \bD^2$. 
  
  We now check that a level set of $f$ is compatible with a contact structure supported by the spinal open book obtained by removing $\mathcal{S}$. Notice that the induced contact form on a regular level set of $f$ is $\lambda := -df \circ J_0$ where $J_0$ is the standard complex structure on $\bD^4$. We first see that $d\lambda$ is a symplectic form on each regular fiber and hence each page of the open book. Second, we claim that $\lambda$ evaluates positively on each fiber of the spine. Notice that removing a small neighborhood of $\mathcal{S}$ introduces some boundary components on each fiber surface $\{p\} \times \bD^2$ for $p \in \bD^2$. The $S^1$-fibration on the spine coincides with the boundaries of the fibers away from the exotic curves, and near the exotic curves can be made arbitrarily close to the boundaries of the fiber. Thus to show the claim, it is sufficient to show that $\lambda$ evaluates positively on tangent vectors of each boundary component of each fiber surface. Let $X$ be a non-vanishing tangent vector field along a boundary component of a fiber surface, inducing the boundary orientation. Then $J_0(X)$ points away from the the multisection $\mathcal{S}$, so the function $f$ decreases along the integral curves of $J_0(X)$. Therefore, it follows that $\lambda(X)>0$ and this completes the proof that $\ker(\lambda)$ is supported by the induced spinal open book on the boundary.
  
  Finally, as $E$ is obtained by attaching Weinstein handles to $E_1$, corresponding to vanishing cycles, $E$ is Stein as well and the boundary contact structure is supported by the induced spinal open book. 
\end{proof}

%%%%%%%%%%%%%%%%%%%%%%%%%%%%%%%%%%%%%%%%%%%%%%%%%%%%%%%%%%%%
\section{Classifications of fillings} \label{sec: classification_filling}
%%%%%%%%%%%%%%%%%%%%%%%%%%%%%%%%%%%%%%%%%%%%%%%%%%%%%%%%%%%%

In this section, we apply Theorem \ref{thm: monodromy_fact} to classify symplectic fillings of some contact $3$-manifolds supported by planar spinal open books, and prove Theorem \ref{thm: torusbundle_spinal}, \ref{thm: parabolicbundles}, \ref{thm: ellipticbundles}, \ref{thm:eg1_33}, \ref{thm:eg1_34}, and \ref{thm: highgenus}. 

%------------------------------------------
\subsection{Fillings of planar spinal open books}
%------------------------------------------
We first show that any rotational contact torus bundles with at least $\pi$ twisting are supported by a planar spinal open book.

\begin{proposition}\label{prop: bundle_planar}
  Let $(M,\xi_n)$ be a contact torus bundle over $S^1$ with $n\geq 1$, i.e., rotational contact structure with at least $\pi$ twisting. Then $\xi_n$ is supported by a planar spinal open book with vertebra $\Sigma = (S^1 \times I) \sqcup_{i=1}^k \mathbb{D}^2$, a disjoint union of an annulus and $k$ disks, and page $S^2_{k+2} = S^2 - \sqcup_{k+2} \mathbb{D}^2$, a $(k+2)$-punctured sphere for some $k \in \N$. 
\end{proposition}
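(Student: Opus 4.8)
The plan is to produce an explicit partially planar spinal open book decomposition of $(M, \xi_n)$ and then verify uniformity. Starting from the relative open book picture of Van Horn--Morris (Figure~\ref{fig:vanhornmorris} and Proposition~\ref{prop:obd_ann_vertebra}), a rotational contact structure $\xi_n$ with $n\pi$-twisting on a torus bundle $T_A$ is obtained by gluing $n$ copies of the $\pi$-twisting piece $T^2\times I$, each carrying the relative open book corresponding to $(aba)^{-2}$, and then closing up with the monodromy $A$. My first step is to isolate one of these $\pi$-twisting $T^2\times I$ pieces and, by Proposition~\ref{prop:obd_ann_vertebra}, recognize it as a neighborhood of a spine component whose vertebra is an annulus $S^1\times I$. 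This gives one annulus vertebra contributing a spine $(S^1\times I)\times S^1$, i.e.\ a thickened torus.

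Next I would handle the remaining $T^2\times I$ pieces (there are $n-1$ of them) together with the closing-up monodromy $A$. The relative open books for the building blocks $a, a^{-1}$ have disk pages; each disk page glued into the bigger picture contributes a disk vertebra. Concatenating the $(aba)^{-2}$-words and the monodromy $A$, the pages glue up into a single connected surface. I would track the Euler characteristic and boundary count: the page of the Van Horn--Morris open book for a torus bundle is a once-punctured torus (or a genus-one surface with boundary), but here we are not closing up the annulus vertebra into a binding --- instead we keep it as a genuine spine component. Cutting the genus from the page by converting the torus-monodromy handle into a spine with an annulus vertebra and $k$ disk vertebrae (the disk vertebrae coming from the binding components of the pieces $a,a^{-1}$, where $k$ depends on the word length, hence on $n$ and $A$) turns the page into a planar surface. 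Counting boundary components: the annulus vertebra meets the page in two circles (multiplicity pattern dictated by $(aba)^{-2}$, which has even twisting), and each of the $k$ disk vertebrae meets it in one circle, so the page is $S^2$ with $k+2$ disks removed, i.e.\ $S^2_{k+2}$. The Giroux form compatibility on the spine follows from \cite[\S2.3]{spinal_I} exactly as in the proof of Proposition~\ref{prop:obd_ann_vertebra}.

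To finish I must check that this spinal open book is \emph{planar} (done: the page is $S^2_{k+2}$, genus zero) and \emph{partially planar} in the sense needed --- the interior of the page contains a genus-zero page, which is immediate. I also need it to be a bona fide spinal open book in the sense of Definition~\ref{def:spinalobd}: the annulus spine has $\pi_\Sigma$ a trivial $S^1$-fibration over $S^1\times I$, the disk spines have $\pi_\Sigma$ a trivial $S^1$-fibration over $\mathbb D^2$, and the multiplicities are read off from the boundary-circle foliations of the relative open books (multiplicity $1$ on the disk-vertebra circles, and the appropriate multiplicity on the two annulus-vertebra circles so that the torus coordinates on the overlap match as in (3) of Definition~\ref{def:spinalobd}). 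The verification of uniformity can be deferred to Theorem~\ref{thm: torusbundle_spinal}, or done here directly: each disk vertebra covers $B=\mathbb D^2$ by the identity, and the annulus vertebra maps to $B$ by an unbranched (or suitably branched) cover consistent with the multiplicities.

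The main obstacle I expect is bookkeeping: correctly combining the relative open book words for the $\pi$-twisting pieces with the monodromy $A$ and verifying that the result is a \emph{connected planar} page with exactly $k+2$ boundary circles, with boundary multiplicities matching Definition~\ref{def:spinalobd}(3) on the overlap tori. The geometric input --- that a $\pi$-twisting $T^2\times I$ is an annulus spine component, and that inserting $(aba)^{-2}$ realizes $\pi$-twisting with a sign change of the monodromy --- is already supplied by Proposition~\ref{prop:obd_ann_vertebra}; the work is in the careful gluing and Euler-characteristic accounting, and in checking that the remaining monodromy $A$, after the spine removal of the torus handle, can be realized by disk vertebrae plus positive/negative Dehn twists on the planar page so that $\xi_n$ is indeed supported.
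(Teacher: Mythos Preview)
Your approach is essentially the same as the paper's: isolate a single $(aba)^{-2}$ relative open book piece inside the Van Horn--Morris open book for the torus bundle, invoke Proposition~\ref{prop:obd_ann_vertebra} to replace it by an annulus spine component, and observe that the complementary relative open book $\mathcal{O}_2$ has a planar page $S^2_{k+2}$ whose $k$ binding components become the disk vertebrae. One small point to clean up: the disk vertebrae do not arise because the $a,a^{-1}$ blocks have ``disk pages'' (they do not), but simply because the binding components of the ordinary relative open book $\mathcal{O}_2$ are, by definition, solid-torus neighborhoods $\mathbb{D}^2\times S^1$, which are precisely spine components with disk vertebrae; the paper's proof states this directly without any Euler-characteristic bookkeeping.
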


\begin{proof}
  Let $(T_{A},\xi_n)$ be a rotational contact torus bundle for $A \in SL_2(\mathbb{Z})$ and $n \geq 1$. As discussed in \S\ref{subsec:relobd}, a rotational torus bundle $(T_{A},\xi_n)$ can be represented by an open book of the form shown in Figure~\ref{fig:vanhornmorris}. Also, by inserting the word $(aba)^{-2}$, we can obtain an open book for $(T_{-A},\xi_{n+1})$, see Figure~\ref{fig:piTwisting} for an example. This open book can be seen as a union of two relative open books $\mathcal{O}_1$ and $\mathcal{O}_2$, where $\mathcal{O}_1$ corresponds to the word $(aba)^{-2}$ and $\mathcal{O}_2$ is the complement of $\mathcal{O}_1$. The page of $\mathcal{O}_2$ is $S^2_{k+2} = S^2 - \sqcup_{k+2} \mathbb{D}^2$ where $k$ is the number of binding components of $\mathcal{O}_2$. Let $\phi$ be the monodromy of $\mathcal{O}_2$.

  According to Proposition~\ref{prop:obd_ann_vertebra}, by replacing $\mathcal{O}_1$ with an annulus vertebra we can construct a spinal open book $(S^2_{k+2},\phi,\{\Sigma\}_{i=0}^k)$ where $\Sigma_i= \mathbb{D}^2$ for $i = 1, \ldots, k$ and $\Sigma_0 = S^1 \times I$. Clearly the spinal open book supports the contact torus bundle $(T_{-A},\xi_{n+1})$, as the annulus spine component is contactomorphic to a relative open book of $T^2 \times I$ corresponding to the word $(aba)^{-2}$.
\end{proof}

\begin{figure}[htbp]{
  \vspace{0.2cm}
  \begin{overpic}[tics=20]
  {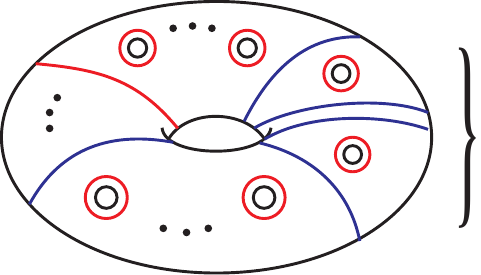}
  \put(238,63){\large$\mathcal{O}_1$}
  \end{overpic}}
  \vspace{0.2cm}
  \caption{An open book decomposition supporting a rotational contact torus bundle $(T_{-A},\xi_{n+1})$. $\mathcal{O}_1$ is a relative open book corresponding to $(aba)^{-2}$. The complement of $\mathcal{O}_1$ is $\mathcal{O}_2$.}
  \label{fig:piTwisting}
\end{figure}

We first classify symplectic fillings of elliptic and parabolic torus bundles with $\pi$ twisting. By Proposition~\ref{prop: bundle_planar}, they are supported by planar spinal open books with connected pages, one annulus spine, and the rest disk spines. Here, classification of fillings amounts to finding factorizations of monodromy that must include interchanging the two boundary components touching the annulus spine twice, and the remaining factors are positive twists about nonseparating simple closed curves. First, we establish the effect on the page boundary of interchanging a pair of boundary components twice along non-intersecting arcs. Since each half-twist happens in a neighborhood of the arc containing the two boundary components, the effect of two half-twists about two non-intersecting arcs is represented on a four-punctured sphere.

\begin{figure}[htbp]{
  \vspace{0.2cm}
  \begin{overpic}[tics=20]
  {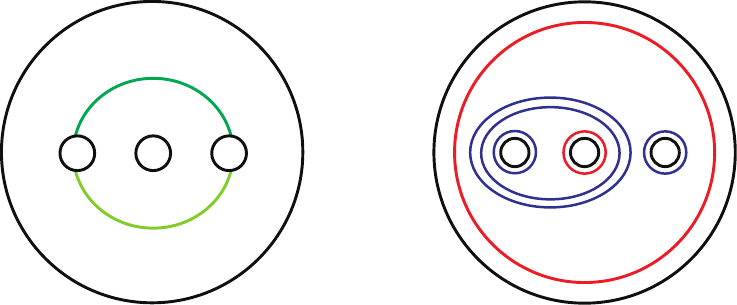}
  \put(70,115){$\alpha$}
  \put(70,24){$\beta$}
  \end{overpic}}
  \vspace{0.2cm}
  \caption{The mapping class is equivalent to doing two consecutive boundary interchanges of the same pair along two non-intersecting arcs in a planar surface as pictured, such that the right boundary component is moved around in a counterclockwise manner. The mapping class pictured is $\tau_{\beta}\tau_{\alpha}$.}
  \label{fig:twotwists}
\end{figure}

\begin{proposition}\label{prop:twotwists}
  Interchange two boundary components via positive half twists along the arcs as shown in the left side of Figure~\ref{fig:twotwists}. Then the resulting mapping class on the surface is equivalent to the one shown in the right side of Figure~\ref{fig:twotwists}.
\end{proposition}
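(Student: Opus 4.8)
The plan is a direct computation in the mapping class group of the subsurface on which both boundary interchanges are supported. Denote the two boundary components being interchanged by $\partial_1$ and $\partial_2$, and let $\alpha$ and $\beta$ be the two arcs of Figure~\ref{fig:twotwists}, so the mapping class in question is $\tau_\beta\tau_\alpha$. Since $\alpha$ and $\beta$ are disjoint in the interior and meet only $\partial_1$ and $\partial_2$, the graph $\partial_1\cup\alpha\cup\partial_2\cup\beta$ has a regular neighborhood $S$ which is a four-holed sphere with boundary components $\partial_1$, $\partial_2$, and two further curves $c_1,c_2$ (the two ``eye'' curves cobounded by $\alpha$ and $\beta$). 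Both $\tau_\alpha$ and $\tau_\beta$ are supported in $S$, so the entire identity takes place in $\Mod(S)$, and this is the sense in which the conclusion is ``represented on a four-punctured sphere.''

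First I would recall from \S\ref{subsec: framed_mcg} the precise normalization of a boundary interchange: $\tau_\gamma$ is the counterclockwise half-twist of $S$ exchanging $\partial_1$ and $\partial_2$ along $\gamma$, post-composed with the clockwise $\pi$-rotation of each of $\partial_1$ and $\partial_2$ prescribed in Figure~\ref{fig:boundarytwist} (these rotations are exactly what make $\tau_\gamma$ a well-defined element of $\SMod$ inducing the correct spinal open book on the boundary). With this fixed, I would track the images under $\tau_\alpha$ and then under $\tau_\beta$ of a spanning system of properly embedded arcs in $S$ --- for instance $\alpha$ and $\beta$ themselves, together with one arc joining $c_1$ to $\partial_1$ --- as well as the induced rotations of the four boundary circles. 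Isotoping the resulting arcs into minimal position and reading off the homeomorphism up to isotopy identifies $\tau_\beta\tau_\alpha$ with the mapping class drawn on the right-hand side of Figure~\ref{fig:twotwists}.

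A cleaner variant uses conjugacy and the braid relation. There is a homeomorphism $f$ of $S$ taking $\alpha$ to $\beta$ while fixing $\partial_1,\partial_2$ setwise, namely a partial rotation dragging $\alpha$ across the bigon it cobounds with $\beta$; then $\tau_\beta=f\tau_\alpha f^{-1}$, so $\tau_\beta\tau_\alpha=(f\tau_\alpha f^{-1})\tau_\alpha$. Expressing $f$ as a product of Dehn twists about curves parallel to $c_1,c_2$ reduces the statement to a relation among one half-twist and Dehn twists in a pair of pants, which can be checked directly --- for example by passing to the double branched cover over $\partial_1\cup\partial_2$, where the half-twists become honest Dehn twists and the identity becomes a braid relation among Dehn twists about curves with the expected intersection pattern.

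The main obstacle is bookkeeping the framing, i.e.\ the boundary rotations. Because $\tau_\gamma$ lies in $\SMod$ rather than $\Mod_\partial$, composing two boundary interchanges rotates $\partial_1$ and $\partial_2$, and one must keep careful account of exactly which boundary-parallel Dehn twists, and with which signs, appear in $\tau_\beta\tau_\alpha$, so that the answer matches Figure~\ref{fig:twotwists} on the nose and is compatible with the framing conjugation move of Definition~\ref{def: generalized_hurwitz_equivalence}. A secondary, more routine point is to check that $\alpha$ and $\beta$ as drawn are in minimal position and that the counterclockwise/clockwise conventions of the two half-twists agree with those fixed in \S\ref{subsec: framed_mcg}; reversing either one flips the sign of the resulting twist.
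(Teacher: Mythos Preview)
Your primary approach --- track a spanning system of arcs on the four-holed sphere $S$ under $\tau_\alpha$ then $\tau_\beta$ and read off the mapping class --- is exactly what the paper does: its proof is Figure~\ref{fig:twotwists2}, which follows a single test arc through the two interchanges. The paper then packages the answer more conceptually than you do, interpreting $\tau_\beta\tau_\alpha$ as the simultaneous point-pushing of the two interchanged boundaries around loops enclosing the other interior boundaries (one clockwise, one counterclockwise), together with a full left-handed twist about each of $\partial_1,\partial_2$; this uses the point-pushing description from \cite[Fact~4.7]{primer_mcg} and makes the identification with the right-hand side of Figure~\ref{fig:twotwists} immediate without having to match an entire arc system.

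Your alternative route via conjugacy has a small slip worth flagging: $\alpha$ and $\beta$ do not cobound a bigon in $S$ --- the two regions they cut off each contain one of the other boundary components $c_1,c_2$ --- so the homeomorphism $f$ carrying $\alpha$ to $\beta$ is not a ``partial rotation across a bigon'' but rather itself a point-pushing map dragging $\partial_1$ (or $\partial_2$) around one of the $c_i$. Once you write $f$ this way you are essentially back at the paper's point-pushing description, so the conjugacy reformulation does not really shortcut the computation. The double-branched-cover idea is a genuine alternative, but you would still need to sort out the framing contributions after descending, which is the same bookkeeping you already identified as the main obstacle.
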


\begin{proof}
  This follows from Figure~\ref{fig:twotwists2}. The idea there is to realize the boundary interchange along $\alpha$, followed by that along $\beta$, as the simultaneous occurrence of the following three events:
  \begin{itemize}
    \item The boundary on the right endpoint of $\alpha$ moving (analogous to moving a marked point) all the way around, counterclockwise, along a loop that encloses the two other interior boundary components
    \item The boundary on the left endpoint of $\alpha$ moving, similarly as above, all the way around, clockwise, along a loop that encloses the innermost boundary component
    \item A full left-handed twist about each of the boundary components at either end of $\alpha$
  \end{itemize}

  The mapping class for moving a marked point along a loop is described in \cite[Fact 4.7]{primer_mcg}.
\end{proof}

\begin{figure}[htbp]{
  \vspace{0.2cm}
  \begin{overpic}[tics=20]
  {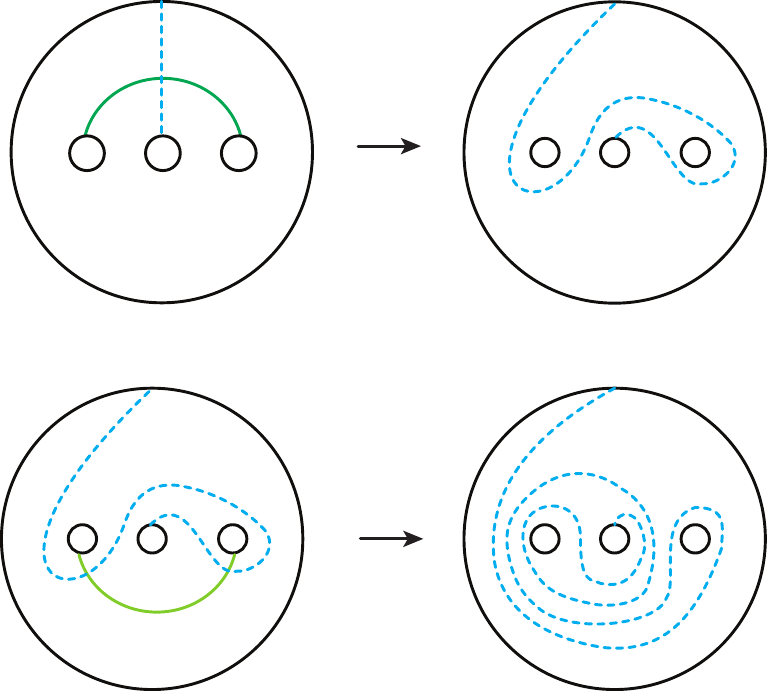}
  \end{overpic}}
  \vspace{0.2cm}
  \caption{The effect of boundary interchange on $\alpha$ followed by that on $\beta$, on the blue arc.}
  \label{fig:twotwists2}
\end{figure}

We also point out that monodromy factorizations that are related by framing conjugations give symplectic fillings that are deformation equivalent according to \cite[p.17]{Baykur_Hayano_Hurwitz}. This allows us to relate monodromy factorizations which have boundary interchanges about arcs that are homotopic, but not homotopic rel boundary.

\begin{lemma}\label{lem:framing_conj}
  Consider two arcs $\alpha$ and $\alpha'$ connecting the boundary components $\partial_1$ and $\partial_2$ as shown in Figure~\ref{fig:arcs}. Then, 
  \[
    \tau_{\alpha'}= t_{\partial_1}t_{\partial_2}^{-1}\tau_{\alpha}
  \]
\end{lemma}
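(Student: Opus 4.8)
The plan is to recognize the arc $\alpha'$ as the image of $\alpha$ under a boundary Dehn twist and then exploit the fact that the boundary interchange is a natural construction under homeomorphisms.

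First I would read off from Figure~\ref{fig:arcs} that the two arcs differ by a single positive boundary twist: $\alpha' = t_{\partial_1}(\alpha)$, i.e.\ $\alpha'$ is obtained from $\alpha$ by dragging the endpoint lying on $\partial_1$ once around $\partial_1$ in the positive direction while keeping the endpoint on $\partial_2$ fixed. Deciding which boundary component and which orientation this corresponds to is the one genuinely convention-dependent step, and I would fix orientations so that it reads as $t_{\partial_1}$. (One may equally describe the same arc as $\alpha' = t_{\partial_2}^{-1}(\alpha)$; since $t_{\partial_1}$ and $t_{\partial_2}$ are supported on disjoint annular collars and hence commute, the two descriptions are interchangeable and lead to the same conclusion, a convenient consistency check.)

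Next I would use that the boundary interchange $\tau_\gamma$ is defined intrinsically by a fixed model homeomorphism on a regular neighborhood of $\gamma \cup \partial_1 \cup \partial_2$ — the positive half-twist along $\gamma$ together with the clockwise half-rotations of the two boundary components depicted in Figure~\ref{fig:boundarytwist}. By the change-of-coordinates principle (\cite[\S3.2, \S4.2]{primer_mcg}), for any orientation-preserving homeomorphism $f$ of $P$ that is the identity near $\partial P$ one then has $\tau_{f(\gamma)} = f\,\tau_\gamma\,f^{-1}$. Applying this with $f = t_{\partial_1}$ and $\gamma = \alpha$ yields $\tau_{\alpha'} = t_{\partial_1}\,\tau_\alpha\, t_{\partial_1}^{-1}$. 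Finally, since $\tau_\alpha$ is an orientation-preserving mapping class carrying $\partial_1$ to $\partial_2$, the same principle gives $\tau_\alpha t_{\partial_1}\tau_\alpha^{-1} = t_{\partial_2}$, hence $\tau_\alpha t_{\partial_1}^{-1} = t_{\partial_2}^{-1}\tau_\alpha$, and therefore
\[
  \tau_{\alpha'} \;=\; t_{\partial_1}\,\tau_\alpha\, t_{\partial_1}^{-1} \;=\; t_{\partial_1} t_{\partial_2}^{-1}\,\tau_\alpha ,
\]
which is the asserted identity.

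The main point requiring care is the naturality statement $\tau_{t_{\partial_1}(\alpha)} = t_{\partial_1}\tau_\alpha t_{\partial_1}^{-1}$: one must check that the clockwise boundary rotations built into the definition of $\tau_\gamma$ transform correctly under $t_{\partial_1}$, so that the equality holds on the nose rather than up to an extra power of $t_{\partial_1}$ or $t_{\partial_2}$. This is harmless because $t_{\partial_1}$ is supported in an annulus disjoint from $\partial_2$ and from the twisting region of the model map, and is the identity on a collar of $\partial_1$ itself, but it is the step I would write out most carefully; everything else is the change-of-coordinates principle and a one-line rewriting using the swap relation.
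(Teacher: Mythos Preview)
Your argument is correct. The paper actually states this lemma without proof, treating it as an elementary observation following the remark that framing conjugations relate factorizations with boundary interchanges along arcs that are homotopic but not homotopic rel boundary (with a pointer to \cite{Baykur_Hayano_Hurwitz}). Your write-up supplies exactly the justification one would expect: identify $\alpha'$ as $t_{\partial_1}(\alpha)$, invoke naturality $\tau_{f(\alpha)} = f\tau_\alpha f^{-1}$, and then use the swap relation $\tau_\alpha t_{\partial_1}\tau_\alpha^{-1} = t_{\partial_2}$ to rewrite. One small inaccuracy in your final paragraph: $t_{\partial_1}$ is supported in an annular collar of $\partial_1$ and is the identity \emph{on} $\partial_1$, not on a collar of it; but this does not affect the naturality argument, since what matters is that $t_{\partial_1}$ fixes $\partial P$ pointwise and carries a regular neighborhood of $\alpha\cup\partial_1\cup\partial_2$ to one of $\alpha'\cup\partial_1\cup\partial_2$.
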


\begin{figure}[htbp]{
  \vspace{0.2cm}
  \begin{overpic}[tics=20]
  {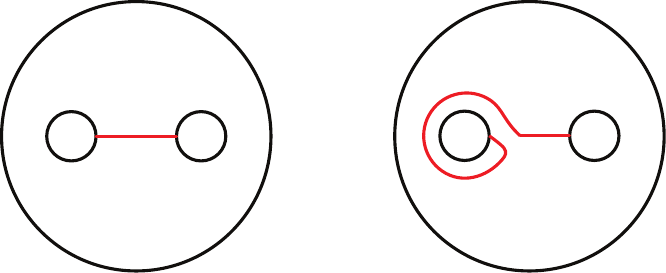}
  \end{overpic}}
  \vspace{0.2cm}
  \caption{Two arcs $\alpha$ and $\alpha'$ joining two boundary components.}
  \label{fig:arcs}
\end{figure}

Now we recall Theorem \ref{thm: parabolicbundles} and prove it.

\parabolic*

\begin{proof}
  By applying Proposition~\ref{prop: bundle_planar}, we obtain a planar spinal open book supporting $(T_-(n), \xi_1)$, which has an annulus page with monodromy being $t_{\gamma}^n$ where $\gamma$ is the core of the annulus, and a single annulus vertebra. The spinal open book is uniform with respect to $\mathbb{D}^2$, and any strong filling contain exactly two exotic fibers by Theorem~\ref{thm: intro-localmodel}. Further, by Theorem~\ref{thm: stein_filling}, any minimal strong filling is deformation equivalent to a Stein filling.
    
  By Theorem \ref{thm: monodromy_fact}, minimal strong fillings of $(T_-(n),\xi_1)$ correspond to Hurwitz equivalence classes of positive admissible factorizations of $t_{\gamma}^n$. Due to the existence of two exotic fibers in the filling, there shuold be two boundary interchanges in the factorization. Let $\alpha$ be an arc connecting two boundary components of the page. Then by Proposition~\ref{prop:twotwists}, we have the monodromy factorization $t_{\gamma}^{-4} = \tau^2_{\alpha}$. From these, it is straightforward to see that there is a unique factorization of $t^n_{\gamma}$ up to Hurwitz equivalence: 
  \[
    t^n_{\gamma} = t_{\gamma}^{n+4}\circ \tau^2_{\alpha}
  \]
  Thus $(T_-(n),\xi_1)$ is not strongly fillable if $n < -4$ and there is a unique Stein filling for $\xi_1$ if $n \geq -4$. 
\end{proof}

Now we prove Corollary~\ref{cor: not_monoid}.

\begin{proof}
  Consider a spinal open book decomposition $(P,\phi_n,\Sigma)$ where $P$ and $\Sigma$ are an annulus, and $\phi_n = t_{\gamma}^{n}$ for $\gamma$ being the core of $P$. Then we have 
  \[
    \phi_{m+n} = \phi_{m} \circ \phi_{n}.
  \] 
  Thus $\phi_{-5}$ is a product of $\phi_{-2}$ and $\phi_{-3}$. According to Theorem~\ref{thm: parabolicbundles}, however, $(P,\phi_n,\Sigma)$ is Stein fillable for $n = -2,-3$, but not strongly fillable for $n = -5$.
\end{proof}

We now prove Theorem~\ref{thm:eg1_33}, by proving the following equivalent statement.

\begin{theorem}
  Let  $(M, \xi)$ be a contact $3$-manifold supported by a spinal open book $(P,\phi,\Sigma)$ where $P$ is an annulus, $\Sigma = \mathbb{D}^2$, and $\phi = \tau_{\alpha}$, a boundary interchanging map along an arc $\alpha$ connecting two boundary components of $P$. Then $(M, \xi)$ bounds a unique strong filling which is deformation equivalent to a Stein filling.
\end{theorem}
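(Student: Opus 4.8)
The plan is to identify the base of the spinal open book, apply the structural correspondence of Theorem~\ref{thm: monodromy_fact}, and then show by a direct computation in $\SMod(P)$ that $\tau_\alpha$ admits a unique positive admissible factorization.

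First I would pin down the base. The page $P$ is an annulus whose two boundary components are both attached to the single spine $\mathbb{D}^2\times S^1$, and since $\tau_\alpha$ interchanges them, the multiplicity of $\pi_P$ at the corresponding boundary torus of $M_P$ is $m=2$. Uniformity therefore requires a degree-$2$ branched cover $\pi_1\colon\Sigma_1=\mathbb{D}^2\to B$ restricting to a degree-$2$ cover on the boundary; Riemann--Hurwitz gives $1=\chi(\mathbb{D}^2)=2\chi(B)-n_1$ with $n_1\ge 0$, which forces $B=\mathbb{D}^2$ and $n_1=1$, and $\mathbb{D}^2$ is the only admissible base. Hence Theorem~\ref{thm: intro-localmodel} shows that every minimal strong filling of $(M,\xi)$ is a PANLF over $\mathbb{D}^2$ with regular fiber $P$ whose completion contains exactly one exotic fiber, and Theorem~\ref{thm: monodromy_fact} shows that minimal strong fillings of $(M,\xi)$ correspond to Hurwitz equivalence classes of positive admissible factorizations of $\tau_\alpha$ with respect to $\mathbb{D}^2$, each of which is deformation equivalent to a Stein filling because $\pi$ is uniform with respect to $\mathbb{D}^2$. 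So the theorem reduces to classifying these factorizations.

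Next I would carry out the classification. Since $\pi_1(\mathbb{D}^2)$ is trivial, the only monodromy representation is trivial and $\phi_\rho=\mathrm{id}$, so a positive admissible factorization of $\tau_\alpha$ with respect to $\mathbb{D}^2$ writes $\tau_\alpha$ as a product of exactly $n_1=1$ boundary interchange $\tau_{\gamma_1}$ along an arc $\gamma_1$ joining the two boundary components of $P$, together with $a\ge 0$ positive Dehn twists about essential simple closed curves of $P$. As $P$ is an annulus, every such curve is isotopic to the core $\gamma$, so each such twist is $t_\gamma$. In $\SMod(P)$ the class $t_\gamma$ is central: it commutes with itself, and for any boundary interchange $\tau_{\gamma'}$ one has $\tau_{\gamma'}t_\gamma\tau_{\gamma'}^{-1}=t_{\tau_{\gamma'}(\gamma)}=t_\gamma$ since $\tau_{\gamma'}(\gamma)$ is again a core curve, and these classes generate $\SMod(P)$. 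Moreover any two arcs joining the two boundary components of $P$ differ by a power of $t_\gamma$, so every such boundary interchange equals $\tau_\alpha$ in $\SMod(P)$; in particular $\SMod(P)$ is abelian, generated by $t_\gamma$ and $\tau_\alpha$ with the relation $\tau_\alpha^2=t_\gamma^{-4}$ (Proposition~\ref{prop:twotwists}, exactly as in the proof of Theorem~\ref{thm: parabolicbundles}), and $t_\gamma$ has infinite order there. Feeding this into $\tau_\alpha=t_\gamma^{a}\tau_{\gamma_1}=t_\gamma^{a}\tau_\alpha$ yields $t_\gamma^{a}=\mathrm{id}$, hence $a=0$. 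So the only positive admissible factorization of $\tau_\alpha$ with respect to $\mathbb{D}^2$ is $\tau_\alpha=\tau_\alpha$ itself, which is its own Hurwitz class; by Theorem~\ref{thm: monodromy_fact}, $(M,\xi)$ bounds a unique minimal strong filling up to symplectic deformation, deformation equivalent to a Stein filling --- explicitly the PANLF over $\mathbb{D}^2$ with regular fiber $P$ and a single exotic fiber, in agreement with Theorem~\ref{thm:eg1_33}.

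The hard part is the computation in $\SMod(P)$: one needs that $t_\gamma$ is central of infinite order and that $\tau_\alpha^2=t_\gamma^{-4}$, so that no positive Dehn twist can be absorbed into (equivalently, split off from) the single boundary interchange that must occur. Granting this input, the remainder is a direct application of Theorems~\ref{thm: intro-localmodel} and~\ref{thm: monodromy_fact}.
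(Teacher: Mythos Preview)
Your proof is correct and follows essentially the same approach as the paper: determine that $B=\mathbb{D}^2$ with a single branch point via Riemann--Hurwitz, invoke Theorems~\ref{thm: intro-localmodel} and~\ref{thm: monodromy_fact} to reduce to a factorization problem, and observe that the factorization is forced. Your argument is in fact more detailed than the paper's, which simply asserts that ``there is a unique such PANLF'' without spelling out the $\SMod(P)$ computation; your verification that $t_\gamma$ is central and that all boundary interchanges on the annulus coincide makes this step explicit.
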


\begin{proof}
  Since $(M, \xi)$ is supported by a planar spinal open book, by Theorem~\ref{thm: monodromy_fact}, all minimal strong fillings of $(M, \xi)$ arise from positive admissible factorizations of the page monodromy with respect to a surface $B$, where the spinal open book is uniform with respect to $B$. As the single boundary component of the page component has multiplicity 2, and the vertebra is a disk, $B$ has to be a disk and we consider the two-fold branched cover $\mathbb{D}^2 \to \mathbb{D}^2$ with a single branch point. It follows that any filling of $(M, \xi)$ has the structure of a PANLF with base $\mathbb{D}^2$ and annulus fiber. Then, by Theorem~\ref{thm: intro-localmodel}, the completion of any filling will have a single exotic fiber, the monodromy around which will interchange the boundary components of the annulus. It follows that there is a unique such a PANLF, hence $(M, \xi)$ has a unique strong filling up to deformation, which is deformation equivalent to a Stein filling.
\end{proof}

In the following, we prove Theorem~\ref{thm:eg1_34}. As before, we prove a restatement in terms of the planar spinal open book decomposition of the manifold. 

\begin{theorem}
  Consider the contact $3$-manifold $(M, \xi)$ supported by the planar spinal open book with two paper components with annulus fibers and monodromy interchanging boundary components, and a single spine component with $\Sigma_{2,2}$ vertebra. The Stein fillings of this manifold are either bordered Lefschetz fibrations with $\Sigma_{1,2}$ base and annulus fibers with no singular fibers, or a PANLF with annulus base, annulus fibers, and four singular fibers.
\end{theorem}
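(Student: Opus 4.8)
The strategy is that of Theorem~\ref{thm: parabolicbundles}: use Theorem~\ref{thm: monodromy_fact} to translate the classification into a monodromy‑factorization count, which here is small enough to carry out by hand. First I would pin down the possible bases. By Theorem~\ref{thm: monodromy_fact} every minimal strong filling of $(M,\xi)$ is deformation equivalent to one built from a positive admissible factorization of the page monodromy $\phi$ with respect to some pair $(B,\rho)$ for which the spinal open book is uniform with respect to $B$. Such a $B$ carries a branched covering $\pi\colon\Sigma_{2,2}\to B$ of degree $2$ (the number of boundary circles of each annulus page meeting the single spine) whose restriction to each boundary circle of $\Sigma_{2,2}$ has degree $2$, so $\partial B$ has two components; if $\pi$ has $b\ge 0$ branch points, Riemann--Hurwitz gives $-4=\chi(\Sigma_{2,2})=2\chi(B)-b$, and having two boundary components then forces $\chi(B)\in\{0,-2\}$, i.e.\ $(b,B)=(4,\Sigma_{0,2})$ or $(0,\Sigma_{1,2})$, realized by $\pi_2$ and $\pi_1$. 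So it suffices to analyze $B=\Sigma_{1,2}$ and $B=\Sigma_{0,2}$.

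The key computation is that in both cases the class to be factored is the identity. Combining the two annulus page components by Remark~\ref{rem: boundaries}, one must factor $\phi_2'\circ\phi_1'$ with $\phi_i'=\phi_i\circ\phi_{\rho,i}^{-1}$, where $\phi_1=\phi_2=\sigma$ is the page‑interchange induced by the involution $\sigma(s,t)=(-s,-t)$ of the annulus appearing in the $E_i$ of \eqref{eq:E}. This $\sigma$ is an honest involution, $\sigma^2=\mathrm{id}$, and one checks it differs from the boundary interchange $\tau_\gamma$ of Figure~\ref{fig:boundarytwist} only by a boundary twist, $\sigma=\tau_\gamma\,t_\gamma^{2}$ with $\gamma$ the core (consistent with $\tau_\gamma^2=t_\gamma^{-4}$). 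Because the spinal mapping class group of the annulus is abelian, the product $\phi_{\rho,1}\phi_{\rho,2}$ lies in its trivial commutator subgroup, so $\phi_{\rho,2}=\phi_{\rho,1}^{-1}$ and
\[
  \phi_2'\circ\phi_1'=\sigma\,\phi_{\rho,2}^{-1}\,\sigma\,\phi_{\rho,1}^{-1}=\sigma^{2}\,(\phi_{\rho,1}\phi_{\rho,2})^{-1}=\mathrm{id},
\]
independently of the choice of $\rho$.

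It remains to factor the identity in each case. When $B=\Sigma_{1,2}$ the cover $\pi_1$ has no branch points, so a positive admissible factorization of $\mathrm{id}$ with respect to $(\Sigma_{1,2},\rho)$ uses only positive Dehn twists about essential curves of the annulus page and is hence the empty product; the corresponding filling is a bordered Lefschetz fibration over $\Sigma_{1,2}$ with annulus regular fiber and no singular fibers (a locally trivial $\Sigma_{0,2}$‑bundle with boundary monodromy $\sigma$), which by Theorem~\ref{thm: blf_stein} carries a Stein structure, and letting $\rho$ vary gives the stated family, containing $E_1$. When $B=\Sigma_{0,2}$ the cover $\pi_2$ has four branch points, so by Theorem~\ref{thm: intro-localmodel} the associated PANLF over $\Sigma_{0,2}$ has exactly four exotic fibers, and a positive admissible factorization reads $\mathrm{id}=t_\gamma^{k}\,\tau_{\alpha_1}\tau_{\alpha_2}\tau_{\alpha_3}\tau_{\alpha_4}$ for some $k\ge 0$ and arcs $\alpha_i$ joining the two boundary components of the annulus page. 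By Lemma~\ref{lem:framing_conj} every such arc yields the same class $\tau_\gamma$ on the annulus, and $\tau_\gamma^2=t_\gamma^{-4}$ by Proposition~\ref{prop:twotwists} (exactly as in the proof of Theorem~\ref{thm: parabolicbundles}), so $\tau_{\alpha_1}\tau_{\alpha_2}\tau_{\alpha_3}\tau_{\alpha_4}=t_\gamma^{-8}$ and $k=8$; thus the factorization is unique up to the generalized Hurwitz equivalence of Definition~\ref{def: generalized_hurwitz_equivalence}, producing a single PANLF over $\Sigma_{0,2}$ with annulus fiber, eight singular fibers, and four exotic fibers, which is deformation equivalent to $E_2$ (Weinstein, hence Stein). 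Together with Theorem~\ref{thm: monodromy_fact} this yields precisely the two families in the statement.

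The main obstacle is the computation just sketched: correctly identifying the page‑interchange class $\sigma$ and its relation $\sigma=\tau_\gamma t_\gamma^{2}$ to the exotic‑fiber interchange, and tracking the contributions of $\phi_\rho$ across both page components so that the class to be factored is seen to be the identity for every $\rho$. Once this is in hand, the count $k=8$ (via Proposition~\ref{prop:twotwists}) and the uniqueness of $E_2$ up to Hurwitz equivalence follow as in the parabolic torus bundle case, and the Lefschetz‑amenable case $B=\Sigma_{1,2}$ is handled exactly as in Theorem~\ref{thm: lef_amenable_LVW}.
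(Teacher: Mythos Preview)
Your proposal is correct and follows the same overall strategy as the paper: determine the possible bases $B$ via Riemann--Hurwitz, then count positive admissible factorizations over each. The execution differs slightly in the $B=\Sigma_{0,2}$ case: you invoke Remark~\ref{rem: boundaries} to combine the two page components, identify the page monodromy explicitly as the involution $\sigma=\tau_\gamma t_\gamma^{2}$, and use abelianness of $\SMod(\Sigma_{0,2})$ together with $\phi_{\rho,1}\phi_{\rho,2}\in[\,\cdot\,,\,\cdot\,]=1$ to see that the combined class to factor is the identity for every $\rho$; the paper instead keeps the two page components separate and tracks the split of exotic fibers between the two boundaries of the annulus base. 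Your route is a bit more systematic and makes the count $k=8$ transparent via $\tau_\gamma^4=t_\gamma^{-8}$.

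One small point you leave implicit (as does the paper, which simply asserts ``the monodromy representation must be trivial in this case''): for $B=\Sigma_{0,2}$ you show the factorization is unique up to generalized Hurwitz equivalence, but to conclude a \emph{single} PANLF you also need that different choices of $\rho\colon\pi_1(\Sigma_{0,2})\to\Mod_\partial(\Sigma_{0,2})=\langle t_\gamma\rangle$ yield deformation-equivalent fillings. This is harmless here---changing $\rho(1)$ by $t_\gamma$ amounts to sliding a singular fiber across the core of the annulus base---but it is worth stating. Your identification of the resulting PANLF with $E_2$ then goes through exactly as in the paper, since $E_2$ is already known to be a Weinstein (hence Stein) filling of $(M,\xi)$.
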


\begin{proof}
  Since $(M, \xi)$ is supported by a planar spinal open book, by Theorem~\ref{thm: monodromy_fact}, all minimal weak fillings of $(M, \xi)$ arise from positive admissible factorizations of the page monodromy with respect to base $B \in \mathcal{B}$. Here, $B$ has to be a surface such that there exists a two-fold branched cover $\Sigma_{2,2} \to B$ which induces a two-fold honest cover on the boundaries. From the Riemann--Hurwitz formula, $B$ should be either $\Sigma_{1,2}$ or $\Sigma_{0,2}$. 
    
  In the case $B = \Sigma_{1,2}$, by Riemann--Hurwitz formula, we have an unbranched covering. The fillings of $(M, \xi)$ which have a foliation parameterized by $\Sigma_{1,2}$ thus correspond to positive allowable factorizations of the page monodromy with respect to $\Sigma_{1,2}$, and there are no exotic fibers, hence no boundary interchanges due to those. Thus these fillings are bordered Lefschetz fibrations. Notice that we obtain a family of such fillings by varying the monodromy representation.

  In the case $B = \Sigma_{0,2}$, by Riemann--Hurwitz formula, there are four branch points. By Theorem~\ref{thm: intro-localmodel}, the corresponding fillings are PANLFs with annulus fibers and annulus base, and their completions have 4 exotic fibers. The PANLF with annulus fibers and annulus base, whose completion has $4$ exotic fibers, and no singular fibers, induces a spinal open book on its boundary with two page components with annulus fibers, such that monodromy on one of the page components interchanges boundary components, while on the other page component the monodromy is composition of three boundary interchanges, which is equivalent to one boundary exchange composed with $\tau_{\gamma}^{-4}$ by Proposition~\ref{prop:twotwists}. Notice that the monodromy representation must be trivial in this case. To agree with the spinal open book monodromy as in the statement of the theorem, the PANLF filling must also contain 4 singular fibers, each of which has $\gamma$ as its vanishing cycle. This gives the unique minimal filling of $(M, \xi)$ that contains exotic fibers.
    
  In fact, the Stein filling $E_2$ constructed in \cite{spinal_II} (see (\ref{eq:E})) contains eight Lagrangian spheres. Since there is a unique PANLF filling of $(M, \xi)$, this Stein filling must be deformation equivalent to the PANLF obtained above.
\end{proof}

\begin{figure}[htbp]{
  \vspace{0.2cm}
  \begin{overpic}[tics=20]
  {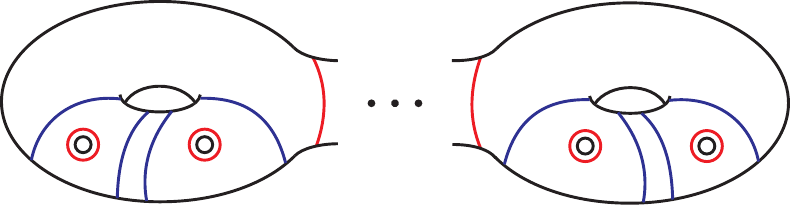}
  \end{overpic}}
  \vspace{0.2cm}
  \caption{A high genus open book which can be turned into a planar spinal open book. }
  \label{fig:highgenus}
\end{figure}

Now we prove Theorem~\ref{thm: ellipticbundles}. Note that the uniqueness of these fillings up to diffeomorphism was proven in \cite{GL:fillings}.

\ellipticbundles*

\begin{figure}[htbp]{
  \vspace{0.2cm}
  \begin{overpic}[tics=20]
  {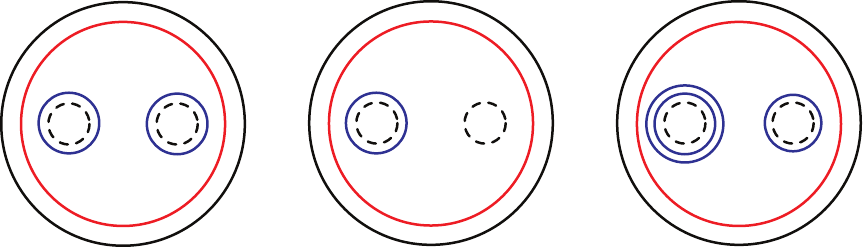}
  \put(55,-12){$\phi_1$}
  \put(205,-12){$\phi_2$}
  \put(355,-12){$\phi_3$}
  \end{overpic}}
  \vspace{0.2cm}
  \caption{The monodromies of the spinal open books supporting the elliptic bundles with $\pi$-twisting. The dotted boundary components meet the spine component with annulus vertebra.}
  \label{fig:ellipticbundles}
\end{figure}

\begin{proof}
  According to \cite[Table in p.6]{honda2} and \cite[Proof of Theorem 4.3.1]{Van_horn_morris_thesis}, there exist three contact elliptic torus bundles with $\pi$ twisting. They are supported by the open books shown in Figure~\ref{fig:piTwisting} corresponding to the words $(aba)^{-2}$, $(ba)^{-5}$, and $(ba)^{-4}$, respectively. By applying Proposition~\ref{prop:obd_ann_vertebra}, we can convert them into planar spinal open books $(P,\phi_i,\Sigma_1\cup\Sigma_2)$ where $P$ is a pair of pants, $\Sigma_1 = \mathbb{D}^2$ and $\Sigma_2$ is an annulus. The monodromies $\phi_i$ are shown in Figure~\ref{fig:ellipticbundles}. The spinal open book is uniform with respect to $\mathbb{D}^2$, and any strong filling contains exactly two exotic fibers by Theorem~\ref{thm: intro-localmodel}. Further, by Theorem~\ref{thm: stein_filling}, any minimal strong filling is deformation equivalent to a Stein filling. By Theorem \ref{thm: monodromy_fact}, minimal strong fillings of the torus bundles correspond to Hurwitz equivalence classes of positive admissible factorizations of $\phi_i$. Due to the existence of two exotic fibers in the filling, there should be two boundary interchanges in the factorization. 
    
  Let $\alpha$ be an arc connecting two boundary components of $P$ that meet the annulus vertebra. Suppose $d_1$ and $d_2$ are curves parallel to the boundary components of $P$ that meet the annulus vertebra, and $d_3$ is a curve parallel to the boundary component of $P$ that meets the disk vertebra. Then by Proposition~\ref{prop:twotwists}, we have the monodromy factorization 
  \[
    \tau^2_{\alpha} = t_{d_1}^{-3} t_{d_2}^{-1} t_{d_3}.
  \] 
  From this, it is straightforward to see that there is a unique factorization for each $\phi_i$ up to Hurwitz equivalence: 
  \begin{align*}
    \phi_1 &= t_{d_1}^2\tau^2_{\alpha},\\
    \phi_2 &= t_{d_1}^2t_{d_2}\tau^2_{\alpha},\\
    \phi_3 &= t_{d_1}\tau^2_{\alpha}.
  \end{align*}    
  Thus each torus bundle admits a unique Stein filling. 
\end{proof}

%------------------------------------
\subsection{Higher genus open books}
%------------------------------------
We conclude our paper providing a motivating family of examples that could lead to the classification of symplectic fillings of open books with higher genus pages in the future.

\begin{theorem}\label{thm:highgenus}
  Consider the open book shown in Figure~\ref{fig:highgenus}. For every genus $g\geq 2$, this represents a Stein fillable contact $3$-manifold that admits at least one Stein filling which is a Lefschetz fibration over the disk with genus $g$ fibers, and is also a nearly Lefschetz fibration over the disk with planar fibers.
\end{theorem}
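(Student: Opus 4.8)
The plan is to exhibit the open book of Figure~\ref{fig:highgenus} both as a genus-$g$ Lefschetz fibration over $\bD^2$ and, after converting one binding component into an annulus spine via Proposition~\ref{prop:obd_ann_vertebra}, as a planar spinal open book with a PANLF filling, and then to observe that both fillings are built from the same Stein domain. First I would read off from Figure~\ref{fig:highgenus} a positive factorization of the genus-$g$ monodromy $\phi$ into Dehn twists about homologically essential curves; by Theorem~\ref{thm: blf_stein} (and the standard PALF-to-Stein correspondence) this gives a Stein filling $E$ which is an allowable Lefschetz fibration over $\bD^2$ with genus-$g$ regular fibers, inducing the factorization of $\phi$ on the boundary. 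This is the ``first half'' of the statement and requires only that the curves pictured are nonseparating (or at least homologically essential), which is visible from the figure.

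Next I would produce the planar spinal description. The genus of the page in Figure~\ref{fig:highgenus} is concentrated in handles that can be realized inside a single ``band'' whose relative open book is of the type studied in \S\ref{subsec:relobd}; the key computation, exactly as in Proposition~\ref{prop:obd_ann_vertebra} and Proposition~\ref{prop: bundle_planar}, is that the relevant sub-open-book corresponds to a word in $a, a^{-1}$ whose $\pi$-twisting incarnation can be replaced by an annulus spine. Carrying this out turns the genus-$g$ page into a planar page $P$ (a punctured sphere) with two boundary components attached to one annulus vertebra and the remaining boundary components attached to disk vertebrae, and the residual monodromy becomes a positive admissible factorization of the planar page monodromy with respect to $B = \bD^2$: namely, positive Dehn twists about essential curves in $P$ together with two boundary interchanges coming from the unique branch point of the double branched cover $\bD^2 \to \bD^2$, as in Definition~\ref{def: pos_adm_fact} and Proposition~\ref{prop:twotwists}. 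By Theorem~\ref{thm: stein_filling} this positive admissible factorization is realized by a Stein PANLF $W$ over $\bD^2$ with planar fibers.

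Finally I would argue that $E$ and $W$ fill the \emph{same} contact $3$-manifold: the spine-removal/relative-open-book move used to pass from the genus-$g$ open book to the planar spinal open book does not change the supported contact structure (this is exactly the content of Theorem~\ref{thm: gluing_rel_obd} together with the contactomorphism between the annulus spine component and the $\pi$-twisted $T^2\times I$ established in Proposition~\ref{prop:obd_ann_vertebra}), so both the Lefschetz fibration $E$ and the PANLF $W$ are fillings of one fixed $(M,\xi)$. In fact $W$ and $E$ can be taken to be the same underlying Stein domain, presented by two different fibration structures — the Lefschetz fibration structure and the PANLF structure obtained by removing a positive multisection, in the sense of Theorem~\ref{thm: bh_multisections} and Remark~\ref{rem: exotic_nbd_branch} — which makes the ``Stein fillable, admits both structures'' assertion immediate.

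The main obstacle I anticipate is the second step: verifying carefully that the explicit curves in Figure~\ref{fig:highgenus} assemble into (i) a genuine positive factorization of a genus-$g$ open book monodromy and simultaneously (ii) a configuration in which exactly one binding piece is a relative open book of the type $(aba)^{\pm}$ handled by Proposition~\ref{prop:obd_ann_vertebra}, so that after the substitution the page genuinely becomes planar and the leftover factorization is \emph{admissible} (all non-twist factors are boundary interchanges). This is a hands-on surface-topology bookkeeping argument — tracking how the genus handles sit relative to the distinguished binding, and how the monodromy restricts — rather than a deep analytic point, but it is where the real work of the proof lies; everything downstream is an application of Theorems~\ref{thm: blf_stein}, \ref{thm: monodromy_fact}, and~\ref{thm: stein_filling}.
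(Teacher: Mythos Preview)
Your overall strategy is close to the paper's, but there are two concrete problems.

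First, the spinal conversion is miscounted. In Figure~\ref{fig:highgenus} the genus is not ``concentrated in a single band'': each of the $g$ handles carries its own $(aba)^{-2}$-type relative open book, and Proposition~\ref{prop:obd_ann_vertebra} is applied to \emph{each} handle separately. The resulting planar spinal open book therefore has $g$ annulus vertebrae (plus disk vertebrae), and by Riemann--Hurwitz each annulus-to-disk cover contributes two branch points, so every PANLF filling contains $2g$ exotic fibers and the positive admissible factorization must contain $2g$ boundary interchanges, not two. (Your phrase ``two boundary interchanges coming from the unique branch point of the double branched cover $\bD^2\to\bD^2$'' is internally inconsistent anyway: one branch point yields one boundary interchange, and an annulus double-covering a disk has two branch points.) This matters for the rest of the argument and for the applications, cf.\ Remark~\ref{rem: high_genus_difficulty}, where the $g=2$ case already has four exotic fibers.

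Second, your final step---constructing a separate PANLF filling $W$ via Theorem~\ref{thm: stein_filling} and then arguing that $W$ and $E$ are the \emph{same} Stein domain---is both unnecessary and not actually justified by the tools you cite (Theorem~\ref{thm: bh_multisections} describes complements of multisections, it does not identify your $W$ with your $E$). The paper's route is much shorter: once you know the contact manifold is supported by a planar spinal open book uniform with respect to $\bD^2$, Theorem~\ref{thm: monodromy_fact} applies to \emph{any} minimal strong filling, in particular to the Lefschetz-fibration Stein filling $E$ you already built, and forces $E$ itself to carry a PANLF structure with planar fibers. No second filling $W$ needs to be constructed or compared.

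For the first half, the paper simply cites the positive factorization from Ding--Li \cite{ding2018strong} rather than reading it off the figure, but your plan there is fine in spirit.
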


\begin{proof}
  The proof of Proposition 3.2 in \cite{ding2018strong} shows that this open book monodromy admits a positive factorization, which corresponds to a Stein filling that admits a Lefschetz fibration with genus $g$ fibers. Now, we use Proposition~\ref{prop:obd_ann_vertebra} to replace the monodromy parts on each ``handle'' by an annulus vertebra. Thus this manifold is supported by a planar spinal open book which is Stein fillable and is uniform with respect to $\mathbb{D}^2$. Theorem~\ref{thm: monodromy_fact} then shows that the Stein fillings must also have the structure of a PANLF with planar pages.
\end{proof}

\begin{remark}\label{rem: high_genus_difficulty}
  The above examples, for $g \geq 2$, also show the difficulty of classifying fillings using positive admissible monodromy factorizations, which arise from different choices of arcs on the page along which the boundary interchanges can happen. We will make some comments on the symplectic fillings of the $g=2$ example; let us refer to this contact manifold as $M_2$. First observe that the planar spinal open books are uniform with respect to $\mathbb{D}^2$; there are $g$ vertebra components each of which contribute two branch points, hence any filling will contain $2g$ exotic fibers. 

  \begin{figure}[htbp]{
    \vspace{0.2cm}
    \begin{overpic}[tics=20]
    {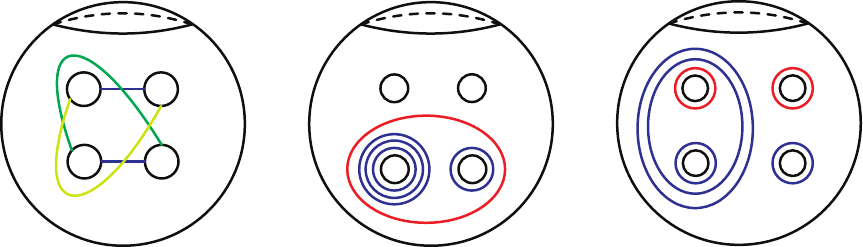}
    \put(15,80){$\alpha_0^2$}
    \put(15,40){$\alpha_1^2$}
    \put(59,29){$\alpha_0^1$}
    \put(59,82){$\alpha_1^2$}
    \end{overpic}}
    \vspace{0.2cm}
    \caption{The arcs $\alpha_1^1$, $\alpha_0^1$, $\alpha_0^2$, and $\alpha_1^2$ on $\Sigma_{0,4}$, connecting pairs of boundary components corresponding to the same spine component in $M_2$. The middle figure is the mapping class $\tau^2_{\alpha_0^1}$, and the rightmost picture represents the mapping class $\tau_{\alpha_0^1}\tau_{\alpha_0^2}$.}
    \label{fig:twoarcs_genus2}
  \end{figure}

  The page on the planar spinal open book is $\Sigma_{0,4}$, a 4-holed sphere, where the boundary components are paired up according to which spine component they meet. Each pair of boundary components meet a spine component with annulus vertebra. The monodromy on the page is $t_{\gamma_{1,2}}^2$ as shown in the bottom right picture in Figure~\ref{fig:genus2_fact2}. In the figure the bottom pair of boundary components meet a spine component, and the top pair meet the other spine component. Since any filling must contain four exotic fibers, two corresponding to each vertebra component, both pairs of boundary components will be interchanged twice in any positive admissible factorization of the monodromy. In Figure~\ref{fig:twoarcs_genus2}, we show two choices of arcs for each pair -- this is not an exhaustive list and part of what makes this a difficult problem! We show that there exist at least three distinct minimal strong fillings of $M_2$, coming from boundary interchanges along different choices of arcs.

  The notation for this example is set up in the following manner:
  \begin{itemize}
    \item The arcs are denoted $\alpha_i^j$, where $i = 0,1$, $j=1,2$, the letter $i$ indexing the lower pair of boundary components as 0 and the upper pair as 1, while the letter $i$ indexes the choice of arc as indicated in Figure~\ref{fig:genus2_fact3}
    \item The boundary interchange mapping class along the arc $\alpha_i^j$ is denoted $\tau_{\alpha_i^j}$
    \item The boundary components are denoted $d_{i}$, the indices counting them clockwise starting from top left as in Figure~\ref{fig:genus2_fact2}
    \item The curve enclosing boundary components $d_i$ and $d_j$ will be denoted $\gamma_{i,j}$.
  \end{itemize}    
  One filling comes from the positive admissible factorization 
  \[
    t_{\gamma_{1,2}}^2 = t_{d_1}^3t_{d_2}t_{d_3}^3t_{d_4}\tau_{\alpha_1^1}^2\tau_{\alpha_0^1}^2
  \]
  which contains 8 vanishing cycles. 

  \begin{figure}[htbp]{
   \vspace{0.2cm}
   \begin{overpic}[tics=20]
   {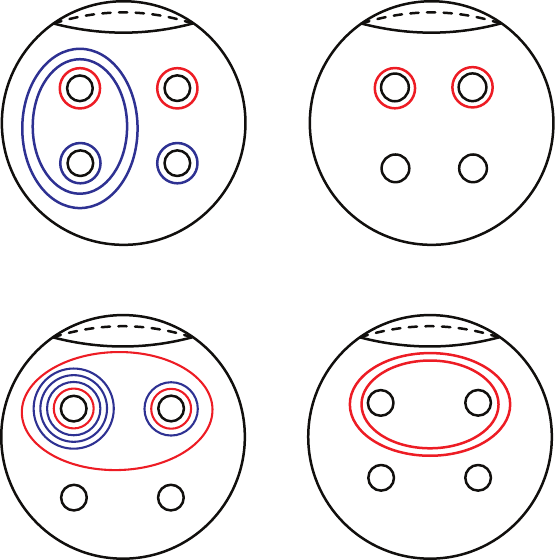}
    \put(45,140){$\tau_{\alpha_0^1}\tau_{\alpha_0^2}$}
    \put(172,140){$t_{d_3}t_{d_4}t^2_{\gamma_{1,4}}\tau_{\alpha_0^1}\tau_{\alpha_0^2}$}
    \put(15,-10){$\tau^2_{\alpha_1^1}t_{d_3}t_{d_4}t^2_{\gamma_{1,4}}\tau_{\alpha_0^1}\tau_{\alpha_0^2}$}
    \put(145,-10){$t^2_{d_1}t_{\gamma_{1,2}}\tau^2_{\alpha_1^1}t_{d_3}t_{d_4}t^2_{\gamma_{1,4}}\tau_{\alpha_0^1}\tau_{\alpha_0^2}$}
   \end{overpic}}
   \vspace{0.5cm}
   \caption{A factorization corresponding to a filling of $M_2$ with 7 singular fibers.} 
   \label{fig:genus2_fact2}
  \end{figure}

  \begin{figure}[htbp]{
   \vspace{0.2cm}
   \begin{overpic}[tics=20]
   {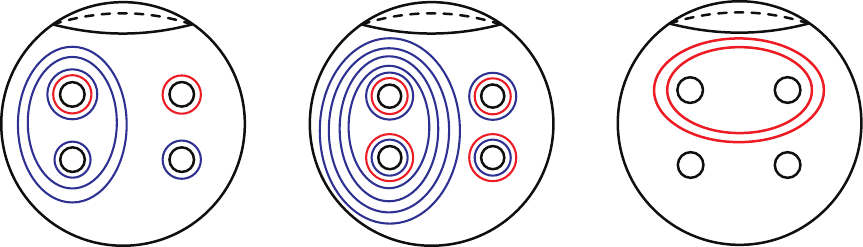}
    \put(40,-10){$\tau_{\alpha_0^1}\tau_{\alpha_0^2}$}
    \put(170,-10){$\tau_{\alpha_1^2}\tau_{\alpha_1^1}\tau_{\alpha_0^1}\tau_{\alpha_0^2}$}
    \put(305,-10){$t_{\gamma_{1,4}}^4t_{\gamma_{1,2}}^2\tau_{\alpha_1^2}\tau_{\alpha_1^1}\tau_{\alpha_0^1}\tau_{\alpha_0^2}$}
   \end{overpic}}
   \vspace{0.5cm}
   \caption{A factorization corresponding to a filling of $M_2$ with six singular fibers.}
   \label{fig:genus2_fact3}
  \end{figure}

  Another filling with 7 singular fibers comes from the positive admissible factorization shown in Figure~\ref{fig:genus2_fact2}, which has 7 vanishing cycles and the product of the boundary interchanges $\tau_{\alpha_1^1}^2\tau_{\alpha_0^1}\tau_{\alpha_0^2}$. Yet another filling with 6 singular fibers comes from the positive admissible factorization shown in Figure~\ref{fig:genus2_fact3}, which has 6 vanishing cycles and the product of the boundary interchanges $\tau_{\alpha_1^2}\tau_{\alpha_1^1}\tau_{\alpha_0^1}\tau_{\alpha_0^2}$. 
\end{remark}

\bibliographystyle{amsalpha}
\bibliography{references}
\end{document}